\documentclass[a4paper]{amsart}

\RequirePackage{amsmath} 
\RequirePackage{amssymb}
\usepackage{amscd,latexsym,amsthm,amsfonts,amssymb,amsmath,amsxtra}
\usepackage[colorlinks=true,urlcolor=blue,citecolor=blue]{hyperref}
\usepackage{color}
\usepackage[all]{xy}
\usepackage[OT2,T1]{fontenc}
\usepackage{bm}
\usepackage{mathtools}
\usepackage{ mathrsfs }
\usepackage{xcolor}
\usepackage{comment}
\usepackage{marginnote}
\usepackage{enumitem}
\usepackage{thmtools, thm-restate}

\DeclareSymbolFont{cyrletters}{OT2}{wncyr}{m}{n}
\DeclareMathSymbol{\Sha}{\mathalpha}{cyrletters}{"58}

\let\Re\undefined

\DeclareMathOperator{\Re}{Re}

\DeclareMathOperator{\Tr}{Tr}

\DeclareMathOperator{\supp}{supp}

\DeclareMathOperator{\Spec}{Spec}

	\newcommand{\Res}{\operatorname{Res}}
	
	\newcommand{\Eis}{\operatorname{Eis}}
	\newcommand{\Geo}{\operatorname{Geo}}
	
	\newcommand{\K}{\operatorname{K}}
	\newcommand{\sgn}{\operatorname{sgn}}
	
	\newcommand{\du}{\operatorname{Dual}}
	
	\newcommand{\Ad}{\operatorname{Ad}}

	\newcommand{\Reg}{\operatorname{Reg}}

	\newcommand{\fin}{\operatorname{fin}}
	\newcommand{\diag}{\operatorname{diag}}

	\newcommand{\Vol}{\operatorname{Vol}}

	\newcommand{\sm}{\operatorname{Small}}
	\newcommand{\bi}{\operatorname{Big}}

	\newcommand{\dist}{\operatorname{dist}}

	\newcommand{\Ind}{\operatorname{Ind}}
	\newcommand{\ER}{\operatorname{ER}}

	\newcommand{\RNum}[1]{\uppercase\expandafter{\romannumeral #1\relax}}

\begin{document}
\theoremstyle{plain}
	\newtheorem{thm}{Theorem}[section]
	
	\newtheorem{cor}[thm]{Corollary}
	\newtheorem{thmy}{Theorem}
	\renewcommand{\thethmy}{\Alph{thmy}}
	\newenvironment{thmx}{\stepcounter{thm}\begin{thmy}}{\end{thmy}}
	\newtheorem{cory}{Corollary}
	\renewcommand{\thecory}{\Alph{cory}}
	\newenvironment{corx}{\stepcounter{thm}\begin{cory}}{\end{cory}}
	\newtheorem{hy}[thm]{Hypothesis}
	\newtheorem*{thma}{Theorem A}
	\newtheorem*{corb}{Corollary B}
	\newtheorem*{thmc}{Theorem C}
	\newtheorem{lemma}[thm]{Lemma}  
	\newtheorem{prop}[thm]{Proposition}
	\newtheorem{conj}[thm]{Conjecture}
	\newtheorem{fact}[thm]{Fact}
	\newtheorem{claim}[thm]{Claim}
	
	\theoremstyle{definition}
	\newtheorem{defn}[thm]{Definition}
	\newtheorem{example}[thm]{Example}
	\theoremstyle{remark}
	
	\newtheorem{remark}[thm]{Remark}	
	\numberwithin{equation}{section}
	
\title[]{Relative Trace Formula and Twisted $L$-functions: the Burgess Bound}%
\author{Liyang Yang}
	
\begin{abstract}
Let $F$ be a number field, $\pi$ either a unitary cuspidal automorphic representation of $\mathrm{GL}(2)/F$ or a unitary Eisenstein series, and $\chi$ a unitary Hecke character of analytic conductor $C(\chi).$ We develop a regularized relative trace formula to prove a refined hybrid subconvex bound for $L(1/2,\pi\times\chi).$  
In particular, we obtain the Burgess subconvex bound 
\begin{align*}
L(1/2,\pi\times\chi)\ll_{\pi,F,\varepsilon}C(\chi)^{\frac{1}{2}-\frac{1}{8}+\varepsilon},
\end{align*}  
where the implied constant depends on $\pi,$ $F$ and $\varepsilon.$ 
\end{abstract}
	
\date{\today}%
\maketitle
\tableofcontents
	
\section{Introduction}
\subsection{The Burgess Bound for $\mathrm{GL}(2)\times\mathrm{GL}(1)$ over Number Fields}\label{sec1.1}

Let $F$ be a number field. Let $\pi$ be a pure isobaric representation of $\mathrm{GL}(2)/F,$ i.e., $\pi$ is either a unitary cuspidal automorphic representation, or an Eisenstein series induced from unitary Hecke characters. In this paper we consider the \textit{subconvexity problem for character twists over a number field} (\textbf{ScP($\pi\otimes\chi$})): finding a constant $\delta\in (0,1/2],$ such that 
\begin{equation}\label{scp}
L(1/2,\pi\times\chi)\ll_{\pi,F,\varepsilon} C(\chi)^{\frac{1}{2}-\delta+\varepsilon}
\end{equation}
holds for all unitary Hecke characters $\chi$ over $F,$ where the implied constant relies on $\pi,$ $F$ and $\varepsilon.$ Here $C(\chi)$ is the analytic conductor of $\chi$ (cf. \textsection\ref{2.1.1}). Note that \eqref{scp} involves the bound of $L(s,\pi\times\chi)$ on the critical line $\Re(s)=1/2$ as $L(1/2+it,\pi\times\chi)=L(1/2,\pi\times\chi|\cdot|^{it}).$ Moreover, if $\pi=\textbf{1}\boxplus\textbf{1}$ is the Eisenstein series induced from the trivial character, then $L(1/2,\pi\times\chi)=L(1/2,\chi)^2,$ and thus \eqref{scp} becomes the classical subconvexity problem for Hecke $L$-functions.

The above \textbf{ScP($\pi\otimes\chi$}) is pioneering study of the automorphic subconvexity problem in a systematic manner  and has been extensively investigated over the years, e.g., cf. \cite{Bur63}, \cite{HB78}, \cite{HB80}, \cite{CI00}, \cite{DFI02}, \cite{Mil16}, \cite{You17},   \cite{Wu19}, \cite{PY22} and \cite{PY20} in the case that $\pi$ is an Eisenstein series, and cf. \cite{DFI93}, \cite{By96}, \cite{CPSS01}, \cite{BHM07}, \cite{BH08}, \cite{Ven10}, \cite{MV10}, \cite{BH10}, \cite{BH14}, \cite{Wu14}, \cite{Mag14}, \cite{Mun18a}, \cite{Nel19}, \cite{AHLS20}, and \cite{FS22} in the case that $\pi$ is cuspidal. The subconvex bound of type \eqref{scp} is a versatile tool with numerous applications in arithmetic geometry, as highlighted in Remark \ref{rem1.4} below, which provides several illustrative examples. One notable instance involves the work of \cite{CPSS01}, where the authors were the first to examine the case of $F$ being a number field other than $\mathbb{Q}$ in connection with Hilbert's eleventh problem. This groundbreaking investigation paved the way for further developments in the analytic theory of $L$-functions, leading to the emergence of the \textbf{ScP($\pi\otimes\chi$}) problem over number fields as a significant area of study.

The Lindel\"{o}f hypothesis predicts that  \eqref{scp} holds with $\delta=1/2$. When $F=\mathbb{Q}$, the tightest bound for \eqref{scp} is $\delta=1/8$ (e.g., cf. \cite{BH08}). In the special case that $\pi\otimes\chi$ has a trivial central character, Conrey and Iwaniec \cite{CI00} showed that $\delta = 1/6$ is admissible. In the more general situation where $F\neq\mathbb{Q}$, the best known result is $\delta=(1-2\vartheta)/8$ (cf. \cite{Wu14}, \cite{Wu19}), where $0\leq \vartheta<1/2$ is any exponent towards the Ramanujan-Petersson conjecture (\textbf{RPC}) for unitary cuspidal representations of $\mathrm{GL}(2)/F$. Currently the strongest known value is  $\vartheta=7/64$ (cf. \cite{KS03}, \cite{BB11}). Hence, the most favorable unconditional bound in the general case is $\delta=1/8-7/256$. Nevertheless, under the \textbf{RPC}, i.e., $\vartheta=0$, one gets the bound $\delta=1/8$. This is  conventionally referred to as the \textit{Burgess bound}, which represents natural barriers that may require new ideas to overcome.

The first achievement of this paper  confirms \eqref{scp} with $\delta=1/8$ for any pure isobaric representation $\pi$ and unitary Hecke character $\chi$ over $F,$ obtaining the Burgess bound without assuming the Ramanujan-Petersson conjecture.  
\begin{thmx}\label{A}
Let notation be as before. Then 
\begin{equation}\label{a.}
L(1/2,\pi\times\chi)\ll_{\pi,F,\varepsilon}C(\chi)^{\frac{1}{2}-\frac{1}{8}+\varepsilon},
\end{equation}
where the implied constant depends on $\pi,$ $F$ and $\varepsilon.$ 
\end{thmx}

\begin{cor}
Let notation be as before. Then 
\begin{equation}\label{equ1.1}
L(1/2,\chi)\ll_{F,\varepsilon}C(\chi)^{\frac{1}{4}-\frac{1}{16}+\varepsilon}.
\end{equation}
 Let $\pi$ be a  unitary cuspidal representation of $\mathrm{GL}(2)/F.$ Then 
\begin{equation}\label{equ1.2}
L(1/2,\pi\times\chi)\ll_{\pi,F,\varepsilon}C(\chi)^{\frac{1}{2}-\frac{1}{8}+\varepsilon}.
\end{equation}
\end{cor}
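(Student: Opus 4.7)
The plan is to derive both estimates as immediate specializations of Theorem A by choosing the pure isobaric representation $\pi$ appropriately; no further analytic input is required, and the entire content of the corollary is the extraction from Theorem A.

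For the bound \eqref{equ1.1}, I would take $\pi = \mathbf{1}\boxplus\mathbf{1}$, the isobaric sum of two copies of the trivial Hecke character on $F^\times\backslash\mathbb{A}_F^\times$. As already remarked following \eqref{scp}, for this choice the Rankin--Selberg $L$-function factors as $L(s,\pi\times\chi)=L(s,\chi)^2$, which is the specialization $\eta_1=\eta_2=\mathbf{1}$ of the general identity $L(s,(\eta_1\boxplus\eta_2)\times\chi)=L(s,\eta_1\chi)L(s,\eta_2\chi)$. Applying Theorem A at $s=1/2$ and taking absolute values gives
\[
|L(1/2,\chi)|^{2} \ll_{F,\varepsilon} C(\chi)^{\frac{1}{2}-\frac{1}{8}+\varepsilon},
\]
and extracting a positive square root (after renaming $\varepsilon$ to absorb the factor $1/2$) produces \eqref{equ1.1}. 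The dependence of the implied constant reduces to $F$ and $\varepsilon$ alone, since the representation $\mathbf{1}\boxplus\mathbf{1}$ is determined by $F$.

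For \eqref{equ1.2}, no additional argument is needed: a unitary cuspidal automorphic representation of $\mathrm{GL}(2)/F$ is by definition a pure isobaric representation of $\mathrm{GL}(2)/F$, so Theorem A applies verbatim with the same implied constant. Consequently I anticipate no substantive obstacle in the corollary itself; the hard work has been absorbed entirely into the proof of Theorem A, and what remains here is the routine unpacking of two special cases.
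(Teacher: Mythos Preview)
Your proposal is correct and matches the paper's approach exactly: the corollary is stated without proof as an immediate consequence of Theorem~\ref{A}, and the specialization $\pi=\mathbf{1}\boxplus\mathbf{1}$ for \eqref{equ1.1} is precisely the one flagged in \textsection\ref{sec1.1}. Nothing further is needed.
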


\begin{remark}
The estimate \eqref{equ1.1} improves \cite{Wu19}, and \eqref{equ1.2} improves \cite{By96}, \cite{BHM07}, \cite{BH08}, \cite{BH10}, \cite{BH14}, \cite{Wu14}, and \cite{Mag14}. 
\end{remark}

\begin{remark}\label{rem1.4}
Via Waldspurger type formulas, Theorem \ref{A} assumes a crucial role in 
\begin{itemize}
	\item bounding Fourier coefficients of general half-integral weight Hilbert modular forms;
	\item computing the asymptotic of representation numbers of quadratic forms with error terms, cf. \cite{CPSS01}, \cite{Cog03}, and \cite{BH10};
	\item equidistribution of CM-points and certain subvarieties on the Hilbert modular surface, cf. e.g., \cite{Duk88}, \cite[Theorem 3.2]{Zha05}, \cite[Theorem 1.2]{Coh05}, and \cite[\textsection 1.1]{Ven10}. 
\end{itemize}
\end{remark}
\begin{remark}
Upon closer inspection, it is observable from the proof that the implied constants in \eqref{a.}, \eqref{equ1.1} and \eqref{equ1.2} have polynomial dependence in the analytic conductor  of $\pi,$ and in the absolute discriminant of $F.$
\end{remark}

\subsection{Explicit Hybrid Bounds with Applications}\label{sec1.2}

Theorem \ref{A} follows readily from the following refined hybrid subconvex bound for twisted $L$-functions. 
\begin{thmx}\label{B}
Let $F$ be a number field with ring of adeles $\mathbb{A}_F$. Let $\pi$ be a pure isobaric representation of $\mathrm{GL}(2)/F$ with central character $\omega_{\pi}.$ Let $\chi$ be a Hecke character of $\mathbb{A}_F^{\times}/F^{\times}$. Suppose that $\pi_{v}\otimes\chi_{v}$ has uniform parameter growth of size $(T_v;c_v,C_v),$ for all $v\mid\infty,$ cf. \textsection\ref{2.1.2}. Then 
\begin{align*}
L&(1/2,\pi\times\chi)\ll C_{\fin}(\pi)^{\frac{1}{4}+\varepsilon}C_{\fin}(\chi)^{\frac{3}{8}+\varepsilon}C_{\infty}(\pi\otimes\chi)^{\frac{3}{16}+\varepsilon}\gcd(C_{\fin}(\omega_{\pi}),C_{\fin}(\chi))^{\frac{1}{8}}\\
&+C_{\fin}(\pi)^{\frac{1}{2}+\varepsilon}C_{\fin}(\chi)^{\frac{1}{4}+\varepsilon}C_{\infty}(\pi\otimes\chi)^{\frac{1}{8}+\varepsilon}C_{\infty}(\pi)^{\frac{1}{4}+\varepsilon}\gcd(C_{\fin}(\omega_{\pi}),C_{\fin}(\chi))^{\frac{1}{4}},
\end{align*}
where the implied constant depends on $\varepsilon,$ $F,$ $c_v,$ and $C_v,$ $v\mid\infty$. Here for a representation $\sigma=\otimes_v\sigma_v$ of $\mathrm{GL}(1)$ or $\mathrm{GL}(2)/F,$ $C_v(\sigma)$ is the associated local conductor (cf. \textsection\ref{2.1.1}), and $C_{\infty}(\sigma)=\prod_{v\mid\infty}C_v(\sigma),$ and   $C_{\fin}(\sigma)=\prod_{v<\infty}C_v(\sigma).$  
\end{thmx}

\begin{remark}
Theorem \ref{B} refines and generalizes the bounds in several prior works, including \cite{BHM07}, \cite{BH08}, and \cite{BH14}. Notably, when $F=\mathbb{Q}$, the exponents of the finite parts $C_{\fin}(\pi),$ $C_{\fin}(\chi),$ and $\gcd(C_{\fin}(\omega_{\pi})),C_{\fin}(\chi))$ are identical to those in \cite{BH08} and \cite{BH14}. However, the exponents of $C_{\infty}(\pi\otimes\chi)$ and $C_{\infty}(\pi)$ have been improved, partially due to the use of Nelson's test function at the archimedean places.
\end{remark}

\begin{remark}
As in \cite[\textsection 3.4.2]{MV06}, \cite{BHM07}, and \cite[\textsection 5]{MV10}, Theorem \ref{B} can also be used as an input for the subconvexity problem on $\mathrm{GL}(2)\times\mathrm{GL}(2)$.	
\end{remark}

\subsection{Discussion of the Proofs} 
Different methods have been explored in the literature to approach the Burgess subconvex bound (for general $\chi$):
\begin{itemize}
	\item Bykovski\u{\i}'s formula in \cite{By96}, which is evolved from the Kuznetsov formula, gives rise to the Burgess bound in the $C_{\fin}(\chi)$-aspect when $F=\mathbb{Q}$ (cf. \cite{By96}, \cite{BH08}, and \cite{BH14}). Nevertheless, it remains unclear how to generalize this to $C(\chi)$-aspect even over $\mathbb{Q}.$ 
	\item Recently, various delta methods have been developed to prove the Burgess bound when $F=\mathbb{Q}$ and $\chi$ is a Dirichlet character, e.g., cf. \cite{Mun18a}, \cite{AHLS20}, and \cite{FS22}. 
	\item When $F\neq \mathbb{Q}$ is a number field, the Burgess-like bound $\delta=(1-2\vartheta)/8$ can be achieved by three  different methods:
	\begin{itemize}
    \item the large sieve, and bounding shift convolution sums by Jutila's circle method, cf. \cite{BHM07};
	\item the spectral decomposition of shift convolution sums in Hecke eigenvalues, cf. \cite{BH10} and \cite{Mag14};
	\item the method of Michel and and Venkatesh \cite{MV10}, cf. \cite{Wu14} and \cite{Wu19}.
	\end{itemize}
\end{itemize}

Varied from the techniques listed above, our strategy to prove  Theorem \ref{B} depends essentially on a new relative trace formula (\textbf{RTF}) as a  vital structural input (cf. Theorem \ref{thmD'} in \textsection\ref{sec2}). When $F=\mathbb{Q}$, the amplified relative trace formula Theorem \ref{thmD'} simplifies under certain classical test functions to the Bykovski\u{\i} formula in \cite{By96}, providing a new proof of the latter.


In principle the \textbf{RTF} is based on the
equality of geometric and spectral expansions of the integral
\begin{equation}\label{1.4}
\int_{A(F)\backslash A(\mathbb{A}_F)}\int_{A(F)\backslash A(\mathbb{A}_F)}\K(x,y)\chi(x)\overline{\chi}(y)d^{\times}xd^{\times}y,
\end{equation}
where $\K$ is an automorphic kernel function and $A=\diag(\mathrm{GL}(1),1).$ Despite the fact that \eqref{1.4} does not converge in general, we regularize it to obtain the \textbf{RTF} of the form (cf. \eqref{2.24} in \textsection\ref{sec2})
\begin{align*}
J_{\Spec}^{\Reg,\heartsuit}(f,\textbf{s},\chi)=J_{\Geo}^{\Reg,\heartsuit}(f,\textbf{s},\chi),\ \ \textbf{s}\in\mathbb{C}^2,
\end{align*}
where $f$ is a suitable test function constructed in \textsection\ref{3.2}. To illustrate the basic idea, let us consider the level aspect. Suppose, for the sake of argument, that $\pi$ and $\chi$ have disjoint ramifications. Let $\mathbf{s}=\mathbf{s}_0$ be a suitable point near the origin $(0,0).$ 

The test function $f$ allows us to select a family $\mathcal{A}$ of automorphic representations of level $C_{\fin}(\pi)C_{\fin}(\chi)$, which includes $\pi$. Hence, on the spectral side, we have 
\begin{equation}\label{1.6}
J_{\Spec}^{\Reg,\heartsuit}(f,\textbf{s}_0,\chi)\gg  (C_{\fin}(\pi)C_{\fin}(\chi))^{-\varepsilon} |L(1/2,\pi\times\chi)|^2.
\end{equation}

Regarding the geometric side, the contribution from irregular orbital integrals can be estimated as $\ll |\mathcal{A}|^{1+o(1)}\asymp C_{\fin}(\pi)^{1+\varepsilon}C_{\fin}(\chi)^{1+\varepsilon}.$ It is worth noting that we only require a family containing $\pi$, while the size of $\mathcal{A}$ may be significantly larger than $C_{\fin}(\pi)$. However, our approach of constructing local test functions at nonarchimedean ramified places produces favorable behaviors of regular orbital integrals. Along with sharp estimates of certain trace functions as in \cite{BH08}, we can majorize the regular orbital integrals by $O(C_{\fin}(\pi)^{\varepsilon}C_{\fin}(\chi)^{\frac{1}{2}+\varepsilon}).$ Therefore, 
\begin{equation}\label{1.7}
J_{\Geo}^{\Reg,\heartsuit}(f,\textbf{s}_0,\chi)\ll C_{\fin}(\pi)^{1+\varepsilon}C_{\fin}(\chi)^{1+\varepsilon}+C_{\fin}(\pi)^{\varepsilon}C_{\fin}(\chi)^{\frac{1}{2}+\varepsilon}.
\end{equation}

By combining \eqref{1.6} and \eqref{1.7}, we obtain the inequality 
\begin{align*}
L(1/2,\pi\times\chi)\ll_{\pi,\chi_{\infty}} C_{\fin}(\chi)^{\frac{1}{2}+\varepsilon}+C_{\fin}(\chi)^{\frac{1}{4}+\varepsilon}.
\end{align*}
Here, $C_{\fin}(\chi)^{\frac{1}{2}+\varepsilon}$ is the convex bound and the other term is much smaller, which leaves room for obtaining subconvexity by some analytic techniques.

To make the above illustration rigorous and complete, we employ a variant of the \textbf{RTF} equipped with arithmetic amplification in the spirit of Duke-Friedlander-Iwaniec (cf. \cite{DFI02} and references). The amplified \textbf{RTF} (cf. Theorem \ref{thmD'} in \textsection\ref{3.7.3}) has the form
\begin{equation}\label{1.5}
\mathcal{J}_{\Spec}^{\heartsuit}(\boldsymbol{\alpha},\chi)=\mathcal{J}_{\Geo}^{\heartsuit}(\boldsymbol{\alpha},\chi),
\end{equation}

Through out \textsection\ref{sec3}--\textsection\ref{sec7}, 
we shall prove that the spectral side 
\begin{align*}
\mathcal{J}_{\Spec}^{\heartsuit}(\boldsymbol{\alpha},\boldsymbol{\ell})\gg C_{\infty}(\pi\otimes\chi)^{-\frac{1}{4}-\varepsilon}(C_{\fin}(\pi)C_{\fin}(\chi))^{-\varepsilon} |L(1/2,\pi\times\chi)|^2\cdot L^{2-\varepsilon}
\end{align*} 
if $L\gg_{F,\varepsilon} C_{\fin}(\chi)^{\varepsilon}C_{\infty}(\pi)^{1/2+\varepsilon}C_{\fin}(\pi)^{1+\varepsilon};$ and the geometric side 
\begin{align*}
\mathcal{J}_{\Geo}^{\heartsuit}(\boldsymbol{\alpha},\chi)\ll &[C_{\fin}(\pi),C_{\fin}(\omega_{\pi}))C_{\fin}(\chi)]^{1+\varepsilon}C_{\infty}(\pi\otimes\chi)^{\frac{1}{4}+\varepsilon}L^{1+\varepsilon}\\
&+C_{\infty}(\pi\otimes\chi)^{\varepsilon}C_{\fin}(\pi)^{\varepsilon}C_{\fin}(\chi)^{\frac{1}{2}+\varepsilon}\cdot \gcd(C_{\fin}(\omega_{\pi})),C_{\fin}(\chi))^{\frac{1}{2}}L^{3+\varepsilon}.
\end{align*}	

Here $L$ is the parameter from the amplification, and the factors involving $C_{\infty}(\pi\otimes\chi)$ on both sides follow from applying Nelson's microlocal lifted vectors at the archimedean places. Then Theorem \ref{B} arises from \eqref{1.5} by optimizing $L$ in the above inequalities. 

\begin{remark}
When $F=\mathbb{Q},$ choosing the archimedean test function $f_{\infty}$ via a unipotent transform as in \cite{Sar85}, the \textbf{RTF} approach will give a new proof of the hybrid bounds in \cite{BH08} and \cite{BH14}.  

\end{remark}


\subsection{Outline of the Paper}
\subsubsection{The Relative Trace Formula with  Amplification}
In \textsection\ref{sec2.1}--\textsection\ref{3.2} we set up the intrinsic local and global data, choice of test functions, and parameters for the amplification. In \textsection\ref{2.3}--\textsection\ref{sec3.7} 
we prove the regularized amplified relative trace formula in Theorem \ref{thmD'}.

\subsubsection{The Spectral Side}
In \textsection\ref{sec3} we describe the amplified spectral side and obtain its meromorphic continuation. Together with the local estimates developed in \textsection\ref{sec3.5}--\textsection\ref{sec3.6}, we prove a lower bound of the spectral side (cf. Theorem \ref{thm6}) in terms of central $L$-values and sum of Hecke eigenvalues.

\subsubsection{The Geometric Side}
In \textsection\ref{sec4}--\textsection\ref{sec6} we handle the geometric side $J_{\Geo}^{\Reg,\heartsuit}(f,\mathbf{s}),$ which will be separated into 3 parts as follows.
\begin{enumerate}
	\item the small cell orbital integral $J^{\Reg}_{\Geo,\sm}(f,\mathbf{s})$, which, as one of the main terms, is handled by Proposition \ref{prop12} in \textsection\ref{4.1.4} by combining local estimates from  \textsection\ref{4.1.1}--\textsection\ref{4.1.3}. 
	\item the dual orbital integral $J_{\Geo,\du}^{\bi}(f,\textbf{s})$ is bounded by Proposition \ref{prop17} and Lemmas \ref{lem14.}, \ref{lem5.4} and \ref{lem5.5} in \textsection\ref{sec5}. This integral is `dual' to the small cell  orbital integral $J^{\Reg}_{\Geo,\sm}(f,\mathbf{s})$ via the Poisson summation, and contributes the other main term.
	\item the regular orbital integrals, which comprises the majority of the  geometric side $\mathcal{J}_{\Geo}^{\heartsuit}(\boldsymbol{\alpha},\chi)$. We describe their behaviors by Theorem \ref{thmD} in \textsection\ref{sec6}. The saving in the $C_{\fin}(\chi)$-aspect  follows from a square-root cancellation of trace functions of certain Kummer sheaves and Artin-Schreier sheaves (cf. \textsection\ref{sec6.2}), and the saving in the $C_{\infty}(\pi\otimes\chi)$-aspect is a consequence of properties of microlocal lifted vectors (cf. \textsection\ref{sec6.4}).  
\end{enumerate}
\subsubsection{Proof of Main Results}
With the aforementioned preparations, we are able to prove the main results in \textsection\ref{sec7}. 

In \textsection\ref{sec7.1}--\textsection\ref{sec7.3} we specify the amplification data and put estimates from the spectral and geometric side all together, obtaining Theorem \ref{thmE}, which yields Theorem \ref{B}.

\subsection{Notation}\label{notation}
\subsubsection{Number Fields and Measures}\label{1.1.1}
Let $F$ be a number field with ring of integers $\mathcal{O}_F.$ Let $[F:\mathbb{Q}]$ be the degree. Let $N_F$ be the absolute norm. Let $\mathfrak{O}_F$ be the different of $F.$ Let $\mathbb{A}_F$ be the adele group of $F.$ Let $\Sigma_F$ be the set of places of $F.$ Denote by $\Sigma_{F,\fin}$ (resp. $\Sigma_{F,\infty}$) the set of nonarchimedean (resp. archimedean) places. For $v\in \Sigma_F,$ we denote by $F_v$ the corresponding local field and $\mathcal{O}_v$ its ring of integers. Denote by $\widehat{\mathcal{O}_F}=\prod_{v<\infty}\mathcal{O}_{v}.$ For a nonarchimedean place $v,$ let  $\mathfrak{p}_v$ be the maximal prime ideal in $\mathcal{O}_v.$ Given an integral ideal $\mathcal{I},$ we say $v\mid \mathcal{I}$ if $\mathcal{I}\subseteq \mathfrak{p}_v.$ Fix a uniformizer $\varpi_{v}\in\mathfrak{p}_v$ of the discrete valuation ring $\mathcal{O}_v.$ Denote by $e_v(\cdot)$ the evaluation relative to $\varpi_v$ normalized as $e_v(\varpi_v)=1.$ Denote by $\mathbb{F}_v=\mathcal{O}_v/\mathfrak{p}_v$ the residue field with cardinality $\#\mathbb{F}_v=q_v.$ We use $v\mid\infty$ to indicate an archimedean place $v$ and write $v<\infty$ if $v$ is nonarchimedean. Let $|\cdot|_v$ be the norm in $F_v.$ Put $|\cdot|_{\infty}=\prod_{v\mid\infty}|\cdot|_v$ and $|\cdot|_{\fin}=\prod_{v<\infty}|\cdot|_v.$ Let $|\cdot|_{\mathbb{A}_F}=|\cdot|_{\infty}\otimes|\cdot|_{\fin}$. We will simply write $|\cdot|$ for $|\cdot|_{\mathbb{A}_F}$ in calculation over $\mathbb{A}_F^{\times}$ or its quotient by $F^{\times}$.   

Denote by $\Tr_F$ the trace map, extended to $\mathbb{A}_F\rightarrow \mathbb{A}_{\mathbb{Q}}.$ Let $\psi_{\mathbb{Q}}$ be the additive character on $\mathbb{Q}\backslash \mathbb{A}_{\mathbb{Q}}$ such that $\psi_{\mathbb{Q}}(t_{\infty})=\exp(2\pi it_{\infty}),$ for $t_{\infty}\in \mathbb{R}\hookrightarrow\mathbb{A}_{\mathbb{Q}}.$ Let $\psi_F=\psi_{\mathbb{Q}}\circ \Tr_F.$ Then $\psi_F(t)=\prod_{v\in\Sigma_F}\psi_v(t_v)$ for $t=(t_v)_v\in\mathbb{A}_F.$ For $v\in \Sigma_F,$ let $dt_v$ be the additive Haar measure on $F_v,$ self-dual relative to $\psi_v.$ Then $dt=\prod_{v\in\Sigma_F}dt_v$ is the standard Tamagawa measure on $\mathbb{A}_F$. Let $d^{\times}t_v=\zeta_{F_v}(1)dt_v/|t_v|_v,$ where $\zeta_{F_v}(\cdot)$ is the local Dedekind zeta factor. In particular, $\Vol(\mathcal{O}_v^{\times},d^{\times}t_v)=\Vol(\mathcal{O}_v,dt_v)=N_{F_v}(\mathfrak{D}_{F_v})^{-1/2}$ for all finite place $v.$ Moreover, $\Vol(F\backslash\mathbb{A}_F; dt_v)=1$ and $\Vol(F\backslash\mathbb{A}_F^{(1)},d^{\times}t)=\underset{s=1}{\Res}\ \zeta_F(s),$ where $\mathbb{A}_F^{(1)}$ is the subgroup of ideles $\mathbb{A}_F^{\times}$ with norm $1,$ and $\zeta_F(s)=\prod_{v<\infty}\zeta_{F_v}(s)$ is the finite Dedekind zeta function. Denote by $\widehat{F^{\times}\backslash\mathbb{A}_F^{(1)}}$  the Pontryagin dual of $F^{\times}\backslash\mathbb{A}_F^{(1)}.$

\subsubsection{Reductive Groups}
For an algebraic group $H$ over $F$, we will denote by $[H]:=H(F)\backslash H(\mathbb{A}_F).$ We equip measures on $H(\mathbb{A}_F)$ as follows: for each unipotent group $U$ of $H,$ we equip $U(\mathbb{A}_F)$ with the Haar measure such that, $U(F)$ being equipped with the counting measure and the measure of $[U]$ is $1.$ We equip the maximal compact subgroup $K$ of $H(\mathbb{A}_F)$ with the Haar measure such that $K$ has total mass $1.$ When $H$ is split, we also equip the maximal split torus of $H$ with Tamagawa measure induced from that of $\mathbb{A}_F^{\times}.$

In this paper we set  $A=\diag(\mathrm{GL}(1),1),$ and $G=\mathrm{GL}(2).$ Let $B$ be the group of upper triangular matrices in $G$.  
Let $\overline{G}=Z\backslash G$ and $B_0=Z\backslash B,$ where $Z$ is the center of $G.$ Let $T_B$ be the diagonal subgroup of $B$. Then $A\simeq Z\backslash T_B.$ Let $N$ be the unipotent radical of $B$. Let $K=\otimes_vK_v$ be a maximal compact subgroup of $G(\mathbb{A}_F),$ where $K_v=\mathrm{U}_2(\mathbb{C})$ is $v$ is complex, $K_v=\mathrm{O}_2(\mathbb{R})$ if $v$ is real, and $K_v=G(\mathcal{O}_v)$ if $v<\infty.$ For $v\in \Sigma_{F,\fin},$ $m\in\mathbb{Z}_{\geq 0},$ define 
\begin{equation}\label{2.1}
K_v[m]:=\Big\{\begin{pmatrix}
a&b\\
c&d
\end{pmatrix}\in G(\mathcal{O}_v):\ c\in \varpi_v^{m}\mathcal{O}_v\Big\}.
\end{equation}

\subsubsection{Automorphic Data}
Let $\textbf{s}=(s_1, s_2)\in\mathbb{C}^2.$ 
Let $\omega\in \widehat{F^{\times}\backslash\mathbb{A}_F^{(1)}}.$ Denote by $\mathcal{A}_0\left([G],\omega\right)$ the set of cuspidal representations on $G(\mathbb{A}_F)$ with central character $\omega.$ 

For $\eta_1, \eta_2\in \widehat{F^{\times}\backslash\mathbb{A}_F^{(1)}},$ let $\Ind(\eta_1\otimes\eta_2)$ be the unitary parabolic induction from $B(\mathbb{A}_F)$ to $G(\mathbb{A}_F)$ associated with $\eta_1\otimes\eta_2,$ and let   $\eta_1\boxplus\eta_2$ be Langlands sum. Define 
\begin{equation}\label{R}
\mathcal{R}:=\big\{\mathbf{s}=(s_1,s_2)\in\mathbb{C}^2:\ \Re(s_1),\ \Re(s_2)>-1/2\big\}.
\end{equation}

\subsubsection{Other Conventions}\label{sec1.5.4}
For a function $h$ on $G(\mathbb{A}_F),$ we define $h^*$ by assigning $h^*(g)=\overline{h({g}^{-1})},$ $g\in G(\mathbb{A}_F).$ Let $F_1(s), F_2(s)$ be two meromorphic functions. Write $F_1(s)\sim F_2(s)$ if there exists an \textit{entire} function $E(s)$ such that $F_1(s)=E(s)F_2(s).$ Denote by $\alpha\asymp \beta$ for $\alpha, \beta \in\mathbb{R}$ if there are absolute constants $c$ and $C$ such that $c\beta\leq \alpha\leq C\beta.$

Throughout, we follow the $\varepsilon$-convention: that is, $\varepsilon$ will always be positive number which can be taken as small as we like, but may differ from one occurrence to another.

\textbf{Acknowledgements}
I am deeply grateful to Peter Sarnak for his helpful discussions. I would also like to express my gratitude to Valentin Blomer, Yongxiao Lin, Paul Nelson, and Dinakar Ramakrishnan for their precise comments and valuable suggestions.

\section{Test Functions and the Amplified Relative Trace Formula}\label{sec2}
\textit{The notations presented in this section will be used extensively throughout the remainder of this paper.}
\subsection{Intrinsic Data}\label{sec2.1}
Let $F$ be a number field. Let $\chi=\otimes_v\chi_v$ be a primitive unitary Hecke character of $F^{\times}\backslash\mathbb{A}_F^{\times}$. Let $\pi=\otimes_{v}\pi_v$ be a pure isobaric representation of $G(\mathbb{A}_F)$ with central character $\omega_{\pi}=\omega=\otimes_v\omega_v.$ Throughout this paper we assume that $|L(1/2,\pi\times\chi)|\geq 2.$
\subsubsection{Analytic Conductors}\label{2.1.1}
For $F_v\simeq \mathbb{R},$ $\chi_v=\sgn^{m_v'}|\cdot|^{i\kappa_v},$ $m_v'\in\{0,1\},$ we define 
\begin{align*}
C_v(\chi)=1+\Big|\frac{m_v'+i\kappa_v}{2}\Big|;
\end{align*}
if $F_v\simeq \mathbb{C},$ and $\chi_v(a)=(a|a|^{-1})^{m_v'}|a|^{2i\kappa_v},$ $a\in F_v^{\times},$ we define
\begin{align*}
C_v(\chi):=(1+|i\kappa_v+|m_v'|/2|)^2;
\end{align*}
and for $v<\infty,$ let $\varpi_v^{r_{\chi_v}}$ be the arithmetic conductor of $\chi_v,$ i.e., $r_{\chi_v}$ is the smallest nonnegative integer such that $\chi_v$ is trivial over $1+\varpi_v^{r_{\chi_v}}\mathcal{O}_v^{\times}$ but not over $1+\varpi_v^{r_{\chi_v}-1}\mathcal{O}_v^{\times}.$ The integer $r_{\chi_v}$ is called the exponent of $\chi_v.$ Let $C_v(\chi)=q_v^{r_{\chi_v}}.$ Denote by $C_{\infty}(\chi):=\otimes_{v\mid\infty}C_v(\chi)$ and $C_{\fin}(\chi):=\otimes_{v<\infty}C_v(\chi).$ Let $C(\chi):=C_{\infty}(\chi)C_{\fin}(\chi)$ be the \textit{analytic} conductor of $\chi.$ 

Let $C(\pi):=\otimes_vC(\pi_v)$ be the analytic conductor of $\pi,$ where each local conductor $C(\pi_v)$ is defined as follows.
\begin{itemize}
	\item For $v<\infty,$ denote by  $r_{\pi_v}\geq 0$ the exponent of $\pi_v,$ which is the unique integer such that $\pi_v$ has a vector which is $K_{v}[r_{\pi_v}]$-invariant (cf. \eqref{2.1}) but is not $K_{v}[r_{\pi_v}-1]$-invariant. Let $C_{v}(\pi):=q_v^{r_{\pi_v}}$ be the local conductor of $\pi_v.$ 
	\item For $v\mid\infty,$ the local $L$-function of $\pi_v$ is a product of shifted Gamma factors of the shape $L_v(s,\pi_v)=\Gamma_v(s+\beta_{1,v})\Gamma_v(s+\beta_{2,v}).$ Here $\Gamma_v$ is the Gamma function over $F_v$, and $\beta_{1,v}$ and $\beta_{2,v}$ are complex numbers. Let
	\begin{align*}
		C(\pi_v):=[(1+|\beta_{1,v}|)(1+|\beta_{2,v}|)]^{[F_v:\mathbb{R}]}
	\end{align*}  

\end{itemize}  
 Let $C_{\fin}(\pi)=\prod_{v<\infty}C(\pi_v)$ be the arithmetic conductor of $\pi$ and let $C_{\infty}(\pi)=\prod_{v\mid\infty}C(\pi_v)$ be the archimedean conductor of $\pi.$ We define $C_{\infty}(\pi\otimes\chi)$ and $C(\pi\otimes\chi)$ in a  similar manner.  


\subsubsection{Uniform Parameter Growth}\label{2.1.2}
Let $\Pi$ be an automorphic representation of $\mathrm{GL}(2)/F.$ For $v\mid\infty,$ let $L_v(s,\Pi_v)=\Gamma_v(s+\gamma_{1,v})\Gamma_v(s+\gamma_{2,v})$ be the associated $L$-factor of $\Pi_{v}.$ We say that $\Pi_v$ has \textit{uniform parameter growth of size $(T_v;c_v,C_v)$} if for $1\leq j\leq 2,$ $c_vT_v\leq |\gamma_{j,v}|\leq C_vT_v.$ 
\begin{hy}\label{hy}
Throughout this paper we assume that $\Pi_v=\pi_v\otimes\chi_v$ has \textit{uniform parameter growth of size $(T_v;c_v,C_v)$} for some constants $c_v$ and $C_v,$ and parameters $T_v,$ at all archimedean places $v\mid\infty.$
\end{hy}

In particular, the Hypothesis \ref{hy} holds if $\pi_{\infty}$ is fixed, or $\chi_{\infty}=1$ and $\omega_{\infty}=1.$  

\subsubsection{Ramification Parameters}\label{2.1.5} 
Denote by $\mathfrak{Q}=\prod_{v<\infty}\mathfrak{p}_v^{r_{\chi_v}}.$ For simplicity we write $Q=C_{\fin}(\chi),$ $M=C_{\fin}(\pi),$ and $M'=C(\omega_{\fin}).$ Then $M'\mid M.$ Suppose that $Q>1.$ Let $[M, M'Q]$ be the least common multiple of $M$ and $M'Q.$ 

For $v\in\Sigma_{F,\fin},$ let $e_v(\cdot)$ be the normalized evaluation of $F_v$ such that $e_v(\varpi_v)=1.$ Following the notation in \textsection\ref{2.1.1}, let  $r_{\chi_v}$ (resp. $r_{\omega_v}$) be the exponent of $\chi_v$ (resp. $\omega_v$). Let $r_{\pi_v}$ be the exponent of $\pi_v.$ We set $m_v:=r_{\chi_v},$  and 
$$
n_v:=\max\{r_{\pi_v},r_{\chi_v}+r_{\omega_v}\}\geq m_v.
$$ 
Let $K_v[n_v]$ and $K_v[m_v]$ be defined by \eqref{2.1}. 

\subsubsection{Deformation Parameters}\label{2.1.5.} 
Throughout this paper we shall the deformation parameter by 
\begin{equation}\label{eq2.1}
s_0=\begin{cases}
d'/\sqrt{2},\ \ &\text{if $\pi=\eta\boxplus\eta$ with $\eta^2=\omega_{\pi}$},\\
d',\ \ &\text{otherwise},
\end{cases}
\end{equation}
where $d'\in [2^{-1}\exp(-3\sqrt{\log C(\pi\times\chi)}),\exp(-3\sqrt{\log C(\pi\times\chi)})]$ is determined by $\pi$ and $\chi$ such that for all $s$ with $|s-1/2|=d',$ 
\begin{align*}
|L(1/2,\pi\times\chi)|\ll \exp(\log^{3/4}C(\pi\times\chi))\cdot |L(s,\pi\times\chi)|.
\end{align*}
Here the implied constant depends only on $F.$ Such a $d'$ will be constructed in Lemma \ref{lem3.6} in \textsection\ref{stab}. 

The utilization of the parameter $s_0$ is necessary only when $\pi=\eta\boxplus\eta$ is an Eisenstein series induced from  some Hecke character $\eta$ with $\eta^2=\omega_{\pi},$ in which case $L(1/2,\pi\times\chi)=L(1/2,\eta\chi)^2.$ See \textsection\ref{stab} for details. 

\subsubsection{Hecke Eigenvalues}\label{2.1.4}
For $v\in \Sigma_{F,\fin}$ where $\pi_v$ is unramified, let $L_v(s,\pi)=(1-\alpha_{1,v}q_v^{-s})^{-1}(1-\alpha_{2,v}q_v^{-s})^{-1}$ be the local $L$-factor of $\pi_v,$ where $\alpha_{1,v}$ and $\alpha_{2,v}$ are the Satake parameters. We denote by $\mu_{\pi_v}=\alpha_{1,v}+\alpha_{2,v}$ the the trace of the normalized Hecke operator. For a square-free integral ideal $\mathfrak{m},$ we denote by $\mu_{\pi}(\mathfrak{m}):=\prod_{v\mid\mathfrak{m}}\mu_{\pi_v}$ the $\mathfrak{m}$-th Hecke trace.  

\subsubsection{Amplification Parameters}\label{sec2.1.5}
Let $L\gg 1$ be such that $\log L\asymp \log Q.$ Let $\mathcal{L}$ be a subset of the set $\{\mathfrak{m}\in \mathcal{O}_{F}:\  L<N_F(\mathfrak{m})\leq 2L,\ (\mathfrak{m},\mathfrak{D}_FMQ)=1,\ \text{$\mathfrak{m}$ is square-free}\}.$ Let $\boldsymbol{\alpha}=(\alpha_{\mathfrak{m}})_{\mathfrak{m}\in\mathcal{L}}$ with $\alpha_{\mathfrak{m}}\in\mathbb{C}.$ 

\subsubsection{Other Notations}
For a function $h$ on $G(\mathbb{A})$ or $G(\mathbb{Q}_v),$ $v\in\Sigma_F,$ define $h^*(g)=\overline{h(g^{-1})}$ and 
\begin{equation}\label{3..}
(h*h^*)(g)=\int h(gg'^{-1})h^*(g')dg'=\int h(gg')\overline{h(g')}dg',
\end{equation}
where $g'$ ranges over the domain of $h.$ 

\subsection{Construction of Test Functions}\label{3.2}

We shall construct a test function $f$ on $G(\mathbb{A}_F)$ in the following way.
\begin{itemize}
	\item At the archimedean places (cf. \textsection \ref{3.2.1}), $f_{\infty}$ is built upon Nelson's work \cite{Nel20} (cf. \textsection1.5.2 and \textsection14 on p.80) in the manner described in \cite{NV21}, \textsection1.10. See also \cite{Nel21}, Part 2. 
	\medskip
	\item At ramified finite  places, we construct the local test function via a double average over unipotent translations weighted by characters (cf. \textsection\ref{2.2.2}). This step is essential in handling the level aspect and also simplifies the regularized relative trace formula (cf. \textsection\ref{2.4.1}).  
	\medskip
	\item At certain unramified places we introduce the arithmetic amplification in the spirit of Duke-Friedlander-Iwaniec \cite{DFI02} (cf. \textsection\ref{3.2.4}). 
\end{itemize}

\subsubsection{Construction of ${f}_{\infty}$}\label{3.2.1}
Let $v\mid\infty.$ Recall that $\pi_v\otimes\chi_v$ has \textit{uniform parameter growth of size $(T_v;c_v,C_v)$} (cf. Hypothesis \ref{hy} in \textsection\ref{2.1.2}). Then $\pi_v\otimes\chi_v|\cdot|_v^{s_0}$ has \textit{uniform parameter growth of size $(T_v;c_v/2,2C_v)$}, where $s_0$ is the parameter defined by \eqref{eq2.1} in \textsection\ref{2.1.5.}.

Let $\mathfrak{g}$ (resp. $\mathfrak{g}'$) be the Lie algebras of $G(F_v)$ (resp. $A(F_v)$),  with imaginal dual $\hat{\mathfrak{g}}$ (resp. $\hat{\mathfrak{g}}'$). One can choose an element $\tau\in\hat{\mathfrak{g}}$ with the restriction $\tau'=\tau\mid_{A}\in \hat{\mathfrak{g}}',$ so that $\tau$ (resp. $\tau'$) lies in the coadjoint orbit $\mathcal{O}_{\pi_v\otimes\chi_v|\cdot|_v^{s_0}}$ of $\pi_v\otimes\chi_v|\cdot|_v^{s_0}$ (resp. $\mathcal{O}_{\textbf{1}_v}$ of $\textbf{1}_v$ the trivial representation of $A(F_v)$). Let $\tilde{f}^{\wedge}_{v}:$ $\hat{\mathfrak{g}}\rightarrow\mathbb{C}$ be a smooth bump function concentrated on $\{\tau+(\xi,\xi^{\bot}):\ \xi\ll T_v^{\frac{1}{2}+\varepsilon},\ \xi^{\bot}\ll T_v^{\varepsilon}\},$ where $\xi$ lies in the tangent space of $\mathcal{O}_{\pi_v\otimes\chi_v}$ at $\tau,$ and $\xi^{\bot}$ has the normal direction. Let $\tilde{f}_{v}\in C_c^{\infty}(G(F_v))$ be the pushforward of the Fourier transform of $\tilde{f}_{v}^{\wedge}$ truncated at the essentially support, namely,  
\begin{equation}\label{245}
	\supp \tilde{f}_{v}\subseteq \big\{g\in G(F_v):\ g=I_{n+1}+O(T_v^{-\varepsilon}),\ \Ad^*(g)\tau=\tau+O(T_v^{-\frac{1}{2}+\varepsilon})\big\},
\end{equation}
where the implied constants rely on $c_v$ and $C_v.$ 

Then, in the sense of \cite{NV21}, \textsection {2.5}, the operator $\pi_{v}(\tilde{f}_{v})$ is approximately a rank one projector with range spanned by a unit vector microlocalized at $\tau.$ Let
\begin{equation}\label{3.}
f_{v}(g):=f_v(g,\chi_v)*f_v(g,\chi_v)^*,
\end{equation} 
where $v\mid\infty,$ $g\in G(F_v),$ and 
\begin{equation}\label{6.}
f_v(g,\chi_v):=\chi_v(\det g)|\det g|_v^{s_0}\int_{Z(F_v)}\tilde{f}_{v}(zg)\omega_{v}(z)d^{\times}z.
\end{equation}

By the support of $\tilde{f},$ we have $f_v(g)\neq 0$ unless $|\det g|_v>0.$ So $\sgn(|\det g|_v)=1.$ As a consequence, the function $f_v$ is a smooth function on $G(F_v).$

\subsubsection{Application of Transversality}\label{2.2.2.}
By definition, one has (cf. (14.13) in \cite{Nel21}) 
\begin{equation}\label{250}
	\|\tilde{f}_{v}\|_{\infty}\ll_{\varepsilon} T_v^{1+\varepsilon}, \ \ v\mid\infty,
\end{equation}
where $\|\cdot \|_{\infty}$ is the sup-norm. For $g\in\overline{G}(F_v),$ we may write 
\begin{align*}
g=\begin{pmatrix}
a&b\\
c&d
\end{pmatrix}\in G(F_v),\ \ \ g^{-1}=\begin{pmatrix}
a'&b'\\ 
c'&d'
\end{pmatrix}\in G(F_v).
\end{align*}
Define 
\begin{equation}\label{dg}	
d_{v}(g):=\begin{cases}
\min\big\{1, |d^{-1}b|_v+ |d^{-1}c|_v+ |d'^{-1}b'|_v+|d'^{-1}c'|_v \big\},&\ \text{if $dd'\neq 0,$}\\
1,&\ \text{if $dd'=0$}.
\end{cases}
\end{equation}
\begin{prop}[Theorem 15.1 of \cite{Nel20}]\label{prop3.1}
Let notation be as above. Then there is a fixed neighborhood $\mathcal{Z}$ of the identity in $A(F_v)$ with the following property. Let $g$ be in a small neighborhood of $I_{n+1}$ in $\overline{G}(F_v).$ Let $r_v>0$ be small. Then 
\begin{equation}\label{2.7..}
\Vol\left(\big\{z\in\mathcal{Z}:\ \dist(gz\tau, A(F_v)\tau)\leq r_v\big\}\right)\ll \frac{r_v}{d_v(g)}. 
\end{equation} 
Here $\dist(\cdots)$ denotes the infimum over $g'\in A(F_v)$ of $\|gz\tau-g'\tau\|,$ where $\|\cdot\|$ is a fixed norm on $\hat{\mathfrak{g}}.$
\end{prop}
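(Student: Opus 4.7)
The plan is to establish an infinitesimal transversality bound: the derivative of the map $z\mapsto gz\tau$ projected onto $\hat{\mathfrak{g}}/T_{\tau}(A(F_v)\tau)$ should have norm $\gg d_v(g)$, so that preimages of $r_v$-tubes around $A(F_v)\tau$ have measure at most $r_v/d_v(g)$ by a standard co-area argument.

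First I would linearize. Writing $z=\exp(\eta)$ for $\eta$ in the Lie algebra $\mathfrak{a}$ of $A(F_v)$, a Taylor expansion gives
\begin{equation*}
gz\tau=\Ad^*(g)\tau+\Ad^*(g)\,\mathrm{ad}^*(\eta)\tau+O(|\eta|^2),
\end{equation*}
where $\Ad^*$ and $\mathrm{ad}^*$ denote the coadjoint actions. Since the tangent space to $A(F_v)\tau$ at $\tau$ equals $\mathrm{ad}^*(\mathfrak{a})\tau$, the claim reduces to a lower bound $\|L_g\|\gg d_v(g)$ on the linear map $L_g:\mathfrak{a}\to\hat{\mathfrak{g}}/\mathrm{ad}^*(\mathfrak{a})\tau$ sending $\eta\mapsto \Ad^*(g)\,\mathrm{ad}^*(\eta)\tau$. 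The co-area inequality on $\mathcal{Z}$ then yields the desired volume bound.

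Second, I would compute $L_g$ in coordinates. Writing $g=\begin{pmatrix}a&b\\c&d\end{pmatrix}$ and $g^{-1}=\begin{pmatrix}a'&b'\\c'&d'\end{pmatrix}$, choose a basis of $\hat{\mathfrak{g}}$ adapted to the root-space decomposition of $G$ relative to the diagonal torus. The matrix entries of $\Ad^*(g)\,\mathrm{ad}^*(\eta)\tau$ are then polynomial in the entries of $g$ and $g^{-1}$; after quotienting out $\mathrm{ad}^*(\mathfrak{a})\tau$, the surviving linearly independent components are, up to $\tau$-dependent coefficients that encode the natural scaling on $\hat{\mathfrak{g}}$, linear combinations of the four ratios $d^{-1}b$, $d^{-1}c$, $d'^{-1}b'$, $d'^{-1}c'$. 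The normalization by $d$ and $d'$ arises intrinsically because $A=\diag(\mathrm{GL}(1),1)$ has a fixed entry in the second slot, so conjugation by $g$ on the $A$-direction is measured relative to these pivots.

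Third, combining, $\|L_g\|$ is comparable to $|d^{-1}b|_v+|d^{-1}c|_v+|d'^{-1}b'|_v+|d'^{-1}c'|_v$, matching the definition \eqref{dg} of $d_v(g)$ up to the truncation at $1$; when the sum exceeds $1$ the conclusion is absorbed into the trivial bound $\Vol(\mathcal{Z})\ll 1$, and when $dd'=0$ the statement is vacuous by the same token. The main obstacle I anticipate is the explicit matching in the second step: one must verify that the four ratios appear with linearly independent coefficients along some choice of transverse test directions, ruling out accidental cancellations that would weaken the lower bound, and confirm that Nelson's normalization of $\tau\in\mathcal{O}_{\pi_v\otimes\chi_v|\cdot|_v^{s_0}}$ cleanly absorbs the $T_v$-scaling of the coadjoint orbit into the norm on $\hat{\mathfrak{g}}$. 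One also must restrict $\mathcal{Z}$ so that the $O(|\eta|^2)$ error does not dominate $L_g\eta$ on the relevant tubes. This careful bookkeeping is precisely what is executed in \cite[Theorem~15.1]{Nel20}.
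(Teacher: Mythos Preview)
The paper does not supply a proof of this proposition: it is quoted verbatim as Theorem~15.1 of \cite{Nel20} and used as a black box, so there is no argument in the paper to compare your proposal against. Your sketch---linearize the coadjoint action, bound the transverse component of $L_g$ from below by the four ratios defining $d_v(g)$, then apply a co-area estimate---is a plausible outline of how such a transversality statement is established, and you yourself correctly attribute the detailed execution to \cite{Nel20}.
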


Proposition \ref{prop3.1} (with $r_v=T_v^{-1/2+\varepsilon}$) will be used to detect the restriction $\Ad^*(g)\tau=\tau+O(T_v^{-\frac{1}{2}+\varepsilon})$ in the support of $\tilde{f}_v.$ By \eqref{245}, \eqref{250}, and \eqref{2.7..}, 
\begin{equation}\label{2.7}
|\tilde{f}_v(g)|\ll T^{1+\varepsilon}\cdot\textbf{1}_{|\Ad^*(g)\tau-\tau|\ll T_v^{-1/2+\varepsilon}}\cdot\textbf{1}_{|g-I_2|\ll T_v^{-\varepsilon}}\cdot \Big\{1,\frac{T_v^{-1/2+\varepsilon}}{d_v(g)}\Big\}.
\end{equation}

\subsubsection{Finite Places}\label{2.2.2}
For $v\in\Sigma_{F,\fin},$ we define a function on $G(F_v),$ supported on $Z(F_v)\backslash K_{v}[n_v],$ by  
\begin{equation}\label{5.}
f_{v}(z_vk_v;\omega_v)={\Vol(\overline{K_{v}[n_v]})^{-1}}\omega_v(z_v)^{-1}\omega_v(E_{2,2}(k_v))^{-1},
\end{equation}
where $\overline{K_v[n_v]}$ is the image of $K_v[n_v]$ in $\overline{G}(F_v),$ and $E_{2,2}(k_v)$ is the $(2,2)$-th entry of $k_v\in K_v[n_v].$ For $g_v\in G(F_v),$ define by  
\begin{align*}
f_v(g_v)=\frac{1}{|\tau(\chi_v)|^{2}}\sum_{\alpha\in (\mathcal{O}_v/\varpi_v^{m_v}\mathcal{O}_v)^{\times}}\sum_{\beta\in (\mathcal{O}_v/\varpi_v^{m_v}\mathcal{O}_v)^{\times}}\chi_v(\alpha)\overline{\chi}_v(\beta)f_{v}\left(g_{\alpha,\beta,v};\omega_v\right),
\end{align*}
where 
$$
\tau(\chi_v)=\sum_{\alpha\in (\mathcal{O}_v/\varpi_v^{m_v}\mathcal{O}_v)^{\times}}\psi_v(\alpha\varpi_v^{-m_v})\chi_v(\alpha)
$$ 
is the Gauss sum relative to the additive character $\psi_v$, and 
\begin{align*}
	g_{\alpha,\beta,v}:=\begin{pmatrix}
		1&\alpha \varpi_v^{-m_v}\\
		&1
	\end{pmatrix}g_v\begin{pmatrix}
		1&\beta \varpi_v^{-m_v}\\
		&1
	\end{pmatrix}. 
\end{align*}
 
Note that $m_v=0$ for almost all $v\in\Sigma_{F,\fin}.$ Hence, for all but finitely many $v\in\Sigma_{F,\fin},$ the test function $f_v(\cdot)=f_v(\cdot;\omega_v)$ (cf. \eqref{5.}) supports in $Z(F_v)\backslash K_{v}[n_v].$

\subsubsection{The Amplification}\label{3.2.4}
For $v<\infty$ and $r_{\pi_v}=r_{\chi_v}=0,$ i.e., both $\pi_v$ and $\chi_v$ are unramified, we let $\mathcal{T}_v=q_v^{-1/2}\textbf{1}_{K_v\diag(\varpi_v,1)K_v},$ the normalized Hecke operator of $\pi_v$ at $v.$ Let $\mathcal{T}_v^*=q_v^{-1/2}\textbf{1}_{K_v\diag(1,\varpi_v^{-1})K_v}$ be the adjoint operator. By Hecke relations, there are constants $c_{v,0}, c_{v,1}\ll 1$ such that 
\begin{align*}
\mathcal{T}_v*\mathcal{T}_v^*=c_{v,0}\textbf{1}_{K_v}+c_{v,1}q_v^{-1}\textbf{1}_{K_v\diag(\varpi_v,\varpi_v^{-1})K_v}.
\end{align*}

For $v_0, v_1, v_2\in \mathcal{L},$ $v_1\neq v_2,$ we define 
$$
{f}_{v_0,i_{v_0}}(g_{v_0})=c_{v_0,i_{v_0}}q_{v_0}^{-i_{v_0}}\int_{Z(F_{v_0})}\textbf{1}_{K_{v_0}\diag(\varpi_{v_0}^{i_{v_0}},\varpi_{v_0}^{-i_{v_0}})K_{v_0}}(z_{v_0}g_{v_0})\omega_{{v_0}}(z_{v_0})d^{\times}z_{v_0},
$$ 
where $i_{v_0}\in\{0,1\},$ $g_{v_0}\in G(F_{v_0});$ and for  $g_{v_j}\in G(F_{v_j}),$ $j=\{1,2\},$ we set 
\begin{align*}
f_{v_1}(g_{v_1})=&q_{v_1}^{-1/2}\int_{Z(F_{v_1})}\textbf{1}_{K_{v_1}\diag(\varpi_{v_1},1)K_{v_1}}(z_{v_1}g_{v_1})\omega_{v_1}(z_{v_1})d^{\times}z_{v_1},\\
f_{v_2}^*(g_{v_2})=&q_{v_1}^{-1/2}\int_{Z(F_{v_2})}\textbf{1}_{K_{v_2}\diag(1,\varpi_{v_2})K_{v_2}}(z_{v_2}g_{v_2})\omega_{v_2}(z_{v_2})d^{\times}z_{v_2}.
\end{align*}

\subsubsection{Construction of the Test Function}\label{3.2.5}
Let notation be as in \textsection\ref{3.2.1}-\textsection\ref{3.2.4}. 
Let $\mathbf{v}_0, \mathbf{v}_1$ and $\mathbf{v}_2$ be finite sets of non-archimedean places of $F.$ Let $\mathbf{v}:=\mathbf{v}_0\sqcup \mathbf{v}_1\sqcup \mathbf{v}_2.$ Suppose that $\mathbf{v}_0\cap \mathbf{v}_1=\mathbf{v}_0\cap \mathbf{v}_2=\mathbf{v}_1\cap \mathbf{v}_2=\emptyset,$ and neither $\pi$ nor $\chi$ is ramified at $v\in\mathbf{v}.$ Define 
\begin{equation}\label{2.8}
f(g;\mathbf{v},\mathbf{i}):=(\otimes_{v\in\Sigma_F-\mathbf{v}}f_v\otimes\otimes_{v_0\in\textbf{v}_0}{f}_{v_0,i_{v_0}}\otimes\otimes_{v_1\in\textbf{v}_1}{f}_{v_1}\otimes\otimes_{v_2\in \textbf{v}_2} f_{ v_2}^*)(g),
\end{equation}
where $\textbf{i}=\{(i_{v_0})_{v_0\in\textbf{v}_0}:\ i_{v_0}\in\{0,1\}\}.$
Here we allow that the sets $\mathbf{v}_j$ to be empty. 

Ultimately we shall take sets $\textbf{v}=\mathbf{v}_0\sqcup \mathbf{v}_1\sqcup \mathbf{v}_2$ as follows. Let $\mathfrak{m}_1, \mathfrak{m}_2\in\mathcal{L}.$ Let $\mathfrak{m}_j=\mathfrak{n}_j\mathfrak{l},$ $j\in\{1, 2\},$ where $\mathfrak{l}=\gcd(\mathfrak{m}_1,\mathfrak{m}_2).$ Let $\mathbf{v}_0=\{\text{$\mathfrak{p}_v$ prime}:\ \mathfrak{p}_v\mid \mathfrak{l}\},$ $\mathbf{v}_j=\{\text{$\mathfrak{p}_v$ prime}:\ \mathfrak{p}_v\mid \mathfrak{n}_j\},$ $j=1, 2.$ See \textsection\ref{3.7.3} for details.

\subsubsection{Some Auxiliary Notations}\label{3.6.2}
Let $f=f(g;\mathbf{v},\mathbf{i}).$ Denote by 
\begin{equation}\label{61}
\nu(f):=\prod_{v_0\in\textbf{v}_0}\varpi_{v_0}\prod_{v\in\textbf{v}_1\sqcup \textbf{v}_2}\varpi_{v},\ \ \mathfrak{N}_f=\prod_{v_0\in\textbf{v}_0}\mathfrak{p}_{v_0}^{2i_{v_0}}\prod_{v\in\textbf{v}_1\sqcup \textbf{v}_2}\mathfrak{p}_{v}.
\end{equation}
Write $\mathcal{N}_f:=\prod_{v_0\in\textbf{v}_0}q_{v_0}^{i_{v_0}}\prod_{v\in\textbf{v}_1\sqcup \textbf{v}_2}q_{v}^{1/2}$ for the square-root of the norm of $\mathfrak{N}_f.$

Note that neither $\pi$ nor $\chi$ is ramified at $v\in\mathbf{v},$ namely,  $r_{\chi_v}=r_{\pi_v}=0,$ $v\in\mathbf{v}.$  

\subsection{Fourier Expansion of the Kernel Function}\label{2.3}
Let $f=f(g;\mathbf{v},\mathbf{i})$ be defined in \textsection\ref{3.2.5}. Then $f$ defines an integral operator 
\begin{equation}\label{p}
	R(f)\phi(g)=\int_{\overline{G}(\mathbb{A}_F)}f(g')\phi(gg')dg'
\end{equation}
on the space $L^2\left([G],\omega\right)$ of functions on $[G]$ which transform under $Z(\mathbb{A}_F)$ by $\omega$ and are square integrable on $[\overline{G}].$ This operator is represented by the kernel function
\begin{equation}\label{11}
	\K(g_1,g_2)=\sum_{\gamma\in \overline{G}(F)}f(g_1^{-1}\gamma g_2),\ \ g_1, g_2\in G(\mathbb{A}_F).
\end{equation}

It is well known that $L^2\left([G],\omega\right)$ decomposes into the direct sum of the space $L_0^2\left([G],\omega\right)$ of cusp forms and spaces $L_{\Eis}^2\left([G],\omega\right)$ and $L_{\Res}^2\left([G],\omega\right)$ defined using Eisenstein series and residues of Eisenstein series respectively. 

The spectral decomposition of the $L^2\left([G],\omega\right)$  (cf. \cite{GJ79}) gives that
\begin{equation}\label{decom}
R=\bigoplus_{\text{$\sigma$ cuspidal}}\sigma\ \oplus \int_{i\mathbb{R}}\bigoplus_{\eta\in\widehat{F^{\times}\backslash \mathbb{A}_F^{(1)}}}\sigma_{\lambda,\eta}\frac{d\lambda}{4\pi i}\ \oplus \bigoplus_{\substack{\eta\in \widehat{F^{\times}\backslash\mathbb{A}_F^{(1)}}\\ \eta^2=\omega}}\eta\circ \det,
\end{equation}
where $\sigma_{\lambda,\eta}=\Ind(\eta|\cdot|^{\lambda},\eta^{-1}\omega|\cdot|^{-\lambda})$ is the parabolic induction.

Then the kernel function $\K(g_1,g_2)$ splits up as the {pre-trace formula}:
\begin{equation}\label{ker}
	\K_0(g_1,g_2)+\K_{\ER}(g_1,g_2)=\K(g_1,g_2)=\sum_{\gamma\in \overline{G}(F)}f(g_1^{-1}\gamma g_2),
\end{equation}
where $\K_{\ER}(g_1,g_2)$ is the contribution from the non-cuspidal spectrum. The subscript `$\ER$' refers to Eisenstein series and their  residues. Explicit expansions of $\K_0(g_1,g_2)$ and $\K_{\ER}(g_1,g_2)$ will be given in \textsection\ref{sec3.1}. 

Note that $\K(g_1,g_2)=\K_{\sm}(g_1,g_2)+\K_{\bi}(g_1, g_2),$ where 
	\begin{align*}
		\K_{\sm}(g_1,g_2)=\sum_{\gamma\in B_0(F)}f(g_1^{-1}\gamma g_2),\quad \K_{\bi}(g_1,g_2)=\sum_{\gamma\in B_0(F)wN(F)}f(g_1^{-1}\gamma g_2).
	\end{align*}
Let $\mathcal{K}(\cdot,\cdot)\in \{\K(\cdot,\cdot), \K_0(\cdot,\cdot), \K_{\ER}(\cdot,\cdot), \K_{\sm}(\cdot,\cdot),\K_{\bi}(\cdot,\cdot)\}.$ Define 
\begin{align*} 
\mathcal{F}_0\mathcal{F}_1\mathcal{K}(g_1,g_2):=&\int_{[N]}\mathcal{K}(g_1,u_2g_2)du_2,\ \ \mathcal{F}_1\mathcal{F}_0\mathcal{K}(g_1,g_2):=\int_{[N]}\mathcal{K}(u_1g_1,g_2)du_1,\\
\mathcal{F}_1\mathcal{F}_1\mathcal{K}(g_1,g_2):=&\int_{[N]}\int_{[N]}\mathcal{K}(u_1g_1,u_2g_2)du_2du_1,\\
\mathcal{F}_2\mathcal{F}_2\K(g_1,g_2):=&\sum_{\alpha\in A(F)}\sum_{\beta\in A(F)}\int_{[N]}\int_{[N]}\K(u_1\alpha g_1,u_2\beta g_2)\theta(u_1)\overline{\theta}(u_2)du_2du_1.
\end{align*}
Using Poisson summation twice the integral $\mathcal{F}_2\mathcal{F}_2\mathcal{K}(g_1,g_2)$ is equal to 
\begin{equation}\label{8}
\mathcal{K}(g_1,g_2)-\mathcal{F}_0\mathcal{F}_1\mathcal{K}(g_1,g_2)-\mathcal{F}_1\mathcal{F}_0\mathcal{K}(g_1,g_2)+\mathcal{F}_1\mathcal{F}_1\mathcal{K}(g_1,g_2).
\end{equation}

Since $f$ has compact support modulo the center, \eqref{8} converges absolutely. 

\begin{lemma}\label{lem3}
Let notation be as before. Then for $x, y\in A(\mathbb{A}_F),$ 
\begin{align*}
\mathcal{F}_0\mathcal{F}_1\K_{\bi}(x,y)=\mathcal{F}_1\mathcal{F}_0\K_{\bi}(x,y)=\mathcal{F}_1\mathcal{F}_1\K_{\bi}(x,y)\equiv 0.
\end{align*}
\end{lemma}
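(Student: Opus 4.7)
The plan is to reduce each of the three Fourier coefficients to an integral over a product $\prod_v \int_{N(F_v)}$, then exhibit, at any finite place $v$ where $\chi_v$ is ramified, local vanishing coming from the character-averaged structure of $f_v$.

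Every element of the big Bruhat cell $B_0(F) w N(F)$ admits a unique decomposition $\gamma = \alpha n_1 w n_2$ with $\alpha \in A(F)$ and $n_1, n_2 \in N(F)$, since $B_0 = AN$ and $A \cap N\bar{N}N = \{e\}$. Substituting this into $\K_{\bi}$ and applying the standard unfolding $\sum_{n \in N(F)} \int_{[N]}(\cdots) = \int_{N(\mathbb{A}_F)}(\cdots)$ (when necessary, first conjugating $u_1 \in [N]$ through $\alpha \in A(F)$, which preserves Haar since $\alpha$ has idelic norm $1$) yields
\begin{align*}
\mathcal{F}_0\mathcal{F}_1\K_{\bi}(x,y) &= \sum_{\alpha \in A(F)} \sum_{n_1 \in N(F)} \int_{N(\mathbb{A}_F)} f(x^{-1} \alpha n_1 w v\, y)\, dv, \\
\mathcal{F}_1\mathcal{F}_0\K_{\bi}(x,y) &= \sum_{\alpha \in A(F)} \sum_{n_2 \in N(F)} \int_{N(\mathbb{A}_F)} f(x^{-1} \alpha\, v w n_2\, y)\, dv, \\
\mathcal{F}_1\mathcal{F}_1\K_{\bi}(x,y) &= \sum_{\alpha \in A(F)} \int_{N(\mathbb{A}_F)^2} f(x^{-1} \alpha\, v_1 w v_2\, y)\, dv_1\, dv_2.
\end{align*}

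Each inner integral factors as $\prod_v \int_{N(F_v)} f_v(\cdots)\, dv_v$ (or $\int_{N(F_v)^2}$ in the last case). Because $Q = C_{\fin}(\chi) > 1$ (cf.\ \textsection\ref{2.1.5}), there is a finite place $v$ with $m_v = r_{\chi_v} \geq 1$, and at this place $f_v$ is the double $\chi_v$-average from \textsection\ref{2.2.2}. In the $\mathcal{F}_0\mathcal{F}_1$-case, substitute $v_v \mapsto v_v'$ determined by $v_v\, y_v\, n(\beta_v \varpi_v^{-m_v}) = y_v\, v_v'$: this is a left translation of $v_v$ by an element of $N(F_v)$ composed with conjugation by $y_v \in A(F_v)$, so the Jacobian depends only on $y_v$ and \emph{not} on $\beta_v$. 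After the substitution the integrand is independent of $\beta_v$, and the inner sum collapses to
\[ \sum_{\beta_v \in (\mathcal{O}_v/\varpi_v^{m_v})^{\times}} \overline{\chi}_v(\beta_v) = 0, \]
which vanishes because $\chi_v$ is a nontrivial character on $(\mathcal{O}_v/\varpi_v^{m_v})^{\times}$ by the primitivity of the conductor $\varpi_v^{m_v}$. In the $\mathcal{F}_1\mathcal{F}_0$-case one analogously absorbs $n(\alpha_v \varpi_v^{-m_v})$ on the left (through $x_v^{-1}\alpha \in A(F_v)$) into the free $N(F_v)$-integral, producing $\sum_{\alpha_v} \chi_v(\alpha_v) = 0$; in the $\mathcal{F}_1\mathcal{F}_1$-case either absorption suffices. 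The single vanishing local factor forces the product to vanish.

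The conceptual core is the vanishing character sum produced by the double-average structure of $f_v$ at a ramified place; once that is in hand, the only hazard is bookkeeping to verify that the unipotent translations $n(\bullet\, \varpi_v^{-m_v})$ can be cleanly absorbed into the relevant $N(F_v)$-integrals after $A(F_v)$-conjugation---a direct check for the diagonal torus of $\mathrm{GL}(2)$.
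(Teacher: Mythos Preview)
Your proof is correct and follows essentially the same approach as the paper: unfold the $[N]$-integral against the $N(F)$-sum coming from the Bruhat decomposition, then at a place $v\mid\mathfrak{Q}$ absorb the unipotent translate $n(\beta\varpi_v^{-m_v})$ into the free $N(F_v)$-integral (after conjugation through $y_v\in A(F_v)$), leaving the vanishing character sum $\sum_{\beta}\overline{\chi}_v(\beta)=0$. Your write-up is slightly more explicit in parametrising the big cell as $\alpha n_1 w n_2$, but the argument is the same.
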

\begin{proof}
By Bruhat decomposition, we have
\begin{align*}
\mathcal{F}_0\mathcal{F}_1\K_{\bi}(x,y)=\sum_{b\in B_0(F)}\int_{N(\mathbb{A})}f(x^{-1}bwu_2y)du_2,
\end{align*}
where $w=\begin{pmatrix}
	&1\\
	1&
\end{pmatrix}.$ Let $v\mid\mathfrak{Q}$ and $m_v=r_{\chi_v}\geq 1.$ By the definition of $f_v,$ the integral  
\begin{equation}\label{9.}
|\tau(\chi_v)|^2\Vol(K_{v}[n_v])\int_{N(F_v)}f_v(x_{v}^{-1}bwu_{v}y_{v})du_{v},
\end{equation}
where $n_v=\max\{r_{\pi_v},r_{\chi_v}+r_{\omega_v}\},$ is equal to 
\begin{equation}\label{10.}
\sum_{\alpha, \beta}\chi_v(\alpha)\overline{\chi}_v(\beta)\int f_v\left(\begin{pmatrix}
	1&\alpha q_v^{m_v}\\
	&1
\end{pmatrix}x_{v}^{-1}bwu_{v}y_{v}\begin{pmatrix}
	1&\beta q_v^{m_v}\\
	&1
\end{pmatrix};\omega_v\right)du_{v}, 
\end{equation}
where $\alpha, \beta\in (\mathcal{O}_v/\varpi_v^{m_v}\mathcal{O}_v)^{\times},$ $u_{v}$ ranges through $N(F_v),$ and the local test function $f_v(\cdot;\omega_v)$ is defined by \eqref{5.}. Since $y_{v}\in A(F_v),$ then after the change of variables $u_{v}\mapsto u_{v}y_{v} \begin{pmatrix}
	1&-\beta q_v^{m_v}\\
	&1
\end{pmatrix}y_{v}^{-1},$ \eqref{10.} becomes
\begin{align*}
\sum_{\beta}\overline{\chi}_v(\beta)\sum_{\alpha}\chi_v(\alpha)\int f_v\left(\begin{pmatrix}
	1&\alpha q_v^{m_v}\\
	&1
\end{pmatrix}x_{v}^{-1}bwu_{v}y_{v};\omega_v\right)du_{v}=0,
\end{align*}
as a consequence of orthogonality $\sum_{\beta}\overline{\chi}_v(\beta)=0.$

Hence \eqref{9.} vanishes, which implies that $\mathcal{F}_0\mathcal{F}_1\K_{\bi}(x,y)=0.$ Likewise, we have $\mathcal{F}_1\mathcal{F}_0\K_{\bi}(x,y)=\mathcal{F}_1\mathcal{F}_1\K_{\bi}(x,y)=0.$
\end{proof}

\subsection{The Amplified Relative Trace Formula}\label{sec3.7}
Let the local and global data be chosen as in \textsection\ref{sec2.1}--\textsection\ref{3.2}: $\mathcal{L},$ $\boldsymbol{\alpha},$ and $f=f(g;\mathbf{v},\mathbf{i}).$ 
\subsubsection{The Relative Trace Formula}\label{2.4.1}
Let $\Re(s_1)\gg 1$ and $\Re(s_2)\gg 1.$ Define
\begin{equation}\label{11}
J_{\Spec}^{\Reg}(f,\textbf{s},\chi):=J_{0}^{\Reg}(f,\textbf{s},\chi)+J_{\Eis}^{\Reg}(f,\textbf{s},\chi),
\end{equation}
the spectral side, where 
\begin{align*}
J_{0}^{\Reg}(f,\textbf{s},\chi):=&\int_{[A]}\int_{[A]}\K_0(x,y)|\det x|^{s_1}|\det y|^{s_2}\chi(x)\overline{\chi}(y)d^{\times}xd^{\times}y,\\
J_{\Eis}^{\Reg}(f,\textbf{s},\chi):=&\int_{[A]}\int_{[A]}\mathcal{F}_2\mathcal{F}_2\K_{\ER}(x,y)|\det x|^{s_1}|\det y|^{s_2}\chi(x)\overline{\chi}(y)d^{\times}xd^{\times}y.
\end{align*}

By Proposition 6.4 in \cite{Yan23a} (cf. \textsection 6.2), the integral $J_{\Spec}^{\Reg}(f,\textbf{s},\chi)$ converges absolutely in $\Re(s_1), \Re(s_2)\gg 1.$ 

Note that $\K_0(x,y)=\mathcal{F}_2\mathcal{F}_2\K_0(x,y).$ By \eqref{8}, Lemma \ref{lem3} and the decomposition $\K(x,y)=\K_{\sm}(x,y)+\K_{\bi}(x,y),$ the integral $J_{\Spec}^{\Reg}(f,\textbf{s},\chi)$ is equal to 
\begin{equation}\label{2.17}
J_{\Geo}^{\Reg}(f,\textbf{s},\chi):=J^{\Reg}_{\Geo,\sm}(f,\textbf{s},\chi)+J^{\Reg}_{\Geo,\bi}(f,\textbf{s},\chi),
\end{equation}
the geometric side, where $\Re(s_1)\gg 1,$ $\Re(s_2)\gg 1,$ and 
\begin{align*}
J^{\Reg}_{\Geo,\sm}(f,\textbf{s},\chi):=&\int_{[A]}\int_{[A]}\mathcal{F}_2\mathcal{F}_2\K_{\sm}(x,y)|\det x|^{s_1}|\det y|^{s_2}\chi(x)\overline{\chi}(y)d^{\times}xd^{\times}y
,\\
J^{\Reg}_{\Geo,\bi}(f,\textbf{s},\chi):=&\int_{[A]}\int_{[A]}\K_{\bi}(x,y)|\det x|^{s_1}|\det y|^{s_2}\chi(x)\overline{\chi}(y)d^{\times}xd^{\times}y.
\end{align*}

Note that $J_{\Geo}^{\Reg}(f,\textbf{s},\chi)$ is much simpler than its higher rank counterpart in \cite{Yan23a}. In particular, the contributions from $\mathcal{F}_0\mathcal{F}_1\K_{\bi}(\cdot,\cdot),$ $\mathcal{F}_1\mathcal{F}_0\K_{\bi}(\cdot,\cdot)$ and $\mathcal{F}_1\mathcal{F}_1\K_{\bi}(\cdot,\cdot)$ are annihilated by the construction of the local test functions at $v\mid\mathfrak{Q}$ (i.e., Lemma \ref{lem3}). As in \cite{RR05} (or \cite{Yan23a} \textsection 5) we have
\begin{equation}\label{15.}
J^{\Reg}_{\Geo,\bi}(f,\textbf{s},\chi)=J^{\Reg}_{\Geo,\du}(f,\textbf{s},\chi)+J^{\Reg,\RNum{2}}_{\Geo,\bi}(f,\textbf{s},\chi),	
\end{equation}
where $J_{\Geo,\du}^{\bi}(f,\textbf{s},\chi)$ is defined by 
\begin{equation}\label{16.}
\int_{\mathbb{A}_F^{\times}}\int_{\mathbb{A}_F^{\times}}f\left(\begin{pmatrix}
	1\\
	x&1
\end{pmatrix}\begin{pmatrix}
	y\\
	&1
\end{pmatrix}\right)|x|^{s_1+s_2}|y|^{s_2}\overline{\chi}(y)d^{\times}yd^{\times}x,
\end{equation}
and the regular orbital $J^{\Reg,\RNum{2}}_{\Geo,\bi}(f,\textbf{s},\chi)$ is defined by 
\begin{equation}\label{17...} 
\sum_{t\in F-\{0,1\}}\int_{\mathbb{A}_F^{\times}}\int_{\mathbb{A}_F^{\times}}f\left(\begin{pmatrix}
	y&x^{-1}t\\
	xy&1
\end{pmatrix}\right)|x|^{s_1+s_2}|y|^{s_2}\overline{\chi}(y)d^{\times}yd^{\times}x.
\end{equation}
Note that \eqref{16.} converges absolutely in $\Re(s_1+s_2)>1,$ and by Theorem 5.6 in \textsection 5.4 of \cite{Yan23a} the integral $J^{\Reg,\RNum{2}}_{\Geo,\bi}(f,\textbf{s},\chi)$ converges absolutely in $\mathbf{s}\in\mathbb{C}^2,$ and in particular, the sum over $t\in F-\{0,1\}$ is \textit{finite}, which is called \textit{stability} of the regular orbital integrals (cf. \cite{FW09}, \cite{MR12}).   

In \textsection\ref{sec3} we will show that the spectral side $J_{\Spec}^{\Reg}(f,\textbf{s},\chi)$ admits a holomorphic continuation $J_{\Spec}^{\Reg,\heartsuit}(f,\textbf{s},\chi)$ to $\mathbf{s}\in\mathbb{C}^2.$ In \textsection \ref{sec4}-\textsection\ref{sec6} we will derive a holomorphic continuation $J_{\Geo}^{\Reg,\heartsuit}(f,\textbf{s},\chi)$ of the geometric side $J_{\Geo}^{\Reg}(f,\textbf{s},\chi)$ to $\mathbf{s}\in\mathbb{C}^2.$ Therefore, we obtain the relative trace formula as an equality of two holomorphic functions:
\begin{thm}\label{thm2.4}
Let notation be as before. Then 
\begin{equation}\label{2.24}
J_{\Spec}^{\Reg,\heartsuit}(f,\textbf{s},\chi)=J_{\Geo}^{\Reg,\heartsuit}(f,\textbf{s},\chi).
\end{equation}
\end{thm}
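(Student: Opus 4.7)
The plan is to establish the identity in the half-plane of absolute convergence $\Re(s_1), \Re(s_2)\gg 1$, and then invoke the holomorphic continuations constructed in the subsequent sections together with the identity theorem on the connected domain $\mathbb{C}^2$.

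First I would derive the pointwise kernel identity underlying the trace formula. Starting from the decompositions $\K = \K_0+\K_{\ER} = \K_{\sm}+\K_{\bi}$, the cuspidal kernel $\K_0(g_1,\cdot)$ lies in $L_0^2([G],\omega)$ as a function of its second argument and therefore has vanishing constant term along $N$; the same holds in the first variable. Thus $\mathcal{F}_0\mathcal{F}_1\K_0 = \mathcal{F}_1\mathcal{F}_0\K_0 = \mathcal{F}_1\mathcal{F}_1\K_0 = 0$, and \eqref{8} gives $\mathcal{F}_2\mathcal{F}_2\K_0 = \K_0$. On the other hand, Lemma \ref{lem3} — the decisive input coming from the construction of $f_v$ at places $v\mid\mathfrak{Q}$, where the double character average together with the orthogonality relation $\sum_\beta \overline{\chi}_v(\beta)=0$ kills the relevant Bruhat unipotent integral — yields $\mathcal{F}_0\mathcal{F}_1\K_{\bi} = \mathcal{F}_1\mathcal{F}_0\K_{\bi} = \mathcal{F}_1\mathcal{F}_1\K_{\bi} = 0$, hence $\mathcal{F}_2\mathcal{F}_2\K_{\bi} = \K_{\bi}$. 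Applying $\mathcal{F}_2\mathcal{F}_2$ to each decomposition and equating the results gives the pointwise identity
\begin{equation*}
\K_0(g_1,g_2) + \mathcal{F}_2\mathcal{F}_2\K_{\ER}(g_1,g_2) = \mathcal{F}_2\mathcal{F}_2\K_{\sm}(g_1,g_2) + \K_{\bi}(g_1,g_2),
\end{equation*}
valid as an equality of absolutely convergent sums on $[G]\times[G]$, using that $f$ has compact support modulo the center.

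Next I would integrate this identity against $|\det x|^{s_1}|\det y|^{s_2}\chi(x)\overline{\chi}(y)$ over $[A]\times[A]$. For $\Re(s_1), \Re(s_2)$ sufficiently large, the left-hand side is $J_0^{\Reg}(f,\textbf{s},\chi)+J_{\Eis}^{\Reg}(f,\textbf{s},\chi) = J_{\Spec}^{\Reg}(f,\textbf{s},\chi)$, whose absolute convergence is Proposition 6.4 of \cite{Yan23a}. The right-hand side equals $J_{\Geo,\sm}^{\Reg}(f,\textbf{s},\chi)+J_{\Geo,\bi}^{\Reg}(f,\textbf{s},\chi)$, and the big-cell piece splits via \eqref{15.} into the dual orbital integral \eqref{16.}, absolutely convergent for $\Re(s_1+s_2)>1$ by direct estimation, and the regular orbital integral \eqref{17...}, where Theorem 5.6 of \cite{Yan23a} (the stability statement) reduces the sum over $t\in F-\{0,1\}$ to a \emph{finite} one, yielding absolute convergence for all $\textbf{s}\in\mathbb{C}^2$. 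Fubini then transfers the pointwise identity into the numerical equality $J_{\Spec}^{\Reg}(f,\textbf{s},\chi) = J_{\Geo}^{\Reg}(f,\textbf{s},\chi)$ on the common domain of absolute convergence.

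Finally, the holomorphic continuation $J_{\Spec}^{\Reg,\heartsuit}(f,\textbf{s},\chi)$ is furnished by the regularization of the Eisenstein contribution in \textsection\ref{sec3}, while the continuation $J_{\Geo}^{\Reg,\heartsuit}(f,\textbf{s},\chi)$ is assembled in \textsection\ref{sec4}--\textsection\ref{sec6} from the small-cell, dual, and regular orbital pieces. Since both continuations are holomorphic on the connected set $\mathbb{C}^2$ and agree on the non-empty open subset where both original integrals converge absolutely, the identity theorem for several complex variables forces their equality everywhere. The substantive work lies entirely in constructing these two holomorphic continuations — once their existence is established in the later sections, Theorem \ref{thm2.4} reduces to the elementary analytic continuation step outlined here; the delicate part is only to verify that the regularizations performed independently on each side produce continuations whose difference vanishes in the absolutely convergent regime, which is automatic because each continuation is obtained by subtracting entire correction terms from the original integral representations.
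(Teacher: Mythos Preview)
Your proposal is correct and follows essentially the same route as the paper: establish $J_{\Spec}^{\Reg}=J_{\Geo}^{\Reg}$ for $\Re(s_1),\Re(s_2)\gg 1$ via the kernel identity $\K_0=\mathcal{F}_2\mathcal{F}_2\K_0$ together with Lemma~\ref{lem3} and \eqref{8}, then invoke the holomorphic continuations built in \textsection\ref{sec3}--\textsection\ref{sec6} and the identity principle. One small caveat: the paper's Proposition~\ref{prop6} only supplies holomorphy of the spectral continuation on the strip $0<\Re(s_1),\Re(s_2)<1/4$ (not all of $\mathbb{C}^2$ as the surrounding text suggests), so the equality \eqref{2.24} is strictly justified on that strip---which is all that is needed downstream since one only evaluates at $\mathbf{s}_0$.
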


\subsubsection{The Amplified Relative Trace Formula}\label{3.7.3}
We define the spectral and the geometric side of the amplified relative trace formula respectively by
\begin{align*}
\mathcal{J}_{\Spec}^{\heartsuit}(\boldsymbol{\alpha},\chi):=&\sum_{\mathfrak{m}_1,\mathfrak{m}_2\in\mathcal{L}}\alpha_{\mathfrak{m}_1}\overline{\alpha_{\mathfrak{m}_2}}\prod_{v_0\mid\gcd(\mathfrak{m}_1,\mathfrak{m}_2)}\sum_{i_{v_0}=0}^{1}c_{v_0,i_{v_0}}J_{\Spec}^{\Reg,\heartsuit}(f(\cdot;\mathbf{v},\mathbf{i}),\mathbf{s}_0),\\
\mathcal{J}_{\Geo}^{\heartsuit}(\boldsymbol{\alpha},\chi):=&\sum_{\mathfrak{m}_1,\mathfrak{m}_2\in\mathcal{L}}\alpha_{\mathfrak{m}_1}\overline{\alpha_{\mathfrak{m}_2}}\prod_{v_0\mid\gcd(\mathfrak{m}_1,\mathfrak{m}_2)}\sum_{i_{v_0}=0}^{1}c_{v_0,i_{v_0}}J_{\Geo}^{\Reg,\heartsuit}(f(\cdot;\mathbf{v},\mathbf{i}),\mathbf{s}_0),
\end{align*}
where $\textbf{s}_0=(s_0,s_0),$ with $s_0$ is defined in \textsection\ref{sec2.1}, and $\textbf{v}=\mathbf{v}_0\sqcup \mathbf{v}_1\sqcup \mathbf{v}_2$ and $\textbf{i}=\{(i_{v_0})_{v_0\in\textbf{v}_0}:\ i_{v_0}\in\{0,1\}\}$ are defined from $\mathfrak{m}_1,$ $\mathfrak{m}_2$ as follows: $\mathbf{v}_0=\{\text{$\mathfrak{p}$ prime}:\ \mathfrak{p}\mid \gcd(\mathfrak{m}_1,\mathfrak{m}_2)\},$ $\mathbf{v}_j=\{\text{$\mathfrak{p}$ prime}:\ \mathfrak{p}\mid \mathfrak{m}_j/\gcd(\mathfrak{m}_1,\mathfrak{m}_2)\},$ $j=1, 2.$ (cf. \textsection\ref{3.2.5} for the notations.)

\begin{thmx}\label{thmD'}
Let notation be as before. Then 
\begin{equation}\label{3.7}
	\mathcal{J}_{\Spec}^{\heartsuit}(\boldsymbol{\alpha},\chi)=\mathcal{J}_{\Geo}^{\heartsuit}(\boldsymbol{\alpha},\chi).
\end{equation}
\end{thmx}

\section{The Spectral Side: Meromorphic Continuation and Bounds}\label{sec3}
In this section we shall show that $J_{\Spec}^{\Reg,\heartsuit}(f,\textbf{s},\chi)$ admits a holomorphic continuation to $\mathbf{s}\in\mathbb{C}^2.$ Hence, the spectral side $\mathcal{J}_{\Spec}^{\heartsuit}(\boldsymbol{\alpha},\chi)$ of the amplified relative trace formula is well defined. Moreover, we derive a lower bound of it as follows. 
\begin{restatable}[]{thm}{thmf}\label{thm6}
Let notation be as in \textsection\ref{sec2.1}--\textsection\ref{sec3.7}. Suppose that $|L(1/2,\pi\times\chi)|\geq 1.$
\begin{enumerate}
	\item[(a).] Suppose $\pi$ is cuspidal or $\pi=\sigma_{\lambda',\eta'}$ with $\lambda'\neq 0$ or $\omega\neq \eta'^2.$ then  
\begin{equation}\label{16..}
\mathcal{J}_{\Spec}^{\heartsuit}(\boldsymbol{\alpha},\chi)\gg_{\varepsilon}C_{\infty}(\pi\otimes\chi)^{-\frac{1}{4}-\varepsilon}(MQ)^{-\varepsilon} |L(1/2,\pi\times\chi)|^2\cdot \Big|\sum_{\mathfrak{m}\in\mathcal{L}}\alpha_{\mathfrak{m}}\mu_{\pi}(\mathfrak{m})\Big|^2,
\end{equation}
where $\mu_{\pi}(\mathfrak{m})$ is the $\mathfrak{m}$-th Hecke trace, cf. \textsection\ref{2.1.4}, and  the implied constant depends only on $F,$ $\varepsilon,$ and $c_v,$ $C_v$ at $v\mid\infty,$ cf. \textsection \ref{2.1.2}. 
\item[(b).] Suppose $\pi=\sigma_{\lambda',\eta'}$ with $\lambda'=0$ and $\omega=\eta'^2.$ Then 
\begin{equation}\label{17..}
\mathcal{J}_{\Spec}^{\heartsuit}(\boldsymbol{\alpha},\chi)\gg_{\varepsilon}C_{\infty}(\pi\otimes\chi)^{-\frac{1}{4}-\varepsilon}(MQ)^{-\varepsilon} |L(1/2,\pi\times\chi)|^2\cdot \Big|\sum_{\mathfrak{m}\in\mathcal{L}}\alpha_{\mathfrak{m}}\mu_{\pi^{\dagger}}(\mathfrak{m})\Big|^2,
\end{equation}
where the implied constant depends only on $F,$ $\varepsilon,$ and $c_v,$ $C_v$ at $v\mid\infty,$ cf. \textsection \ref{2.1.2}. Here $\pi^{\dagger}:=\sigma_{is_0,\eta'},$ with $s_0$ being defined by \eqref{eq2.1} in \textsection\ref{2.1.5.}. 
\end{enumerate}
\end{restatable}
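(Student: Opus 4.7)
\medskip

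\noindent\textbf{Proof proposal.} The plan is to expand $J_{\Spec}^{\Reg}(f,\mathbf{s}_0,\chi)$ spectrally, identify each summand as a product of Hecke periods against $\chi|\cdot|^{s_0}$, and then isolate the contribution of $\pi$ using positivity. First I would use the spectral decomposition \eqref{decom} of $R(f)$ together with the kernel decomposition $\K=\K_0+\K_{\ER}$ to write
\[
J_{\Spec}^{\Reg}(f,\mathbf{s}_0,\chi)
=\sum_{\sigma}\sum_{\varphi\in\mathcal{B}(\sigma)}\mathcal{P}_{s_0,\chi}(R(f)\varphi)\,\overline{\mathcal{P}_{s_0,\chi}(\varphi)},
\]
where $\mathcal{P}_{s,\chi}(\varphi):=\int_{[A]}\varphi(x)\chi(x)|\det x|^{s}d^{\times}x$ and the outer sum runs over the cuspidal, continuous, and residual spectrum. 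The meromorphic continuation to $\mathbf{s}\in\mathbb{C}^{2}$ follows by standard contour shifts once the continuous-spectrum integrals are Hecke-unfolded, as the residual contributions only produce finitely many explicit terms that are incorporated into the holomorphic object $J_{\Spec}^{\Reg,\heartsuit}$. Since $f=\bigotimes f_{v}$ is a product of self-convolutions and amplifier pieces, the diagonal form of $R(f)$ on each irreducible subspace makes each summand \emph{non-negative}, so one is free to drop everything except $\sigma=\pi$ (resp.\ $\sigma=\pi^{\dagger}$ in case (b)).

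Next I would compute the $\pi$-term locally. At places $v\in\mathbf{v}$ the test function is built out of the normalised Hecke operators $\mathcal{T}_{v}$, $\mathcal{T}_{v}^{*}$, and the convolution identity $\mathcal{T}_{v}*\mathcal{T}_{v}^{*}=c_{v,0}\mathbf{1}_{K_{v}}+c_{v,1}q_{v}^{-1}\mathbf{1}_{K_{v}\diag(\varpi_{v},\varpi_{v}^{-1})K_{v}}$ shows that $R(f_{v_{0},i_{v_{0}}})$, $R(f_{v_{1}})$, $R(f_{v_{2}}^{*})$ act on the spherical vector of $\pi_{v}$ by $|\mu_{\pi_{v_{0}}}|^{2}$, $\mu_{\pi_{v_{1}}}$, $\overline{\mu_{\pi_{v_{2}}}}$ respectively. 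Summing against $\alpha_{\mathfrak{m}_{1}}\overline{\alpha_{\mathfrak{m}_{2}}}$ over $\mathfrak{m}_{1},\mathfrak{m}_{2}\in\mathcal{L}$ assembles these scalars into $|\sum_{\mathfrak{m}}\alpha_{\mathfrak{m}}\mu_{\pi}(\mathfrak{m})|^{2}$. At ramified finite places $v\mid\mathfrak{Q}$ the $\chi_{v}$-twisted double unipotent average in \textsection\ref{2.2.2} is designed so that $R(f_{v})$ is (up to a factor $1+O(q_v^{-1})$) a projection onto the $\chi_{v}$-twisted newvector of $\pi_{v}\otimes\chi_{v}$; unfolding $\mathcal{P}_{s_{0},\chi}$ against this newvector recovers $L_{v}(1/2+s_{0},\pi_{v}\times\chi_{v})$ times a Gauss-sum factor of size $q_{v}^{O(\varepsilon)}$, giving the global factor $|L(1/2+s_{0},\pi\times\chi)|^{2}$ up to $(MQ)^{\varepsilon}$.

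At $v\mid\infty$ I would exploit the construction $f_{v}=f_{v}(\cdot,\chi_{v})*f_{v}(\cdot,\chi_{v})^{*}$: the operator $\pi_{v}(f_{v}(\cdot,\chi_{v}))$ is approximately the rank-one projector onto the microlocalised vector at $\tau$ (of $L^{2}$-mass $\asymp 1$ by (14.13) of \cite{Nel21}), so $\mathcal{P}_{s_{0},\chi}$ of this vector behaves like the archimedean local period, which by the uniform parameter growth hypothesis and the transversality estimate in Proposition \ref{prop3.1} has squared absolute value $\asymp C_{v}(\pi\otimes\chi)^{-1/2+\varepsilon}|L_{v}(1/2,\pi_{v}\times\chi_{v})|^{2}$. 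Multiplying over $v\mid\infty$ produces the claimed prefactor $C_{\infty}(\pi\otimes\chi)^{-1/4-\varepsilon}$. Finally, the choice of $s_{0}$ in \eqref{eq2.1} and Lemma \ref{lem3.6} transfer $|L(1/2+s_{0},\pi\times\chi)|^{2}$ back to $|L(1/2,\pi\times\chi)|^{2}$ with a loss of at most $(MQ)^{\varepsilon}$, proving (a).

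For case (b), the Eisenstein representation $\pi=\eta'\boxplus\eta'$ with $\eta'^{2}=\omega$ is \emph{not} in the discrete spectrum, and $L(s,\pi\times\chi)=L(s,\eta'\chi)^{2}$ creates a pole of the continuous-spectrum integrand at $\lambda=0$. Shifting the $\lambda$-contour in the Eisenstein sum of \eqref{decom} past the small circle $|\lambda|=d'$ and picking up the small arc near $\lambda=is_{0}$ from the Mellin-Barnes contour, the same positivity argument applied to the nearby point $\pi^{\dagger}=\sigma_{is_{0},\eta'}$ gives \eqref{17..} with $\mu_{\pi^{\dagger}}(\mathfrak{m})$ in place of $\mu_{\pi}(\mathfrak{m})$. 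The main obstacle in the whole scheme is the archimedean step: showing that the microlocal test function really produces the sharp exponent $-1/4$ rather than a weaker power of $C_{\infty}(\pi\otimes\chi)$ requires the full strength of the transversality bound \eqref{2.7..} in Proposition \ref{prop3.1}, and matching it against the stationary-phase evaluation of the archimedean Hecke period. A secondary, more bookkeeping difficulty is to ensure that the Eisenstein and residual terms, which need not be individually non-negative before contour shifting, still admit a positivity-preserving regularisation so that dropping non-$\pi$ terms is legitimate.
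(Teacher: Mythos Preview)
Your overall strategy---spectral expansion, positivity, Hecke eigenvalue extraction, local period computation, and deformation via Lemma~\ref{lem3.6}---matches the paper's. Three points deserve correction or sharpening.

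\textbf{Archimedean lower bound.} You invoke the transversality estimate (Proposition~\ref{prop3.1}) to produce the factor $C_v(\pi\otimes\chi)^{-1/4-\varepsilon}$. That proposition is a volume bound on the torus used on the \emph{geometric} side (\textsection\ref{sec5}--\ref{sec6}); it does not by itself give a lower bound for the archimedean period. The paper instead appeals directly to the Kirillov model: for the microlocal test vector there exists $W_v$ with $\Psi_v(s_0,\pi_v(f_v(\cdot,\chi_v))W_v,\chi_v)\gg T_v^{-1/2-\varepsilon}$ (see \eqref{37.} and the reference to \cite{Nel21}). This is the correct input on the spectral side.

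\textbf{Case (b).} No contour shift is performed. Since the continuation $J_{\Eis}^{\Reg,\heartsuit}$ is, by Proposition~\ref{prop6}, still given by the manifestly non-negative integral \eqref{24'''} over $\lambda\in i\mathbb{R}$, one simply drops to a neighbourhood of $\lambda=is_0$ (where $L(1+2\lambda,\eta'^2\omega^{-1})^{-1}=\zeta(1+2is_0)^{-1}$ is finite and $\ll C(\pi)^{\varepsilon}$) and applies the same localisation argument as in \textsection 3.6.2. The representation $\pi^{\dagger}=\sigma_{is_0,\eta'}$ enters as the point of evaluation, not as a residue.

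\textbf{Positivity after regularisation.} Your ``secondary difficulty'' is exactly what Proposition~\ref{prop6} settles: the test function at $v\mid\mathfrak{Q}$ forces the extra terms $\mathcal{G}_\eta$, $\widetilde{\mathcal{G}}_\eta$ to vanish, so the holomorphic continuation in $0<\Re(s_1),\Re(s_2)<1/4$ coincides with the original (positive) integral. With this in hand, Lemma~\ref{lem8} records the spectral side as a genuine sum/integral of non-negative terms, and dropping all but $\pi$ (or $\pi^{\dagger}$) is immediate.
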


\subsection{Eisenstein series}
Define $H_T:$ $T(\mathbb{A}_F)\longrightarrow\mathbb{R}$ by setting $H\left(\begin{pmatrix}
	a\\
	&d
\end{pmatrix}\right)=\log |ad^{-1}|_{\mathbb{A}_F}.$ Extend $H_T$ to a map $H:$ $G(\mathbb{A}_F)\longrightarrow\mathbb{R}$ by using the Iwasawa decomposition $G(\mathbb{A}_F)=N(\mathbb{A}_F)T(\mathbb{A}_F)K,$ and defining $H(utk)=H_T(t),$ $u\in N(\mathbb{A}_F),$ $t\in T(\mathbb{A}_F),$ and $k\in K.$  

For $\eta\in \widehat{F^{\times}\backslash\mathbb{A}_F^{(1)}},$ let $\sigma_{0,\eta}=\Ind(\eta,\eta^{-1}\omega)$ be the parabolic induction. Let $\phi\in\sigma_{0,\eta}$ be a section. Define the Eisenstein series 
\begin{align*}
E(g,\phi,\lambda)=\sum_{\delta\in B(F)\backslash G(F)}\phi(\delta g)e^{(1/2+\lambda)H(\delta g)},\ \ \Re(\lambda)>0.
\end{align*}
Then $E(g,\phi,\lambda)$ converges absolutely and admits a meromorphic continuation to $\lambda\in\mathbb{C}$ with possible simple poles at $\lambda\in \{0,1\}$ as will as a functional equation. 

Let $f=f(g;\mathbf{v},\mathbf{i})$ be constructed in \textsection\ref{3.2}. For $g\in G(\mathbb{A}_F),$ define   
\begin{align*}
E(g,\mathcal{I}(\lambda, f)\phi,\lambda)=\sigma_{0,\eta}(f)E(g,\phi,\lambda):=\int_{\overline{G}(\mathbb{A}_F)}f(y)E(gy,\phi,\lambda)dy,\ \ \Re(\lambda)>0.
\end{align*}

To simply notation, we also use  $E(g,\phi,\lambda)$ and $E(g,\mathcal{I}(\lambda, f)\phi,\lambda)$ to denote their meromorphic continuations.

\subsection{Spectral Expansion of the kernel functions}\label{sec3.1}
Let notation be as in \textsection\ref{3.2}. Let $v_0, v_1, v_2\in \mathcal{L},$ $v_1\neq v_2,$ $i\in\{0,1\}.$ Let $f=f(g;\mathbf{v},\mathbf{i}).$  Let $\K_0(x,y)$ and $\K_{\ER}(x,y)$ be defined by \eqref{ker} in \textsection \ref{2.3}. Then by the decomposition \eqref{decom} we have (e.g., cf. \cite{Art79}) 
\begin{align*}
&\K_0(x,y)=\sum_{\sigma\in \mathcal{A}_0([G],\omega)}\sum_{\phi\in\mathfrak{B}_{\sigma}}\sigma(f)\phi(x)\overline{\phi(y)},\\
&\K_{\ER}(x,y)=\frac{1}{4\pi}\sum_{\eta\in \widehat{F^{\times}\backslash\mathbb{A}_F^{(1)}}}\int_{i\mathbb{R}}\sum_{\phi\in \mathfrak{B}_{\sigma_{0,\eta}}}E(x,\mathcal{I}(\lambda,f)\phi,\lambda)\overline{E(y,\phi,\lambda)}d\lambda,
\end{align*}
where $\mathfrak{B}_{\sigma}$ is an orthonormal basis of the cuspidal representation $\sigma,$ and $\sigma_{0,\eta}=\Ind(\eta,\eta^{-1}\omega).$ Note that since $f$ is $K$-finite, the inner sums over $\phi$ in both $\K_{0}(x,y)$ and $\K_{\ER}(x,y)$ are finite.

\subsection{Rankin-Selberg Periods}\label{sec3.3}
Let $\varphi$ be an automorphic form on $G(\mathbb{A}_F).$ Define the associated Whittaker function by 
\begin{equation}\label{whi}
W_{\varphi}(g):=\int_{[N]}\varphi(ug)\overline{\theta(u)}du, \ \ g\in G(\mathbb{A}_F),
\end{equation}
where $\theta$ is the generic induced by the fixed additive character $\psi$ (cf. \textsection\ref{1.1.1}). By multiplicity one, $W_{\varphi}$ factors into local integrals, i.e., $W_{\varphi}(g)=\prod_{v\in\Sigma_F}W_{\varphi,v}(g_v),$ where $g=\otimes_vg_v\in G(\mathbb{A}_F)$ and $W_{\varphi,v}$ is a local Whittaker function associated with the local additive character $\psi_v,$ moreover, $W_{\varphi,v}$ is spherical for all but finitely many places $v\in\Sigma_F.$ Here $I_2$ is the identity in $G(F_v).$  Define  
\begin{align*}
\Psi(s,\varphi,\chi):=\int_{\mathbb{A}_F^{\times}}W_{\varphi}\left(\begin{pmatrix}
	x\\
	&1
\end{pmatrix}\right)|x|^s\chi(x)d^{\times}x=\prod_{v\in\Sigma_F}\Psi_v(s,\varphi,\chi),
\end{align*}
where the local integral is defined by 
\begin{align*}
\Psi_v(s,\varphi,\chi)=\int_{F_v^{\times}}W_{\varphi,v}\left(\begin{pmatrix}
	x_v\\
	&1
\end{pmatrix}\right)|x_v|_v^s\chi_v(x_v)d^{\times}x_v.
\end{align*}

By standard estimates of $W_{\varphi,v}$ the integral $\Psi(s,\varphi,\chi)$ converges absolutely in $\Re(s)>1/2.$ Furthermore, it is related to $L$-functions as follows. 

If $\varphi\in\mathfrak{B}_{\sigma},$ where $\sigma\in \mathcal{A}_0([G],\omega),$ then by Fourier expansion  
\begin{align*} 
\Psi(s,\varphi,\chi)=\int_{F^{\times}\backslash\mathbb{A}_F^{\times}}\varphi\left(\begin{pmatrix}
	x\\
	&1
\end{pmatrix}\right)|x|^s\chi(x)d^{\times}x,
\end{align*}
which, by the rapid decay of $\varphi,$ converges absolutely for all $s\in\mathbb{C}.$ Hence, the function $\Psi(s,\varphi,\chi)$ is entire. By Hecke's theory, $\Psi(s,\varphi,\chi)$ is the integral representation for the complete $L$-function $\Lambda(s+1/2,\sigma).$

If $\varphi\in\mathfrak{B}_{0,\eta}$ for some $\eta\in\widehat{F^{\times}\backslash\mathbb{A}_F^{(1)}},$ then by \cite{Jac09}, each local integral  
$$
\int_{F_v^{\times}}W_{\varphi,v}\left(\begin{pmatrix}
	x_v\\
	&1
\end{pmatrix}\right)|x_v|_v^s\chi_v(x_v)d^{\times}x_v 
$$
represents the local $L$-function $L_v(s+1/2,\eta_v)L_v(s+1/2,\eta_v^{-1}\omega_v).$ Therefore, the function $\Psi(s,\varphi,\chi)$ admits a meromorphic continuation to $s\in\mathbb{C},$ which represents the complete $L$-function $\Lambda(s+1/2,\eta)\Lambda(s+1/2,\eta^{-1}\omega).$ 

\subsection{The Spectral Side: meromorphic continuation}\label{sec3.4}
According to the construction of the test function $f,$ the Eisenstein series $E(x,\mathcal{I}(\lambda,f)\phi,\lambda),$ $\phi\in\mathfrak{B}_{0,\eta},$ vanishes unless $\phi$ is right invariant under $K_v[n_v],$ where $v\mid\mathfrak{Q},$ and $m_v=r_{\chi_v}\geq 1.$

 
Substituting the Rankin-Selberg periods (cf. \textsection\ref{sec3.3}) into the decomposition \eqref{11} we then obtain $J_{\Spec}^{\Reg}(f,\textbf{s},\chi)=J_{0}^{\Reg}(f,\textbf{s},\chi)+J_{\Eis}^{\Reg}(f,\textbf{s},\chi),$ where 
\begin{align*}
J_{0}^{\Reg}(f,\textbf{s},\chi)=&\sum_{\sigma\in\mathcal{A}_0([G],\omega,\chi)}\sum_{\phi\in\mathfrak{B}_{\sigma}}\Psi(s_1,\sigma(f)\phi)\Psi(s_2,\overline{\phi},\overline{\chi}),\\
J_{\Eis}^{\Reg}(f,\textbf{s},\chi)=&\frac{1}{4\pi}\sum_{\eta}\int_{i\mathbb{R}}\sum_{\phi\in \mathfrak{B}_{\sigma_{0,\eta}}}\Psi(s_1,\sigma_{0,\eta}(f)E(\cdot,\phi,\lambda),\chi)\Psi(s_2,\overline{E(\cdot, \phi,\lambda)},\overline{\chi})d\lambda,
\end{align*}
where $\eta$ ranges through $\in \widehat{F^{\times}\backslash\mathbb{A}_F^{(1)}},$ $\Re(s_1)\gg 1$ and $\Re(s_2)\gg 1.$
\begin{prop}\label{prop6}
Let notation be as before. Then the function $J_{\Eis}^{\Reg}(f,\textbf{s},\chi)$ extends to a holomorphic function $J_{\Eis}^{\Reg,\heartsuit}(f,\textbf{s},\chi)$ in $0<\Re(s_1), \Re(s_2)<1/4$ with $J_{\Eis}^{\Reg,\heartsuit}(f,\textbf{s},\chi)$ given by 
\begin{equation}\label{24'''}
\frac{1}{4\pi}\sum_{\eta}\int_{i\mathbb{R}}\sum_{\phi\in \mathfrak{B}_{\sigma_{0,\eta}}}\Psi(s_1,\sigma_{0,\eta}(f)E(\cdot,\phi,\lambda),\chi)\Psi(s_2,\overline{E(\cdot, \phi,\lambda)},\overline{\chi})d\lambda,
\end{equation}
where the integrand $\Psi(s_1,\sigma_{0,\eta}(f)E(\cdot,\phi,\lambda),\chi)\Psi(s_2,\overline{E(\cdot, \phi,\lambda)},\overline{\chi})$ is identified with its meromorphic continuation. In particular, $J_{\Spec}^{\Reg}(f,\textbf{s},\chi)$ is holomorphic in the region $0<\Re(s_1), \Re(s_2)<1/4.$
\end{prop}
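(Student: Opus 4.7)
The plan is to view the summand-integrand
\[
I_{\eta}(s_1,s_2,\lambda,\phi):=\Psi(s_1,\sigma_{0,\eta}(f)E(\cdot,\phi,\lambda),\chi)\,\Psi(s_2,\overline{E(\cdot,\phi,\lambda)},\overline{\chi})
\]
as a meromorphic function of $(s_1,s_2)\in\mathbb{C}^2$ for each fixed $(\lambda,\eta,\phi)$ arising in the spectral decomposition, show that its polar set avoids the strip $0<\Re(s_1),\Re(s_2)<1/4$, and then justify the interchange of meromorphic continuation with the spectral integral by a uniform absolute convergence bound. Once this is done, Morera together with analytic continuation from the region $\Re(s_1),\Re(s_2)\gg 1$, where the original spectral expansion converges absolutely, yields the desired extension.

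For the pole analysis, invoke the Rankin–Selberg theory of \textsection\ref{sec3.3}: since $E(\cdot,\phi,\lambda)$ generates the induced representation $\sigma_{\lambda,\eta}=\Ind(\eta|\cdot|^{\lambda},\eta^{-1}\omega|\cdot|^{-\lambda})$, the period $\Psi(s_1,\sigma_{0,\eta}(f)E(\cdot,\phi,\lambda),\chi)$ represents the complete $L$-function $\Lambda(s_1+1/2+\lambda,\eta\chi)\Lambda(s_1+1/2-\lambda,\eta^{-1}\omega\chi)$ up to a local entire prefactor reflecting the action of $f$ (which is entire in $s_1$ because $f$ is $K$-finite with compact support modulo centre and the ramified local factors of $f$ that we built in \textsection\ref{2.2.2} contribute regular quantities). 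The Hecke $L$-functions appearing here have only simple poles, at $s_1+1/2+\lambda=1$ when $\eta\chi=\mathbf{1}$, and at $s_1+1/2-\lambda=1$ when $\eta^{-1}\omega\chi=\mathbf{1}$; for $\lambda\in i\mathbb{R}$ both poles sit on $\Re(s_1)=1/2$. The symmetric analysis of the second factor places its poles on $\Re(s_2)=1/2$. Hence $I_{\eta}(\cdot,\cdot,\lambda,\phi)$ is holomorphic on the entire strip $0<\Re(s_j)<1/4$ for every choice of $(\lambda,\eta,\phi)$.

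To convert pointwise holomorphy into a holomorphic integrated object, one must bound the integrand uniformly on compacta of $(s_1,s_2)$ in the strip. The sum over $\phi$ is finite by $K$-finiteness of $f$; the constraint that $\phi$ be $K_v[n_v]$-invariant at each $v\mid \mathfrak{Q}$, together with the $K_\infty$-type restriction from $f_\infty$, confines $\eta$ to a finite set of Hecke characters. The only real work is the $\lambda$-integral: since $\Re(s_1\pm \lambda+1/2)\in(1/2,3/4)$, the convexity bound gives polynomial growth $|L(s_1+1/2\pm\lambda,\eta\chi)|\ll(1+|\lambda|)^{A}$, which is more than compensated by the Stirling-type rapid decay of the archimedean local Whittaker integrals $\Psi_\infty(s_j,\cdot,\chi_\infty)$ attached to the deformed Eisenstein sections—this decay being reinforced by the microlocalised behaviour of $f_\infty$ constructed in \textsection\ref{3.2.1}. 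Thus the integrand is dominated by an $L^1(i\mathbb{R})$ function uniformly on compacta.

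The principal obstacle is precisely this uniform bound: one needs the archimedean decay of $E(\cdot,\phi,\lambda)$ and of $\mathcal{I}(\lambda,f_\infty)$ in the spectral parameter $\lambda$ to beat the convexity growth of the two Hecke $L$-factors. This is where the properties of Nelson's test function $f_\infty$ from \textsection\ref{3.2.1} enter in an essential way; the non-archimedean places cause no trouble since the ramification constraints make both the $\phi$-sum and $\eta$-sum finite, and the amplification pieces $f_{v_0,i_{v_0}}$, $f_{v_1}$, $f_{v_2}^{*}$ are compactly supported Hecke operators whose local contribution to the integrand is bounded. With these estimates in place, Morera's theorem applied to the locally uniformly convergent integral/sum gives the holomorphic extension $J_{\Eis}^{\Reg,\heartsuit}(f,\mathbf{s},\chi)$ on the strip, coinciding with $J_{\Eis}^{\Reg}(f,\mathbf{s},\chi)$ on $\Re(s_1),\Re(s_2)\gg 1$.
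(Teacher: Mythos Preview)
Your argument has a genuine gap at the continuation step. You correctly observe that for each fixed $\lambda\in i\mathbb{R}$ the poles of $I_\eta(s_1,s_2,\lambda,\phi)$ sit on $\Re(s_j)=\pm 1/2$ and so avoid the target strip. But this does not show that $\int_{i\mathbb{R}}I_\eta\,d\lambda$, evaluated on the strip with the continued integrand, is the analytic continuation of the same integral from $\Re(s_j)\gg 1$: the two regions are separated by $\Re(s_1)=1/2$, and for the exceptional $\eta$ (those with $\eta\chi$ or $\eta^{-1}\omega\chi$ trivial) the pole of the Dedekind zeta factor at $\lambda=1/2-s_1$ crosses the integration contour $\Re(\lambda)=0$ precisely as $s_1$ crosses that line. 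The two naively computed integrals therefore differ by a residue term, which you have not accounted for. Morera on the strip gives you a holomorphic function there, but not one you have connected back to $J_{\Eis}^{\Reg}(f,\mathbf{s},\chi)$.

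The paper's proof is organised around exactly this obstruction. For generic $\eta$ the integrand is entire in $\mathbf{s}$ and your outline is fine. For the exceptional $\eta$ the paper invokes the general continuation from \cite{Yan23a}, which expresses the continued $J_\eta$ as the naive expression $J_\eta^{\heartsuit}$ plus explicit correction terms $\mathcal{G}_\eta$, $\widetilde{\mathcal{G}}_\eta$ coming from the contour shift; these are Tate integrals in $s_1+s_2$. The paper then shows both corrections vanish identically, and this uses the specific construction of $f_v$ at $v\mid\mathfrak{Q}$: after a change of variable in the unipotent integration the $\beta$-sum in the double character average decouples and is killed by orthogonality $\sum_\beta\overline{\chi}_v(\beta)=0$, exactly the mechanism of Lemma~\ref{lem3}. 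So the nonarchimedean ramified test function is not harmless bookkeeping as your sketch suggests; it is doing the essential work of annihilating the residues that block a direct continuation across $\Re(s_j)=1/2$.
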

\begin{proof}
The function $J_{0}^{\Reg}(f,\textbf{s},\chi)$ converges absolutely in $\mathbf{s}\in\mathbb{C}^2.$ Moreover, for $\eta\in \widehat{F^{\times}\backslash\mathbb{A}_F^{(1)}}-\{\chi^{-1},\chi\omega\},$ by the discussion in \textsection\ref{sec3.3} the function $\Psi(s_1,\sigma(f)\varphi,\chi)\Psi(s_2,\overline{\varphi},\overline{\chi})$ admits a holomorphic continuation to $\mathbf{s}\in\mathbb{C}^2,$ where $\varphi(\cdot)=E(\cdot,\phi,\lambda).$

Now we consider the case that $\eta\in\{\chi^{-1},\chi\omega\}.$ For $\Re(s_1), \Re(s_2)\gg 1,$ let 
\begin{equation}\label{24'}
J_{\eta}(f,\textbf{s},\chi):=\frac{1}{4\pi}\int_{i\mathbb{R}}\sum_{\phi\in \mathfrak{B}_{\sigma_{0,\eta}}}\Psi(s_1,\sigma_{0,\eta}(\tilde{f})E(\cdot,\phi,\lambda),\chi)\Psi(s_2,\overline{E(\cdot, \phi,\lambda)},\overline{\chi})d\lambda.
\end{equation}

By \textsection 6.3.3 in \cite{Yan23a} we have a meromorphic continuation 
\begin{align*}
\tilde{J}_{\eta}(f,\textbf{s},\chi)=J_{\eta}^{\heartsuit}(f,\textbf{s},\chi)-\mathcal{G}_{\eta}(f,\mathbf{s},\chi)+\widetilde{\mathcal{G}}_{\eta}(f,\mathbf{s},\chi),
\end{align*}
where $0<\Re(s_1), \Re(s_2)<1/4,$ the function $J_{\eta}^{\heartsuit}(f,\textbf{s},\chi)$ is defined by \eqref{24'} with the integrand $\Psi(s_1,\sigma_{0,\eta}(f)E(\cdot,\phi,\lambda))\Psi(s_2,\overline{E(\cdot, \phi,\lambda)})$ is understood as its meromorphic continuation, and by Theorem 7.1 in loc. cit. the function $\mathcal{G}_{\eta}(f,\mathbf{s},\chi)$ is the meromorphic continuation of the Tate integral  
\begin{equation}\label{25'}
-\int_{\mathbb{A}_F^{\times}}\int_{\mathbb{A}_F^{\times}}\int_{\mathbb{A}_F}\int_{\mathbb{A}_F}f\left(\begin{pmatrix}
	1&\\
	c&1
\end{pmatrix}\begin{pmatrix}
	y&b\\
	&1
\end{pmatrix}\right)\psi(cx)dbdc|x|^{s_1+s_2}
d^{\times}x\overline{\chi}(y)|y|^{s_2}d^{\times}y,
\end{equation}
in the region $\Re(s_1+s_2)>1;$ and the function $\widetilde{\mathcal{G}}_{\eta}(f,\mathbf{s},\chi)$ is the 
meromorphic continuation of the Tate integral  
\begin{align*}
-\int_{\mathbb{A}_F^{\times}}\int_{\mathbb{A}_F^{\times}}\int_{\mathbb{A}_F}\int_{\mathbb{A}_F}f\left(\begin{pmatrix}
	1&b\\
	&1
\end{pmatrix}\begin{pmatrix}
	1&\\
	c&1
\end{pmatrix}\begin{pmatrix}
	y&\\
	&1
\end{pmatrix}\right)\psi(cx)dbdc|x|_{\mathbb{A}_F}^{s_1+s_2}
d^{\times}x\overline{\chi}(y)|y|_{\mathbb{A}_F}^{s_2}d^{\times}y
\end{align*}
in the region $\Re(s_1+s_2)>1.$ 

Consider the absolutely convergent integral $\mathcal{J}(f;x,y,c)=\prod_{v}\mathcal{J}_v(f;x,y,c),$ where 
\begin{align*}
\mathcal{J}_v(f;x,y,c):=\int_{F_v}f_v\left(\begin{pmatrix}
	1&\\
	c_v&1
\end{pmatrix}\begin{pmatrix}
	y_v&b_v\\
	&1
\end{pmatrix}\right)db_v.
\end{align*}
Let $v\mid\mathfrak{Q}.$ By definition $\mathcal{J}_v(f;x,y,c)$ is equal to $|\tau(\chi_v)|^{-2}$ multiplying 
\begin{align*}
\sum_{\alpha, \beta}\chi_v(\alpha)\overline{\chi}_v(\beta)\int_{F_v}f_v\left(\begin{pmatrix}
	1&\alpha\varpi_v^{-m_v}\\
	&1
\end{pmatrix}\begin{pmatrix}
	1&\\
	c_v&1
\end{pmatrix}\begin{pmatrix}
	y_v&b_v\\
	&1
\end{pmatrix}\begin{pmatrix}
	1&\beta\varpi_v^{-m_v}\\
	&1
\end{pmatrix}\right)db_v,
\end{align*} 
where $\alpha, \beta\in (\mathcal{O}_v/\varpi_v^{m_v}\mathcal{O}_v)^{\times}.$ Changing variable $b_v\mapsto b_v-y_v\beta\varpi_v^{-m_v},$ the function $|\tau(\chi_v)|^{2}\mathcal{J}_v(f;x,y,c)$ becomes 
\begin{equation}\label{26'}
\sum_{\beta}\overline{\chi}_v(\beta)\sum_{\alpha}\chi_v(\alpha)\int_{F_v}f_v\left(\begin{pmatrix}
	1&\alpha\varpi_v^{-m_v}\\
	&1
\end{pmatrix}\begin{pmatrix}
	1&\\
	c_v&1
\end{pmatrix}\begin{pmatrix}
	y_v&b_v\\
	&1
\end{pmatrix}\right)db_v,
\end{equation}

Since $\chi_v$ is ramified, then by orthogonality, $\sum_{\beta}\overline{\chi}_v(\beta)=0.$ So \eqref{26'} vanishes, which implies that $\mathcal{J}_v(f;x,y,c)=0.$ So $\mathcal{J}(f;x,y,c)=0.$ As a consequence, \eqref{25'} vanishes in the region $\Re(s_1+s_2)>1,$ yielding that $\mathcal{G}_{\eta}(f,\mathbf{s},\chi)\equiv 0.$ Similarly, $\widetilde{\mathcal{G}}_{\eta}(f,\mathbf{s},\chi)\equiv 0.$

Hence, by Lemma 6.7 in \cite{Yan23a} the integral \eqref{24'''} converges absolutely in $0<\Re(s_1), \Re(s_2)<1/4,$ and gives the a holomorphic continuation of $J_{\Eis}^{\Reg}(f,\textbf{s},\chi)$ therein.  Therefore, Proposition \ref{prop6} follows.
\end{proof}

\subsection{The Spectral Side: a variant and local calculations}\label{sec3.5}
\subsubsection{Rewrite the spectral expansion}\label{3.5.1}
Set $n_v=\max\{r_{\pi_v},r_{\chi_v}+r_{\omega_v}\}$ and $m_v=r_{\chi_v}$ as before. Let ${f}_v$ and ${f}_{v_0,i}$ be defined in \textsection\ref{3.2.1}-\textsection\ref{3.2.4}.

Let ${f}^{\dagger}(g;i,v_0):=\otimes_{v\mid\infty}f_v(g_v,\chi_v)\otimes\otimes_{v\in \Sigma_{F,\fin}-\{v_0\}}f_v(g_{v};\omega_{v})\otimes{f}_{v_0,i}(g_{v_0}),$ and ${f}^{\dagger}(g;v_1,v_2):=\otimes_{v\mid\infty}f_v(g_v,\chi_v)\otimes{f}_{v_1}(g_{v_1})\otimes {f}_{v_2}^*(g_{v_2})\otimes\otimes_{v\in \Sigma_{F,\fin}-\{v_1,v_2\}}f_v(g_{v};\omega_{v}).$ Denote by ${f}^{\ddagger}(g)=\otimes_{v\mid\infty}f_v(g_v,\chi_v)\otimes\otimes_{v\in \Sigma_{F,\fin}} f_v(g_{v};\omega_v),$ where $f_v(\cdot;\omega_v)$ is defined by  \eqref{5.}:
$$
f_v(z_vk_v;\omega_v)={\Vol(\overline{K_{v}[n_v]})^{-1}}\omega_v(z_v)^{-1}\omega_v(E_{2,2}(k_v))^{-1}.
$$

For ${f}^{\dagger}\in\{{f}^{\dagger}(\cdot;i,v_0),\{f^{\dagger}(\cdot;v_1,v_2)\},$ $\varphi\in \sigma,$ where $\sigma\in\mathcal{A}([G],\omega),$ define $\varphi^{\dagger}(x)$ by 
\begin{align*}
\int_{G(\mathbb{A})}{f}^{\dagger}(g)\prod_{v\mid\mathfrak{Q}}\Bigg[\frac{1}{\tau(\chi_v)}\sum_{\alpha\in (\mathcal{O}_v/\varpi_v^{m_v}\mathcal{O}_v)^{\times}}\chi_v(\alpha)\sigma_v\left(\begin{pmatrix}
	1&\alpha \varpi_v^{-m_v}\\
	&1
\end{pmatrix}\right)\Bigg]\sigma(g)\varphi(x)dg.
\end{align*}

Similarly, we define $\varphi^{\ddagger}(x)$ by 
\begin{align*}
\int_{G(\mathbb{A})}{f}^{\ddagger}(g)\prod_{v\mid\mathfrak{Q}}\Bigg[\frac{1}{\tau(\chi_v)}\sum_{\beta\in (\mathcal{O}_v/\varpi_v^{m_v}\mathcal{O}_v)^{\times}}\chi_v(\beta)\sigma_v\left(\begin{pmatrix}
	1&\beta\varpi_v^{-m_v}\\
	&1
\end{pmatrix}\right)\Bigg]\sigma(g)\varphi(x)dg.
\end{align*}

To simplify notations, we shall still write $\Psi(s,\phi^{\ddagger},\chi)$ and $\Psi(s,E(\cdot,\phi,\lambda)^{\ddagger},\chi)$ for their holomorphic continuations, respectively.
\begin{lemma}\label{lem8}
Let notation be as before. Then $\mathcal{J}_{\Spec}^{\heartsuit}(\boldsymbol{\alpha},\chi)$ (cf. \textsection\ref{3.7.3}) converges absolutely. Moreover, 
\begin{equation}\label{14.}
\mathcal{J}_{\Spec}^{\heartsuit}(\boldsymbol{\alpha},\chi)=\mathcal{J}_{0}^{\heartsuit}(\boldsymbol{\alpha},\chi)+\mathcal{J}_{\Eis}^{\heartsuit}(\boldsymbol{\alpha},\chi),
\end{equation}
where $s_0$ is defined by \eqref{eq2.1} in \textsection\ref{2.1.5.}, and 
\begin{align*}
\mathcal{J}_{0}^{\heartsuit}(\boldsymbol{\alpha},\chi):=&\sum_{\sigma\in\mathcal{A}_0([G],\omega)}\sum_{\phi\in\mathfrak{B}_{\sigma}}\Big|\sum_{\mathfrak{m}\in\mathcal{L}}\alpha_{\mathfrak{m}}\mu_{\pi}(\mathfrak{m})\Big|^2\cdot\big|\Psi(s_0,\phi^{\ddagger},\chi)\big|^2,\\
\mathcal{J}_{\Eis}^{\heartsuit}(\boldsymbol{\alpha},\chi):=&\frac{1}{4\pi}\sum_{\eta}\int_{i\mathbb{R}}\sum_{\phi\in \mathfrak{B}_{\sigma_{0,\eta}}}\Big|\sum_{\mathfrak{m}\in\mathcal{L}}\alpha_{\mathfrak{m}}\mu_{\sigma_{\lambda,\eta}}(\mathfrak{m})\Big|^2\cdot\big|\Psi(s_0,E(\cdot,\phi,\lambda)^{\ddagger},\chi)\big|^2d\lambda,
\end{align*}
with $\eta\in \widehat{F^{\times}\backslash\mathbb{A}_F^{(1)}}.$ Here $\mu_{\pi}(\mathfrak{m})$ and $\mu_{\sigma_{\lambda,\eta}}(\mathfrak{m})$ are the $\mathfrak{m}$-th Hecke trace of $\pi$ and $\sigma_{\lambda,\eta},$ respectively (cf. \ref{2.1.4}).  
\end{lemma}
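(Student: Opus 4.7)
The plan is to substitute the holomorphic expansion from Proposition~\ref{prop6} into the definition of $\mathcal{J}_{\Spec}^{\heartsuit}(\boldsymbol{\alpha},\chi)$, interchange the finite amplification sums over $\mathfrak{m}_1,\mathfrak{m}_2\in\mathcal{L}$ and $\mathbf{i}$ with the spectral sum and integral, and then exploit the factorization $f(\cdot;\mathbf{v},\mathbf{i})=\otimes_v f_v$ to carry out a place-by-place analysis of $\sigma(f)\phi$. Three classes of places appear: the amplifier places $v\in\mathbf{v}$, the ramified finite places $v\mid\mathfrak{Q}$, and the archimedean places. The parameter $s_0$ of \textsection\ref{2.1.5.} is real and lies in the strip $0<\Re(s_0)<1/4$ of absolute convergence of the spectral side, so $\overline{\Psi(s_0,\phi,\chi)}=\Psi(s_0,\overline{\phi},\overline{\chi})$, which will be used throughout.

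At the amplifier places the spherical Hecke algebra acts on the new vector of $\sigma_v$ by its Hecke eigenvalues. The factor $\sigma_v(f_{v_1})$ contributes $\mu_{\sigma,v}$ for $v\in\mathbf{v}_1$, the factor $\sigma_v(f_{v_2}^{*})$ contributes $\overline{\mu_{\sigma,v}}$ for $v\in\mathbf{v}_2$, and for $v\in\mathbf{v}_0$ the internal sum over $i_{v_0}\in\{0,1\}$ against $c_{v_0,i_{v_0}}$ assembles the convolution $\mathcal{T}_v\ast\mathcal{T}_v^{*}$, which acts by $|\mu_{\sigma,v}|^2$. By multiplicativity of the normalized Hecke trace over square-free ideals, together with the definitions of $\mathbf{v}_0,\mathbf{v}_1,\mathbf{v}_2$ tied to the factorizations of $\mathfrak{m}_1,\mathfrak{m}_2$ as specified in \textsection\ref{3.7.3}, this yields the scalar factor $\mu_{\sigma}(\mathfrak{m}_1)\overline{\mu_{\sigma}(\mathfrak{m}_2)}$, and the double sum against $\alpha_{\mathfrak{m}_1}\overline{\alpha_{\mathfrak{m}_2}}$ assembles the amplification factor $\bigl|\sum_{\mathfrak{m}}\alpha_{\mathfrak{m}}\mu_{\sigma}(\mathfrak{m})\bigr|^2$. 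The archimedean and ramified factors of $f$ are by construction positive self-adjoint convolutions $h_v\ast h_v^{*}$: archimedean by the definition \eqref{3.}, and ramified because the double character-weighted average of \textsection\ref{2.2.2} factors as such with $h_v$ carrying a single $\chi_v$-twisted unipotent translate normalized by $\tau(\chi_v)^{-1}$. In the Rankin-Selberg period $\Psi(s_0,\sigma(f)\phi,\chi)$, the $h_v$-copy absorbs into the period via a change of variables in the integral over $[A]$ against $\chi$ and produces exactly the transformed vector $\phi^{\ddagger}$ defined in \textsection\ref{3.5.1}; the other copy, seen as $h_v^{*}$ acting on $\phi$ and paired with $\Psi(s_0,\overline{\phi},\overline{\chi})$ via adjointness, produces $\overline{\Psi(s_0,\phi^{\ddagger},\chi)}$. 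The Gauss-sum normalization $|\tau(\chi_v)|^{-2}$ at each $v\mid\mathfrak{Q}$ is exactly what is needed for this identification to be clean. Combining the local pieces gives $|\Psi(s_0,\phi^{\ddagger},\chi)|^2$ on the cuspidal side, and the identical argument with $E(\cdot,\phi,\lambda)$ in place of $\phi$ and Hecke trace $\mu_{\sigma_{\lambda,\eta}}(\mathfrak{m})$ in place of $\mu_{\sigma}(\mathfrak{m})$ handles the Eisenstein term.

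The main obstacle is justifying absolute convergence of $\mathcal{J}_{\Spec}^{\heartsuit}(\boldsymbol{\alpha},\chi)$, which legitimizes the interchange of summation and integration above. Absolute convergence of each individual $J_{\Spec}^{\Reg,\heartsuit}(f(\cdot;\mathbf{v},\mathbf{i}),\mathbf{s}_0,\chi)$ is supplied by Proposition~\ref{prop6}, and since the amplification sums are finite, only a uniform upper bound over the finite set of pairs $(\mathbf{v},\mathbf{i})$ is required. This follows from standard operator-norm bounds on the local test functions $f_v$ together with rapid decay of Rankin-Selberg periods in the spectral parameters of $\sigma$, $\eta$, and $\lambda$. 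Once convergence is established, the identity \eqref{14.} follows from the pointwise place-by-place identification described above.
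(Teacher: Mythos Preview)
Your proposal is correct and follows essentially the same approach as the paper: factor the action of the test function on the spectral kernel so that the amplifier places contribute Hecke eigenvalues assembling into $\big|\sum_{\mathfrak{m}}\alpha_{\mathfrak{m}}\mu_{\sigma}(\mathfrak{m})\big|^2$, while the archimedean convolution structure together with the ramified double character sum split the kernel into $\phi^{\ddagger}(x)\overline{\phi^{\ddagger}(y)}$, yielding $|\Psi(s_0,\phi^{\ddagger},\chi)|^2$ after taking periods. The paper packages this via the auxiliary vectors $\varphi^{\dagger}=\mu_{\sigma,f}\,\varphi^{\ddagger}$ and the identity \eqref{13.}, which is exactly your amplifier computation; your $h_v\ast h_v^{*}$ description at $v\mid\mathfrak{Q}$ is the same mechanism (up to a harmless factor $\chi_v(-1)$), though note that the correct device is the kernel rewriting $\sum_\phi\sigma(h\ast h^{*})\phi(x)\overline{\phi(y)}=\sum_\phi\sigma(h)\phi(x)\overline{\sigma(h)\phi(y)}$ rather than a change of variables inside the period.
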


\begin{proof}
Let $\nu(f)$ be defined by \eqref{61} in \textsection\ref{3.6.2}. By construction, we have 
\begin{align*}
\varphi^{\dagger}(x)=\mu_{\sigma,f}\varphi^{\ddagger}(x),
\end{align*}
where $\mu_{\sigma,f}$ is a scalar determined by $f_v$ and $\sigma_v$ at $v\mid\nu(f)$ with the property that 
\begin{equation}\label{13.}  
\sum_{\mathfrak{m}_1,\mathfrak{m}_2\in\mathcal{L}}\alpha_{\mathfrak{m}_1}\overline{\alpha_{\mathfrak{m}_2}}\prod_{v_0\mid\gcd(\mathfrak{m}_1,\mathfrak{m}_2)}\sum_{i_{v_0}=0}^{1}c_{v_0,i_{v_0}}\mu_{\sigma,f(g;\mathbf{v},\mathbf{i})}=\Big|\sum_{\mathfrak{m}\in\mathcal{L}}\alpha_{\mathfrak{m}}\mu_{\sigma}(\mathfrak{m})\Big|^2,
\end{equation}
where $\mathbf{v}$ and $\mathbf{i}$ are determined by $\mathfrak{m}_1$ and $\mathfrak{m}_2$ as in \textsection\ref{3.7.3}. 

Moreover, by \eqref{3.} and \eqref{6.}, i.e., for $v\mid\infty,$ $f_v=(f_v(\cdot,\chi_v)*f_v(\cdot,\chi_v)^*)(g_v),$ the spectral expansion of the kernel functions becomes
\begin{align*}
&\K_0(x,y)=\sum_{\sigma\in \mathcal{A}_0([G],\omega)}\sum_{\phi\in\mathfrak{B}_{\sigma}}\mu_{\sigma,f}\phi^{\ddagger}(x)\overline{\phi^{\ddagger}(y)},\\
&\K_{\ER}(x,y)=\frac{1}{4\pi}\sum_{\eta\in \widehat{F^{\times}\backslash\mathbb{A}_F^{(1)}}}\int_{i\mathbb{R}}\sum_{\phi\in \mathfrak{B}_{\sigma_{0,\eta}}}\mu_{\sigma_{\lambda,\eta},f}E(x,\phi,\lambda)^{\ddagger}\overline{E(y,\phi,\lambda)^{\ddagger}}d\lambda.
\end{align*}

By Proposition \ref{prop6}, the spectral expansion (cf. \textsection\ref{sec3.4}) becomes 
\begin{align*}
J_{0}^{\Reg}(f,\textbf{s}_0,\chi)=&\sum_{\sigma\in\mathcal{A}_0([G],\omega)}\sum_{\phi\in\mathfrak{B}_{\sigma}}\mu_{\sigma,f}\big|\Psi(s_0,\phi^{\ddagger},\chi)\big|^2,\\
J_{\Eis}^{\Reg}(f,\textbf{s}_0,\chi)=&\frac{1}{4\pi}\sum_{\eta\in \widehat{F^{\times}\backslash\mathbb{A}_F^{(1)}}} \int_{i\mathbb{R}}\sum_{\phi\in \mathfrak{B}_{\sigma_{0,\eta}}}\mu_{\sigma_{\lambda,\eta},f}\big|\Psi(s_0,E(\cdot,\phi,\lambda)^{\ddagger},\chi)\big|^2d\lambda,
\end{align*}
which converges absolutely. Then \eqref{14.} follows from  \eqref{13.}. 
\end{proof}
\subsubsection{Local calculations}
In this section we compute the period integrals $\Psi(s_0,\phi^{\ddagger},\chi)$ and $\Psi(s_0,E(\cdot,\phi,\lambda)^{\ddagger},\chi).$ 

\begin{lemma}\label{lem9}
Let notation be as before. Let $\sigma\in\mathcal{A}_0([G],\omega).$ Let $\phi\in\sigma$ be a pure tensor. Suppose that $\phi^{\ddagger}\neq 0.$ Then for $v\in\Sigma_{F,\fin},$ we have 
\begin{equation}\label{15..}
\Psi_v(s,\phi^{\ddagger},\chi)=W_{\phi,v}(I_2)L_v(s+1/2,\sigma_v\times\chi_v),\ \ \Re(s)\geq 0. 
\end{equation}
\end{lemma}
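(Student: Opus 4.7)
The plan is to pass to the Whittaker model and compute place by place, splitting on the ramification of $\sigma_v$ and $\chi_v$. By the construction of $\phi^{\ddagger}$ in \textsection\ref{3.5.1} and uniqueness of the local Whittaker model, for a pure tensor $\phi$ one has
$$
W_{\phi^{\ddagger},v}(h) = \int_{\overline{G}(F_v)} f_v(g;\omega_v)\, W'_{\phi,v}(hg)\, dg,
$$
where $W'_{\phi,v} = W_{\phi,v}$ if $v \nmid \mathfrak{Q}$, and at $v \mid \mathfrak{Q}$
$$
W'_{\phi,v}(h) = \frac{1}{\tau(\chi_v)} \sum_{\beta \in (\mathcal{O}_v/\varpi_v^{m_v})^{\times}} \chi_v(\beta)\, W_{\phi,v}\!\left(h \begin{pmatrix} 1 & \beta \varpi_v^{-m_v} \\ & 1 \end{pmatrix}\right).
$$
Since $f_v(\cdot;\omega_v)$ is the normalized indicator of $Z(F_v)\backslash K_v[n_v]$ weighted by $\omega_v^{-1}$, the operator $\sigma_v(f_v(\cdot;\omega_v))$ acts as a projector onto the $K_v[n_v]$-invariant subspace of $\sigma_v$ with prescribed central character.

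For $v \nmid M\mathfrak{Q}$, all data are unramified and the classical Macdonald-type unramified Rankin--Selberg computation delivers $\Psi_v(s,\phi^{\ddagger},\chi) = W_{\phi,v}(I_2)\, L_v(s+1/2,\sigma_v \times \chi_v)$ directly. For $v \mid M$ with $v \nmid \mathfrak{Q}$, $\chi_v$ is unramified while $\sigma_v$ has conductor $\varpi_v^{r_{\pi_v}}$; since $n_v \geq r_{\pi_v}$, the projector $\sigma_v(f_v)$ selects the line containing the new vector of $\sigma_v$, and the local Rankin--Selberg integral of a new vector against an unramified character is well known to represent the local $L$-factor, with proportionality constant $W_{\phi,v}(I_2)$.

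For $v \mid \mathfrak{Q}$, where $\chi_v$ has conductor $\varpi_v^{m_v}$ with $m_v \geq 1$ and $n_v = \max\{r_{\pi_v},\, m_v + r_{\omega_v}\}$, the Gauss-sum twist defining $W'_{\phi,v}$ is the Atkin--Li recipe producing a vector adapted to the twisted representation $\sigma_v \otimes \chi_v$. The subsequent $K_v[n_v]$-projection by $f_v$ yields a Whittaker function whose restriction to the torus $\diag(x,1)$ is $W_{\phi,v}(I_2)$ times the indicator of $\mathcal{O}_v^{\times}$, after normalization by $|\tau(\chi_v)|$ and the volume factor hidden in $f_v$. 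Integrating against $|x|_v^{s}\chi_v(x)\, d^\times x$ telescopes to $L_v(s+1/2,\sigma_v \times \chi_v)$, yielding the claimed identity.

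The main obstacle is the ramified case $v \mid \mathfrak{Q}$: one must verify that the Gauss-sum twist composed with the $K_v[n_v]$-averaging reconstructs, up to the exact scalar $W_{\phi,v}(I_2)$, the essential test vector of $\sigma_v \otimes \chi_v$ whose zeta integral collapses to the $L$-factor. This is standard in the local theory of newforms under twists, but demands careful bookkeeping of conductors, especially when $\omega_v$ is ramified so that $n_v$ strictly exceeds $m_v$; the interplay between the size of $K_v[n_v]$ and the support of the Gauss-sum-translated Whittaker function is what forces the integral to localize on $\mathcal{O}_v^\times$ and produce the $L$-factor exactly.
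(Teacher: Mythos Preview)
At places $v\nmid\mathfrak{Q}$ your approach coincides with the paper's: Casselman--Shalika when $\sigma_v$ is unramified, and the new-vector zeta integral of \cite{JPSS81} when $\sigma_v$ is ramified but $\chi_v$ is not.

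At $v\mid\mathfrak{Q}$, however, your argument is both overcomplicated and incomplete, while the paper's is a three-line elementary computation. Two concrete points:

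\textbf{Order of operations.} In the definition of $\phi^{\ddagger}$ (\textsection\ref{3.5.1}) the projector $\sigma_v(f_v(\cdot;\omega_v))$ acts \emph{before} the Gauss-sum unipotent twist, not after as in your displayed formula for $W_{\phi^{\ddagger},v}$. Since $\phi^{\ddagger}\neq 0$ forces $\phi$ itself to be $K_v[n_v]$-invariant, the projector is the identity on $\phi$, and the local Whittaker function of $\phi^{\ddagger}$ at $v\mid\mathfrak{Q}$ is simply the Gauss-twisted $W_{\phi,v}$; there is no ``subsequent $K_v[n_v]$-projection'' to analyze. Your reversed ordering would apply the projector to the (non-$K_v[n_v]$-invariant) twisted vector, which is a genuinely different and harder object.

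\textbf{The ramified computation.} You invoke Atkin--Li and essential test vectors, then concede the verification ``demands careful bookkeeping'' --- but that verification \emph{is} the lemma, and you do not carry it out. The paper bypasses all of this: from
\[
W_{\phi,v}\bigl(\mathrm{diag}(x_v,1)\,u_\alpha\bigr)=\psi_v(\alpha x_v\varpi_v^{-m_v})\,W_{\phi,v}\bigl(\mathrm{diag}(x_v,1)\bigr),
\]
the sum over $\alpha$ becomes a Gauss sum. Since $K_v[n_v]$-invariance forces $W_{\phi,v}(\mathrm{diag}(x_v,1))=0$ for $e_v(x_v)<0$, standard Gauss-sum orthogonality (the paper's \eqref{21.}) annihilates the contribution from $e_v(x_v)\geq 1$ and returns $\tau(\chi_v)\overline{\chi_v}(x_v)$ on $\mathcal{O}_v^\times$. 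The factor $\overline{\chi_v}(x_v)$ cancels the $\chi_v(x_v)$ in the zeta integral, leaving $\Psi_v=W_{\phi,v}(I_2)$; and $L_v(s+1/2,\sigma_v\times\chi_v)\equiv 1$ because $\chi_v$ is ramified. (Your intermediate claim that the torus restriction equals $W_{\phi,v}(I_2)\mathbf{1}_{\mathcal{O}_v^\times}$ misses this $\overline{\chi_v}(x_v)$ factor, though the final integral is unaffected.) No newform theory for $\sigma_v\otimes\chi_v$ is needed.
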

\begin{proof}
Since $\phi^{\ddagger}\neq 0,$ the cusp form $\phi$ must be  right invariant under the compact subgroup $\otimes_{v<\infty}K_v[n_v],$ with $n_v:=\max\{r_{\pi_v},r_{\chi_v}+r_{\omega_v}\}$ as before (cf. \textsection\ref{2.1.5}).

When $v\nmid\mathfrak{Q},$ $\chi_v$ is unramified. By Casselman-Shalika formula (at $r_{\pi_v}=0$) and \cite{JPSS81} (at $r_{\pi_v}\geq 1$) we have \eqref{15..}.  

Now we suppose $v\mid\mathfrak{Q},$ i.e., $m_v=r_{\chi_v}\geq 1.$ Then $\Psi_v(s,\phi^{\ddagger},\chi)$ is defined by 
\begin{equation}\label{17.}
\frac{1}{\tau(\chi_v)}\sum_{\alpha}\chi_v(\alpha)\int_{F_v^{\times}}W_{\phi,v}\left(\begin{pmatrix}
	x_v\\
	&1
\end{pmatrix}\begin{pmatrix}
	1&\alpha \varpi_v^{-m_v}\\
	&1
\end{pmatrix}\right)\chi_v(x_v)|x_v|_v^sd^{\times}x_v,
\end{equation}
where $\alpha\in (\mathcal{O}_v/\varpi_v^{m_v}\mathcal{O}_v)^{\times}.$ Note that 
\begin{equation}\label{16.}
W_{\phi,v}\left(\begin{pmatrix}
	x_v\\
	&1
\end{pmatrix}\begin{pmatrix}
	1&\alpha \varpi_v^{-m_v}\\
	&1
\end{pmatrix}\right)=\psi_v(\alpha x_v\varpi_v^{-m_v})W_{\phi,v}\left(\begin{pmatrix}
	x_v\\
	&1
\end{pmatrix}\right).
\end{equation}
Substituting \eqref{16.} into \eqref{17.} we then derive that 
\begin{align*}
\Psi_v(s,\phi^{\ddagger},\chi)=\frac{1}{\tau(\chi_v)}\sum_{\alpha}\int_{F_v^{\times}}W_{\phi,v}\left(\begin{pmatrix}
	x_v\\
	&1
\end{pmatrix}\right)\chi_v(\alpha x_v)\psi_v(\alpha x_v\varpi_v^{-m_v})|x_v|_v^sd^{\times}x_v.
\end{align*}

Note that $W_{\phi,v}\left(\begin{pmatrix}
	x_v\\
	&1
\end{pmatrix}\right)=0$ unless $e_v(x_v)\geq 0.$ Moreover, 
\begin{equation}\label{21.}
\sum_{\alpha\in (\mathcal{O}_v/\varpi_v^{m_v}\mathcal{O}_v)^{\times}}\chi_v(\alpha x_v)\psi_v(\alpha x_v\varpi_v^{-m_v})=\begin{cases}
	\tau(\chi_v),\ &\text{if $e_v(x_v)=0,$}\\
	0,\ &\text{if $e_v(x_v)\geq 1$.}
\end{cases}
\end{equation}

Therefore, we have $\Psi_v(s,\phi^{\ddagger},\chi)=W_{\phi,v}(I_2)=W_{\phi,v}(I_2)L_v(s+1/2,\sigma_v\times\chi_v),$ which implies \eqref{15.} as $L_v(s+1/2,\sigma_v\times\chi_v)\equiv 1.$  
\end{proof}

\begin{lemma}\label{lem10.}
Let notation be as before. Let $\phi\in\sigma_{\lambda,\eta}$ be a pure tensor. Let $\varphi=E(\cdot,\phi,\lambda).$ Suppose that $\varphi^{\ddagger}\neq 0.$ Then for $v\in\Sigma_{F,\fin},$ $\Re(s)\geq 0,$ we have 
\begin{equation}\label{15.}
\Psi_v(s,\varphi^{\ddagger},\chi)=W_{\varphi,v}(I_2)L_v(s+1/2+\lambda,\eta_v\chi_v)L_v(s+1/2-\lambda,\eta_v^{-1}\chi_v\omega_v). 
\end{equation}
\end{lemma}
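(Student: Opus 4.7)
The plan is to run the proof of Lemma \ref{lem9} with the principal series $\sigma_{\lambda,\eta,v}$ in place of the cuspidal local component $\sigma_v$. The nonvanishing hypothesis $\varphi^{\ddagger}\neq 0$ forces the pure tensor $\phi$ to be right $\otimes_{v<\infty}K_v[n_v]$-invariant, so the local Whittaker function $W_{\varphi,v}$ of the Eisenstein series is right $K_v[n_v]$-invariant at every finite place. One then splits the computation of $\Psi_v(s,\varphi^{\ddagger},\chi)$ according to whether $v\nmid\mathfrak{Q}$ or $v\mid\mathfrak{Q}$.

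For $v\nmid\mathfrak{Q}$, the character $\chi_v$ is unramified and the unipotent-averaging operator entering the definition of $\varphi^{\ddagger}$ is trivial at $v$, so $\Psi_v(s,\varphi^{\ddagger},\chi)=\Psi_v(s,\varphi,\chi)$. I would then invoke the local theory for new vectors in the principal series $\sigma_{\lambda,\eta,v}=\Ind(\eta_v|\cdot|^{\lambda},\eta_v^{-1}\omega_v|\cdot|^{-\lambda})$ (Casselman--Shalika when all local data are spherical, and the newform computation of \cite{Jac09} otherwise) to obtain
\begin{equation*}
\Psi_v(s,\varphi,\chi)=W_{\varphi,v}(I_2)\,L_v(s+1/2,\sigma_{\lambda,\eta,v}\times\chi_v),
\end{equation*}
whose $L$-factor factorizes as $L_v(s+1/2+\lambda,\eta_v\chi_v)L_v(s+1/2-\lambda,\eta_v^{-1}\omega_v\chi_v)$ by the definition of $\sigma_{\lambda,\eta,v}$.

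For $v\mid\mathfrak{Q}$ the calculation is a verbatim copy of the second half of Lemma \ref{lem9}'s proof. Unfolding $\varphi^{\ddagger}$ and applying the Whittaker transformation
\begin{equation*}
W_{\varphi,v}\left(\begin{pmatrix}x_v\\&1\end{pmatrix}\begin{pmatrix}1&\alpha\varpi_v^{-m_v}\\&1\end{pmatrix}\right)=\psi_v(\alpha x_v\varpi_v^{-m_v})W_{\varphi,v}\left(\begin{pmatrix}x_v\\&1\end{pmatrix}\right),
\end{equation*}
the inner sum over $\alpha$ collapses by the Gauss sum identity \eqref{21.}, forcing $e_v(x_v)=0$; then right $K_v[n_v]$-invariance of $W_{\varphi,v}$, together with $\diag(x_v,1)\in K_v[n_v]$ for $x_v\in\mathcal{O}_v^{\times}$, reduces the integral to $W_{\varphi,v}(I_2)$.

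The step I expect to be the delicate one is matching this last expression to the right-hand side of \eqref{15.} at $v\mid\mathfrak{Q}$: one must show that both $L_v(s+1/2+\lambda,\eta_v\chi_v)$ and $L_v(s+1/2-\lambda,\eta_v^{-1}\omega_v\chi_v)$ equal $1$. I would argue by a local conductor analysis, namely that the existence of a $K_v[n_v]$-fixed vector in $\sigma_{0,\eta,v}$ bounds the arithmetic conductor $r_{\eta_v}+r_{\eta_v^{-1}\omega_v}$ by $n_v=\max\{r_{\pi_v},m_v+r_{\omega_v}\}$, and combined with $r_{\chi_v}=m_v\geq 1$ this rules out the possibility that either $\eta_v\chi_v$ or $\eta_v^{-1}\omega_v\chi_v$ is unramified, so both local $L$-factors are trivial, exactly as required.
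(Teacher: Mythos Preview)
Your approach is exactly what the paper intends: its proof of Lemma~\ref{lem10.} is the single sentence ``The proof is similar to that of Lemma~\ref{lem9}, so we omit it'', and you have written out precisely that---splitting $v\nmid\mathfrak{Q}$ (Casselman--Shalika or the newvector computation of \cite{JPSS81}, with the principal-series $L$-factor factoring as $L_v(s+1/2+\lambda,\eta_v\chi_v)L_v(s+1/2-\lambda,\eta_v^{-1}\omega_v\chi_v)$) from $v\mid\mathfrak{Q}$ (the Gauss-sum identity \eqref{21.} forcing $e_v(x_v)=0$ and reducing $\Psi_v$ to $W_{\varphi,v}(I_2)$).

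The one weak point is your final conductor argument for $L_v\equiv 1$ at $v\mid\mathfrak{Q}$, and it is worth flagging since it is the only place where you go beyond the paper. From $r_{\eta_v}+r_{\eta_v^{-1}\omega_v}\leq n_v=\max\{r_{\pi_v},m_v+r_{\omega_v}\}$ together with $m_v\geq 1$ you cannot conclude that $\eta_v\chi_v$ is ramified: take for instance $m_v=1$, $r_{\omega_v}=2$, $r_{\pi_v}=2$, so $n_v=3$; if $\eta_v$ is an unramified twist of $\chi_v^{-1}$ then $r_{\eta_v}=1$, $r_{\eta_v^{-1}\omega_v}=2$, the conductor constraint $1+2\leq 3$ is satisfied, yet $\eta_v\chi_v$ is unramified and its Euler factor is nontrivial. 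Note, however, that in the parallel step of Lemma~\ref{lem9} the paper itself simply asserts $L_v(s+1/2,\sigma_v\times\chi_v)\equiv 1$ without argument, so this imprecision is inherited rather than introduced by you; and it is harmless for the application in Theorem~\ref{thm6}, since any stray local Euler factor at $v\mid\mathfrak{Q}$ is bounded above and below for $\Re(s)\geq 0$ and is absorbed into the $(MQ)^{\varepsilon}$.
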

\begin{proof}
The proof is similar to that of Lemma \ref{lem9}, so we omit it.
\end{proof}


\subsection{Spectral Side: the lower bound}\label{sec3.6}
In this section, our goal is to prove Theorem \ref{thm6}. To do so, we will analyze the structure of $\pi$ and consider the three cases separately in \textsection\ref{3.6.1}--\textsection\ref{3.6.3}. 

\subsubsection{Stability of deformation}\label{stab}

Suppose $\pi=\eta\boxplus \omega\eta^{-1}$ is an Eisenstein series induced from a character $\eta$. In the continuous spectrum, the associated Rankin-Selberg period behaves approximately as follows:
$$
\int_{\mathbb{R}}\frac{|L(1/2+it,\eta\chi)|^2|L(1/2+it,\eta^{-1}\omega\chi)|^2}{|L(1+2it,\eta^2\omega^{-1})|^2}dt. 
$$ 

However, if $\eta^2=\omega$, then $L(1+2it,\eta^2\omega^{-1})$ approaches $0$ as $t$ approaches $0$. Therefore, for this particular type of $\pi$, we do not obtain $|L(1/2,\eta\chi)|^4$ from the spectral side. In this case, we can consider the deformation $|L(1/2+is_0,\pi\times\chi)|^2$ with the following properties:

\begin{itemize}
	\item $|s_0|$ is not too small, or else the denominator $|L(1+2is_0,\eta^2\omega^{-1})|^2=|\zeta(1+2is_0)|^2$ would be too large.
	\item $|L(1/2+is_0,\pi\times\chi)|$ has a similar magnitude to $|L(1/2,\pi\times\chi)|$.
\end{itemize}

We construct such an $s_0$ in the following lemma, which we will use in \textsection\ref{3.6.1} through \textsection\ref{3.6.3}. 
\begin{lemma}\label{lem3.6}
Let notation be as before. Let $\sigma$ be an automorphic representation of $\mathrm{GL}(2)/F.$ Suppose that $|L(1/2,\sigma\times\chi)|\geq 2.$ Let $\textbf{C}:=C(\sigma\times\chi)$ be the analytic conductor of $\sigma\boxtimes\chi.$ Then there exists some $d'\in [2^{-1}\exp(-3\sqrt{\log \textbf{C}}),\exp(-3\sqrt{\log \textbf{C}})]$ such that for all $s\in\mathbb{C}$ with $|s-1/2|=d',$ we have
\begin{equation}\label{11.1}
|L(1/2,\sigma\times\chi)|\ll \exp(\log^{3/4}\textbf{C})\cdot |L(s,\sigma\times\chi)|,
\end{equation} 
where the implied constant depends only on $F.$
\end{lemma}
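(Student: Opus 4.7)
The plan is to find $d'$ by a zero-avoidance argument, and then to establish the lower bound $|L(s,\sigma\times\chi)/L(1/2,\sigma\times\chi)| \gg \exp(-\log^{3/4}\textbf{C})$ via an explicit factorization of $L(s,\sigma\times\chi)$ near $s=1/2$. Set $\eta:=\exp(-3\sqrt{\log\textbf{C}})$, so the target interval is $[\eta/2,\eta]$.

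First I would count zeros. The standard convexity bound gives $|L(s,\sigma\times\chi)|\ll \textbf{C}^{A}$ uniformly for $|s-1/2|\leq 1/2$, with implied constant depending only on $F$. Combined with the hypothesis $|L(1/2,\sigma\times\chi)|\geq 2$, Jensen's formula on this disk yields $\int_{0}^{1/2}N(r)/r\,dr \ll \log\textbf{C}$, where $N(r)$ counts the zeros of $L(s,\sigma\times\chi)$ in $|s-1/2|\leq r$; in particular $N(r)\log(1/r)\ll\log\textbf{C}$, so $N(2\eta)\leq K$ with $K=O(\sqrt{\log\textbf{C}})$. Let $r_{1},\ldots,r_{K}$ denote the moduli $|\rho-1/2|$ of the zeros with $|\rho-1/2|\leq 2\eta$. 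The bad set $\bigcup_{j=1}^{K}[r_{j}-\eta/(10K),\,r_{j}+\eta/(10K)]$ has Lebesgue measure at most $\eta/5$, so one can choose $d'\in[\eta/2,\eta]$ with $|d'-r_{j}|\geq \eta/(10K)$ for every $j$.

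For this $d'$ I would invoke the Hadamard-style factorization
\[
\frac{L(s,\sigma\times\chi)}{L(1/2,\sigma\times\chi)} \;=\; \frac{H(s)}{H(1/2)} \prod_{\rho\in D} \frac{s-\rho}{1/2-\rho},
\]
valid on $D:=\{|s-1/2|\leq 1/2\}$, with $H$ holomorphic and nowhere vanishing on $D$, and then bound each factor on $|s-1/2|=d'$. Writing $(s-\rho)/(1/2-\rho)=1-(d'/r_{\rho})e^{i\alpha_{\rho}}$ with $r_{\rho}=|\rho-1/2|$, for the $K$ near zeros ($r_{\rho}\leq 2\eta$) the separation $|d'-r_{\rho}|\geq \eta/(10K)$ forces each factor to have modulus $\gg 1/K$, producing a total log-loss of $-O(K\log K)=-O(\sqrt{\log\textbf{C}}\,\log\log\textbf{C})$. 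For the far zeros ($r_{\rho}>2\eta$) I would use $\log|1-xe^{i\alpha}|=O(|x|)$ for $|x|\leq 1/2$ together with a dyadic decomposition by $r_{\rho}$ and the sharp Jensen count $N(r)\ll \log\textbf{C}/\log(1/r)$ to obtain $\sum_{r_{\rho}>2\eta} d'/r_{\rho}\ll \sqrt{\log\textbf{C}}$. Finally, Borel--Carath\'eodory applied to $\log H$ on $D$ yields $|\log H(s)-\log H(1/2)|\ll \eta\log\textbf{C}=o(1)$ on $|s-1/2|\leq\eta$, a negligible contribution.

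The hard part will be controlling the far-zero contribution: a trivial per-zero estimate, combined with the crude total count $N(1/2)=O(\log\textbf{C})$ of zeros in $D$, produces $O(\log\textbf{C})$ losses, which is far too weak. The dyadic decomposition leveraging the refined bound $N(r)\ll \log\textbf{C}/\log(1/r)$ is what reduces this to $O(\sqrt{\log\textbf{C}})$. Summing the three contributions then gives
\[
\log\bigl|L(s,\sigma\times\chi)/L(1/2,\sigma\times\chi)\bigr| \;\geq\; -O\bigl(\sqrt{\log\textbf{C}}\,\log\log\textbf{C}\bigr),
\]
which is bounded below by $-\log^{3/4}\textbf{C}$ for $\textbf{C}$ sufficiently large; the remaining bounded range of $\textbf{C}$ is absorbed into the implied constant, establishing \eqref{11.1}.
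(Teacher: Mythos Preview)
Your approach works, but it differs structurally from the paper's. The paper uses a \emph{two-scale} trick: it only factors out the zeros in the tiny disk $|\rho-1/2|\le e^{-2\sqrt{\log\mathbf{C}}}$ (there are $O(\sqrt{\log\mathbf{C}})$ of them, by Jensen on a disk of radius $e^{-\sqrt{\log\mathbf{C}}}$), chooses by pigeonhole \emph{two} radii $d\in[\tfrac12 e^{-2\sqrt{\log\mathbf{C}}},e^{-2\sqrt{\log\mathbf{C}}}]$ and $d'\in[\tfrac12 e^{-3\sqrt{\log\mathbf{C}}},e^{-3\sqrt{\log\mathbf{C}}}]$ each separated from those few zero-moduli, and applies Borel--Carath\'eodory to $\log(L/\mathbf{P})$ between the inner radius $d'$ and the outer radius $d$. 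The ratio $d'/d\sim e^{-\sqrt{\log\mathbf{C}}}$ kills the $O(\log\mathbf{C})$ coming from convexity and from the lower bound on $|\mathbf{P}(s_*)|$, and then $|\mathbf{P}(1/2)/\mathbf{P}(s)|$ on the circle $|s-1/2|=d'$ is controlled directly via the separation at scale $d'$. The point is that your ``far zeros'' never appear in this scheme: everything outside the tiny disk is absorbed wholesale into the Borel--Carath\'eodory step, so the dyadic sum and the refined Jensen count $N(r)\ll\log\mathbf{C}/\log(1/r)$ are never needed. Your route---factoring out all zeros in a macroscopic disk and estimating the resulting product by hand---is the classical Landau-style local factorization; it is correct but more laborious, and the step you flagged as the ``hard part'' is exactly what the two-scale device sidesteps. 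One caution on your $H$-step: applying Borel--Carath\'eodory to $\log H$ on the full disk $D$ requires bounding $\max_{\partial D}\log|H|$, which is not automatic since zeros with $r_\rho$ close to $1/2$ can make $\prod_\rho|s-\rho|$ small at nearby boundary points; the standard repair is to replace $(s-\rho)$ by Blaschke factors (so the product has modulus $1$ on $\partial D$) or to pigeonhole the outer Borel--Carath\'eodory radius as well---either way your conclusion survives.
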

\begin{proof}
Denote by $\mathcal{D}:=\big\{\rho\in\mathbb{C}:\ L(\rho,\sigma\times\chi)=0,\ |\rho-1/2|\leq \exp(-2\sqrt{\log\textbf{C}})\big\}.$ By Jensen's formula and the convexity bound for $L(s,\sigma\times\chi),$ we have 
 	\begin{align*}
 		\#\mathcal{D}\cdot \log (\exp(\sqrt{\log\textbf{C}}))\leq \max_{|s-1/2|\leq \exp(-\sqrt{\log\textbf{C}})}\log \frac{|L(s,\sigma\times\chi)|}{|L(1/2,\sigma\times\chi)|}\ll \log\textbf{C}, 	\end{align*} 	
 	where the implied constant depends only on $F$. Consequently, 
 	$$
 	\#\mathcal{D}\ll\sqrt{\log\textbf{C}},
 	$$ 
 	where the implied constant relies on $F.$ Dividing $\mathcal{D}$ into disjoint annuli, by Pigeonhole principle there exists some $d\in [2^{-1}\exp(-2\sqrt{\log\textbf{C}}),\exp(-2\sqrt{\log\textbf{C}})]$ such that for all $|s-1/2|=d$ and for all $\rho\in\mathcal{D},$ one has 
 	\begin{equation}\label{11.3.}
 	|s-\rho|\geq \frac{\exp(-2\sqrt{\log\textbf{C}})}{6\cdot (1+\#\mathcal{D})}.
 	\end{equation}
 	
Likewise, there exists some $d'\in [2^{-1}\exp(-3\sqrt{\log\textbf{C}}),\exp(-3\sqrt{\log\textbf{C}})]$ such that for all $|s-1/2|=d'$ and for all $\rho\in\mathcal{D},$ we have 
\begin{equation}\label{3.17}
 	|s-\rho|\geq \frac{\exp(-3\sqrt{\log\textbf{C}})}{6\cdot (1+\#\mathcal{D})},
 \end{equation}
 	
 	 Let $\mathcal{B}:=\{s:\ |s-1/2|\leq d\}$ and $\mathcal{B}':=\{s:\ |s-1/2|\leq d'\}.$

 	Set $
 		\mathbf{P}(s):=\prod_{\rho\in \mathcal{D}}(s-\rho).$ 	Then $\mathbf{P}$ is a polynomial of degree $\deg\mathbf{P}=\#\mathcal{D}.$ Define
 	$$
 	\mathbb{L}(s):=\log\frac{L(s,\sigma\times\chi)}{\mathbf{P}(s)},\ \ s\in\mathcal{B}_l.
 	$$
 	Then $\mathbb{L}(s)$ is holomorphic in $\mathcal{B}_l.$ In particular, it is holomorphic in $\mathcal{B}_l'.$ Therefore, by Borel-Carath\'{e}odory theorem, for all $s\in\mathcal{B}_l',$
 \begin{align*}
 		\big|\mathbb{L}(1/2)-\mathbb{L}(s)\big|\leq \frac{10\exp(-3\sqrt{\log\textbf{C}})\cdot \big[\max_{s'\in \mathcal{B}_l}\Re(\mathbb{L}(s'))-\Re(\mathbb{L}(1/2))\big]}{\exp(-2\sqrt{\log\textbf{C}})-\exp(-3\sqrt{\log\textbf{C}})}.
 		\end{align*}
 	
 	Let $s_*\in\partial \mathcal{B}_l$ be such that $\Re(\mathbb{L}(s_*))=\max_{s\in \mathcal{B}_l}\Re(\mathbb{L}(s)),$ where $\partial\mathcal{B}_l$ is the boundary of $\mathcal{B}_l$.  Then 
 	\begin{align*}
 	\max_{s'\in \mathcal{B}_l}\Re(\mathbb{L}(s'))-\Re(\mathbb{L}(1/2))=\log\Big|\frac{L(s_*,\sigma\times\chi)}{L(1/2,\sigma\times\chi)}\Big|+\log \Big|\frac{\mathbf{P}(1/2)}{\mathbf{P}(s_*)}\Big|.	
 	\end{align*}
 	
 	Using the trivial bound $|\mathbf{P}(1/2)|\leq 1$ and \eqref{11.3.}, 
 	\begin{align*}
 		\Big|\frac{\mathbf{P}(1/2)}{\mathbf{P}(s_*)}\Big|\ll \frac{1}{|\mathbf{P}(s_*)|}\ll \Big[6\cdot (1+\#\mathcal{D})\exp(2\sqrt{\log\textbf{C}})\Big]^{\deg \mathbf{P}}.
 	\end{align*}
 	Hence, we have, by $\#\mathcal{D}\ll \sqrt{\log\textbf{C}},$ that
 	$$
 	\log \Big|\frac{\mathbf{P}(1/2)}{\mathbf{P}(s_*)}\Big|\ll \deg \mathbf{P}\cdot 
 	\log \Big[6\cdot (1+\#\mathcal{D})\exp(2\sqrt{\log\textbf{C}})\Big]\ll \log\textbf{C},
 	$$
 	where the implied constant is absolute. Moreover, by convexity bound, 
 	$$
 	\log\Big|\frac{L(s_*,\sigma\times\chi)}{L(1/2,\sigma\times\chi)}\Big|\leq \log\Big|\frac{L(s_*,\sigma\times\chi)}{2}\Big|\ll \log |L(s_*,\sigma\times\chi)|\ll  \log\textbf{C},
 	$$
where the implied constant depends on $F.$ Therefore, 
 	\begin{align*}
 		\big|\mathbb{L}(1/2)-\mathbb{L}(s)\big|\ll \exp(-\sqrt{\log\textbf{C}})\cdot \log\textbf{C}\ll 1,
 		\end{align*}
 		which leads to 
 		\begin{align*}
 L(1/2,\sigma\times\chi)\ll  \big|\mathbf{P}(1/2)\big|\cdot \Big|\frac{L(s,\sigma\times\chi)}{\mathbf{P}(s)}\Big|,\ \ s\in \mathcal{B}_l'.
 		\end{align*}

By \eqref{3.17} there exists some  absolute constant $C$ such that for all $\rho\in\mathcal{D}$ and $s\in\partial\mathcal{B}_l',$
 	$$
 	\frac{|1/2-\rho|}{|s-\rho|}\leq 
 	\begin{cases}
 		1,&\ \text{if $|s-1/2|\leq d'/2,$}\\
 		C,&\ \text{if $d'/2\leq |s-1/2|\leq 3d'/4,$}\\
 		C\cdot (1+\#\mathcal{D}),&\ \text{if $3d'/4\leq |s-1/2|\leq 5d'/4,$}\\
 		C,&\ \text{if $5d'/4\leq |s-1/2|\leq d.$}
 		\end{cases}
 	$$
Note that 
$\deg\mathbf{P}=\#\mathcal{D}\leq C_2\sqrt{\log\textbf{C}}$ 
for some constant $C_2$ depending at most on $F.$ Hence, we have, uniformly for $s\in\partial\mathcal{B}_l',$ that 
\begin{align*}
	\Big|\frac{\mathbf{P}(1/2)}{\mathbf{P}(s)}\Big|\leq (C\log\textbf{C})^{C_2\sqrt{\log\textbf{C}}}\ll \exp(\log^{3/4}\textbf{C}),
\end{align*}
where the implied constant depends on $F.$ So \eqref{11.1} follows.
\end{proof}
\begin{remark}
Note that $d'$ may depend on $\sigma$ and $\chi.$
\end{remark}


\subsubsection{$\pi$ is cuspidal.}\label{3.6.1}
Suppose that $\pi$ is a cuspidal representation. Denote by $f_v^{\circ}:=\int_{Z(F_v)}\tilde{f}_{v}(zg)\omega_{v}(z)d^{\times}z.$ By Kirillov theorem (cf. e.g., \cite{Nel21}), 
\begin{equation}\label{37.}
T_v^{-1/2-\varepsilon}\ll_{\varepsilon}\int_{F_v^{\times}}(\pi_v(f_v^{\circ})(W_v\otimes \chi_v|\cdot|_v^{s_0}))\left(\begin{pmatrix}
	x_v\\
	&1
\end{pmatrix}
\right)d^{\times}x_v\ll_{\varepsilon} T_v^{-1/2+\varepsilon}
\end{equation}
for some $W_v$ in the Kirillov model of $\pi_v,$ where $T_v\asymp C(\pi_{v}\otimes\chi_v)^{1/2}$ is defined in \textsection\ref{2.1.2}. By definition \eqref{6.} in \textsection \ref{3.2.1}, we have 
\begin{align*}
\pi_v(f_v(\cdot,\chi_v))W_v\left(\begin{pmatrix}
	x_v\\
	&1
\end{pmatrix}
\right)\chi_v(x_v)|x_v|_v^{s_0}=(\pi_v(f_v^{\circ})(W_v\otimes \chi_v|\cdot|_v^{s_0}))\left(\begin{pmatrix}
	x_v\\
	&1
\end{pmatrix}
\right).
\end{align*}

Hence, $\Psi_v(s_0,\pi_v(f_v(\cdot,\chi_v))W_v,\chi_v)\gg_{\varepsilon} C(\pi_{v}\otimes\chi_v)^{-1/4-\varepsilon}$ for some $W_v$ in the Kirillov model of $\pi_v.$ Let $\phi\in\pi$ be a cusp form with Petersson norm $\langle\phi,\phi\rangle=1,$ and Whittaker function $W_{\phi}=\otimes_vW_{\phi,v}$ (defined by \eqref{whi}), such that $W_{\phi,v}=W_v,$ for all $v\mid\infty,$ and $W_{\phi,v}$ is $\prod_{v<\infty}K_v[r_{\chi_v}]$-invariant. Then 
\begin{align*}
\Psi_{v}(s_0,\phi^{\ddagger},\chi)=\Psi_v(s_0,\pi_v(f_v(\cdot,\chi_v))W_v,\chi_v)\gg_{\varepsilon} C(\pi_{v}\otimes\chi_v)^{-1/4-\varepsilon}.
\end{align*}

Together with Lemmas \ref{lem8} and \ref{lem9} we have
\begin{align*}
\mathcal{J}_{0}^{\heartsuit}(\boldsymbol{\alpha},\chi)\gg_{\varepsilon} \Big|\sum_{\mathfrak{m}\in\mathcal{L}}\alpha_{\mathfrak{m}}\mu_{\pi}(\mathfrak{m})\Big|^2\cdot C_{\infty}(\pi\otimes\chi)^{-\frac{1}{4}+\varepsilon}\cdot \big|W_{\fin}(I_2)L(1/2+s_0,\pi\times\chi)\big|^2.
\end{align*}
Therefore, \eqref{16..} follows from  the bound $|W_{\fin}(I_2)|\gg C(\pi)^{-\varepsilon}$ (cf. \cite{HL94}), and Lemma \ref{lem3.6}.

\subsubsection{$\pi$ is an Eisenstein series \RNum{1}}  
Now we consider the case that $\pi=\sigma_{\lambda',\eta'}$ for some $\lambda'\in i\mathbb{R}$ and $\eta'\in \widehat{F^{\times}\backslash\mathbb{A}_F^{(1)}}.$ Similar to \eqref{37.} in  \textsection\ref{3.6.1}, for  $v\mid\infty,$ there exists a Whittaker function $W_v$ in the Kirillov model of $\pi_{v}$ such that 
\begin{align*}
C(\pi_{v}\otimes\chi_v)^{-1/4-\varepsilon}\ll_{\varepsilon} \Psi_{v}(s_0,E(\cdot,\phi,\lambda)^{\ddagger},\chi)\ll_{\varepsilon} C(\pi_{v}\otimes\chi_v)^{-1/4+\varepsilon}.
\end{align*}

Let $\phi\in \mathfrak{B}_{\lambda',\eta'}$ be such that the Eisenstein series $\varphi=E(\cdot,\phi,\lambda')\in \pi$ satisfies that $W_{\varphi,v}=W_v,$ for all $v\mid\infty,$ and $\varphi$ is right $\prod_{v<\infty}K_v[n_v]$-invariant. Here $W_{\varphi,v}$ is the local Whittaker function (cf. \textsection\ref{sec3.3}).   

By Lemmas \ref{lem8} and \ref{lem10.} we have
\begin{equation}\label{28}
\mathcal{J}_{\Eis}^{\heartsuit}(\boldsymbol{\alpha},\chi)\gg \int_{i\mathbb{R}}h(\lambda;\eta')d\lambda,
\end{equation}
where the function $h(\lambda;\eta')$ is defined by
\begin{align*}
\Big|\sum_{v\in\mathcal{L}}\alpha_v\mu_{\sigma_{\lambda,\eta,v}}\Big|^2\cdot\big|\Psi_{\infty}(s_0,E(\cdot,\phi,\lambda)^{\ddagger},\chi)\big|^2\cdot \frac{|L(s_0'+\lambda,\eta'\chi)L(s_0'-\lambda,\eta'^{-1}\omega\chi)|^2}{|L(1+2\lambda,\eta'^2\omega^{-1})|^2},
\end{align*}
where $s_0':=1/2+s_0.$ Here we appeal to the calculation  (cf. e.g., \cite{Sha10}, \textsection 7) 
$$
W_{\varphi,\fin}(I_2)=\prod_{v<\infty}W_{\varphi,v}(I_2)=L(1+2\lambda,\eta'^2\omega^{-1})^{-1}
$$

Suppose $\lambda'\neq 0$ or $\omega\neq \eta'^2.$ Then $L(1+2\lambda,\eta'^2\omega^{-1})^{-1}\neq 0$ for all $\lambda\in i\mathbb{R}.$ Moreover, for $|\lambda-\lambda'|\leq |\lambda'|/2,$ we have $L(1+2\lambda,\eta'^2\omega^{-1})^{-1}\ll_{\varepsilon} C(\eta'^2\omega^{-1})^{\varepsilon}.$ In conjunction with the convex bound we obtain by \cite{Ram95} ( the inequality (1.2.3) in \textsection 1.2) that  
\begin{align*}
\int_{i\mathbb{R}}h(\lambda;\eta')d\lambda\gg h(\lambda';\eta')C(\eta'\chi)^{-\varepsilon}C(\eta'^{-1}\omega\chi)^{-\varepsilon},
\end{align*}
where the implied constant relies only on $F$, $\varepsilon,$ and $c_v, C_v,$ for all $v\mid\infty.$ So $\mathcal{J}_{\Eis}^{\heartsuit}(\boldsymbol{\alpha},\chi)$ is 
\begin{equation}\label{29} 
\gg \Big|\sum_{\mathfrak{m}\in\mathcal{L}}\alpha_{\mathfrak{m}}\mu_{\pi}(\mathfrak{m})\Big|^2\cdot (MQ)^{-\varepsilon}C_{\infty}(\pi\otimes\chi)^{-\frac{1}{4}+\varepsilon}\cdot \big|L(1/2+s_0,\pi\times\chi)\big|^2.
\end{equation}
Therefore, \eqref{16..} follows from \eqref{11.1} and \eqref{29}.
  
\subsubsection{$\pi$ is an Eisenstein series \RNum{2}}\label{3.6.3}
Suppose $\pi=\sigma_{\lambda',\eta'}$ for $\lambda'=0$ and $\eta'\in \widehat{F^{\times}\backslash\mathbb{A}_F^{(1)}}$ with $\omega=\eta'^2.$  Then $L(s,\pi\times\chi)=L(s,\eta'\chi)^2,$ $s\in\mathbb{C}.$ 

By assumption we have $|L(1/2,\eta'\chi)|=\sqrt{|L(1/2,\pi\times\chi)|}\geq 1.$ Then by \eqref{11.1},
\begin{equation}\label{30}
L(1/2,\eta'\chi)\ll \exp(\log^{3/4}C(\eta'\chi))\cdot |L(1/2+is_0,\eta'\chi)|,
\end{equation} 
where we recal $s_0\asymp  \exp(-2\sqrt{\log C(\eta'\chi)})$ is defined by Lemma \ref{lem3.6}.

Let $\pi^{\dagger}=\sigma_{is_0,\eta'}.$ Similar to the arguments that yields  \eqref{29} we have
\begin{align*}
\mathcal{J}_{\Eis}^{\heartsuit}(\boldsymbol{\alpha},\chi)\gg  (MQ)^{-\varepsilon}C_{\infty}(\pi\otimes\chi)^{-\frac{1}{4}+\varepsilon}\big|L(1/2+s_0+is_0,\pi^{\dagger}\times\chi)\big|^2\Big|\sum_{\mathfrak{m}\in\mathcal{L}}\alpha_{\mathfrak{m}}\mu_{\pi^{\dagger}}(\mathfrak{m})\Big|^2,
\end{align*}
where the implied constant relies only on $F$, $\varepsilon,$ and $c_v, C_v,$ for all $v\mid\infty.$ Together with \eqref{30}, we derive that $\mathcal{J}_{\Eis}^{\heartsuit}(\boldsymbol{\alpha},\chi)$ is 
\begin{equation}\label{33}
\gg\Big|\sum_{\mathfrak{m}\in\mathcal{L}}\alpha_{\mathfrak{m}}\mu_{\pi^{\dagger}}(\mathfrak{m})\Big|^2\cdot (MQ)^{-\varepsilon}C_{\infty}(\pi\otimes\chi)^{-\frac{1}{4}+\varepsilon}\cdot \big|L(1/2+s_0+is_0,\pi\times\chi)\big|^2.
\end{equation}
Therefore, \eqref{17..} follows from \eqref{33} and Lemma \ref{lem3.6}.

\section{The Geometric Side: the Orbital Integral $J^{\Reg}_{\Geo,\sm}(f,\textbf{s},\chi)$}\label{sec4}
Let $f=f(g;\mathbf{v},\mathbf{i})$ be constructed in  \textsection\ref{3.2.5}. By definition we have
\begin{align*}
J^{\Reg}_{\Geo,\sm}(f,\textbf{s},\chi)=\int_{\mathbb{A}_F^{\times}}\int_{\mathbb{A}_F^{\times}}f\left(\begin{pmatrix}
	y&b\\
	&1
\end{pmatrix}\right)\psi(xb)\overline{\chi}(y)|x|^{1+s_1+s_2}|y|^{s_2}d^{\times}xd^{\times}y,
\end{align*}
which is a Tate integral representing $\Lambda(1+s_1+s_2,\textbf{1}_F).$ Hence, $J^{\Reg}_{\Geo,\sm}(f,\textbf{s}_0,\chi)$ converges, where $s_0\in [4^{-1}\exp(-3\sqrt{\log C(\pi\times\chi)}),\exp(-3\sqrt{\log C(\pi\times\chi)})]$ is the deformation parameter defined by \eqref{eq2.1} in \textsection\ref{2.1.5.}, and $\mathbf{s}_0=(s_0,s_0).$

\begin{prop}\label{prop12}
Let notation be as before. Then 
\begin{align*}
J^{\Reg}_{\Geo,\sm}(f,\textbf{s}_0,\chi)\ll_{\varepsilon} \mathcal{N}_f^{-1}[M,M'Q]^{1+\varepsilon}C_{\infty}(\pi\otimes\chi)^{\frac{1}{4}+\varepsilon},
\end{align*} 
where $\mathcal{N}_f$ is defined by \eqref{61}, and the implied constant depends only on $F,$ $\varepsilon,$ and $c_v,$ $C_v$ at $v\mid\infty,$ cf. \textsection \ref{2.1.2}.
\end{prop}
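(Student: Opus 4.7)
The plan is to exploit the product structure $f = f(g;\mathbf{v},\mathbf{i}) = \otimes_v f_v$ to factorize $J^{\Reg}_{\Geo,\sm}(f,\mathbf{s}_0,\chi)$ as a product of local integrals, and then bound each local factor on a place-by-place basis. Writing the matrix as $\begin{pmatrix} y & b \\ & 1\end{pmatrix} = n(b)\cdot\diag(y,1)$ and applying Fubini (whose legitimacy at $\mathbf{s}_0$ follows from the Tate integral interpretation giving $\Lambda(1+s_1+s_2,\mathbf{1}_F)$ twisted by $\overline{\chi}$ in $y$), we obtain $J^{\Reg}_{\Geo,\sm}(f,\mathbf{s}_0,\chi) = \prod_{v\in\Sigma_F} J_v(\mathbf{s}_0,\chi_v)$ where
\[
J_v(\mathbf{s}_0,\chi_v) := \int_{F_v^{\times}}\!\int_{F_v^{\times}}\!\int_{F_v} f_v\!\left(\begin{pmatrix} y_v & b_v \\ & 1\end{pmatrix}\right)\psi_v(x_v b_v)\overline{\chi_v}(y_v)|x_v|_v^{1+2s_0}|y_v|_v^{s_0}\,db_v\,d^{\times}x_v\,d^{\times}y_v.
\]

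Next I would estimate each local factor according to the type of place (this is the content of \textsection\ref{4.1.1}--\textsection\ref{4.1.3}). \textbf{(i)} At $v\in\Sigma_{F,\fin}$ outside $\mathbf{v}$ with $\pi_v,\chi_v,\omega_v$ all unramified, $f_v$ is (up to normalization) the characteristic function of $Z(F_v)K_v$ and $J_v$ evaluates to the local zeta factor $\zeta_{F_v}(1+2s_0)$; the product over such $v$ is $O(1)$ at $\mathbf{s}_0$. \textbf{(ii)} At ramified finite $v \mid [M,M'Q]$, plug in \eqref{5.} and the double character-sum definition in \textsection\ref{2.2.2}. Changing variables to absorb the unipotent translations into $K_v[n_v]$, the Gauss-sum normalization $|\tau(\chi_v)|^{-2}$ combined with the measure of $K_v[n_v]$ contributes $J_v \ll_\varepsilon q_v^{n_v(1+\varepsilon)}$, whose product over $v$ yields the factor $[M,M'Q]^{1+\varepsilon}$. \textbf{(iii)} At amplification places $v\in\mathbf{v}$, the support condition $K_v\diag(\varpi_v^{a_v},\varpi_v^{-a_v})K_v$ in the definition of $f_{v_0,i_{v_0}}$, $f_{v_1}$, and $f_{v_2}^{*}$ forces $|y_v|_v$ into a fixed coset; together with the Hecke normalization $q_v^{-1/2}$ (resp.\ $q_v^{-i_{v_0}}$) this yields $J_v \ll q_v^{-a_v}$, which multiplies to $\mathcal{N}_f^{-1}$.

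\textbf{(iv)} At archimedean $v$, invoke the microlocal construction of \textsection\ref{3.2.1}. Expanding $f_v = f_v(\cdot,\chi_v)*f_v(\cdot,\chi_v)^{*}$ via \eqref{3.}--\eqref{6.} and applying Cauchy--Schwarz, it suffices to bound the integral of $|\tilde{f}_v|$ against the test twist over $n(b_v)\diag(y_v,1)$. The support estimate \eqref{245} confines $g$ to a $T_v^{-\varepsilon}$-neighborhood of $I_2$ with the coadjoint condition $\Ad^{*}(g)\tau = \tau + O(T_v^{-1/2+\varepsilon})$, and Proposition \ref{prop3.1} (with $r_v = T_v^{-1/2+\varepsilon}$) controls the transverse volume by $T_v^{-1/2+\varepsilon}/d_v(g)$. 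Combining this with the sup-norm bound \eqref{250} and integrating out the $b_v$ and $x_v$ coordinates using the oscillatory factor $\psi_v(x_v b_v)$ gives $J_v \ll_\varepsilon T_v^{1/2+\varepsilon} \asymp C_v(\pi\otimes\chi)^{1/4+\varepsilon}$. The main obstacle is precisely this archimedean estimate: naive bounds using $\|\tilde{f}_v\|_\infty$ alone give only $T_v^{1+\varepsilon}$, and recovering the square-root saving requires exploiting that the Fourier support of $\tilde{f}_v$ lies along a thin sliver transverse to the image of the diagonal torus $A(F_v)\tau$, as encoded in Proposition \ref{prop3.1}. Taking the product of (i)--(iv) and accounting for the deformation parameter $s_0 = O(\exp(-3\sqrt{\log C(\pi\times\chi)}))$, which contributes only a subpolynomial factor absorbed into $\varepsilon$, yields the claimed bound.
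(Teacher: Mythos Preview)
Your overall structure --- factorize into local integrals and bound place by place --- is exactly the paper's, and your finite-place analysis (i)--(iii) is correct and matches Lemmas~\ref{lem13} and~\ref{lem14}. The archimedean step (iv), however, invokes the wrong mechanism and, as written, has a gap.

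The paper's argument in \textsection\ref{4.1.3} does \emph{not} use Proposition~\ref{prop3.1} (transversality) or any Cauchy--Schwarz on the convolution $f_v = f_v(\cdot,\chi_v)*f_v(\cdot,\chi_v)^*$. The square-root saving comes instead from a Fourier localization of $x_v$: because $\tilde{f}_v^{\wedge}$ is concentrated near $\tau$ to width $T_v^{1/2+\varepsilon}$ along the orbit direction, the $b_v$-Fourier transform in $\mathcal{I}_v(x_v)$ is negligible unless $x_v T_v^{-1} - \gamma_v \ll T_v^{-1/2+\varepsilon}$, so $x_v$ is confined to an interval of length $T_v^{1/2+\varepsilon}$ around $T_v\gamma_v$. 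On that window one simply bounds $\mathcal{I}_v(x_v)$ by the sup-norm \eqref{250} times the measures of the $y_v$- and $b_v$-supports (each $\ll T_v^{-1/2+\varepsilon}$, read off directly from \eqref{245} and from rapid decay in $b_v$), obtaining $\mathcal{I}_v(x_v)\ll |x_v|_v^{1+2s_0}T_v^{\varepsilon}$; integrating over the $x_v$-window then produces the factor $T_v^{1/2+\varepsilon}\asymp C_v(\pi\otimes\chi)^{1/4+\varepsilon}$.

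Your proposed route via Proposition~\ref{prop3.1} has an order-of-operations problem: using transversality to bound the $y_v$-volume requires taking absolute values inside the integral, which destroys the oscillation $\psi_v(x_v b_v)$ and leaves the $x_v$-integral with no localization (it then diverges, since $|x_v|_v^{1+2s_0}d^{\times}x_v$ is not integrable at infinity). The paper reserves transversality for the orbital integrals carrying a nontrivial lower-left entry (Lemmas~\ref{lem14.}, \ref{lem5.4}, \ref{lem6.8}), where there is no competing oscillation to preserve; for the small cell the entire saving comes from the Fourier-side localization of $x_v$.
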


By definition, we have
\begin{align*}
J^{\Reg}_{\Geo,\sm}(f,\textbf{s}_0,\chi):=&\prod_{v\in\Sigma_{F}}\int_{F_v^{\times}}\mathcal{I}_v(x_v)d^{\times}x_v,
\end{align*}
where we define by 
\begin{equation}\label{34.}
\mathcal{I}_v(x_v):=|x_v|_v^{1+2s_0}\int_{F_v^{\times}}\int_{F_v}f_v\left(\begin{pmatrix}
	y_v&b_v\\
	&1
\end{pmatrix}\right)\psi_v(x_vb_v)\overline{\chi}_v(y_v)|y|_v^{s_0}db_vd^{\times}y_v.
\end{equation}

\subsection{Local Calculation at $v\in\Sigma_{F,\fin}-\{v:\ v\mid \nu(f)\}$}\label{4.1.1}
\begin{lemma}\label{lem13}
Let $v\in\Sigma_{F,\fin}-\{v:\ v\mid \nu(f)\}.$ Let $n_v=\max\{r_{\pi_v},r_{\chi_v}+r_{\omega_v}\}$. Then 
\begin{equation}\label{19}
\mathcal{I}_v(x_v)=|x_v|_v^{1+2s_0}\Vol(K_v[n_v])^{-1}N_{F_v}(\mathfrak{D}_{F_v})^{-1/2}\textbf{1}_{\mathfrak{D}_{F_v}}(x_v).\end{equation}
\end{lemma}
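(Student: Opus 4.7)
The plan is to unwind the explicit formula for the test function $f_v$ at a non-amplified finite place and evaluate the resulting integral step by step. Since $v\nmid\nu(f)$, the function $f_v$ is exactly the one built in \textsection\ref{2.2.2}: a double $\chi_v$-twisted unipotent average of $f_v(\cdot;\omega_v)$, which collapses to $f_v(\cdot;\omega_v)$ itself when $\chi_v$ is unramified ($m_v=0$). First I would substitute this double average into \eqref{34.}, then perform the change of variables $b_v\mapsto b_v-(\alpha+y_v\beta)\varpi_v^{-m_v}$ in the $b_v$-integral. This absorbs both unipotent translations into the matrix argument of $f_v(\cdot;\omega_v)$, at the cost of an extra phase $\psi_v(-x_v(\alpha+y_v\beta)\varpi_v^{-m_v})$ multiplying the integrand.

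Next I would pin down the support. Writing $\begin{pmatrix} y_v & b_v \\ & 1 \end{pmatrix}=z_vk_v$ with $z_v\in Z(F_v)$ and $k_v\in K_v[n_v]$, the conditions that all entries of $k_v$ lie in $\mathcal{O}_v$ and that $\det k_v\in\mathcal{O}_v^\times$ force, through a short valuation count, $z_v\in Z(\mathcal{O}_v)$, $y_v\in\mathcal{O}_v^\times$, and $b_v\in\mathcal{O}_v$. On this support, $f_v(\cdot;\omega_v)$ equals $\Vol(\overline{K_v[n_v]})^{-1}$ times a unit-valued twist in $y_v$ coming from $\omega_v(z_v)^{-1}\omega_v(E_{2,2}(k_v))^{-1}$.

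With the support clarified, the remaining integrals separate. The $b_v$-integral reduces to $\int_{\mathcal{O}_v}\psi_v(x_vb_v)\,db_v$, which by self-duality of the measure with respect to $\psi_v$ equals $N_{F_v}(\mathfrak{D}_{F_v})^{-1/2}\mathbf{1}_{\mathfrak{D}_{F_v}}(x_v)$ under the paper's conventions. In the unramified case $m_v=0$ the $y_v$-integral is simply $\int_{\mathcal{O}_v^\times}\overline{\chi}_v(y_v)|y_v|_v^{s_0}\,d^\times y_v=\Vol(\mathcal{O}_v^\times)$, and multiplying everything out yields \eqref{19} directly. In the ramified case $m_v\geq 1$, I would combine the phase $\psi_v(-x_v\alpha\varpi_v^{-m_v})$ with the $\alpha$-sum to produce a Gauss sum $\tau(\chi_v)$ times an explicit character value in $x_v$, and similarly combine the $\beta$-sum with the $y_v$-integral on $\mathcal{O}_v^\times$ to extract another factor of $\tau(\chi_v)$. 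The normalization $|\tau(\chi_v)|^{-2}$ sitting in front cancels the $|\tau(\chi_v)|^{2}$, and the combined valuation and character conditions collapse to the same indicator $\mathbf{1}_{\mathfrak{D}_{F_v}}(x_v)$ coming from the $b_v$-integration.

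The main technical obstacle is the bookkeeping in the ramified case $m_v\geq 1$: one must verify that the valuation condition from the $b_v$-integral and the narrower constraints produced by the two Gauss-sum collapses align exactly with the stated indicator, and that the Gauss-sum phases cancel into a unit so the overall scalar is precisely $\Vol(K_v[n_v])^{-1}N_{F_v}(\mathfrak{D}_{F_v})^{-1/2}$. A minor secondary issue is reconciling $\Vol(K_v[n_v])$ in the statement with the $\Vol(\overline{K_v[n_v]})$ that enters through the definition of $f_v(\cdot;\omega_v)$, which requires passing carefully through the central quotient $Z(F_v)\backslash G(F_v)$.
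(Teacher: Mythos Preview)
Your proposal is correct and follows essentially the same route as the paper's proof: determine the support by forcing $z_v,y_v\in\mathcal{O}_v^\times$ and $b_v\in\mathcal{O}_v$, shift $b_v$ to absorb the two unipotent translations into a phase, evaluate $\int_{\mathcal{O}_v}\psi_v(x_vb_v)\,db_v$, and collapse the remaining $\alpha,\beta$ sums via the Gauss sum identity \eqref{21.}. The only cosmetic difference is that the paper first rescales $\beta\mapsto y_v^{-1}\beta$ (so the phase becomes symmetric, $\psi_v(-x_v(\alpha+\beta)\varpi_v^{-m_v})$, and the $y_v$-integral drops out before the Gauss sums are taken), whereas you keep the $y_v\beta$ in the phase and pair the $\beta$-sum with the $y_v$-integral; both orderings lead to the same squared-Gauss-sum factor and cancellation against $|\tau(\chi_v)|^{-2}$.
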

\begin{proof}
By definition, we have $ f_v\left(\begin{pmatrix}
	y&b\\
	&1 
\end{pmatrix}\right)\neq 0$ iff there exists $z\in F_v^{\times}$ such that 
$$
z\begin{pmatrix}
	1&\alpha \varpi_v^{-m_v}\\
	&1
\end{pmatrix}\begin{pmatrix}
	y&b\\
	&1
\end{pmatrix}\begin{pmatrix}
	1&\beta \varpi_v^{-m_v}\\
	&1
\end{pmatrix}\in K_v[n_v],
$$ 
which implies that $z, y\in \mathcal{O}_v^{\times}.$ Changing variable $\beta\mapsto y^{-1}\beta,$ $\mathcal{I}_v(x_v)$ becomes
\begin{align*}
\frac{|x_v|_v^{1+2s_0}|\tau(\chi_v)|^{-2}}{\Vol(K_v[n_v])}\sum_{\alpha,\beta}\int \textbf{1}_{K_v[n_v]}\left(\begin{pmatrix}
	1&b+(\alpha +\beta) \varpi_v^{-m_v}\\
	&1
\end{pmatrix}\right)\psi_v(x_vb)\chi_v(\alpha)\overline{\chi}_v(\beta)db,
\end{align*}
where $b\in F_v,$ $\alpha,\beta\in (\mathcal{O}_v/\varpi_v^{m_v}\mathcal{O}_v)^{\times}$ with $m_v=r_{\chi_v}.$ After a further change variables $b\mapsto b-\alpha \varpi_v^{-m_v}-\beta \varpi_v^{-m_v},$  the above integral is equal to 
\begin{align*}
&\frac{|x_v|_v^{1+2s_0}|\tau(\chi_v)|^{-2}}{\Vol(K_v[n_v])}\sum_{\alpha,\beta}\psi_v(-x_v(\alpha \varpi_v^{-m_v}+\beta \varpi_v^{-m_v}))\chi_v(\alpha)\overline{\chi}_v(\beta)\int_{\mathcal{O}_v}\psi_v(x_vb)db.
\end{align*}

Since $\int_{\mathcal{O}_v}\psi_v(x_vb)db=N_{F_v}(\mathfrak{D}_{F_v})^{-1/2}\textbf{1}_{\mathfrak{D}_{F_v}^{-1}}(x_v),$ we then obtain that
\begin{align*}
\mathcal{I}_v(x_v)=&\frac{|x_v|_v^{1+2s_0}\Vol(K_v[n_v])^{-1}}{|\tau(\chi_v)|^2N_{F_v}(\mathfrak{D}_{F_v})^{1/2}}\textbf{1}_{\mathfrak{D}_{F_v}^{-1}}(x_v)\Big|\sum_{\alpha\in (\mathcal{O}_v/\varpi_v^{m_v}\mathcal{O}_v)^{\times}}\psi_v(x_v\alpha \varpi_v^{-m_v})\chi_v(\alpha)\Big|^2.
\end{align*}

Hence, \eqref{19} follows from  properties of Gauss sums (i.e., \eqref{21.}).
\end{proof}

\subsection{Local Calculation at $v\mid \nu(f)$}\label{4.1.2}
Note that $\mathcal{I}_v(x_v)=\mathcal{I}_v(\varpi_v^{r_1}),$ where $r_1=e_v(x_v).$ Let $e_v(y)=r_2,$ and $b=\varpi_v^m\gamma,$ $\gamma\in \mathcal{O}_v^{\times}.$ Since $f_v$ is bi-$K_v$-invariant, we have 
\begin{align*}
\mathcal{I}_v(x_v)=q_v^{-r_1^{\dagger}}\textbf{1}_{r_1\geq 0}\sum_{r_2\in\mathbb{Z}}\overline{\chi}_v(\varpi_v^{r_2})\sum_{m\in\mathbb{Z}}q_v^{-m}f_v\left(\begin{pmatrix}
	\varpi_v^{r_2}&\varpi_v^m\\
	&1
\end{pmatrix}\right)\int_{\mathcal{O}_v^{\times}}\psi_v(\varpi_v^{r_1+m}\gamma)d^{\times}\gamma,
\end{align*} 
where $r_1^{\dagger}:=r_1(1+2s_0).$

\begin{lemma}\label{lem14}
Let notation be as before. Let $r_1^{\dagger}:=r_1(1+2s_0).$ Then 
\begin{align*}
\mathcal{I}_v(x_v)=&c_{v,1}q_v^{-r_1^{\dagger}}\textbf{1}_{r_1\geq 1}-c_{v,1}\zeta_{F_v}(1)q_v^{-1}\textbf{1}_{r_1=0}+q_v^{-r_1^{\dagger}-1}\zeta_{F_v}(1)\overline{\chi}_v(\varpi_v^{2})\textbf{1}_{r_1\geq 0}\\
&\qquad +c_{v,1}q_v^{-r_1^{\dagger}+1}\zeta_{F_v}(1)\overline{\chi}_v(\varpi_v^{-2})\textbf{1}_{r_1\geq 2}
\end{align*}
if $v\in\textbf{v}_0;$ and if $v\in\textbf{v}_1\sqcup \textbf{v}_2,$
\begin{align*}
\mathcal{I}_v(x_v)=q_v^{-r_1^{\dagger}-1/2}\zeta_{F_v}(1)\overline{\chi}_v(\varpi_v)\textbf{1}_{r_1\geq 0}+q_v^{1/2-r_1^{\dagger}}\chi_v(\varpi_v)\zeta_{F_v}(1)\textbf{1}_{r_1\geq 1}.
\end{align*}
\end{lemma}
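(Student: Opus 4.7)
The plan is to evaluate $\mathcal{I}_v(x_v)$ directly from the displayed formula just above the lemma by combining two elementary inputs: (i) a support analysis pinning down, for each of $f_{v_0, i_{v_0}}$, $f_{v_1}$, $f_{v_2}^{*}$, the pairs $(r_2, m) \in \mathbb{Z}^2$ for which $g_{r_2, m} := \begin{pmatrix} \varpi_v^{r_2} & \varpi_v^m \\ & 1 \end{pmatrix}$ lies in the support of $f_v$, together with the corresponding value of $f_v(g_{r_2, m})$; and (ii) the standard evaluation of the Ramanujan-type integral
$$
J(r) := \int_{\mathcal{O}_v^{\times}} \psi_v(\varpi_v^r \gamma)\, d^{\times}\gamma = \begin{cases} 1 & \text{if } r \geq 0, \\ -\zeta_{F_v}(1) q_v^{-1} & \text{if } r = -1, \\ 0 & \text{if } r \leq -2, \end{cases}
$$
valid since $\psi_v$ is unramified at such $v$. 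Plugging these into $\mathcal{I}_v$ and summing the resulting finite geometric series in $m$ (truncated by $J$) against $\overline{\chi}_v(\varpi_v^{r_2})$ will assemble the claimed closed form.

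For $v \in \mathbf{v}_0$, the relevant support is $Z(F_v) K_v \diag(\varpi_v^{i_{v_0}}, \varpi_v^{-i_{v_0}}) K_v$, and I would write $z = u \varpi_v^c$ with $u \in \mathcal{O}_v^{\times}$ and impose the two constraints defining the Cartan cell $(-i_{v_0}, i_{v_0})$: the determinant condition $r_2 + 2c = 0$ (so $r_2$ is even) and the minimal-entry-valuation condition $c + \min(r_2, m, 0) = -i_{v_0}$. For $i_{v_0} = 1$ these pick out exactly three loci, namely $\{r_2 = -2,\ m \geq -2\}$ with $c = 1$, the singleton $\{(r_2, m) = (0, -1)\}$ with $c = 0$, and $\{r_2 = 2,\ m \geq 0\}$ with $c = -1$, while for $i_{v_0} = 0$ one gets the single locus $\{r_2 = 0,\ m \geq 0\}$ with $c = 0$. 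Since $\omega_v$ is unramified, the unit integral over $\mathcal{O}_v^{\times}$ in the definition of $f_{v_0, i_{v_0}}$ contributes $\omega_v(\varpi_v)^c \cdot \Vol(\mathcal{O}_v^{\times})$ on each locus. Feeding these values into the displayed expression, the $m$-sum on each locus is either an infinite geometric series summing to $\zeta_{F_v}(1)$ or a finite tail of one; combining the $i_{v_0} = 0$ and $i_{v_0} = 1$ contributions through the Hecke relation $\mathcal{T}_v * \mathcal{T}_v^{*} = c_{v, 0} \mathbf{1}_{K_v} + c_{v, 1} q_v^{-1} \mathbf{1}_{K_v \diag(\varpi_v, \varpi_v^{-1}) K_v}$ recalled in \textsection\ref{3.2.4} gives the four-term expression stated.

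For $v \in \mathbf{v}_1 \sqcup \mathbf{v}_2$ the analysis is analogous and simpler: the support is $Z(F_v) K_v \diag(\varpi_v, 1) K_v$, the Cartan-cell equations become $r_2 + 2c = 1$ and $c + \min(r_2, m, 0) = 0$, and one finds only the two loci $\{r_2 = -1,\ m \geq -1\}$ with $c = 1$ and $\{r_2 = 1,\ m \geq 0\}$ with $c = 0$, giving the two-term expression. The main (modest) obstacle throughout is the case-by-case bookkeeping for small $r_1$: at $r_1 = 0$ and $r_1 = 1$ the nonvanishing value $J(-1) = -\zeta_{F_v}(1) q_v^{-1}$ produces delicate cancellations among the geometric-series contributions that are precisely responsible for the indicator functions $\mathbf{1}_{r_1 \geq 2}$ and $\mathbf{1}_{r_1 \geq 1}$ (and the isolated $\mathbf{1}_{r_1 = 0}$ term) appearing in the final formula. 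Once these cancellations are tabulated term-by-term, the remainder of the proof is routine algebraic simplification.
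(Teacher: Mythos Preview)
Your overall plan---support analysis on the Cartan cell, evaluation of the Ramanujan-type integral $J(r)$, and then summing the resulting geometric series---is exactly what the paper does, and your identification of the relevant loci $(r_2,m)$ for each of $f_{v_0,1}$, $f_{v_1}$, $f_{v_2}^*$ matches the paper's.

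There is one genuine misreading you should correct. You write that one must ``combine the $i_{v_0}=0$ and $i_{v_0}=1$ contributions through the Hecke relation'' to obtain the four-term expression. This is not how the lemma is set up: the test function $f=f(\cdot;\mathbf{v},\mathbf{i})$ carries a \emph{fixed} $i_{v_0}\in\{0,1\}$ at each $v\in\mathbf{v}_0$, and $\mathcal{I}_v(x_v)$ is computed for that fixed choice. The Hecke relation is only used much later (in \textsection\ref{3.7.3}) when summing over $\mathfrak{m}_1,\mathfrak{m}_2$. The displayed four-term formula in the lemma is precisely the $i_{v_0}=1$ case; the paper dispatches $i_{v_0}=0$ in one line by observing that then $\supp f_v=Z(F_v)K_v$ and the computation reduces to Lemma~\ref{lem13}. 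So you should treat the two values of $i_{v_0}$ separately, not combine them.

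Apart from this, your anticipation that the $J(-1)$ contributions force the indicators $\mathbf{1}_{r_1\geq 1}$ and $\mathbf{1}_{r_1\geq 2}$ via cancellations at small $r_1$ is exactly right and is the only nontrivial bookkeeping in the argument.
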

\begin{proof}
Since $f=f(g;\mathbf{v},\mathbf{i}),$ we shall consider the following cases. 
\begin{itemize}
	\item Suppose $v\in \textbf{v}_0.$ If $i_v=0,$ then $\supp f_v=Z(F_v)K_v,$ in which case we have $\mathcal{I}_v(x)=|x_v|_v\Vol(K_v[n_v])^{-1}\textbf{1}_{\mathcal{O}_v-\{0\}}(x)$ as in Lemma \ref{lem13}. Now we assume that $i_v=1.$ Note that 
\begin{align*}
z\begin{pmatrix}
	\varpi_v^{r_2}&\varpi_v^m\\
	&1
\end{pmatrix}\in K_v\begin{pmatrix}
	\varpi_v\\
	&\varpi_v^{-1}
\end{pmatrix}K_v
\end{align*} 
for some $z\in F_v^{\times}$ if and only if $r_2=0,$ $m=-1,$ or $r_2=2,$ $m\geq 0,$ or $r_2=-2,$ $m\geq -2.$ 

\begin{itemize}
	\item Suppose $r_2=0.$ The contribution to $\mathcal{I}_v(x_v)$ is  
\begin{align*}
q_v^{1-r_1^{\dagger}}\textbf{1}_{r_1\geq 0}f_v\left(\begin{pmatrix}
	1&\varpi_v^{-1}\\
	&1
\end{pmatrix}\right)\int_{\mathcal{O}_v^{\times}}\psi_v(\varpi_v^{r_1-1}\gamma)d^{\times}\gamma=\begin{cases}
	c_{v,1}q_v^{-r_1^{\dagger}}, &\text{if $r_1\geq 1,$}\\
	-c_{v,1}\zeta_{F_v}(1)q_v^{-1},& \text{if $r_1=0.$}
\end{cases}
\end{align*}
\item Suppose $r_2=2.$ The contribution to $\mathcal{I}_v(x_v)$ is 
\begin{align*}
&q_v^{-r_1^{\dagger}}\textbf{1}_{r_1\geq 0}\overline{\chi}_v(\varpi_v^{2})q_v^{-2s_0}\sum_{m\geq 0}q_v^{-m}f_v\left(\begin{pmatrix}
	\varpi_v^{2}&\varpi_v^m\\
	&1
\end{pmatrix}\right)\int_{\mathcal{O}_v^{\times}}\psi_v(\varpi_v^{r_1+m}\gamma)d^{\times}\gamma\\
=&q_v^{-r_1^{\dagger}-2s_0-1}\textbf{1}_{r_1\geq 0}\overline{\chi}_v(\varpi_v^{2})\sum_{m\geq 0}q_v^{-m}\int_{\mathcal{O}_v^{\times}}\psi_v(\varpi_v^{r_1+m}\gamma)d^{\times}\gamma,
\end{align*}
which is equal to $q_v^{-r_1^{\dagger}-2s_0-1}\zeta_{F_v}(1)\textbf{1}_{r_1\geq 0}\overline{\chi}_v(\varpi_v^{2}).$
\item Suppose $r_2=-2.$ The contribution to $\mathcal{I}_v(x_v)$ is 
\begin{align*}
&q_v^{-r_1^{\dagger}+2s_0}\textbf{1}_{r_1\geq 0}\overline{\chi}_v(\varpi_v^{-2})\sum_{\substack{m\geq -1-r_1\\ m\geq -2}}q_v^{-m}f_v\left(\begin{pmatrix}
	\varpi_v^{-2}&\varpi_v^m\\
	&1
\end{pmatrix}\right)\int_{\mathcal{O}_v^{\times}}\psi_v(\varpi_v^{r_1+m}\gamma)d^{\times}\gamma\\
=&q_v^{-r_1^{\dagger}+2s_0-1}\textbf{1}_{r_1\geq 0}\overline{\chi}_v(\varpi_v^{-2})\sum_{m\geq \max\{-1-r_1,-2\}}q_v^{-m}\int_{\mathcal{O}_v^{\times}}\psi_v(\varpi_v^{r_1+m}\gamma)d^{\times}\gamma\\
=&c_{v,1}q_v^{-r_1^{\dagger}+2s_0+1}\zeta_{F_v}(1)\overline{\chi}_v(\varpi_v^{-2})\textbf{1}_{r_1\geq 2}.
\end{align*}
\end{itemize}

\item Suppose $v\in\textbf{v}_1\sqcup\textbf{v}_2.$
Without loss of generality, we consdier the case that $v\in\textbf{v}_1.$ Note that 
\begin{align*}
z\begin{pmatrix}
	\varpi_v^{r_2}&\varpi_v^m\\
	&1
\end{pmatrix}\in K_v\begin{pmatrix}
	\varpi_v\\
	&1
\end{pmatrix}K_v
\end{align*} 
for some $z\in F_v^{\times}$ if and only if $r_2=1,$ $m\geq 0,$ or $r_2=-1,$ $m\geq -1.$ 
\begin{itemize}
	\item Suppose $r_2=1.$ The contribution to $\mathcal{I}_v(x_v)$ is 
	\begin{align*}
		q_v^{-r_1^{\dagger}-1/2}\overline{\chi}_v(\varpi_v)\textbf{1}_{r_1\geq 0}\sum_{m\geq 0}q_v^{-m}=q_v^{-r_1^{\dagger}-1/2}\zeta_{F_v}(1)\overline{\chi}_v(\varpi_v)\textbf{1}_{r_1\geq 0}.
	\end{align*}
	\item Suppose $r_2=-1.$ The contribution to $\mathcal{I}_v(x_v)$ is 
	\begin{align*}
	q_v^{-r_1^{\dagger}+s_0-1/2}\textbf{1}_{r_1\geq 0}\overline{\chi}_v(\varpi_v^{-1})\sum_{m\geq -1}q_v^{-m}\int_{\mathcal{O}_v^{\times}}\psi_v(\varpi_v^{r_1+m}\gamma)d^{\times}\gamma,	
\end{align*}
which is equal to $q_v^{1/2-r_1^{\dagger}+s_0}\chi_v(\varpi_v)\zeta_{F_v}(1)\textbf{1}_{r_1\geq 1}.$
\end{itemize}
\end{itemize}
Therefore, Lemma \ref{lem14} follows.
\end{proof}

\subsection{Local estimates at archimedean places}\label{4.1.3}
Let $v\mid \infty.$ Recall that\begin{align*}
\mathcal{I}_v(x_v)=|x_v|_v^{1+2s_0}\int_{F_v^{\times}}\int_{F_v}f_v\left(\begin{pmatrix}
	y_v&b_v\\
	&1
\end{pmatrix}\right)\psi_v(x_vb_v)\overline{\chi}_v(y_v)|y|_v^{s_0}db_vd^{\times}y_v.
\end{align*}

By the construction of $f$ we have $\mathcal{I}_v(x_v)\neq 0$ unless $x_vT_v^{-1}-\gamma_v\ll_{\varepsilon} T_v^{-1/2+\varepsilon},$ where $\gamma_v$ is determined by $\tau\in\hat{\mathfrak{g}}$, cf. \textsection\ref{3.2.1}. Moreover, by decaying of Fourier transform of $f_v,$ $f_v\left(\begin{pmatrix}
	y_v&b_v\\
	&1
\end{pmatrix}\right)\ll T_v^{-\infty}$ if $|b_v|_v\gg T_v^{-1/2+\varepsilon}.$ Together with \eqref{250},
\begin{equation}\label{45}
\mathcal{I}_v(x_v)\ll |x_v|_v^{1+2s_0}\textbf{1}_{x_vT_v^{-1}-\gamma_v\ll_{\varepsilon} T_v^{-1/2+\varepsilon}}\cdot T_{v}^{1+\varepsilon}\cdot T_v^{-1/2+\varepsilon}\cdot T_v^{-1/2+\varepsilon}+O(T_v^{-\infty}),
\end{equation}
where the factor $T_{v}^{1+\varepsilon}$ comes from the sup-norm estimate (cf. \eqref{250}), the first factor $T_v^{-1/2+\varepsilon}$ comes from the range of $y_v$ according to the support of $f_v$ (cf. \eqref{245}), and the second $T_v^{-1/2+\varepsilon}$ comes from the essential range of $b_v,$ i.e., $|b_v|_v\ll T_v^{-1/2+\varepsilon}.$ In particular, the implied constant in \eqref{45} depends only on $F_v,$ $\varepsilon,$ and $c_v,$ $C_v$ at $v\mid\infty.$

\subsection{Proof of Proposition \ref{prop12}}\label{4.1.4}
Let $V_v:=\Vol(K_v[n_v])$ and $V:=\prod_{v\mid [M,M'Q]}V_v.$ From the local calculations in \textsection\ref{4.1.1}--\textsection\ref{4.1.3}, one can write $J^{\Reg}_{\Geo,\sm}(f,\textbf{s}_0,\chi)$ as 
\begin{align*}
&\int_{\mathbb{A}_F^{\times}} \prod_{v\mid\infty}\mathcal{I}_v(x_v)\prod_{v\mid\nu(f)}\mathcal{I}_v(x_v)\prod_{\substack{v\nmid\mathfrak{Q}\\ v\nmid \nu(f)}}\frac{|x_v|_v^{1+2s_0}\textbf{1}_{\mathcal{O}_v-\{0\}}(x_v)}{V_v}\prod_{v\mid\mathfrak{Q}}V_v^{-1}\textbf{1}_{\mathcal{O}_v^{\times}}(x_v)d^{\times}x.
\end{align*}

Invoking Lemmas \ref{lem13} and  \ref{lem14} with the estimate \eqref{45} we derive that 
\begin{align*}
J^{\Reg}_{\Geo,\sm}(f,\textbf{s}_0,\chi)\ll_k \mathcal{N}_f^{-1}V^{-1}L(1+2s_0,\textbf{1}_F)\prod_{v\mid\infty}\int_{x_vT_v^{-1}-\gamma_v\ll_{\varepsilon} T_v^{-1/2+\varepsilon}} |x_{v}|_{v}^{2s_0}dx_{v},
\end{align*}
which is $\ll s_0^{-1}\mathcal{N}_f^{-1}V^{-1}\prod_{v\mid\infty}T_v^{1/2+\varepsilon}.$ Then Proposition \ref{prop12} follows from the volume calculation:
\begin{align*}
V^{-1}=[M,M'Q]\prod_{v\mid [M,M'Q]}(1+q_v^{-1})\ll_{\varepsilon} [M,M'Q]^{1+\varepsilon}.
\end{align*}

\section{The Geometric Side: the Orbital Integral $J_{\Geo,\du}^{\bi}(f,\textbf{s},\chi)$}\label{sec5}
Let $\mathbf{s}=(s_1,s_2)\in\mathbb{C}^2$ with $\Re(s_1+s_2)>1.$ Recall \eqref{16.} in \textsection\ref{sec3.7}:
\begin{align*}
J_{\Geo,\du}^{\bi}(f,\textbf{s},\chi):=\int_{\mathbb{A}_F^{\times}}\int_{\mathbb{A}_F^{\times}}f\left(\begin{pmatrix}
	1\\
	x&1
\end{pmatrix}\begin{pmatrix}
	y\\
	&1
\end{pmatrix}\right)|x|^{s_1+s_2}|y|^{s_2}\overline{\chi}(y)d^{\times}yd^{\times}x,
\end{align*}
which is a Tate integral representing $\Lambda(s_1+s_2,\textbf{1}_F).$ By Poisson summation, the function $J^{\Reg}_{\Geo,\du}(f,\textbf{s},\chi)$ admits a meromorphic continuation to $\mathbf{s}\in\mathbb{C}^2.$ Explicitly,
\begin{align*}
J^{\Reg}_{\Geo,\du}(f,\textbf{s},\chi)=J^{\Reg,+}_{\Geo,\du}(f,\textbf{s},\chi)+J^{\Reg,\wedge}_{\Geo,\du}(f,\textbf{s},\chi)+J^{\Reg,\Res}_{\Geo,\du}(f,\textbf{s},\chi),
\end{align*}
where 
\begin{align*}
J^{\Reg,+}_{\Geo,\du}(f,\textbf{s},\chi):=&\int_{|x|\geq 1}\int_{\mathbb{A}_F^{\times}}f\left(\begin{pmatrix}
	1\\
	x&1
\end{pmatrix}\begin{pmatrix}
	y\\
	&1
\end{pmatrix}\right)|x|^{s_1+s_2}|y|^{s_2}\overline{\chi}(y)d^{\times}yd^{\times}x;
\end{align*}
the function $J^{\Reg,\wedge}_{\Geo,\du}(f,\textbf{s},\chi)$ is defined by 
\begin{align*}
\int_{|x|\geq 1}\int_{\mathbb{A}_F^{\times}}\int_{\mathbb{A}_F}f\left(\begin{pmatrix}
	1\\
	b&1
\end{pmatrix}\begin{pmatrix}
	y\\
	&1
\end{pmatrix}\right)\psi(bx)|x|^{1-s_1-s_2}|y|^{s_2}\overline{\chi}(y)dbd^{\times}yd^{\times}x;
\end{align*}
and $J^{\Reg,\Res}_{\Geo,\du}(f,\textbf{s},\chi)=J^{\Reg,\Res,1}_{\Geo,\du}(f,\textbf{s},\chi)+J^{\Reg,\Res,2}_{\Geo,\du}(f,\textbf{s},\chi),$ with 
\begin{align*}
J^{\Reg,\Res,1}_{\Geo,\du}(f,\textbf{s},\chi):=&\frac{1}{s_1+s_2-1}\int_{\mathbb{A}_F^{\times}}\int_{\mathbb{A}_F}f\left(\begin{pmatrix}
	1\\
	b&1
\end{pmatrix}\begin{pmatrix}
	y\\
	&1
\end{pmatrix}\right)db|y|^{s_2}\overline{\chi}(y)d^{\times}y,\\
J^{\Reg,\Res,2}_{\Geo,\du}(f,\textbf{s},\chi):=&-\frac{1}{s_1+s_2}\int_{\mathbb{A}_F^{\times}}f\left(\begin{pmatrix}
	y\\
	&1
\end{pmatrix}\right)|y|^{s_2}\overline{\chi}(y)d^{\times}y.
\end{align*}
Here $\int_{|x|\geq 1}$ represents the integral over the set $\{x\in\mathbb{A}_F:\ |x|_{\mathbb{A}_F}\geq 1\}.$ 

The integrals $J^{\Reg,+}_{\Geo,\du}(f,\textbf{s},\chi)$ and $J^{\Reg,\wedge}_{\Geo,\du}(f,\textbf{s},\chi)$ converges everywhere and thus define holomorphic functions in $\mathbb{C}^2.$ The functions $(s_1+s_2-1)J^{\Reg,\Res,1}_{\Geo,\du}(f,\textbf{s},\chi)$ and $(s_1+s_2)J^{\Reg,\Res,2}_{\Geo,\du}(f,\textbf{s},\chi)$ converges absolutely in $\mathbb{C}^2$.  

Recall that $s_0\in [4^{-1}\exp(-3\sqrt{\log C(\pi\times\chi)}),\exp(-3\sqrt{\log C(\pi\times\chi)})]$ is the parameter defined by \eqref{eq2.1} in \textsection\ref{2.1.5.}. 
\begin{prop}\label{prop17}
Let notation be as before. Then 
\begin{equation}\label{41}
J^{\Reg,\wedge}_{\Geo,\du}(f,\textbf{s}_0,\chi)\ll s_0^{-1}\mathcal{N}_f^{-1+2s_0}[M,M'Q]^{1+\varepsilon}C_{\infty}(\pi\otimes\chi)^{\frac{1}{4}+\varepsilon},
\end{equation}
where the implied constant depends only on $F,$ $\varepsilon,$ and $c_v,$ $C_v$ at $v\mid\infty,$ cf. \textsection \ref{2.1.2}.
\end{prop}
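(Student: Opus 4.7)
The plan is to imitate closely the proof of Proposition \ref{prop12}. Since $f$ is a pure tensor and the lower-triangular matrix product simplifies to a single matrix, the inner $(b,y)$-integration factors as an Euler product over $v \in \Sigma_F$. Concretely,
\begin{align*}
J^{\Reg,\wedge}_{\Geo,\du}(f,\mathbf{s}_0,\chi) = \int_{|x|_{\mathbb{A}_F}\geq 1} |x|^{1-2s_0} \prod_v \mathcal{H}_v(x_v) \, d^{\times}x,
\end{align*}
where, for each $v \in \Sigma_F$,
\begin{align*}
\mathcal{H}_v(x_v) := \int_{F_v^{\times}} \int_{F_v} f_v\!\left(\begin{pmatrix} y_v & 0 \\ b_v y_v & 1 \end{pmatrix}\right) \psi_v(b_v x_v)\overline{\chi}_v(y_v) |y_v|_v^{s_0} \, db_v \, d^{\times}y_v.
\end{align*}
At each finite $v \nmid \nu(f)$, I would repeat the Gauss-sum manipulation of Lemma \ref{lem13}: expanding the double $(\alpha,\beta)$-average defining $f_v$, changing variables in $b_v$, and invoking the orthogonality $\sum_{\alpha \in (\mathcal{O}_v/\varpi_v^{m_v}\mathcal{O}_v)^{\times}} \chi_v(\alpha)\psi_v(\alpha\varpi_v^{-m_v}x_v) = 0$ unless $e_v(x_v) \geq -d_v$ (with $\mathfrak{D}_{F_v} = \varpi_v^{d_v}\mathcal{O}_v$), I expect $\mathcal{H}_v(x_v)$ to collapse to $\Vol(K_v[n_v])^{-1} N_{F_v}(\mathfrak{D}_{F_v})^{-1/2} \textbf{1}_{\mathfrak{D}_{F_v}^{-1}}(x_v)$ after performing the $d^{\times}y_v$-integration.

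At each $v \mid \nu(f)$, I would adapt the case analysis of Lemma \ref{lem14}. The support condition
\begin{align*}
z\begin{pmatrix} y_v & 0 \\ b_v y_v & 1 \end{pmatrix} \in K_v \diag(\varpi_v^{a},\varpi_v^{-a}) K_v
\end{align*}
is related to the one appearing in Lemma \ref{lem14} by the outer automorphism $g \mapsto {}^{t}g^{-1}$, so the same enumeration of $(e_v(y_v), e_v(b_v))$-strata occurs, with the roles of $\chi_v(\varpi_v^{\pm 1})$ and $\overline{\chi}_v(\varpi_v^{\pm 1})$ swapped. Crucially, the weight $|x|^{1-2s_0}$ in the outer integration replaces each $q_v^{-r_1(1+2s_0)}$ in Lemma \ref{lem14} by $q_v^{-r_1(1-2s_0)}$; summing over the enumeration and multiplying over all places $v \mid \nu(f)$ then produces the factor $\mathcal{N}_f^{-1+2s_0}$ in place of the $\mathcal{N}_f^{-1}$ of Proposition \ref{prop12}. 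At archimedean $v$, I would apply the microlocal estimates \eqref{245}, \eqref{2.7}, and Proposition \ref{prop3.1}: restricting $\tilde{f}_v$ to the slice parameterized by $(y_v, b_v)$ and observing that $d_v(g) \asymp |b_v|_v$ on this slice, the essential $b_v$-support lies in $|b_v|_v \ll T_v^{-1/2+\varepsilon}$, so that the Fourier twist $\psi_v(b_v x_v)$ forces $|x_v|_v \asymp T_v$ with effective window width $T_v^{1/2+\varepsilon}$. Integrating $|x_v|_v^{-2s_0} d^{\times}x_v$ over this window contributes $T_v^{1/2-2s_0+\varepsilon} \ll T_v^{1/2+\varepsilon}$, which multiplies to $C_{\infty}(\pi \otimes \chi)^{1/4+\varepsilon}$ over all $v \mid \infty$.

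Combining these local pieces with the ramified-volume estimate $\prod_{v\mid [M,M'Q]} \Vol(K_v[n_v])^{-1} \ll [M,M'Q]^{1+\varepsilon}$ and the bound $\zeta_F(1+2s_0) \ll s_0^{-1}$ arising from the unramified-factor product in the $y$-integration, I obtain \eqref{41}. The hard part will be the bookkeeping at places $v \mid \nu(f)$: one must verify that the cross-terms arising when $b_v$ is \emph{not} small (so that the matrix lies in a non-trivial Hecke double coset) either cancel against the character sums or combine favorably with the Fourier twist $\psi_v(b_v x_v)$, without disrupting the $\mathcal{N}_f^{-1+2s_0}$ normalization. This is the step at which Lemma \ref{lem14}'s enumeration must be revisited most carefully, since the relevant Fourier inversions give geometric series in $q_v$ whose convergence depends delicately on the sign of $s_0$.
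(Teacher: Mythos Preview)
Your global strategy and your handling of the places $v\mid\nu(f)$ and $v\mid\infty$ are correct and match the paper: at $v\mid\nu(f)$ the transpose-inverse invariance of $f_v$ gives $|x_v|_v^{1+2s_0}\mathcal{H}_v(x_v)=\mathcal{I}_v(x_v)$ exactly, and at $v\mid\infty$ the microlocal bound \eqref{45} applies verbatim.

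The gap is at the finite places $v\nmid\nu(f)$. The Gauss-sum manipulation of Lemma~\ref{lem13} does \emph{not} transfer to the lower-triangular matrix $\big(\begin{smallmatrix} y_v & 0\\ b_vy_v & 1\end{smallmatrix}\big)$. In Lemma~\ref{lem13} the $(\alpha,\beta)$-translations act additively on the $(1,2)$-entry, so a shift $b_v\mapsto b_v-(\alpha+\beta)\varpi_v^{-m_v}$ decouples $(\alpha,\beta)$ from $b_v$ and produces the clean Gauss sum. Here $\alpha,\beta$ enter the $(1,1)$, $(1,2)$ and $(2,2)$ entries \emph{multiplicatively} through $\alpha b_v\varpi_v^{-m_v}$ and $\beta b_v\varpi_v^{-m_v}$, so no additive shift in $b_v$ separates them. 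Moreover the congruence condition in $K_v[n_v]$ now hits the $(2,1)$-entry, forcing $b_v\in\varpi_v^{n_v}\mathcal{O}_v$ rather than $\mathcal{O}_v$; your claimed output $\textbf{1}_{\mathfrak{D}_{F_v}^{-1}}(x_v)$ is therefore wrong both in support and in normalization whenever $n_v\geq 1$. The paper instead fixes $e_v(b_v)=m\geq n_v$, solves for $\beta$ in terms of $\alpha$ from the $(1,2)$-entry, and is left with the character sum $G(m)=\sum_{\alpha}\chi_v(1+\alpha\varpi_v^{m-m_v})$ paired with a Ramanujan sum $R(m,x_v)$; only the trivial bound $G(m)\ll q_v^{m_v}$ together with the vanishing of $R(m,x_v)$ for $m<-e_v(x_v)-1$ yields the usable estimate $\mathcal{H}_v(x_v)\ll \Vol(K_v[n_v])^{-1}|x_v|_v^{-1}$.

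A secondary error: the factor $s_0^{-1}$ does not come from the $y$-integration (which at every finite $v$ is over $\mathcal{O}_v^{\times}$ and contributes no zeta factor) but from the outer $x$-integral, where the weight $|x|^{-2s_0}$ against the lattice sum over $x_{\fin}$ produces $\sum_{\mathfrak{n}}N_F(\mathfrak{n})^{-1+2s_0}\ll s_0^{-1}$.
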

\begin{proof}
By definition we have
\begin{align*}
J^{\Reg,\wedge}_{\Geo,\du}(f,\textbf{s}_0,\chi):=&\int_{|x|\geq 1}|x|^{1-2s_0}\prod_{v\in\Sigma_F}\mathcal{J}_v(x_v)d^{\times}x,
\end{align*}
where we define by 
\begin{align*}
\mathcal{J}_v(x_v):=\int_{F_v^{\times}}\int_{F_v}f_v\left(\begin{pmatrix}
	1\\
	b_v&1
\end{pmatrix}\begin{pmatrix}
	y_v\\
	&1
\end{pmatrix}\right)\psi_v(b_vx_v)|y_v|_v^{s_0}\overline{\chi}_v(y_v)db_vd^{\times}y_v.
\end{align*}

\begin{itemize}
	\item At $v\nmid\nu(f),$ by definition $f_v\left(\begin{pmatrix}
	1\\
	b_v&1
\end{pmatrix}\begin{pmatrix}
	y_v\\
	&1
\end{pmatrix}\right)=0$ unless  
\begin{align*}
z_v\begin{pmatrix}
	1&\alpha \varpi_v^{-m_v}\\
	&1
\end{pmatrix}\begin{pmatrix}
	1\\
	b_v&1
\end{pmatrix}\begin{pmatrix}
	y_v\\
	&1
\end{pmatrix}\begin{pmatrix}
	1&\beta \varpi_v^{-m_v}\\
	&1
\end{pmatrix}\in K_v[n_v]
\end{align*}
for some $\alpha,\beta\in (\mathcal{O}_v/\varpi_v^{m_v}\mathcal{O}_v)^{\times},$ and $z_v\in F_v^{\times},$ i.e., 
\begin{equation}\label{36}
z_v\begin{pmatrix}
	y_v+\alpha b_vy_v\varpi_v^{-m_v}&(y_v+\alpha b_vy_v\varpi_v^{-m_v})\beta \varpi_v^{-m_v}+\alpha \varpi_v^{-m_v}\\
	b_vy_v&1+\beta b_vy_v\varpi_v^{-m_v}
\end{pmatrix}\in K_v[n_v].
\end{equation}

Analyzing the $(2,1)$-th entry of the matrix on the LHS of \eqref{36} yields that $e_v(z_v)+e_v(b_v)+e_v(y_v)\geq n_v\geq m_v.$ Hence an investigation of the $(1,1)$-th and $(2,2)$-th entry leads to  
\begin{align*}
\begin{cases}
e_v(y_v)+2e_v(z_v)=0\\
e_v(z_v)+e_v(y_v)\geq 0,\ \ e_v(z_v)\geq 0.
\end{cases}
\end{align*}
As a consequence, $e_v(z_v)=0,$ i.e., $z_v\in\mathcal{O}_v^{\times}.$ So $e_v(y_v)=0,$ $e_v(b_v)\geq n_v.$ 
 
Hence we have $f_v\left(\begin{pmatrix}
	1\\
	b_v&1
\end{pmatrix}\begin{pmatrix}
	y_v\\
	&1
\end{pmatrix}\right)=\textbf{1}_{\mathcal{O}_v^{\times}}(y_v)\textbf{1}_{\varpi_v^{n_v}\mathcal{O}_v}(b_v).$ After a change of variable (i.e., $\beta\mapsto y_v^{-1}\beta$),
\begin{align*}
\mathcal{J}_v(x_v)=\frac{|\tau(\chi_v)|^{-2}}{\Vol(K_v[n_v])}\sum_{\alpha,\beta}\int_{\varpi_v^{n_v}\mathcal{O}_v} \textbf{1}_{K_v[n_v]}\left(X_v\right)\psi_v(b_vx_v)\chi_v(\alpha)\overline{\chi}_v(\beta)db_v,
\end{align*}
where $X_v$ denotes the matrix
\begin{align*}
\begin{pmatrix}
	1+\alpha b_v\varpi_v^{-m_v}&(1+\alpha b_v\varpi_v^{-m_v})\beta \varpi_v^{-m_v}+\alpha \varpi_v^{-m_v}\\
	b_v&1+\beta b_v\varpi_v^{-m_v}
\end{pmatrix}.
\end{align*}

Note that $\textbf{1}_{K_v[n_v]}\left(X_v\right)\neq 0$ unless $(1+\alpha b_v\varpi_v^{-m_v})\beta +\alpha\in \varpi_v^{m_v}\mathcal{O}_v.$ Hence, 
\begin{align*}
\mathcal{J}_v(x_v)=\frac{|\tau(\chi_v)|^{-2}\chi_v(-1)}{\Vol(K_v[n_v])}\sum_{\alpha\in (\mathcal{O}_v/\varpi_v^{m_v}\mathcal{O}_v)^{\times}}\int_{\varpi_v^{n_v}\mathcal{O}_v} \psi_v(b_vx_v)\chi_v(1+\alpha b_v\varpi_v^{-m_v})db_v.
\end{align*}

Write $b_v=\varpi_v^m\gamma_v,$ $\gamma_v\in\mathcal{O}_v^{\times}.$ Changing the variable $\alpha\mapsto \gamma_v^{-1}\alpha,$  
\begin{align*}
\mathcal{J}_v(x_v)=\frac{|\tau(\chi_v)|^{-2}\chi_v(-1)}{\Vol(K_v[n_v])\zeta_{F_v}(1)}\sum_{m\geq n_v}q_v^{-m}G(m)R(m,x_v).
\end{align*}
where $G$ is the character sum
\begin{align*}
G(m):=\sum_{\alpha\in (\mathcal{O}_v/\varpi_v^{m_v}\mathcal{O}_v)^{\times}}\chi_v(1+\alpha \varpi_v^{m-m_v}),
\end{align*}
and $R(m,x_v)$ is the Ramanujan sum 
\begin{align*}
R(m,x_v):=\int_{\mathcal{O}_v^{\times}} \psi_v(\gamma_v\varpi_v^mx_v)d^{\times}\gamma_v.
\end{align*}

Applying the trivial bound $G(m)\ll q_v^{m_v},$ and $R(m,x_v)=0$ if $m<-e_v(x_v)-1,$ $R(m,x_v)\ll q_v^{-1}$ if $m=-e_v(x_v)-1,$ and $R(m,x_v)=1$ if $m\geq -e_v(x_v),$ we then deduce that 
\begin{align*}
\mathcal{J}_v(x_v)\ll \frac{|\tau(\chi_v)|^{-2}}{\Vol(K_v[n_v])}\cdot q_v^{n_v}\cdot |x_v|_v^{-1}=\Vol(K_v[n_v])^{-1}|x_v|_v^{-1}.
\end{align*}

\item When $v\mid\nu(f),$ $f_v$ is invariant under the transpose inverse. So 
$$
|x_v|_v^{1+2s_0}\mathcal{J}_v(x_v)=\mathcal{I}_v(x_v),
$$ 
which is defined by \eqref{34.}  and is calculated by Lemma \ref{lem14} in \textsection\ref{4.1.2}. In particular, $\mathcal{J}_v(x_v)=0$ unless $e_v(x_v)\geq -e_v(\mathfrak{N}_f).$ 
\item At $v\mid\infty,$ we have, by \eqref{45} in \textsection\ref{4.1.3}, that  
$$ 
\mathcal{J}_v(x_v)\ll \textbf{1}_{x_vT_v^{-1}-\gamma_v\ll_{\varepsilon} T_v^{-1/2+\varepsilon}}+O(|x_v|_v^{-1-2s_0}T_v^{-\infty}).
$$  
\end{itemize}

For $x=\otimes_vx_v$ with $e_v(x_v)\geq n_v-e_v(\mathfrak{N}_f)$ at all $v<\infty,$ we may write 
$$
x_{\fin}=\otimes_{v<\infty}x_v\in\mathfrak{n}\mathfrak{N}_f^{-2}\prod_{v<\infty}\mathfrak{p}_v^{n_v}
$$ 
for a unique integral ideal $\mathfrak{n}$ determined by $x_{\fin}.$ From the constraint $|x|_{\mathbb{A}_F}\geq 1$ and $|x_{\infty}|_{\infty}\ll 1$ (from the support of $f_{\infty}$) we have $N_F(\mathfrak{n})\ll \mathcal{N}_f^2[M,M'Q]^{-1}.$ 

Gathering the above estimates together we obtain  
\begin{align*}
J^{\Reg,\wedge}_{\Geo,\du}(f,\textbf{s}_0,\chi)\ll & \int_{\mathbb{A}_F^{\times}}|x|^{1+2s_0}\prod_{v\in\Sigma_F}\mathcal{J}_v(x_v)d^{\times}x\\
\ll & C_{\infty}(\pi\otimes\chi)^{\frac{1}{4}+\varepsilon}V^{-1}\mathcal{N}_f^{-1+2s_0+\varepsilon}\sum_{\substack{\mathfrak{n}\subseteq \mathcal{O}_F\\ N_F(\mathfrak{n})\ll [M,M'Q]\mathcal{N}_f^2}}\frac{1}{N_F(\mathfrak{n})^{1-2s_0}},
\end{align*}
where $V:=\prod_{v\mid [M,M'Q]}\Vol(K_v[n_v]),$ and the implied constant depends only on $F,$ $\varepsilon,$ and $c_v,$ $C_v$ at $v\mid\infty,$ cf. \textsection \ref{2.1.2}. Therefore, \eqref{41} follows.
\end{proof}

\begin{lemma}\label{lem14.}
Let notation be as before. Then
\begin{equation}\label{48} 
J^{\Reg,+}_{\Geo,\du}(f,\textbf{s}_0,\chi)\ll C_{\infty}(\pi\otimes\chi)^{\varepsilon}[M,M'Q]^{\varepsilon}\mathcal{N}_f^{1+2s_0},
\end{equation}
where the implied constant depends only on $F,$ $\varepsilon,$ and $c_v,$ $C_v$ at $v\mid\infty,$ cf. \textsection \ref{2.1.2}.
\end{lemma}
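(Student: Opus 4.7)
The plan is to mirror the factorization strategy of Proposition \ref{prop17}, but now for the non-Fourier-transformed integrand. I write
$$J^{\Reg,+}_{\Geo,\du}(f,\textbf{s}_0,\chi)=\int_{|x|\geq 1}|x|^{2s_0}\prod_{v\in\Sigma_F}\mathcal{J}^+_v(x_v)\,d^\times x,\qquad \mathcal{J}^+_v(x_v):=\int_{F_v^\times}f_v\left(\begin{pmatrix}y_v&\\ x_vy_v&1\end{pmatrix}\right)|y_v|_v^{s_0}\overline{\chi}_v(y_v)\,d^\times y_v,$$
and bound each $\mathcal{J}^+_v(x_v)$ locally. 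At a finite place $v\nmid\nu(f)$, the same matrix-entry inspection of $z_v^{-1}\begin{pmatrix}1&\alpha\varpi_v^{-m_v}\\&1\end{pmatrix}\begin{pmatrix}y_v&\\x_vy_v&1\end{pmatrix}\begin{pmatrix}1&\beta\varpi_v^{-m_v}\\&1\end{pmatrix}\in K_v[n_v]$ used in Proposition \ref{prop17} shows the integrand is nonzero only when $y_v\in\mathcal{O}_v^\times$ and $e_v(x_v)\geq n_v$. Orthogonality of $\chi_v$ in the $(\alpha,\beta)$-sum at $v\mid\mathfrak{Q}$, combined with triviality of $\chi_v$ on $\mathcal{O}_v^\times$ elsewhere, yields the clean bound $|\mathcal{J}^+_v(x_v)|\ll \Vol(K_v[n_v])^{-1}\mathbf{1}_{e_v(x_v)\geq n_v}$.

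At a place $v\mid\nu(f)$ I would carry out the analogue of the Cartan-decomposition case analysis of Lemma \ref{lem14}. The support $Z(F_v)K_v\diag(\varpi_v^{i_v},\varpi_v^{-i_v})K_v$ of $f_v$ forces $e_v(y_v)$ into the finite set $\{\pm i_v\}$ (for $v\in\mathbf{v}_0$) or $\{\pm 1\}$ (for $v\in\mathbf{v}_1\sqcup\mathbf{v}_2$), and constrains $e_v(x_v)\geq -e_v(\mathfrak{N}_f)$; combining with the $q_v^{-i_v}$ or $q_v^{-1/2}$ normalization of $f_v$ and the integrated weight $|y_v|_v^{s_0}\chi_v(y_v)^{-1}$, I obtain $|\mathcal{J}^+_v(x_v)|\ll q_v^{e_v(\mathfrak{N}_f)/2}\mathbf{1}_{e_v(x_v)\geq -e_v(\mathfrak{N}_f)}$, which supplies the full factor $\mathcal{N}_f$. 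At archimedean places, the sup-norm bound \eqref{250} applied together with Proposition \ref{prop3.1} (with transversality parameter $r_v=T_v^{-1/2+\varepsilon}$) and the support condition $y_v=1+O(T_v^{-\varepsilon})$ gives $|\mathcal{J}^+_v(x_v)|\ll T_v^{\varepsilon}\mathbf{1}_{|x_v|_v\ll T_v^{-\varepsilon}}$, contributing the factor $C_\infty(\pi\otimes\chi)^{\varepsilon}$.

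Assembling the local pieces and parametrizing $x_{\fin}\in\mathfrak{a}\cdot\mathfrak{N}_f^{-1}\prod_{v<\infty}\mathfrak{p}_v^{n_v}$ by a unique integral ideal $\mathfrak{a}\subseteq\mathcal{O}_F$, the condition $|x|_{\mathbb{A}_F}\geq 1$ combined with $|x_\infty|_\infty\ll 1$ (from the archimedean support) forces $N_F(\mathfrak{a})\ll \mathcal{N}_f[M,M'Q]^{-1}$. Using the trivial estimate $|x|^{2s_0}\ll C(\pi\otimes\chi)^{\varepsilon}$ (since $s_0\asymp \exp(-3\sqrt{\log C(\pi\times\chi)})$ is negligibly small) together with the volume calculation $V^{-1}=\prod_{v\mid[M,M'Q]}\Vol(K_v[n_v])^{-1}\ll [M,M'Q]^{1+\varepsilon}$ and the standard partial-sum estimate $\sum_{N_F(\mathfrak{a})\leq X}N_F(\mathfrak{a})^{-1-2s_0}\ll X^{\varepsilon}$, the desired bound \eqref{48} follows after absorbing $[M,M'Q]$ into $\mathcal{N}_f^{1+2s_0}$ via the range $N_F(\mathfrak{a})\ll \mathcal{N}_f[M,M'Q]^{-1}$.

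The main obstacle is the careful bookkeeping at $v\mid\nu(f)$: unlike in Proposition \ref{prop17}, one cannot exploit cancellation through a Ramanujan-sum in a Fourier variable $b_v$, so the matching exponent $1+2s_0$ of $\mathcal{N}_f$ must come directly from the interplay of the Cartan double-coset support of $f_v$ with the $y_v$-integration. This requires verifying by hand in each sub-case ($v\in\mathbf{v}_0$ with $i_{v_0}\in\{0,1\}$, $v\in\mathbf{v}_1$, $v\in\mathbf{v}_2$) that the contribution is precisely $q_v^{e_v(\mathfrak{N}_f)/2}$ times a bounded factor, with no extraneous gain or loss.
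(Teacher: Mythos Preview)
Your overall factorization matches the paper's, but the local bound at the amplification places $v\mid\nu(f)$ has the wrong sign in the exponent, and this error is what forces the unjustified $N_F(\mathfrak{a})^{-1}$ weight in your final sum. The Hecke normalizations in \textsection\ref{3.2.4} give $\|f_v\|_\infty\ll q_v^{-e_v(\mathfrak{N}_f)/2}$, and since the $y_v$-integration (with the \emph{multiplicative} measure $d^\times y_v$) ranges over a bounded number of cosets $\varpi_v^j\mathcal{O}_v^\times$, each of $d^\times$-volume $1$, the correct estimate is $|\mathcal{J}^+_v(x_v)|\ll q_v^{-e_v(\mathfrak{N}_f)/2}$, globally $\mathcal{N}_f^{-1}$ rather than your $\mathcal{N}_f$. (For $v\in\mathbf{v}_0$ with $i_{v_0}=1$ one in fact finds $e_v(y_v)\in\{-2,0,2\}$, not $\{\pm 1\}$; this is harmless.) This $\mathcal{N}_f^{-1}$ is exactly the bound the paper records.

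With your stated archimedean estimate $T_v^\varepsilon\mathbf{1}_{|x_v|_v\ll 1}$, the integral $\int_{|x_{\fin}|^{-1}\leq |x_\infty|\ll 1}T^\varepsilon d^\times x_\infty$ is only polylogarithmic in $|x_{\fin}|$, so no $N_F(\mathfrak{a})^{-1}$ weight appears; combined with the incorrect factor $\mathcal{N}_f$ you would obtain at best $\mathcal{N}_f^{3+\varepsilon}$. The paper supplies the weight $N_F(\mathfrak{n})^{-1}$ by using \eqref{2.7} more fully: it keeps the factor $\min\{1,T_v^{-1/2+\varepsilon}/d_v(g)\}$ with $d_v(g)\asymp|x_v|_v$, whose $x_\infty$-integration yields $(N_F(\mathfrak{n})[M,M'Q])^{-1}\mathcal{N}_f^2$. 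Note, however, that once the correct $\mathcal{N}_f^{-1}$ is in place, even your cruder archimedean bound together with the trivial count $\#\{\mathfrak{n}:N_F(\mathfrak{n})\ll\mathcal{N}_f^2/[M,M'Q]\}$ already yields $[M,M'Q]^\varepsilon\mathcal{N}_f^{1+\varepsilon}$; the essential repair is the sign at $v\mid\nu(f)$. (Also, your range should read $N_F(\mathfrak{a})\ll\mathcal{N}_f^2[M,M'Q]^{-1}$, since $N_F(\mathfrak{N}_f)=\mathcal{N}_f^2$.)
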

\begin{proof}
By definition we have
\begin{align*}
J^{\Reg,+}_{\Geo,\du}(f,\textbf{s}_0,\chi):=&\int_{|x|\geq 1}|x|^{2s_0}\prod_{v\in\Sigma_F}\mathcal{J}_v(x_v)d^{\times}x,
\end{align*}
where we define by 
\begin{align*}
\mathcal{J}_v(x_v):=\int_{F_v^{\times}}f_v\left(\begin{pmatrix}
	1\\
	x_v&1
\end{pmatrix}\begin{pmatrix}
	y_v\\
	&1
\end{pmatrix}\right)|y_v|_v^{s_0}\overline{\chi}_v(y_v)d^{\times}y_v.
\end{align*}

\begin{itemize}
\item Let $v\mid\mathfrak{Q}.$ According to the arguments in the proof of Proposition \ref{prop17} we have $f_v\left(\begin{pmatrix}
	1\\
	x_v&1
\end{pmatrix}\begin{pmatrix}
	y_v\\
	&1
\end{pmatrix}\right)\neq 0$ unless $e_v(y_v)=0,$ and $e_v(x_v)\geq n_v.$ Consequently,
\begin{align*}
\mathcal{J}_v(x_v)=\frac{|\tau(\chi_v)|^{-2}}{\Vol(K_v[n_v])}\sum_{\alpha,\beta}\textbf{1}_{K_v[n_v]}\left(X_v\right)\chi_v(\alpha)\overline{\chi}_v(\beta),
\end{align*}
where $X_v$ is defined by 
\begin{align*}
\begin{pmatrix}
	1+\alpha x_v\varpi_v^{-m_v}&(1+\alpha x_v\varpi_v^{-m_v})\beta \varpi_v^{-m_v}+\alpha \varpi_v^{-m_v}\\
	x_v&1+\beta x_v\varpi_v^{-m_v}
\end{pmatrix}.
\end{align*}
It follows from $X_v\in K_v[n_v]$ that $(1+\alpha x_v\varpi_v^{-m_v})\beta \varpi_v^{-m_v}+\alpha \varpi_v^{-m_v}\in \mathcal{O}_v,$ i.e., there is a linear constraint between $\alpha$ and $\beta.$ Hence, by triangle inequality, 
\begin{align*}
\mathcal{J}_v(x_v)\ll \textbf{1}_{e_v(x_v)\geq n_v}\cdot \frac{|\tau(\chi_v)|^{-2}}{\Vol(K_v[n_v])}\sum_{\alpha}1\ll \textbf{1}_{e_v(x_v)\geq n_v}\Vol(K_v[n_v])^{-1}.
\end{align*}

\item At $v<\infty,$ $v\nmid\mathfrak{Q}\nu(f),$ we have 
$$
\mathcal{J}_v(x_v)=N_{F_v}(\mathfrak{D}_{F_v})^{-1/2}\Vol(K_v[n_v])^{-1}\textbf{1}_{\mathcal{O}_v}(x_v).
$$ 

\item For $v\mid\nu(f),$ by Lemma \ref{lem14} we have $f_v\left(\begin{pmatrix}
	1\\
	x_v&1
\end{pmatrix}\begin{pmatrix}
	y_v\\
	&1
\end{pmatrix}\right)\neq 0$ unless $e_v(x_v)\geq -e_v(\mathfrak{N}_f),$ where $\mathcal{N}_f$ is defined by \eqref{61}. Moreover, the classification in the proof of Lemma \ref{lem14} yields that 
$$
\mathcal{J}_v(x_v)
\ll_{\varepsilon}\mathcal{N}_f^{\varepsilon} \textbf{1}_{e_v(x_v)\geq -e_v(\mathfrak{N}_f)}\mathcal{N}_f^{-1+2s_0+\varepsilon}.
$$  

\item Let $v\mid\infty.$ The support of $f_v$ leads to that $\mathcal{J}_v(x_v)=0$ unless $|x_v|_v\ll 1.$
\end{itemize}

As before, for $x=\otimes_vx_v$ with $\otimes_v\mathcal{J}_v(x_v)\neq 0,$ we must have  $e_v(x_v)\geq n_v-e_v(\mathfrak{N}_f)$ at all $v<\infty.$ For such an $x,$  we may write 
$$
x_{\fin}=\otimes_{v<\infty}x_v\in\mathfrak{n}\mathfrak{N}_f^{-2}
\prod_{v<\infty}\mathfrak{p}_v^{n_v}$$ 
for a unique integral ideal $\mathfrak{n}$ determined by $x_{\fin}.$ From the constraint $|x|_{\mathbb{A}_F}\geq 1$ and $|x_{\infty}|_{\infty}\ll 1$ (from the support of $f_{\infty}$) we have $N_F(\mathfrak{n})\ll \mathcal{N}_f^2[M,M'Q]^{-1}.$ 

Gathering the above estimate the function $J^{\Reg,+}_{\Geo,\du}(f,\textbf{s}_0,\chi)$ is 
\begin{align*}
\ll V^{-1}\mathcal{N}_f^{-1+2s_0+\varepsilon}\sum_{\substack{\mathfrak{n}\subseteq\mathcal{O}_F\\ N_F(\mathfrak{n})\ll [M,M'Q]\mathcal{N}_f^2}}N_F(\mathfrak{n})^{2s_0}\int |x_{\infty}|_{\infty}^{2s_0}\prod_{v\mid\infty}\mathcal{J}_v(x_v)d^{\times}x_{\infty},
\end{align*}
where $V:=\prod_{v\mid [M,M'Q]}\Vol(K_v[n_v]),$ and $x_{\infty}$ ranges in $F_{\infty}^{\times}$ with the constraint $N_F(\mathfrak{n})[M,M'Q]\mathcal{N}_f^{-2}\ll |x_{\infty}|_{\infty}\ll 1.$ 

By \eqref{2.7} we have 
\begin{align*}
\int |x_{\infty}|_{\infty}^{2s_0}\prod_{v\mid\infty}\mathcal{J}_v(x_v)d^{\times}x_{\infty}\ll T^{1/2+\varepsilon}\cdot \int_{\frac{N_F(\mathfrak{n})[M,M'Q]}{\mathcal{N}_f^{2}}\ll |x_{\infty}|_{\infty}\ll 1}\frac{T_v^{-\frac{1}{2}+\varepsilon}}{|x_v|_v}d^{\times}x_{\infty},
\end{align*}
which is $\ll (N_F(\mathfrak{n})[M,M'Q])^{-1+\varepsilon}\mathcal{N}_f^{2+\varepsilon}\cdot \prod_{v\mid\infty}T_v^{\varepsilon}.$ Here the factor $T_v^{1/2+\varepsilon}$ comes from the product of $T^{1+\varepsilon}$ and the integral over $y_v$ under the constraint $|\Ad^*(g)\tau-\tau|\ll T_v^{-1/2+\varepsilon},$ where $g=\begin{pmatrix}
	y_v&\\
	x_vy_v&1
\end{pmatrix};$ and the factor $T_v^{-1/2+\varepsilon}|x_v|_v^{-1}$ is the bound from Proposition \ref{prop3.1} in \textsection\ref{2.2.2.}. Note that $T_v\asymp C(\pi_v\otimes\chi_v)^{1/2}.$  Therefore, 
\begin{align*}
J^{\Reg,+}_{\Geo,\du}(f,\textbf{s}_0,\chi)\ll [M,M'Q]^{\varepsilon}\mathcal{N}_f^{1+2s_0+\varepsilon}\prod_{v\mid\infty}T_v^{\varepsilon}\sum_{\substack{\mathfrak{n}\subseteq\mathcal{O}_F\\ N_F(\mathfrak{n})\ll [M,M'Q]\mathcal{N}_f^2}}N_F(\mathfrak{n})^{-1+2s_0+\varepsilon},
\end{align*}
from which the estimate \eqref{48} follows.
\end{proof}
\begin{remark}
From the above proof we see that there exists a constant $C,$ depending on $c_v,$ $C_v$ at $v\mid\infty,$ cf. \textsection \ref{2.1.2}, such that $J^{\Reg,+}_{\Geo,\du}(f,\textbf{s},\chi)\equiv 0,$ $\mathbf{s}\in\mathbb{C}^2,$
if $[M,M'Q]\geq C\mathcal{N}_f^{2}.$	
\end{remark}

\begin{lemma}\label{lem5.4}
Let notation be as before. Let $s_0$ be defined by \eqref{eq2.1} in \textsection\ref{2.1.5.} and $\mathbf{s}_0=(s_0,s_0).$ Then 
\begin{equation}\label{39}
J^{\Reg,\Res,1}_{\Geo,\du}(f,\textbf{s}_0,\chi)\ll C_{\infty}(\pi\otimes\chi)^{\varepsilon}[M,M'Q]^{\varepsilon}\mathcal{N}_f^{1+2s_0+\varepsilon},
\end{equation} 
where the implied constant depends only on $F,$ $\varepsilon,$ and $c_v,$ $C_v$ at $v\mid\infty,$ cf. \textsection \ref{2.1.2}. 
\end{lemma}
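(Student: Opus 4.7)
The plan is to estimate $J^{\Reg,\Res,1}_{\Geo,\du}(f,\textbf{s}_0,\chi)$ by factoring it as a product of local integrals, paralleling the proof of Proposition \ref{prop17} and Lemma \ref{lem14.} but with the Fourier-dual variable specialized to zero. Since $s_0 \in [4^{-1}\exp(-3\sqrt{\log C(\pi\times\chi)}), \exp(-3\sqrt{\log C(\pi\times\chi)})]$, the scalar factor $(s_1+s_2-1)^{-1}|_{\textbf{s}=\textbf{s}_0} = (2s_0-1)^{-1}$ is absolutely bounded. It therefore suffices to bound the product $\prod_{v\in\Sigma_F}\mathcal{K}_v$, where
\[
\mathcal{K}_v := \int_{F_v^\times}\int_{F_v} f_v\!\left(\begin{pmatrix}1 & \\ b_v & 1\end{pmatrix}\!\begin{pmatrix}y_v & \\ & 1\end{pmatrix}\right) db_v\,|y_v|_v^{s_0}\,\overline{\chi}_v(y_v)\,d^\times y_v.
\]
Crucially, $\mathcal{K}_v$ is precisely the value of the function $\mathcal{J}_v(x_v)$ from the proof of Proposition \ref{prop17} at $x_v=0$, so the same local expansions apply verbatim.

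At $v\nmid\mathfrak{Q}\nu(f)$, the support condition on $f_v$ forces $y_v\in\mathcal{O}_v^\times$ and $b_v\in\varpi_v^{n_v}\mathcal{O}_v$, and a direct volume count yields $|\mathcal{K}_v|=O(1)$: the factor $\Vol(K_v[n_v])^{-1}\asymp q_v^{n_v}$ is exactly compensated by the measure of the support. At $v\mid\mathfrak{Q}$, the character-sum expansion from the proof of Proposition \ref{prop17} applies, with the Ramanujan sum $R(m,0)=\Vol(\mathcal{O}_v^\times)$ being $O(1)$; the trivial estimate $|G(m)|\leq q_v^{m_v}$ together with $|\tau(\chi_v)|^{-2}=q_v^{-m_v}$ again cancels the $q_v^{n_v}$ growth of $\Vol(K_v[n_v])^{-1}$ and gives $|\mathcal{K}_v|=O(1)$. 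At $v\mid\nu(f)$, the classification of Lemma \ref{lem14} evaluated at $x_v=0$ and integrated against $|y_v|_v^{s_0}\overline{\chi}_v(y_v)$ gives, after taking the product,
\[
\prod_{v\mid\nu(f)} |\mathcal{K}_v| \ll \mathcal{N}_f^{1+2s_0+\varepsilon},
\]
the deterioration in the exponent of $\mathcal{N}_f$ (compared to Proposition \ref{prop17}) reflecting the absence of Fourier oscillation in $b_v$. At the archimedean places, the sup-norm bound \eqref{250} together with Proposition \ref{prop3.1} applied to the $y_v$-integration and the Fourier decay of $\tilde f_v$ in the $b_v$ direction yields $|\mathcal{K}_v|\ll T_v^{\varepsilon}\ll C(\pi_v\otimes\chi_v)^{\varepsilon}$.

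Multiplying all local estimates and absorbing the finite product of $1+O(q_v^{-1})$ over places dividing $MM'Q$ into $[M,M'Q]^{\varepsilon}$ yields \eqref{39}. The most delicate step is the archimedean one: one must use \emph{both} the Fourier decay of $\tilde f_v$ in $b_v$ \emph{and} the transversality estimate of Proposition \ref{prop3.1} in the $y_v$-integration to recover the full factor $T_v^{-1}$ that compensates $\|\tilde f_v\|_\infty\ll T_v^{1+\varepsilon}$; any weaker saving would destroy the claimed $C_\infty(\pi\otimes\chi)^{\varepsilon}$ dependence and hence the whole bound. The remaining finite-place bookkeeping is routine once one observes the exact cancellation of volume factors against the character- and Ramanujan-sum contributions at the ramified primes.
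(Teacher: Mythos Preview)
Your proposal is correct and follows essentially the same approach as the paper: factorize into local integrals $\mathcal{K}_v$ (which are exactly the $\mathcal{J}_v$ of Proposition~\ref{prop17} specialized to $x_v=0$), bound each class of places separately, and multiply. The finite-place bookkeeping matches the paper's argument.

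One small correction at the archimedean places: you have the two saving mechanisms swapped. In the paper's proof, the $T_v^{-1/2+\varepsilon}$ saving from the $y_v$-integral comes from the \emph{support constraint} $|\Ad^*(g)\tau-\tau|\ll T_v^{-1/2+\varepsilon}$ in \eqref{245}, which confines $y_v$ to a $T_v^{-1/2+\varepsilon}$-neighborhood of $1$; Proposition~\ref{prop3.1} is not what is used here, since for $g=\begin{pmatrix} y_v & \\ b_v y_v & 1\end{pmatrix}$ one computes $d_v(g)\asymp |b_v|$, which does not detect $y_v$. Conversely, the $T_v^{-1/2+\varepsilon}$ saving in the $b_v$-integral comes precisely from the transversality bound \eqref{2.7} (i.e., Proposition~\ref{prop3.1} with $d_v(g)\asymp|b_v|$), producing the factor $\min\{1,T_v^{-1/2+\varepsilon}|b_v|_v^{-1}\}$ that is then integrated over $|b_v|_v\ll 1$; there is no additive character in $b_v$ in this residue term, so ``Fourier decay'' is not the operative mechanism. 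With this attribution corrected, your argument coincides with the paper's.
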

\begin{proof}
By definition, $J^{\Reg,\Res,1}_{\Geo,\du}(f,\textbf{s}_0,\chi)=\prod_{v\in\Sigma_F}\mathcal{J}_v,$ where 
\begin{align*}
\mathcal{J}_v:=\int_{F_v^{\times}}\int_{F_v}f_v\left(\begin{pmatrix}
	1\\
	b_v&1
\end{pmatrix}\begin{pmatrix}
	y_v\\
	&1
\end{pmatrix}\right)db_v|y_v|_v^{s_0}\overline{\chi}_v(y_v)d^{\times}y_v.
\end{align*}

At $v\mid\nu(f),$ similar to the analysis in the proof of Proposition \ref{prop17}, we have 
\begin{align*}
|\mathcal{J}_v|\leq \frac{|\tau(\chi_v)|^{-2}}{\Vol(K_v[n_v])}\sum_{\alpha}\int_{\varpi_v^{n_v}\mathcal{O}_v} db_v\ll N_{F_v}(\mathfrak{D}_{F_v})^{-1/2}\Vol(K_v[n_v])^{-1}q_v^{-n_v},
\end{align*}
where $N_{F_v}(\mathfrak{D}_{F_v})$ is the norm of local different at $v.$ By Lemma \ref{lem14} we have 
$$
\prod_{v\mid\nu(f)}\mathcal{J}_v\ll \mathfrak{N}_f^{-1+2s_0}\int_{\mathfrak{N}_f^{-2}\mathcal{O}_f}db_v \ll \mathcal{N}_f^{1+2s_0}.
$$ 

At archimedean places, by \eqref{2.7} we have 
\begin{align*}
\mathcal{J}_{\infty}\ll \prod_{v\mid\infty}T_v^{1/2+\varepsilon}\cdot \int_{F_v}\min\{1,T_v^{-1/2}|b_v|_v^{-1}\}\textbf{1}_{|b_v|_v\ll 1}db_v\ll \prod_{v\mid\infty}T_v^{\varepsilon},
\end{align*}
which is $\asymp C_{\infty}(\pi\otimes\chi)^{\varepsilon}.$ Here the factor $T_v^{1/2+\varepsilon}$ comes from the product of $T^{1+\varepsilon}$ and the integral over $y_v$ under the constraint $|\Ad^*(g)\tau-\tau|\ll T_v^{-1/2+\varepsilon},$ where $g=\begin{pmatrix}
	y_v&\\
	b_vy_v&1
\end{pmatrix}.$ 

Therefore, gathering the above estimates, 
\begin{align*}
J^{\Reg,\Res,1}_{\Geo,\du}(f,\textbf{s}_0,\chi)=\frac{1}{2s_0-1}\prod_{v\in\Sigma_F}\mathcal{J}_v\ll C_{\infty}(\pi\otimes\chi)^{\varepsilon}[M,M'Q]^{\varepsilon}\mathcal{N}_f^{1+2s_0+\varepsilon},
\end{align*}
proving \eqref{39}.
\end{proof}

\begin{lemma}\label{lem5.5}
Let notation be as before. Then 
\begin{equation}\label{40}
J^{\Reg,\Res,2}_{\Geo,\du}(f,\textbf{s}_0,\chi)\ll s_0^{-1}\mathcal{N}_f^{-1+2s_0}[M,M'Q]^{1+\varepsilon}C_{\infty}(\pi\otimes\chi)^{\frac{1}{4}+\varepsilon},
\end{equation}
where the implied constant depends only on $F,$ $\varepsilon,$ and $c_v,$ $C_v$ at $v\mid\infty,$ cf. \textsection \ref{2.1.2}. 
\end{lemma}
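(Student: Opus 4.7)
The plan is to exploit the product structure of $f$ just as in the proof of Lemma \ref{lem5.4}, but simplified by the absence of any integral over the upper-triangular parameter. By the factorization of the test function,
\begin{equation*}
J^{\Reg,\Res,2}_{\Geo,\du}(f,\mathbf{s}_0,\chi) = -\frac{1}{2s_0}\prod_{v \in \Sigma_F} \mathcal{K}_v,\qquad \mathcal{K}_v := \int_{F_v^\times} f_v\!\left(\begin{pmatrix} y_v & \\ & 1 \end{pmatrix}\right) |y_v|_v^{s_0}\overline{\chi}_v(y_v)\, d^\times y_v,
\end{equation*}
and the prefactor $|2s_0|^{-1} \ll s_0^{-1}$ supplies the first factor in \eqref{40}. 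It remains to bound each $\mathcal{K}_v$ place by place.

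For $v<\infty$ with $v\nmid\nu(f)$, unfolding $f_v$ exactly as in Lemma \ref{lem13} forces $y_v\in\mathcal{O}_v^\times$: when $v\mid\mathfrak{Q}$ the double Gauss-sum collapses via the linear constraint $\alpha+y_v\beta\in\varpi_v^{m_v}\mathcal{O}_v$, and when $v\nmid\mathfrak{Q}\nu(f)$ the character $\chi_v$ is trivial on $\mathcal{O}_v^\times$; either way $|\mathcal{K}_v|\ll\Vol(K_v[n_v])^{-1}\Vol(\mathcal{O}_v^\times)$, so that
\begin{equation*}
\prod_{\substack{v<\infty\\v\nmid\nu(f)}}|\mathcal{K}_v|\ll V^{-1}=\prod_{v\mid[M,M'Q]}\Vol(K_v[n_v])^{-1}\ll[M,M'Q]^{1+\varepsilon}.
\end{equation*}
For $v\mid\nu(f)$, I specialize the case analysis of Lemma \ref{lem14} to the diagonal matrix $\diag(y_v,1)$: intersecting the double coset $Z(F_v)K_v\diag(\varpi_v^r,\varpi_v^{-r})K_v$ with $\diag(F_v^\times,1)$ isolates $y_v\in\varpi_v^{\pm 2r}\mathcal{O}_v^\times$. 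This produces $|\mathcal{K}_v|=O(1)$ when $v\in\mathbf{v}_0$ with $i_v=0$, $|\mathcal{K}_v|\ll q_v^{-1+2s_0}$ when $v\in\mathbf{v}_0$ with $i_v=1$, and $|\mathcal{K}_v|\ll q_v^{-1/2+s_0}$ for $v\in\mathbf{v}_1\sqcup\mathbf{v}_2$; multiplying yields $\prod_{v\mid\nu(f)}|\mathcal{K}_v|\ll\mathcal{N}_f^{-1+2s_0}$.

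At $v\mid\infty$, the sup-norm bound $|f_v|\ll T_v^{1+\varepsilon}$ from \eqref{250} combined with the essential support \eqref{245} restricts $y_v$ to a set of measure $\ll T_v^{-1/2+\varepsilon}$, exactly as in the archimedean step of the proof of Lemma \ref{lem5.4}; consequently $|\mathcal{K}_v|\ll T_v^{1/2+\varepsilon}\asymp C(\pi_v\otimes\chi_v)^{1/4+\varepsilon}$, and hence $|\mathcal{K}_\infty|\ll C_\infty(\pi\otimes\chi)^{1/4+\varepsilon}$. Assembling the three pieces of the local product with the prefactor $(2s_0)^{-1}$ produces \eqref{40}. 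The only delicate step is the case analysis at the amplification places: one must track the valuation of $y_v$ forced by membership in each Cartan double coset modulo the center, without the help of the $b_v$-parameter used in Lemma \ref{lem14}, and verify that the bookkeeping lands exactly on the exponent $\mathcal{N}_f^{-1+2s_0}$.
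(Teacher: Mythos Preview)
Your proposal is correct and follows essentially the same approach as the paper: factorize into local integrals $\mathcal{K}_v$, force $y_v\in\mathcal{O}_v^\times$ at finite places away from $\nu(f)$ to pick up $V^{-1}\ll[M,M'Q]^{1+\varepsilon}$, run the Cartan double-coset bookkeeping at $v\mid\nu(f)$ to get $\mathcal{N}_f^{-1+2s_0}$, and use the sup-norm plus support \eqref{245} at archimedean places for $T_v^{1/2+\varepsilon}$. The paper's proof is organized identically, differing only in notation ($\mathcal{J}_v$ for your $\mathcal{K}_v$) and in citing \eqref{2.7} rather than \eqref{245}/\eqref{250} directly at infinity.
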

\begin{proof}
By definition we have $J^{\Reg,\Res,2}_{\Geo,\du}(f,\textbf{s}_0,\chi)=-\frac{1}{2s_0}\prod_{v\in\Sigma_F}\mathcal{J}_v,$ where
\begin{align*}
\mathcal{J}_v:=&\int_{F_v^{\times}}f_v\left(\begin{pmatrix}
	y_v\\
	&1
\end{pmatrix}\right)|y_v|_v^{s_0}\overline{\chi}_v(y_v)d^{\times}y_v.
\end{align*}

\begin{itemize}
	\item At $v\mid\mathfrak{Q},$ we have $f_v\left(\begin{pmatrix}
	y_v\\
	&1
\end{pmatrix}\right)=0$ unless $y_v\in\mathcal{O}_v^{\times}.$ Then $\mathcal{J}_v$ becomes
\begin{align*}
\frac{|\tau(\chi_v)|^{-2}}{\Vol(K_v[n_v])}\sum_{\alpha,\beta}\int_{\mathcal{O}_v^{\times}} \textbf{1}_{K_v[n_v]}\left(\begin{pmatrix}
	y_v&(\alpha +y_v\beta) \varpi_v^{-m_v}\\
	&1
\end{pmatrix}\right)\chi_v(\alpha)\overline{\chi}_v(\beta)\overline{\chi}_v(y_v)d^{\times}y_v.
\end{align*}

Changing variable $\beta\mapsto y_v^{-1}\beta$ we then obtain 
\begin{align*}
\mathcal{J}_v=\frac{|\tau(\chi_v)|^{-2}}{\Vol(K_v[n_v])}\sum_{\substack{\alpha,\beta\in (\mathcal{O}_v/\varpi_v^{m_v}\mathcal{O}_v)^{\times}\\ \alpha+\beta=0}}\chi_v(\alpha)\overline{\chi}_v(\beta)\ll \Vol(K_v[n_v])^{-1}.
\end{align*}

\item At $v<\infty,$ $v\nmid\mathfrak{Q}\nu(f),$ we have $f_v\left(\begin{pmatrix}
	y_v\\
	&1
\end{pmatrix}\right)=\Vol(K_v[n_v])^{-1}\textbf{1}_{\mathcal{O}_v^{\times}}(y_v).$ So $\mathcal{J}_v=\Vol(K_v[n_v])^{-1}.$
\item At other places, we have $\prod_{v\mid\nu(f)}\mathcal{J}_v\ll \mathcal{N}_f^{-1+2s_0},$ and by \eqref{2.7},
$$
\prod_{v\mid\infty}\mathcal{J}_v\ll \prod_{v\mid\infty}T_v^{\frac{1}{2}+\varepsilon} \asymp C_{\infty}(\pi\otimes\chi)^{\frac{1}{4}+\varepsilon}. 
$$
Here the factor $T_v^{1/2+\varepsilon}$ comes from the product of $T^{1+\varepsilon}$ and the integral over $y_v$ under the constraint $|\Ad^*(g)\tau-\tau|\ll T_v^{-1/2+\varepsilon},$ where $g=\begin{pmatrix}
	y_v&\\
	&1
\end{pmatrix}.$ 
\end{itemize}

The estimate \eqref{40} follows from the above local bounds.
\end{proof}

\section{The Geometric Side: Regular Orbital Integrals}\label{sec6}
Recall the definition \eqref{17...} in \textsection\ref{sec3.7}: 
\begin{align*}
J^{\Reg,\RNum{2}}_{\Geo,\bi}(f,\textbf{s}_0,\chi):=\sum_{t\in F-\{0,1\}}\prod_{v\in\Sigma_F} \mathcal{E}_v(t),
\end{align*}
where for $v\in\Sigma_F,$ 
\begin{equation}\label{51..}
\mathcal{E}_v(t):=\int_{F_v^{\times}}\int_{F_v^{\times}}f_v\left(\begin{pmatrix}
	y_v&x_v^{-1}t\\
	x_vy_v&1
\end{pmatrix}\right)|x_v|_v^{2s_0}|y_v|_v^{s_0}\overline{\chi}_v(y_v)d^{\times}y_vd^{\times}x_v.
\end{equation}
Recall that $s_0\in [4^{-1}\exp(-3\sqrt{\log C(\pi\times\chi)}),\exp(-3\sqrt{\log C(\pi\times\chi)})]$ is the deformation parameter defined by \eqref{eq2.1} in \textsection\ref{2.1.5.}. 

By Theorem 5.6 in \cite{Yan23a} (or \cite{RR05}) the orbital integrals $J^{\Reg,\RNum{2}}_{\Geo,\bi}(f,\textbf{s}_0,\chi)$ converges absolutely. We shall establish an upper bound for it as follows. 
\begin{thmx}\label{thmD}
Let notation be as before. Then 
\begin{align*}
J^{\Reg,\RNum{2}}_{\Geo,\bi}(f,\textbf{s}_0,\chi)\ll C_{\infty}(\pi\otimes\chi)^{\varepsilon}(MQ\mathcal{N}_f)^{\varepsilon}\mathcal{N}_f\cdot \prod_{v\mid\mathfrak{Q}}q_v^{\frac{\min\{r_{\omega_v},m_v\}+m_v}{2}},
\end{align*}	
where the implied constant depends on $\varepsilon,$ $F,$ $c_v,$ and $C_v,$ $v\mid\infty$. Here $M', M, Q,$ $\mathfrak{Q},$ and $m_v$ are defined in \textsection \ref{2.1.5}, and $\mathcal{N}_f$ is defined in \textsection\ref{3.6.2}.
\end{thmx}

\subsection{Local Estimates: unramified finite places}

\begin{lemma}\label{lem6.2}
Let $v\in \Sigma_{F,\fin}$ be such that $v\nmid\mathfrak{Q}\nu(f).$ Then 
\begin{align*}
\mathcal{E}_v(t)\ll \frac{(1-e_v(1-t))(1+e_v(t)-2e_v(1-t))}{q_v^{(2n_v+e_v(t-1))s_0}\Vol(K_v[n_v])}\textbf{1}_{\substack{e_v(t-1)\leq 0\\ e_v(t)-e_v(1-t)\geq n_v}}.
\end{align*}
Moreover, $\mathcal{E}_v(t)=1$ if $e_v(t)=e_v(1-t)=0,$ $n_v=0,$ and $v\nmid \mathfrak{D}_F.$ In particular, $\mathcal{E}_v(t)=1$ for all but finitely many $v$'s.
\end{lemma}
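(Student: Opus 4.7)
The plan is to reduce $\mathcal{E}_v(t)$ to a finite double sum of powers of $q_v$ and estimate it elementarily. Since $v \nmid \mathfrak{Q}\nu(f)$, the character $\chi_v$ is unramified, $m_v = 0$, and $f_v$ is the simple test function from \eqref{5.}, supported on $Z(F_v) K_v[n_v]$. The first task is to characterize the support of the integrand by identifying when
\[
M := \begin{pmatrix} y_v & x_v^{-1}t \\ x_vy_v & 1 \end{pmatrix}
\]
lies in $Z(F_v)K_v[n_v]$. Writing $M = z_v k_v$ with $z_v\in F_v^{\times}$ and $k_v\in K_v[n_v]$, the determinant equation forces $e_v(z_v) = (e_v(y_v) + e_v(1-t))/2$ (in particular an integrality/parity constraint on $e_v(y_v)+e_v(1-t)$), while integrality of the four entries of $z_v^{-1}M$ together with the $(2,1)$-congruence modulo $\varpi_v^{n_v}$ force
\[
e_v(1-t) \leq e_v(y_v) \leq -e_v(1-t), \quad n_v - \tfrac{e_v(y_v)-e_v(1-t)}{2} \leq e_v(x_v) \leq e_v(t) - \tfrac{e_v(y_v)+e_v(1-t)}{2}.
\]
The consistency conditions $e_v(1-t)\leq 0$ and $e_v(t)-e_v(1-t)\geq n_v$ then emerge immediately, explaining the indicator in the claimed bound.

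The next observation is a crucial cancellation on the support: the $(2,2)$-entry of $z_v^{-1}M$ equals $z_v^{-1}$, so the two $\omega_v$-factors $\omega_v(z_v)^{-1}$ and $\omega_v(E_{2,2}(k_v))^{-1}$ in the definition of $f_v$ cancel, and $f_v(M) \equiv \Vol(K_v[n_v])^{-1}$ throughout the support. Integrating out the unit parts of $y_v$ and $x_v$ (using that $\chi_v$ is unramified, so the character reduces to $\overline{\chi}_v(\varpi_v)^{e_v(y_v)}$), the integral collapses to
\[
\mathcal{E}_v(t) = \frac{N_{F_v}(\mathfrak{D}_{F_v})^{-1}}{\Vol(K_v[n_v])} \sum_{a,b} q_v^{-a s_0 - 2b s_0}\,\overline{\chi}_v(\varpi_v)^a,
\]
where $(a,b) = (e_v(y_v), e_v(x_v))$ ranges over the lattice rectangle described above.

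For the estimate, I plan to bound $|\overline{\chi}_v(\varpi_v)^a|\leq 1$ trivially, bound $q_v^{-2bs_0}$ by its value at $b_{\min}$ summed against the number $e_v(t)-e_v(1-t)-n_v+1$ of admissible $b$'s, and then note that the combination $q_v^{-as_0}\cdot q_v^{-2s_0 b_{\min}}$ with $b_{\min} = n_v - (a+k)/2$ and $k=-e_v(1-t)$ telescopes to the $a$-independent factor $q_v^{-(2n_v + e_v(t-1))s_0}$. Summing over $a$ introduces a factor bounded by the number $1-e_v(1-t)$ of admissible $a$'s (using the parity constraint to count), and the elementary inequality $e_v(t)-e_v(1-t)-n_v+1 \leq 1+e_v(t)-2e_v(1-t)$, valid because $n_v\geq 0\geq e_v(1-t)$, replaces $n_v$ by $e_v(1-t)$ and yields the stated upper bound.

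The ``main obstacle'' is really bookkeeping: keeping the scalar normalization by $z_v$ and the parity of $e_v(y_v)+e_v(1-t)$ consistent throughout, and verifying that no character or volume factor contributes extra $q_v$-powers. The final sentence of the lemma is a direct check: when $e_v(t)=e_v(1-t)=n_v=0$ and $v\nmid\mathfrak{D}_F$, the lattice rectangle collapses to $(a,b)=(0,0)$, the character factor is $1$, and the normalizations give $\Vol(K_v[n_v])=N_{F_v}(\mathfrak{D}_{F_v})=1$, so $\mathcal{E}_v(t)=1$. Since $t,1-t\in F^\times$ have nonzero valuation at only finitely many places and $n_v, \mathfrak{D}_F$ are trivial at all but finitely many $v$, the ``almost all $v$'' assertion follows.
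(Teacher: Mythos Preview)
Your proposal is correct and follows essentially the same route as the paper: both identify the support of the integrand via the five valuation constraints coming from $z_vM\in K_v[n_v]$ and the determinant equation, then bound the resulting finite lattice sum by the maximal value of $|x_v|^{2s_0}|y_v|^{s_0}$ times the count of lattice points. Your exposition is in fact slightly more explicit than the paper's on two points the paper leaves tacit: the cancellation $\omega_v(z_v)^{-1}\omega_v(E_{2,2}(k_v))^{-1}=1$ arising from $E_{2,2}(z_v^{-1}M)=z_v^{-1}$, and the parity constraint on $e_v(y_v)+e_v(1-t)$ that produces the factor $1-e_v(1-t)$ rather than $1-2e_v(1-t)$.
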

\begin{proof}
Let $v\nmid\mathfrak{Q}\nu(f).$ Then $f_v\left(\begin{pmatrix}
	y_v&x_v^{-1}t\\
	x_vy_v&1
\end{pmatrix}\right)\neq 0$ unless there exists some $z_v\in F_v^{\times}$ such that
\begin{align*}
z_v\begin{pmatrix}
	y_v&x_v^{-1}t\\
	x_vy_v&1
\end{pmatrix}\in K_v[n_v],
\end{align*}
which implies that 
\begin{align*}
\begin{cases}
2e_v(z_v)+e_v(y_v)+e_v(1-t)=0\\
e_v(z_v)+e_v(y_v)\geq 0\\
e_v(z_v)+e_v(x_v)+e_v(y_v)\geq n_v\\
e_v(z_v)\geq 0\\
e_v(z_v)-e_v(x_v)+e_v(t)\geq 0.
\end{cases}
\end{align*}
Hence 
\begin{equation}\label{47}
\begin{cases}
2e_v(x_v)+e_v(y_v)\geq 2n_v+e_v(t-1)\\
e_v(1-t)\leq e_v(y_v)\leq -e_v(1-t)\\
e_v(1-t)+n_v\leq e_v(x_v)\leq e_v(t)-e_v(1-t)\\
0\leq n_v\leq e_v(t)-e_v(1-t).
\end{cases}
\end{equation}

Therefore, when $e_v(t)=e_v(1-t)=0$ and $n_v=0,$ \eqref{47} implies that $x_v, y_v\in\mathcal{O}_v^{\times},$ so $\mathcal{E}_v(t)=\Vol(K_v[n_v])^{-1}N_{F_v}(\mathfrak{D}_{F_v})^{-1/2}.$ In general,
\begin{align*}
\mathcal{E}_v(t)\ll \frac{(1-e_v(1-t))(1+e_v(t)-2e_v(1-t))}{q_v^{(2n_v+e_v(t-1))s_0}\Vol(K_v[n_v])}\textbf{1}_{\substack{e_v(t-1)\leq 0\\ e_v(t)-e_v(1-t)\geq n_v}},
\end{align*}
where the implied constant depends on $\mathfrak{D}_{F_v}.$ 
\end{proof}

\subsection{Local Estimates: ramification}\label{sec6.2}
\begin{prop}\label{4}
Let $v\mid\mathfrak{Q}.$ Let $m_v=r_{\chi_v}\geq 1$ and $n_v=\max\{r_{\pi_v},m_v+r_{\omega_v}\}$ (cf. \textsection\ref{2.1.5}).   Then 
\begin{align*}
\mathcal{E}_v(t)\ll 
\begin{cases}
m_v(1-e_v(t))^2 q_v^{\frac{m_v-e_v(t)}{2}-3e_v(t)s_0} \ \ &\text{if $e_v(t)\leq -1,$}\\
\kappa_vq_v^{\frac{r_{\omega_v}+n_v+e_v(t)}{2}}\textbf{1}_{r_{\omega_v}\leq m_v}\ \ &\text{if $e_v(t)\geq n_v-m_v,$  $e_v(t-1)=0,$}\\
0\ \ &\text{otherwise,}
\end{cases}
\end{align*}
where $\kappa_v=m_v(e_v(t)+m_v-n_v+1),$ and the implied constant is absolute. 
\end{prop}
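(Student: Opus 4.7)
The plan is to unfold the ramified test function $f_v$ using its double-average definition from \textsection\ref{2.2.2} and reduce $\mathcal{E}_v(t)$ to a character-sum-weighted integral whose size can then be controlled by orthogonality (in the vanishing range) and by square-root cancellation of Weil type (in the two nontrivial ranges). First I would introduce
\begin{align*}
M(\alpha,\beta):=\begin{pmatrix}1&\alpha\varpi_v^{-m_v}\\ &1\end{pmatrix}\begin{pmatrix}y_v&x_v^{-1}t\\ x_vy_v&1\end{pmatrix}\begin{pmatrix}1&\beta\varpi_v^{-m_v}\\ &1\end{pmatrix},
\end{align*}
expand its four entries explicitly, and observe that $\det M(\alpha,\beta)=y_v(1-t)$ is independent of $\alpha,\beta$. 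Substituting into \eqref{51..} gives
\begin{align*}
\mathcal{E}_v(t)=\frac{1}{|\tau(\chi_v)|^{2}}\sum_{\alpha,\beta}\chi_v(\alpha)\overline{\chi}_v(\beta)\int\!\!\int f_v\!\left(M(\alpha,\beta);\omega_v\right)|x_v|_v^{2s_0}|y_v|_v^{s_0}\overline{\chi}_v(y_v)\,d^{\times}y_v\,d^{\times}x_v,
\end{align*}
where $f_v(\cdot\,;\omega_v)$ is the $Z(F_v)$-shifted indicator of $K_v[n_v]$ twisted by $\omega_v^{-1}$ on the $(2,2)$-entry as in \eqref{5.}.

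The second step is a support analysis. With $a:=e_v(x_v)$, $b:=e_v(y_v)$, the condition $M(\alpha,\beta)\in Z(F_v)K_v[n_v]$ unravels, via the determinant relation $2e_v(z)+b+e_v(1-t)=0$ together with the four entrywise valuation inequalities, into a system of constraints on $a,b$ depending on $e_v(t)$, $e_v(t-1)$, $m_v$, $n_v$ and on the valuations of the units $1+\alpha\varpi_v^{-m_v}x_v$ and $1+\beta\varpi_v^{-m_v}x_vy_v$. Working through the resulting case split in $e_v(t)$ naturally produces the three regimes of the proposition. In the ``otherwise'' range I expect the constraints to force either the $\alpha$-sum or the $\beta$-sum to run over a complete orbit of $\chi_v$ on $(\mathcal{O}_v/\varpi_v^{m_v})^{\times}$ without any additive or character twist, whence the sum vanishes by orthogonality and $\mathcal{E}_v(t)=0$.

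For the two surviving ranges, $e_v(t)\leq -1$ and $e_v(t)\geq n_v-m_v$ with $e_v(t-1)=0$, I would make a change of variables $(x_v,y_v)\mapsto(u,w)$ that absorbs the units $1+\alpha\varpi_v^{-m_v}x_v$ (and similarly on the right) into the new integration variables, putting $M(\alpha,\beta)$ into a normalized Iwasawa form. The $(\alpha,\beta)$-sum then reshapes into an exponential sum of the schematic type
\begin{align*}
\sum_{\alpha,\beta}\chi_v(\alpha)\overline{\chi}_v(\beta)\,\omega_v\!\left(1+\beta\varpi_v^{-m_v}uw\right)^{-1}\psi_v\!\left(L(\alpha,\beta;u,w,t)\right),
\end{align*}
with $L$ a simple rational function linear in $\alpha,\beta$. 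This is precisely the trace function of a Kummer sheaf (from the $\chi_v,\omega_v$ twists) tensored with an Artin--Schreier sheaf (from the $\psi_v$ twist), to which Weil's theorem yields square-root cancellation of size $q_v^{m_v/2}$; the factor $\textbf{1}_{r_{\omega_v}\leq m_v}$ in Case~2 then drops out as the condition that the $\omega_v$-contribution is not identically zero, and it carries the extra Gauss-sum weight $|\tau(\omega_v)|=q_v^{r_{\omega_v}/2}$. Combining this with the volume of the prescribed $(a,b)$-region and the weights $|x_v|_v^{2s_0}|y_v|_v^{s_0}$ yields the claimed bounds with exponents $\tfrac{m_v-e_v(t)}{2}-3e_v(t)s_0$ in Case~1 and $\tfrac{r_{\omega_v}+n_v+e_v(t)}{2}$ in Case~2, with the polynomial factors $(1-e_v(t))^2$ and $\kappa_v$ coming from the number of admissible $(a,b)$-lattice points.

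The main obstacle I anticipate is the bookkeeping of Step~2: uniformly in $\alpha,\beta$, one must precisely pin down which $(a,b)$ survive the $K_v[n_v]$-support condition, because the ``boundary'' subcase $e_v(t)=0$ with $e_v(t-1)\geq 1$ is delicate — the smallness of $t-1$ changes the character-sum structure, and it is only at this boundary that the threshold $e_v(t)\geq n_v-m_v$ and the hypothesis $r_{\omega_v}\leq m_v$ enter in an essential way. Extracting the normalized character sum in Step~3 in a form to which the sheaf-theoretic Weil bound applies cleanly is the other delicate point, and it is there that the construction of $f_v$ via the Gauss-normalized double average pays off.
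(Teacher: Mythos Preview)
Your overall strategy --- unfold the double average, do a support analysis on the valuations of $x_v,y_v$ and the centre, and then bound the resulting character sums by Weil/Blomer--Harcos --- is exactly the paper's approach. However, several of the concrete mechanisms you anticipate are not quite right, and one of them hides the key step.

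First, the decisive case distinction is not on $e_v(t)$ directly but on $k:=e_v(z_v)$, the valuation of the centre scalar needed to land $M(\alpha,\beta)$ in $K_v[n_v]$. The constraints force $k\ge 0$; the subcase $k=0$ splits further according to whether $r_2:=e_v(y_v)\ge 1$ (which forces $e_v(t)\le -1$, $n_v=m_v$, $r_{\omega_v}=0$) or $r_2=0$ (which forces $e_v(t-1)=0$ and $e_v(t)\ge n_v-m_v$), while $k\ge 1$ again forces $e_v(t)\le -1$ and $n_v=m_v$. Thus Case~1 of the proposition receives contributions from \emph{two} subcases, and the extra factor $q_v^{-e_v(t)/2}$ in the exponent comes specifically from the $k\ge 1$ analysis, where the shifts in the Kummer sum are by $\varpi_v^{k}$ and $\varpi_v^{k+r_2}$ and the Blomer--Harcos bound picks up $q_v^{\min\{k,k+r_2,m_v\}/2}\le q_v^{-e_v(t)/2}$. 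Your sketch does not isolate this $k$-split, and without it you will not see where $q_v^{-e_v(t)/2}$ originates.

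Second, after writing $x_v=\varpi_v^{r_1}\gamma_1$, $y_v=\varpi_v^{r_2}\gamma_2$ and absorbing $\gamma_1,\gamma_2$ into $\alpha,\beta$, the $(\alpha,\beta)$-sum has no additive character at all: it is a pure Kummer-type sum
\[
\sum_{\substack{\alpha,\beta\in(\mathcal{O}_v/\varpi_v^{m_v})^{\times}\\ \alpha\beta\equiv c\pmod{\varpi_v^{m_v}}}}\chi_v(\alpha-a)\,\overline{\chi}_v(\beta-b)\,\big[\omega_v(\cdots)\big],
\]
to which \cite[Prop.~2]{BH08} applies directly. An Artin--Schreier factor $\psi_v$ only enters in Case~2, and only after one expands $\omega_v$ by its Gauss sum; this is what produces both $\textbf{1}_{r_{\omega_v}\le m_v}$ and the weight $q_v^{r_{\omega_v}/2}$. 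So your schematic $\psi_v(L(\alpha,\beta;u,w,t))$ coming from the integration is not what happens.

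Third, the ``otherwise'' vanishing is not by $\chi_v$-orthogonality but simply because the support system in $(r_1,r_2,k)$ has no solution; in particular the boundary you flag, $e_v(t)=0$ with $e_v(t-1)\ge 1$, never occurs (the constraints force $e_v(t-1)\le 0$).
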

\begin{proof}
By definition, $f_v\left(\begin{pmatrix}
	y_v&x_v^{-1}t\\
	x_vy_v&1
\end{pmatrix}
\right)=0$ unless 
\begin{equation}\label{48.}
\varpi_v^k\begin{pmatrix}
		1&\alpha \varpi_v^{-m_v}\\
		&1
	\end{pmatrix}\begin{pmatrix}
	y_v&x_v^{-1}t\\
	x_vy_v&1
\end{pmatrix}\begin{pmatrix}
		1&\beta \varpi_v^{-m_v}\\
		&1
	\end{pmatrix}\in K_v[n_v]
\end{equation}
for some $k\in\mathbb{Z}.$ Write $x_v=\varpi_v^{r_1}\gamma_1,$ $y_v=\varpi_v^{r_2}\gamma_2,$ where $r_1, r_2\in\mathbb{Z}$ and $\gamma_1, \gamma_2\in\mathcal{O}_v^{\times}.$ Then \eqref{48.} becomes
\begin{align*}
\varpi_v^k\begin{pmatrix}
		1&\gamma_1\alpha \varpi_v^{-m_v}\\
		&1
	\end{pmatrix}\begin{pmatrix}
	\varpi_v^{r_2}&\varpi_v^{-r_1}t\\
	\varpi_v^{r_1+r_2}&1
\end{pmatrix}\begin{pmatrix}
		1&\gamma_1\gamma_2\beta \varpi_v^{-m_v}\\
		&1
	\end{pmatrix}
\in K_v[n_v].
\end{align*}

Changing variables $\alpha\mapsto \gamma_1^{-1}\alpha,$ $\beta\mapsto \gamma_1^{-1}\gamma_2^{-1}\beta,$ the above constraint becomes 
\begin{equation}\label{49} 
\varpi_v^kY_{\alpha,\beta,r_1,r_2,t}\in K_v[n_v]
\end{equation}
for some $k\in\mathbb{Z},$ where $Y_{\alpha,\beta,r_1,r_2,t}$ is defined by 
\begin{align*}
\begin{pmatrix}
	\varpi_v^{r_2}+\alpha \varpi_v^{r_1+r_2-m_v}&(\varpi_v^{r_2}+\alpha \varpi_v^{r_1+r_2-m_v})\beta \varpi_v^{-m_v}+\varpi_v^{-r_1}t+\alpha \varpi_v^{-m_v}\\
	\varpi_v^{r_1+r_2}&1+\beta \varpi_v^{r_1+r_2-m_v}
\end{pmatrix}.
\end{align*} 

By definition the local integral $\mathcal{E}_v(t)$ becomes
\begin{align*}
\frac{1}{|\tau(\chi_v)|^2}\sum_{\alpha,\beta}\chi(\alpha)\overline{\chi}(\beta)\sum_{r_1, r_2\in\mathbb{Z}} q_v^{-2r_1s_0-r_2s_0}\textbf{1}_{Y_{\alpha,\beta,r_1,r_2,t}\in Z(F_v)K_v[n_v]}f_v(Y_{\alpha,\beta,r_1,r_2,t};\omega_v),
\end{align*} 
where $f_v(\cdot;\omega_v)$ is defined by \eqref{5.} in \textsection\ref{2.2.2}.

Note that \eqref{49} amounts to 
\begin{equation}\label{50}
\begin{cases}
2k+r_2+e_v(1-t)=0\\
k+r_1+r_2\geq n_v\\
\varpi_v^k(\varpi_v^{r_2}+\alpha \varpi_v^{r_1+r_2-m_v})\in\mathcal{O}_v^{\times}\\
\varpi_v^k(1+\beta \varpi_v^{r_1+r_2-m_v})\in\mathcal{O}_v^{\times}\\
\varpi_v^k\big[(\varpi_v^{r_2}+\alpha \varpi_v^{r_1+r_2-m_v})\beta \varpi_v^{-m_v}+\varpi_v^{-r_1}t+\alpha \varpi_v^{-m_v}\big]\in\mathcal{O}_v,
\end{cases}
\end{equation}
which implies that $k\geq 0$ as $n_v\geq m_v.$ 

\begin{enumerate}
\item Suppose that $k=0$ in \eqref{50}, which implies that 
\begin{equation}\label{52}
\begin{cases}
r_2+e_v(1-t)=0\\
r_1+r_2\geq n_v\\
\min\{r_2, r_1+r_2-m_v\}=0\\
(\varpi_v^{r_2}+\alpha \varpi_v^{r_1+r_2-m_v})\beta \varpi_v^{-m_v}+\varpi_v^{-r_1}t+\alpha \varpi_v^{-m_v}\in\mathcal{O}_v.
\end{cases}
\end{equation}	

\begin{itemize}
	\item Suppose $r_2\geq 1.$ Then $r_1+r_2=n_v=m_v.$ Since $e_v(t)-r_1\geq -m_v,$ then $e_v(t)+r_2\geq 0.$ So $e_v(t)-e_v(1-t)\geq 0.$ In this case we have $1\leq r_2=-e_v(t-1)=-e_v(t).$ Moreover, the constraint $n_v=m_v$ leads to that $r_{\omega_v}=0,$ i.e., $\omega_v$ is unramified. Therefore, the contribution to $\mathcal{E}_v(t)$ from this case is
\begin{align*}
\mathcal{E}^{(1)}_v(t):=\frac{q_v^{-2m_vs_0-e_v(t)s_0}\textbf{1}_{e_v(t)\leq -1}}{|\tau(\chi_v)|^2\Vol(K_v[n_v])}\sum_{\substack{\alpha,\beta\in (\mathcal{O}_v/\varpi_v^{m_v}\mathcal{O}_v)^{\times}\\ (\varpi_v^{-e_v(t)}+\alpha)\beta \varpi_v^{-m_v}+\varpi_v^{-r_1}t+\alpha \varpi_v^{-m_v}\in\mathcal{O}_v}}\chi(\alpha)\overline{\chi}(\beta).
\end{align*}	

Write $t=\varpi_v^{e_v(t)}\gamma$ under the embedding $F^{\times}\hookrightarrow F_v^{\times},$ where $\gamma\in\mathcal{O}_v^{\times}.$ Then the last sum over $\alpha$ and $\beta$ becomes
\begin{align*}
\sum_{\substack{\alpha,\beta\in (\mathcal{O}_v/\varpi_v^{m_v}\mathcal{O}_v)^{\times}\\ (\varpi_v^{-e_v(t)}+\alpha )(\beta+1)\equiv -\gamma+\varpi_v^{-e_v(t)}\pmod{\varpi_v^{m_v}}}}\chi(\alpha)\overline{\chi}(\beta),
\end{align*}
which, after a change of variables, is equal to 
\begin{align*}
\mathcal{J}_v^{(1)}(t):=\sum_{\substack{\alpha,\beta\in (\mathcal{O}_v/\varpi_v^{m_v}\mathcal{O}_v)^{\times}\\ \alpha\beta\equiv -\gamma+\varpi_v^{-e_v(t)}\pmod{\varpi_v^{m_v}}}}\chi(\alpha-\varpi_v^{-e_v(t)})\overline{\chi}(\beta-1).
\end{align*}

As a special case of Proposition 2 on p.71 of \cite{BH08} we have $\mathcal{J}_v^{(1)}(t)\ll m_vq_v^{m_v/2},$ where the implied constant is absolute. Hence, 
\begin{equation}\label{59.}
\mathcal{E}^{(1)}_v(t)\ll m_vq_v^{\frac{m_v}{2}-2m_vs_0-e_v(t)s_0} \textbf{1}_{e_v(t)\leq -1}.
\end{equation}
	
\item Suppose $r_2=0.$ Then $e_v(1-t)=0,$ $r_1\geq n_v,$ and $e_v(t)\geq r_1-m_v\geq 0.$ 

The contribution to $\mathcal{E}_v(t)$ from this case is
\begin{align*}
\mathcal{E}^{(2)}_v(t):=\sum_{r_1=n_v}^{e_v(t)+m_v}\frac{q_v^{-2r_1s_0}\textbf{1}_{e_v(t)\geq r_1-m_v}}{|\tau(\chi_v)|^2\Vol(K_v[n_v])}\cdot \mathcal{J}_v^{(2)}(r_1,t),
\end{align*}	
where 
\begin{align*}
\mathcal{J}_v^{(2)}(r_1,t):=\sum_{\substack{\alpha,\beta\in (\mathcal{O}_v/\varpi_v^{m_v}\mathcal{O}_v)^{\times}\\ (1+\alpha\varpi_v^{r_1-m_v})\beta \varpi_v^{-m_v}+\varpi_v^{-r_1}t+\alpha \varpi_v^{-m_v}\in\mathcal{O}_v}}\chi(\alpha)\overline{\chi}(\beta)\omega_v(1+\alpha\varpi_v^{r_1-m_v}).
\end{align*}

By Lemma \ref{lem24} below (in conjunction with $r_1\geq n_v$) the sum $\mathcal{E}^{(2)}_v(t)$ is 
\begin{align*}
\ll m_v(e_v(t)+m_v-n_v+1)\frac{\textbf{1}_{e_v(t)\geq n_v-m_v}\cdot \textbf{1}_{r_{\omega_v}\leq m_v}}{|\tau(\chi_v)|^2\Vol(K_v[n_v])}\cdot q_v^{\frac{r_{\omega_v}+m_v}{2}+\frac{m_v-n_v+e_v(t)}{2}}.
\end{align*}

Since $|\tau(\chi_v)|^2\Vol(K_v[n_v])\gg q_v^{m_v-n_v},$ then 
\begin{equation}\label{58.}
\mathcal{E}^{(2)}_v(t)\ll  m_v^2(e_v(t)+m_v-n_v+1)\textbf{1}_{e_v(t)\geq n_v-m_v}\cdot \textbf{1}_{r_{\omega_v}\leq m_v}q_v^{\frac{r_{\omega_v}+n_v+e_v(t)}{2}}.
\end{equation}
\end{itemize}

\item Suppose that $k\geq 1,$ then \eqref{50} amounts to 
\begin{equation}\label{51}
\begin{cases}
2k+r_2+e_v(1-t)=0\\
k+r_1+r_2=n_v=m_v\\
k+r_2\geq 0\\
\varpi_v^k\big[(\varpi_v^{r_2}+\alpha \varpi_v^{r_1+r_2-m_v})\beta \varpi_v^{-m_v}+\varpi_v^{-r_1}t+\alpha \varpi_v^{-m_v}\big]\in\mathcal{O}_v.
\end{cases}
\end{equation}

As a consequence, \eqref{51} yields 
\begin{align*}
\begin{cases}
e_v(1-t)=e_v(t)\leq -k\leq -1\\
e_v(t)\leq r_2\leq -e_v(t)-2\\
m_v+e_v(t)+1\leq r_1\leq m_v\\
k=m_v-r_1-r_2\geq 1\\
\big[(\varpi_v^{k+r_2}+\alpha )\beta \varpi_v^{-m_v}+\varpi_v^{k-r_1}t+\alpha \varpi_v^{k-m_v}\big]\in\mathcal{O}_v.
\end{cases}
\end{align*}

Since $m_v=n_v,$ then $r_{\omega_v}=0,$ i.e., $\omega_v$ is trivial. Hence the contribution from this case to $\mathcal{E}_v(t)$ is 
\begin{align*}
\mathcal{E}^{(3)}_v(t):=\sum_{r_1=m_v+e_v(t)+1}^{m_v}\sum_{r_2=e_v(t)}^{-e_v(t)-2}\frac{q_v^{-2r_1s_0-r_2s_0}\textbf{1}_{r_1+r_2\leq m_v-1}}{|\tau(\chi_v)|^2\Vol(K_v[m_v])}\cdot \mathcal{J}_v^{(3)}(r_1,r_2,t),
\end{align*}
where we set $k=m_v-r_1-r_2$ and 
\begin{align*}
\mathcal{J}_v^{(3)}(r_1,r_2,t):=\sum_{\substack{\alpha,\beta\in (\mathcal{O}_v/\varpi_v^{m_v}\mathcal{O}_v)^{\times}\\ (\varpi_v^{k+r_2}+\alpha)(\beta +\varpi_v^{k})+t\varpi_v^{-e_v(t)}-\varpi_v^{2k+r_2}\in \varpi_v^{m_v}\mathcal{O}_v}}\chi(\alpha)\overline{\chi}(\beta).
\end{align*}

Note that $r_2+k\geq 0.$ Then $\gamma:=t\varpi_v^{-e_v(t)}-\varpi_v^{2k+r_2}\in \mathcal{O}_v^{\times}.$ After a change of variables, we obtain 
\begin{align*}
\mathcal{J}_v^{(3)}(r_1,r_2,t)=\sum_{\substack{\alpha,\beta\in (\mathcal{O}_v/\varpi_v^{m_v}\mathcal{O}_v)^{\times}\\ \alpha\beta \equiv \gamma \pmod{\varpi_v^{m_v}}}}\chi(\alpha-\varpi_v^{k+r_2})\overline{\chi}(\beta-\varpi_v^{k}).
\end{align*}

By \cite[Proposition 2]{BH08} and the fact that $k\leq -e_v(t)$,
\begin{align*}
\mathcal{J}_v^{(3)}(r_1,r_2,t)\ll m_vq_v^{\frac{m_v}{2}}\cdot q_v^{\frac{\min\{k,k+r_2,m_v\}}{2}}\leq m_vq_v^{\frac{m_v-e_v(t)}{2}}.
\end{align*}

Therefore, we have
\begin{equation}\label{6.10.}
\mathcal{E}^{(3)}_v(t)\ll m_v(1-e_v(t))^2q_v^{-(2m_v+2s_0+3e_v(t))s_0}\cdot \textbf{1}_{e_v(t)\leq -1}\cdot q_v^{\frac{m_v-e_v(t)}{2}},
\end{equation}
where the implied constant is absolute. 
\end{enumerate}

Then Proposition \ref{4} follows from \eqref{59.},  \eqref{58.} and \eqref{6.10.}.
\end{proof}

\begin{lemma}\label{lem24}
	Let notation be as before. Then 
	\begin{equation}\label{5}
	\mathcal{J}_v^{(2)}(r_1,t)\ll m_vq_v^{\frac{r_{\omega_v}+m_v}{2}+\frac{m_v-r_1+e_v(t)}{2}}\cdot \textbf{1}_{r_{\omega_v}\leq m_v}, 
	\end{equation}		
	where the implied constant is absolute. 
\end{lemma}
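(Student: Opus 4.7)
The plan is to use the congruence constraint to solve for $\beta$ in terms of $\alpha$, reducing $\mathcal{J}_v^{(2)}(r_1,t)$ to a one-variable character sum, and then invoke the Weil-type estimate of \cite[Proposition 2]{BH08} already used in the proof of Proposition \ref{4}. Setting $\delta := r_1 - m_v \geq 0$ and $\tau := \varpi_v^{-\delta}t$, multiplying the integrality constraint through by $\varpi_v^{m_v}$ yields
\[
(1+\alpha\varpi_v^{\delta})\,\beta \equiv -\alpha - \tau \pmod{\varpi_v^{m_v}}.
\]
Since $1+\alpha\varpi_v^{\delta} \in \mathcal{O}_v^\times$ and $\tau \in \mathcal{O}_v$ (by the standing hypothesis $e_v(t) \geq \delta$), this uniquely determines $\beta$ modulo $\varpi_v^{m_v}$, and the unit condition $\beta \in (\mathcal{O}_v/\varpi_v^{m_v})^\times$ translates to $\alpha + \tau \in \mathcal{O}_v^\times$. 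Substituting back, $\mathcal{J}_v^{(2)}(r_1,t)$ reduces, up to a factor $\chi_v(-1)$, to
\[
\sum_{\alpha \in (\mathcal{O}_v/\varpi_v^{m_v})^\times,\ \alpha+\tau\in\mathcal{O}_v^\times} \chi_v(\alpha)\,\overline{\chi_v}(\alpha+\tau)\,\omega_v(1+\alpha\varpi_v^{\delta}).
\]

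Next I would simplify and estimate this sum. The defining inequality $n_v \geq m_v + r_{\omega_v}$ forces $\delta \geq r_{\omega_v}$, so the twist $\omega_v(1+\alpha\varpi_v^{\delta})$ is identically $1$, and the sum collapses to the Jacobi-type sum $\sum_\alpha \chi_v(\alpha)\overline{\chi_v}(\alpha+\tau)$. Writing $e := e_v(\tau) = e_v(t) - \delta \geq 0$, for $e = 0$ this is precisely a sum of the form treated in \cite[Proposition 2]{BH08}, giving $\ll m_v q_v^{m_v/2}$. For $1 \leq e \leq m_v$, I would rewrite the summand as $\overline{\chi_v}(1+\tau/\alpha)$, which depends on $\alpha$ only modulo $\varpi_v^{m_v-e}$; this produces a multiplicity factor $q_v^e$ together with a shorter sum which, after a second application of Weil's bound, carries square-root cancellation and yields the extra $q_v^{e/2}$. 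The indicator $\textbf{1}_{r_{\omega_v}\leq m_v}$ is enforced by noting that when $r_{\omega_v} > m_v$ the reduced sum admits additional translation-invariance with respect to the subgroup $1+\varpi_v^{m_v}\mathcal{O}_v \subset 1+\varpi_v^{r_{\omega_v}-\delta}\mathcal{O}_v$, and an averaging argument (to which $\chi_v$ is insensitive while $\omega_v$ is not) forces the sum to vanish identically.

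The principal obstacle will be maintaining uniformity in $e$ and $r_{\omega_v}$ so that the logarithmic factor stays at $m_v$ (rather than inflating to $m_v^2$) across all subcases, particularly in the transitional regime where $e$ is comparable to $m_v$ and the two-character product nearly degenerates to a constant. I plan to address this by interpreting the reduced sum as a trace function of a rank-one Kummer $\ell$-adic sheaf on an affine open subvariety of $\mathbb{G}_m$ over $\mathcal{O}_v/\varpi_v^{m_v}$ and invoking Deligne's Weil bound, with a careful tracking of the singular fibres exactly along the lines of the proof of \cite[Proposition 2]{BH08}.
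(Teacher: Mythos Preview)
Your reduction to a one-variable sum contains an algebraic slip: when you solve $\beta \equiv -(1+\alpha\varpi_v^{\delta})^{-1}(\alpha+\tau)$, the factor $\overline{\chi}(\beta)$ contributes not only $\overline{\chi}(-1)\,\overline{\chi}(\alpha+\tau)$ but also $\chi(1+\alpha\varpi_v^{\delta})$. Since $\delta = r_1 - m_v$ satisfies only $\delta \geq r_{\omega_v}$ (from $r_1 \geq n_v \geq m_v + r_{\omega_v}$) and not $\delta \geq m_v$, this extra factor is genuinely nontrivial; for instance $\delta = 0$ occurs whenever $r_{\omega_v}=0$ and $n_v=m_v$. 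Hence the sum does \emph{not} collapse to the Jacobi-type expression $\sum_\alpha \chi(\alpha)\overline{\chi}(\alpha+\tau)$, and your direct appeal to \cite[Proposition~2]{BH08} is not justified. A second consequence is that your vanishing argument for $r_{\omega_v}>m_v$ cannot work: once you have (correctly) noted that $\omega_v(1+\alpha\varpi_v^{\delta})\equiv 1$, the sum no longer sees $\omega_v$ at all, so translation by $1+\varpi_v^{m_v}\mathcal{O}_v$ carries no information about $r_{\omega_v}$.

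The paper avoids this issue by a different manoeuvre: it first sends $\alpha\mapsto\alpha^{-1}$ and then shifts $\alpha\mapsto\alpha-\varpi_v^{\delta}$, $\beta\mapsto\beta-\varpi_v^{-\delta}t$, bringing the constraint to the clean hyperbola $\alpha\beta\equiv t-1\pmod{\varpi_v^{m_v}}$ with summand $\overline{\chi}(\alpha-\varpi_v^{\delta})\,\overline{\chi}(\beta-\varpi_v^{-\delta}t)\,\omega_v(1-t+\alpha\varpi_v^{\delta})$. It then expands $\omega_v$ via its Gauss sum, converting the third multiplicative factor into an additive character $\psi_v(h\alpha q_v^{-r_{\omega_v}})$; this lands exactly in the two-multiplicative-plus-one-additive framework of \cite[Proposition~2]{BH08}, at the price of the artificial loss $q_v^{r_{\omega_v}/2}$ and the indicator $\mathbf{1}_{r_{\omega_v}\leq m_v}$. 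Your observation that the $\omega_v$-factor is already trivial is a genuine simplification over the paper's route, but to exploit it you must still bound the resulting three-factor sum $\sum_\alpha \chi(\alpha)\,\chi(1+\alpha\varpi_v^{\delta})\,\overline{\chi}(\alpha+\tau)$, which requires its own Weil-type argument rather than a reduction to the Jacobi case.
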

\begin{proof}
Note that $(1+\alpha\varpi_v^{r_1-m_v})\beta \varpi_v^{-m_v}+\varpi_v^{-r_1}t+\alpha \varpi_v^{-m_v}\in\mathcal{O}_v$ amounts to 
$$
(\alpha^{-1}+\varpi_v^{r_1-m_v})\beta +\alpha^{-1}\varpi_v^{m_v-r_1}t+1\in\varpi_v^{m_v}\mathcal{O}_v.
$$

Changing the variable $\alpha\mapsto \alpha^{-1},$ we have
\begin{align*}
\mathcal{J}_v^{(2)}(r_1,t)=\sum_{\substack{\alpha,\beta\in (\mathcal{O}_v/\varpi_v^{m_v}\mathcal{O}_v)^{\times}\\ (\alpha+\varpi_v^{r_1-m_v})\beta +\alpha\varpi_v^{m_v-r_1}t+1\in\varpi_v^{m_v}\mathcal{O}_v}}\overline{\chi}(\alpha)\overline{\chi}(\beta)\omega_v(1+\alpha\varpi_v^{r_1-m_v}).
\end{align*}

Changing variables $\alpha\mapsto \alpha-\varpi_v^{r_1-m_v}$ and $\beta\mapsto \beta-\varpi_v^{m_v-r_1}t,$ $\mathcal{J}_v^{(2)}(r_1,t)$ becomes 
\begin{align*}
\sum_{\substack{\alpha,\beta\in (\mathcal{O}_v/\varpi_v^{m_v}\mathcal{O}_v)^{\times}\\ \alpha\beta \equiv t-1\pmod{\varpi_v^{m_v}}}}\overline{\chi}(\alpha-\varpi_v^{r_1-m_v})\overline{\chi}(\beta-\varpi_v^{m_v-r_1}t)\omega_v(1-t+\alpha\varpi_v^{r_1-m_v}).
\end{align*}

Let $h\in \mathcal{O}_v^{\times}.$ Let $\mathcal{J}_v^{(2)}(r_1,t,\psi_v,h)$ be defined by 
\begin{align*}
\sum_{\substack{\alpha,\beta\in (\mathcal{O}_v/\varpi_v^{m_v}\mathcal{O}_v)^{\times}\\ \alpha\beta \equiv t-1\pmod{\varpi_v^{m_v}}}}\overline{\chi}(\alpha-\varpi_v^{r_1-m_v})\overline{\chi}(\beta-\varpi_v^{m_v-r_1}t)\psi_v(h\alpha q_v^{-r_{\omega_v}}).
\end{align*}
Here we recall that $\psi_v$ is a fixed unramified additive chatacter of $F_v.$ By definition, we have $\mathcal{J}_v^{(2)}(r_1,t,\psi_v,h)=0$ if $r_{\omega_v}>m_v.$

Notice that $\chi$ is primitive. By Theorem 2G of \cite{Sch06} (cf. p.45) or Deligne's quasi-orthogonality of trace functions (cf. \cite{Del80}) and Lemmas 12.2 and 12.3 in \cite{IK04}, following the proof of Proposition 2 in \cite{BH08}, we have 
\begin{equation}\label{59}
\mathcal{J}_v^{(2)}(r_1,t,\psi_v,h) \ll m_v(q_v^{m_v-r_1+e_v(t)},q_v^{r_1-m_v},q_v^{m_v})^{\frac{1}{2}}q_v^{\frac{m_v}{2}}\cdot \textbf{1}_{r_{\omega_v}\leq m_v},  
\end{equation}
where the implied constant is absolute. In particular, \eqref{59} yields that 
\begin{equation}\label{60.}
\mathcal{J}_v^{(2)}(r_1,t,\psi_v,h) \ll m_vq_v^{\frac{m_v-r_1+e_v(t)}{2}}\cdot q_v^{\frac{m_v}{2}}\cdot \textbf{1}_{r_{\omega_v}\leq m_v}. 
\end{equation}

Since $\omega_v$ is primitive, we have the Gauss sum expansion
\begin{equation}\label{60}
\omega_v(\alpha)=\frac{1}{\tau(\overline{\omega}_v)}\sum_{h\in (\mathcal{O}_v/{\omega}_v^{r_{\omega_v}}\mathcal{O}_v)^{\times}}\overline{\omega}_v(h)\psi_v(h\alpha q_v^{-r_{\omega_v}}),
\end{equation}
where $q_v^{r_{\omega_v}}$ is the conductor of $\omega_v.$ 

Hence, \eqref{5} follows from \eqref{60.}, \eqref{60}, triangle inequality, and the fact that $|\tau(\overline{\omega}_v)|=q_v^{r_{\omega_v}/2}.$
\end{proof}

\subsection{Local Estimates: amplification} 

\begin{prop}\label{prop6.5}
Let notation be as before. Let $f=f(g;\mathbf{v},\mathbf{i})=\otimes_{v\in\Sigma_F}f_v$ be the test function defined in \textsection\ref{3.6.2}.  Let $v\mid \nu(f).$ Then 
\begin{equation}\label{6.15}
\mathcal{E}_v(t)\ll (3-e_v(t-1))\textbf{1}_{e_v(t)-e_v(t-1)\geq -e_v(\mathfrak{N}_f)}\cdot \|f_v\|_{\infty}\cdot q_v^{(2-e_v(t-1))s_0},
\end{equation}
where $\|f_v\|_{\infty}$ is the sup-norm of $f_v,$ and the implied constant is absolute.
\end{prop}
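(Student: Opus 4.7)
My plan is to majorize $\mathcal{E}_v(t)$ by its sup-norm $\|f_v\|_\infty$ times a weighted volume of the support of the integrand, reducing the problem to a Cartan-cell counting argument. Since $v\mid\nu(f)$, neither $\pi_v$ nor $\chi_v$ is ramified, and by the construction in \textsection\ref{3.2.4} each of $f_{v_0,i_{v_0}}$, $f_{v_1}$, $f_{v_2}^{*}$ is bi-$K_v$-invariant modulo the centre and supported on a single double coset $Z(F_v)K_v\diag(\varpi_v^d,1)K_v$ with $d=e_v(\mathfrak{N}_f)\in\{0,1,2\}$. Thus
\[
|\mathcal{E}_v(t)|\;\leq\;\|f_v\|_\infty\int_{F_v^\times}\int_{F_v^\times}\mathbf{1}_{\gamma_t\in Z(F_v)K_v\diag(\varpi_v^d,1)K_v}\,|x_v|_v^{2s_0}|y_v|_v^{s_0}\,d^\times y_v\,d^\times x_v,
\]
where $\gamma_t=\bigl(\begin{smallmatrix}y_v&x_v^{-1}t\\x_vy_v&1\end{smallmatrix}\bigr)$, and the unramified characters $\chi_v,\omega_v$ contribute only unimodular factors that do not enter the $L^\infty$ bound.

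The next step is to translate the Cartan-cell membership into combinatorial constraints on the valuations $r_1=e_v(x_v)$ and $r_2=e_v(y_v)$. The elementary divisors of $\gamma_t$ are $(v_1,v_2)$ with $v_1=\min\{r_2,\,e_v(t)-r_1,\,r_1+r_2,\,0\}$ and $v_1+v_2=r_2+e_v(1-t)$, so the membership condition reads $v_2-v_1=d$. Writing $k:=-v_1\geq 0$, this becomes $r_2=d-e_v(1-t)-2k$ together with $r_1\in[k-d+e_v(1-t),\,k+e_v(t)]$. Solvability forces $k\in\{0,1,\dots,d-e_v(t-1)\}$ (so in particular $e_v(t-1)\leq d\leq 2$) and $e_v(t)-e_v(t-1)\geq -d=-e_v(\mathfrak{N}_f)$, which is precisely the indicator appearing in the statement. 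The constraints depend on $(x_v,y_v)$ only through their valuations, so the $\mathcal{O}_v^\times$-integration contributes a bounded volume factor $(1-q_v^{-1})^2$.

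To finish, for each admissible $k$ the value $r_2$ is fixed and the weight $|x_v|_v^{2s_0}|y_v|_v^{s_0}=q_v^{-(2r_1+r_2)s_0}$ attains its maximum $q_v^{(d-e_v(t-1))s_0}\leq q_v^{(2-e_v(t-1))s_0}$ at the lower endpoint $r_1=k-d+e_v(1-t)$, and decays geometrically in $r_1$ with ratio $q_v^{-2s_0}<1$. Bounding the number of $k$-values by $d-e_v(t-1)+1\leq 3-e_v(t-1)$ and combining with the leading-order weight then delivers the stated bound.

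The main obstacle is the uniform bookkeeping of the elementary-divisor constraints across $d\in\{0,1,2\}$: one must verify carefully that the four-term minimum defining $v_1$ correctly produces the indicator $\mathbf{1}_{e_v(t)-e_v(t-1)\geq -e_v(\mathfrak{N}_f)}$, and that the extremal admissible configurations (where the minimum is attained by a specific coordinate) are accounted for without double counting. A secondary technical point is ensuring that the inner geometric sum in $r_1$ is effectively absorbed into the leading-term estimate $q_v^{(2-e_v(t-1))s_0}$, which is where the positivity of the deformation parameter $s_0$ plays a role.
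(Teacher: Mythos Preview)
Your Cartan-cell setup is correct and is a clean variant of the paper's argument, which instead changes variables $y_v\mapsto y_vx_v^{-1}$ (so that the weight becomes $q_v^{-(r_1+r_2)s_0}$) and then diagonalizes the matrix by explicit integral row/column operations in each of the four sign quadrants $r_1\lessgtr 0$, $r_2\lessgtr 0$. Your parametrization by $k=-v_1$, the formula $r_2=d-e_v(1-t)-2k$, the range $k\in[0,d-e_v(t-1)]$, and the derivation of the indicator $\mathbf{1}_{e_v(t)-e_v(t-1)\geq -e_v(\mathfrak{N}_f)}$ are all right.

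The gap is at the last step. You assert that the inner sum over $r_1$ is absorbed into the leading weight by geometric decay with ratio $q_v^{-2s_0}<1$; but $s_0\asymp\exp\bigl(-3\sqrt{\log C(\pi\times\chi)}\bigr)$ is tiny, so $1/(1-q_v^{-2s_0})$ is not $O(1)$. More concretely, the $r_1$-interval $[k-d+e_v(1-t),\,k+e_v(t)]$ has length $e_v(t)-e_v(t-1)+d$, which is \emph{not} controlled by $3-e_v(t-1)$. Take $d=2$, $e_v(t)=N$ large, $e_v(1-t)=0$, and $\chi_v=\omega_v=\mathbf{1}$: for $k=0$ one has $r_2=2$ and every $r_1\in[-2,N]$ genuinely lies in the support (the entry $1$ forces $v_1=0$ once all entries are integral), and similarly for $k=2$ every $r_1\in[0,N+2]$ works (now $r_2=-2$ forces $v_1=-2$). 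Each weight is $\asymp 1$ for small $s_0$, so $|\mathcal{E}_v(t)|\asymp N\|f_v\|_\infty$, which exceeds the stated bound $3\|f_v\|_\infty q_v^{2s_0}$. What your argument actually proves is the bound with an extra factor $(e_v(t)-e_v(t-1)+d+1)$; this suffices for the application in Lemma~\ref{lem6.10}, where it is absorbed into $q_v^{\varepsilon e_v(u)}$ with $u=t/(t-1)$, but it does not give the proposition exactly as written. (The paper's quadrant reductions run into the same phenomenon: for instance in case~(4) the claimed form $\diag(\varpi_v^{-r_1},\varpi_v^{r_2}(t-1))$ is only the correct Smith normal form when $e_v(t)<r_1$; when $e_v(t)\geq r_1\geq 1$ the divisors are $(0,\,r_2-r_1+e_v(1-t))$ instead, and those support points are not captured by the stated condition $|r_1+r_2+e_v(t-1)|=e_v(\mathfrak{N}_f)$.)
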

\begin{proof}
Changing variables $y_v\mapsto y_vx_v^{-1}$ we obtain from the definition \eqref{51..} that
\begin{align*}
\mathcal{E}_v(t)=\int_{F_v^{\times}}\int_{F_v^{\times}}f_v\left(\begin{pmatrix}
	y_vx_v^{-1}&x_v^{-1}t\\
	y_v&1
\end{pmatrix}\right)|x_vy_v|_v^{s_0}\chi_v(x_vy_v^{-1})d^{\times}y_vd^{\times}x_v.
\end{align*}

Since $f_v$ is bi-$G(\mathcal{O}_v)$-invariant, we have 
\begin{align*}
\mathcal{E}_v(t)=&\sum_{r_1\in \mathbb{Z}}\sum_{r_2\in \mathbb{Z}}f_v\left(\begin{pmatrix}
			\varpi_v^{r_2-r_1}&\varpi_v^{-r_1}t\\
					\varpi_v^{r_2}&1
				\end{pmatrix}\right)\chi_v(\varpi_v)^{r_1-r_2}q_v^{-(r_1+r_2)s_0}.
\end{align*}

Note that, for $g_v\in G(F_v),$ $f_v(g_v)\neq 0$ unless $g_v\in\supp f_v,$ namely (by \textsection\ref{3.2.4}), 
\begin{equation}\label{65.} 
g_v\in \begin{cases}
Z(F_v)K_{v}\diag(\varpi_{v}^{i_{v}},\varpi_{v}^{-i_{v}})K_{v},\ &\text{if $v\in \mathbf{v}_0,$}\\
Z(F_v)K_{v}\diag(\varpi_{v},1)K_{v},\ &\text{if $v\in \mathbf{v}_1,$}\\
Z(F_v)K_{v}\diag(1,\varpi_{v})K_{v},\ &\text{if $v\in \mathbf{v}_2.$}
\end{cases}
\end{equation}

\begin{enumerate}
\item Suppose $r_1\leq 0,$ and $r_2\leq 0.$ Then
\begin{equation}\label{66}
f_v\left(\begin{pmatrix}
\varpi_v^{r_2-r_1}&\varpi_v^{-r_1}t\\
\varpi_v^{r_2}&1
\end{pmatrix}\right)=f_v\left(\begin{pmatrix}
\varpi_v^{-r_1}(1-t)&\\
&\varpi_v^{r_2}
\end{pmatrix}\right).
\end{equation}
By \eqref{65.}, the RHS of \eqref{66} is zero unless $|r_1+r_2-e_v(t-1)|=e_v(\mathfrak{N}_f),$ which yields that $e_v(t-1)\leq e_v(\mathfrak{N}_f)\leq 2,$ and $r_1+r_2\geq -e_v(\mathfrak{N}_f)+e_v(t-1)\geq -2+e_v(t-1).$ So the contribution corresponding to $r_1\leq 0$ and $r_2\leq 0$ is majorized by 
\begin{align*}
\textbf{1}_{e_v(t-1)\leq e_v(\mathfrak{N}_f)}\sum_{\substack{e_v(t-1)-2\leq r_1\leq 0}}\sum_{\substack{r_2\leq 0\\ |r_1+r_2-e_v(t-1)|=e_v(\mathfrak{N}_f)}}
\|f_v\|_{\infty}q_v^{-(r_1+r_2)s_0},
\end{align*}
which is $\leq 2\cdot (3-e_v(t-1))\|f_v\|_{\infty}q_v^{(2-e_v(t-1))s_0}\textbf{1}_{e_v(t-1)\leq e_v(\mathfrak{N}_f)}.$
Here the implied constant is absolute. 
   	
\item Suppose $r_1\leq 0,$ $r_2\geq 1.$ Then
\begin{equation}\label{6.17}
f_v\left(\begin{pmatrix}
\varpi_v^{r_2-r_1}&p^{-r_1}t\\
\varpi_v^{r_2}&1
\end{pmatrix}\right)=f_v\left(\begin{pmatrix}
\varpi_v^{r_2-r_1}(1-t)&\\
					&1
				\end{pmatrix}\right).
\end{equation}
By \eqref{65.}, the RHS of \eqref{6.17} is zero unless $|r_2-r_1+e_v(t-1)|=e_v(\mathfrak{N}_f),$ which yields that $e_v(t-1)\leq e_v(\mathfrak{N}_f)-1\leq 1,$ and $r_1\geq r_2+e_v(t-1)-e_v(\mathfrak{N}_f)\geq e_v(t-1)-1.$ So the contribution corresponding to $r_1\leq 0$ and $r_2\geq 1$ is majorized by 
\begin{align*}
\textbf{1}_{e_v(t-1)\leq e_v(\mathfrak{N}_f)}\sum_{\substack{e_v(t-1)-1\leq r_1\leq 0}}\sum_{\substack{r_2\geq 1\\ |r_2-r_1+e_v(t-1)|=e_v(\mathfrak{N}_f)}}\|f_v\|_{\infty}q_v^{-(r_1+r_2)s_0},
\end{align*}
which is $\leq 2\cdot (2-e_v(t-1))\|f_v\|_{\infty}q_v^{(1-e_v(t-1))s_0}\textbf{1}_{e_v(t-1)\leq e_v(\mathfrak{N}_f)}.$
Here the implied constant is absolute.

\item Suppose $r_1\geq 1,$ $r_2\leq 0.$ Then
\begin{equation}\label{6.18}
f_{p}\left(\begin{pmatrix}
					p^{r_2-r_1}&p^{-r_1}t\\
					p^{r_2}&1
\end{pmatrix}\right)=f_{p}\left(\begin{pmatrix}
					p^{r_2-r_1}&\\
					&t-1
				\end{pmatrix}\right).
\end{equation}
By \eqref{65.}, the RHS of \eqref{6.18} is zero unless $|r_1-r_2+e_v(t-1)|=e_v(\mathfrak{N}_f),$ which yields that $e_v(t-1)\leq e_v(\mathfrak{N}_f)-1\leq 1,$ and $r_2\geq r_1+r_v(t-1)-e_v(\mathfrak{N}_f)\geq e_v(t-1)-1.$ So the contribution corresponding to $r_1\geq 1$ and $r_2\leq 0$ is majorized by 
\begin{align*}
\textbf{1}_{e_v(t-1)\leq e_v(\mathfrak{N}_f)}\sum_{\substack{e_v(t-1)-1\leq r_2\leq 0}}\sum_{\substack{r_1\geq 1\\ |r_1-r_2+e_v(t-1)|=e_v(\mathfrak{N}_f)}}\|f_v\|_{\infty}q_v^{-(r_1+r_2)s_0},
\end{align*}
which is $\leq 2\cdot (2-e_v(t-1))\|f_v\|_{\infty}q_v^{(1-e_v(t-1))s_0}\textbf{1}_{e_v(t-1)\leq e_v(\mathfrak{N}_f)}.$
Here the implied constant is absolute. 
				
\item Suppose $r_1\geq 1,$ $r_2\geq 1.$ Then
\begin{equation}\label{6.19}
f_{p}\left(\begin{pmatrix}
p^{r_2-r_1}&p^{-r_1}t\\
p^{r_2}&1
\end{pmatrix}\right)=f_v\left(\begin{pmatrix}
p^{-r_1}&\\
&p^{r_2}(t-1)
\end{pmatrix}\right).
\end{equation}
By \eqref{65.}, the RHS of \eqref{6.19} is zero unless $|r_1+r_2+e_v(t-1)|=e_v(\mathfrak{N}_f),$ which yields that $e_v(t-1)\leq e_v(\mathfrak{N}_f)-r_1-r_2\leq e_v(\mathfrak{N}_f)-2\leq 0,$ and $r_1\leq e_v(\mathfrak{N}_f)-r_2-e_v(t-1)\leq 2-1-e_v(t-1)=1-e_v(t-1).$ So the contribution corresponding to $r_1\geq 1$ and $r_2\geq 1$ is majorized by 
\begin{align*}
\textbf{1}_{e_v(t-1)\leq e_v(\mathfrak{N}_f)}\sum_{\substack{1\leq r_1\leq 1-e_v(t-1)}}\sum_{\substack{r_2\geq 1\\ |r_1-r_2+e_v(t-1)|=e_v(\mathfrak{N}_f)}}\|f_v\|_{\infty}q_v^{-(r_1+r_2)s_0},
\end{align*}
which is $\ll \|f_v\|_{\infty}q_v^{-2s_0}\textbf{1}_{e_v(t-1)\leq e_v(\mathfrak{N}_f)}.$
Here the implied constant is absolute. 
\end{enumerate}

Therefore, \eqref{6.15} follows from the above discussions and the fact that if $e_v(t-1)\leq e_v(\mathfrak{N}_f),$ then $e_v(t)-e_v(t-1)\geq -e_v(\mathfrak{N}_f).$
\end{proof}

\subsection{Local Estimates: archimedean}\label{sec6.4}
\begin{lemma}\label{lem6.6}
Let notation be as before. Let $v\mid \infty.$ Let $\mathcal{E}_v(t)$ be defined by \eqref{51..}. Then $\mathcal{E}_v(t)\ll_{\varepsilon} T_v^{1+\varepsilon}\log|t|_v,$ where the implied constant depends on $\varepsilon.$
\end{lemma}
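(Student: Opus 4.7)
The plan is to estimate $\mathcal{E}_v(t)$ by (i) using the support constraints on $\tilde{f}_v$ to restrict the range of $(x_v, y_v)$ integration, (ii) invoking the sup-norm bound $\|\tilde{f}_v\|_\infty \ll T_v^{1+\varepsilon}$ from \eqref{250}, and (iii) carrying out the resulting volume computation, where a logarithmic factor in $|t|_v$ arises naturally from the range of $|x_v|_v$.

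Concretely, write $g = \begin{pmatrix} y_v & x_v^{-1}t \\ x_v y_v & 1 \end{pmatrix}$ with $\det g = y_v(1-t)$. Since $f_v = f_v(\cdot,\chi_v) * f_v(\cdot,\chi_v)^*$ is supported, modulo the center, in the region where the representative in $\overline{G}(F_v)$ lies in $I_2 + O(T_v^{-\varepsilon})$ (cf.\ \eqref{245}), there must exist $z \in F_v^\times$ with $zg$ in this neighbourhood. Normalising via the $(2,2)$-entry forces $|z|_v \asymp 1$, and then the three conditions
\begin{align*}
|y_v - 1|_v \ll T_v^{-\varepsilon}, \qquad |x_v y_v|_v \ll T_v^{-\varepsilon}, \qquad |x_v^{-1} t|_v \ll T_v^{-\varepsilon}
\end{align*}
must hold simultaneously. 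In particular the integral is zero unless $|t|_v \ll T_v^{-2\varepsilon}$, and on the support one has $|y_v|_v \asymp 1$ together with $|t|_v T_v^\varepsilon \ll |x_v|_v \ll T_v^{-\varepsilon}$.

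Next, $\|f_v\|_\infty \ll T_v^{1+\varepsilon}$ follows from \eqref{250} together with the standard fact that convolving two functions each of sup-norm $\ll T_v^{1+\varepsilon}$ and support of (Haar) measure $\ll T_v^{-\varepsilon}$ preserves the sup-norm bound up to $T_v^\varepsilon$. The factor $|x_v|_v^{2s_0}|y_v|_v^{s_0}$ is $O(1)$ on the support since $s_0 \to 0$ (cf.\ \textsection\ref{2.1.5.}) and $|x_v|_v \leq 1$. Thus
\begin{align*}
\mathcal{E}_v(t) \;\ll\; T_v^{1+\varepsilon} \cdot \int_{|y_v-1|_v \ll T_v^{-\varepsilon}} d^\times y_v \cdot \int_{|t|_v T_v^\varepsilon \ll |x_v|_v \ll T_v^{-\varepsilon}} d^\times x_v.
\end{align*}
The $y_v$-integral is $\ll 1$ (in fact $\ll T_v^{-\varepsilon}$), while the $x_v$-integral, being the Haar measure on an annulus in $F_v^\times$, is $\ll \log(T_v^{-2\varepsilon}/|t|_v) \ll \log|t|_v^{-1} + \varepsilon\log T_v$. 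Absorbing the $\log T_v$ factor into $T_v^\varepsilon$ yields the bound $T_v^{1+\varepsilon}\log|t|_v$ (interpreted as vanishing when $|t|_v$ is too large to meet the support constraints).

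The mildly delicate point, which is not really an obstacle, is verifying that the convolution structure of $f_v$ does not inflate either the sup-norm or the effective support beyond what $\tilde{f}_v$ satisfies; this is standard given the microlocal concentration at $\tau$ and the bounds recorded in \eqref{250}–\eqref{2.7}. One could alternatively invoke the transversality bound of Proposition \ref{prop3.1} through $d_v(g) = \min\{1, |x_v|_v + |x_v^{-1}t|_v + O(\cdots)\}$ to obtain a slightly sharper estimate, but the crude sup-norm bound already suffices for the claim.
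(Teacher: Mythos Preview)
Your proof is correct and follows essentially the same route as the paper's own argument: both use the support condition $zg \in I_2 + O(T_v^{-\varepsilon})$ to force $z_v \asymp 1$, $y_v = 1 + O(T_v^{-\varepsilon})$, and $|t|_v \ll |x_v|_v \ll 1$, then apply the sup-norm bound $\|f_v\|_\infty \ll T_v^{1+\varepsilon}$ and read off the $\log|t|_v^{-1}$ from the $d^\times x_v$-integral over the resulting annulus. Your version is in fact slightly more careful with the $T_v^{-\varepsilon}$ factors and with the sign of $\log|t|_v$, and your remark that transversality (Proposition~\ref{prop3.1}) is available but unnecessary here matches the paper's choice to use only the crude sup-norm.
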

\begin{proof}
Recall the definition \eqref{51..}:
\begin{align*}
\mathcal{E}_v(t)=\int_{F_v^{\times}}\int_{F_v^{\times}}f_v\left(\begin{pmatrix}
	y_v&x_v^{-1}t\\
	x_vy_v&1
\end{pmatrix}\right)|x_v|_v^{2s_0}|y_v|_v^{s_0}\overline{\chi}_v(y_v)d^{\times}y_vd^{\times}x_v.
\end{align*}

By the support of $f_v$ (cf. \textsection\ref{3.2.1}), we have $f_v\left(\begin{pmatrix}
	y_v&x_v^{-1}t\\
	x_vy_v&1
\end{pmatrix}\right)=0$ unless there are some $z_v\in F_v^{\times}$ such that
\begin{equation}\label{6.23}
z_v\begin{pmatrix}
	y_v&x_v^{-1}t\\
	x_vy_v&1
\end{pmatrix}\in \supp \tilde{f}_{v},
\end{equation}
which is $\subseteq \big\{g\in G(F_v):\ g=I_{n+1}+O(T_v^{-\varepsilon}),\ \Ad^*(g)\tau=\tau+O(T_v^{-\frac{1}{2}+\varepsilon})\big\}$ (cf. \eqref{245}). Suppose that \eqref{6.23} holds. Then $z_v=1+O(T_v^{-\varepsilon}).$ So 
\begin{align*}
\begin{cases}
y_v=1+O(T_v^{-\varepsilon})\\
|x_v|_v\ll 1\\
y_v(1-t)=1+O(T_v^{-\varepsilon})\\
x_v^{-1}t\ll 1.
\end{cases}
\end{align*}
As a consequence, we have 
\begin{equation}\label{6.24}
\begin{cases}
y_v=1+O(T_v^{-\varepsilon})\\
x_v\ll 1\\
|1-t|_v=1+O(T_v^{-\varepsilon})\\
|t|_v\ll |x_v|_v\ll 1.
\end{cases}
\end{equation}

Therefore, we have by \eqref{6.24} and the  triangle inequality that
\begin{align*}
\mathcal{E}_v(t)\ll \int_{1+O(T_v^{-\varepsilon})}\int_{|t|_v\ll |x_v|_v\ll 1}\|f_v\|_{\infty}d^{\times}x_vd^{\times}y_v\ll_{\varepsilon} T_v^{1+\varepsilon}\log|t|_v,
\end{align*}
where we make use of the sup-norm estimate \eqref{250}.
\end{proof}
\begin{remark}
In fact the upper bound $\mathcal{E}_v(t)\ll_{\varepsilon}T_v^{1+\varepsilon}\log|t|_v$ is actually quite sharp in terms of $\log |t|_v$. In the definition of regular orbital integrals we exclude $t=1,$ which corresponds to the fact that the dual orbital integral $J^{\Reg}_{\Geo,\du}(f,\textbf{s},\chi)$ has simple poles at $s_1+s_2\in \{0,1\}$. Here $\mathbf{s}=(s_1,s_2)\in\mathbb{C}^2.$ 
\end{remark}

Let $v\mid \infty.$ Define by 
\begin{equation}\label{6.21}
\mathcal{E}_v^{\dagger}:=\int_{F_v^{\times}}\int_{F_v}\max_{t\in F-\{0,1\}}\Big|f_v\left(\begin{pmatrix}
	y_v&x_v^{-1}t\\
	x_vy_v&1
\end{pmatrix}\right)\Big||x_v|_v^{2s_0}|y_v|_v^{s_0}dx_vd^{\times}y_v.
\end{equation}

\begin{lemma}\label{lem6.8}
Let notation be as before. Let $v\mid \infty.$ Let $\mathcal{E}_v^{\dagger}$ be defined by \eqref{6.21}. Then 
\begin{equation}\label{6.22}
\mathcal{E}_v^{\dagger}\ll C(\pi_v\otimes\chi_v)^{\frac{1}{4}+\varepsilon},
\end{equation}
where the implied constant depends on $\varepsilon,$ $F,$ $c_v,$ and $C_v$ defined in \textsection\ref{2.1.2}.  
\end{lemma}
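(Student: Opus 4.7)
The plan is to use the support and transversality properties of $\tilde{f}_v$ (hence of $f_v$, via \eqref{3.} and \eqref{6.}) to obtain a bound on $|f_v(g)|$ that is \emph{uniform in $t$}, and then integrate. The key observation is that, when evaluated on the family of matrices in the definition of $\mathcal{E}_v^{\dagger}$, the transversality quantity $d_v(\cdot)$ of \eqref{dg} is bounded below by $|x_v|_v$ independently of $t$, so the bound coming from Proposition \ref{prop3.1} passes through the $\max_t$.

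\textbf{Step 1: locate the support.} Set $g=\begin{pmatrix} y_v & x_v^{-1}t \\ x_vy_v & 1\end{pmatrix}$; note $\det g=y_v(1-t)$. By \eqref{245} applied to $\tilde{f}_v$ at the argument $zg$, the integrand in \eqref{6.21} vanishes unless there exists $z\in F_v^{\times}$ with $zg=I_2+O(T_v^{-\varepsilon})$. This forces $z\asymp 1$, $|y_v-1|_v\ll T_v^{-\varepsilon}$, $|1-t|_v\asymp 1$, $|x_v|_v\ll T_v^{-\varepsilon}$, and $|x_v^{-1}t|_v\ll T_v^{-\varepsilon}$. Consequently $|y_v|_v^{s_0}\asymp 1$, and the $d^{\times}y_v$-integration is over a set of measure $O(T_v^{-\varepsilon})$, which is absorbed into the $\varepsilon$.

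\textbf{Step 2: uniform transversality bound in $t$.} A direct computation of $g$ and $g^{-1}$ in the notation of \eqref{dg}, using $y_v\asymp 1$ and $|1-t|_v\asymp 1$, yields
\begin{align*}
d_v(g)\asymp |x_v^{-1}t|_v+|x_vy_v|_v+|x_v^{-1}t/y_v|_v+|x_v/(1-t)|_v\gg |x_v|_v.
\end{align*}
Hence Proposition \ref{prop3.1} (equivalently, \eqref{2.7}), together with the convolution structure \eqref{3.} which preserves both the support and the sup-norm of $\tilde{f}_v$ up to constants, gives
\begin{align*}
|f_v(g)|\ll T_v^{1+\varepsilon}\cdot\min\Big\{1,\frac{T_v^{-1/2+\varepsilon}}{|x_v|_v}\Big\}.
\end{align*}
Critically, the right-hand side does not depend on $t$, so it bounds $\max_{t\in F-\{0,1\}}|f_v(g)|$.

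\textbf{Step 3: execute the integration.} Using Step 1 for the $y_v$-integral and inserting the bound from Step 2, we obtain
\begin{align*}
\mathcal{E}_v^{\dagger}\ll T_v^{1+\varepsilon}\int_{|x_v|_v\ll 1}\min\Big\{1,\frac{T_v^{-1/2+\varepsilon}}{|x_v|_v}\Big\}|x_v|_v^{2s_0}\,dx_v.
\end{align*}
Splitting the range at $|x_v|_v\asymp T_v^{-1/2+\varepsilon}$: the inner part contributes $T_v^{1+\varepsilon}\cdot T_v^{-1/2+\varepsilon}\asymp T_v^{1/2+\varepsilon}$, and the outer part contributes $T_v^{1/2+\varepsilon}\int_{T_v^{-1/2+\varepsilon}\ll |x_v|_v\ll 1}|x_v|_v^{-1+2s_0}dx_v\ll T_v^{1/2+\varepsilon}\log T_v$. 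Since $T_v\asymp C(\pi_v\otimes\chi_v)^{1/2}$, the bound \eqref{6.22} follows after absorbing the logarithm into $\varepsilon$.

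The main point where care is required is Step 2: one must check that the transversality bound of Proposition \ref{prop3.1}, which is stated for $\tilde{f}_v$, survives the convolution in \eqref{3.} and the character twist/$z$-integral in \eqref{6.}. This is routine because both operations only translate arguments within the $O(T_v^{-\varepsilon})$-neighborhood already identified in Step 1, so the lower bound $d_v(g)\gg |x_v|_v$ is stable; the rest of the argument is straightforward integration.
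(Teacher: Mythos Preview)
Your proof is correct for the stated bound and follows essentially the same strategy as the paper: isolate the support via \eqref{245}, invoke the transversality bound from Proposition~\ref{prop3.1}/\eqref{2.7} with the uniform lower bound $d_v(g)\gg |x_v|_v$, and split the $x_v$-integral at $|x_v|_v\asymp T_v^{-1/2+\varepsilon}$.

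There is one refinement in the paper's proof that you omit: the paper also uses the coadjoint constraint $|\Ad^*(g)\tau-\tau|\ll T_v^{-1/2+\varepsilon}$ from \eqref{245} to cut the $y_v$-range down to measure $O(T_v^{-1/2+\varepsilon})$ rather than your $O(T_v^{-\varepsilon})$. This replaces your leading factor $T_v^{1+\varepsilon}$ by $T_v^{1/2+\varepsilon}$ before the $x_v$-integration, and hence yields the sharper conclusion $\mathcal{E}_v^{\dagger}\ll T_v^{\varepsilon}$ rather than your $T_v^{1/2+\varepsilon}$. Both bounds imply \eqref{6.22} as literally stated (since $T_v\asymp C(\pi_v\otimes\chi_v)^{1/2}$), but note that it is the sharper bound $\prod_{v\mid\infty}\mathcal{E}_v^{\dagger}\ll C_\infty(\pi\otimes\chi)^{\varepsilon}$ that is actually used in the proof of Theorem~\ref{thmD} and in Proposition~\ref{prop7.2}. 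So your argument proves the lemma as written, but you would need the paper's extra $y_v$-saving to feed into the subsequent geometric-side estimates with the exponents claimed there.
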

\begin{proof}
By \eqref{2.7}, we have $\mathcal{E}_v^{\dagger}\ll \mathcal{E}_v^{(1)}+\mathcal{E}_v^{(2)},$ where
\begin{align*}
&\mathcal{E}_v^{(1)}:=T_v^{1/2+\varepsilon}\int_{1+O(T_v^{-\varepsilon})}\int_{|x_v|_v\ll T_v^{-1/2+\varepsilon}}\min\bigg\{1,\frac{T_v^{-1/2+\varepsilon}}{|x_vy_v|_v}\bigg\}dx_vd^{\times}y_v,\\
&\mathcal{E}_v^{(2)}:=T_v^{1/2+\varepsilon}\int_{1+O(T_v^{-\varepsilon})}\int_{T_v^{-1/2+\varepsilon}\ll |x_v|_v\ll 1}\min\bigg\{1,\frac{T_v^{-1/2+\varepsilon}}{|x_vy_v|_v}\bigg\}dx_vd^{\times}y_v.
\end{align*}
Here the factor $T_v^{1/2+\varepsilon}$ comes from the product of $T^{1+\varepsilon}$ and the integral over $y_v$ under the constraint $|\Ad^*(g)\tau-\tau|\ll T_v^{-1/2+\varepsilon},$ where $g=\begin{pmatrix}
	y_v&x_v^{-1}t\\
	x_vy_v&1
\end{pmatrix}.$ 

By the sup-norm bound \eqref{250} we obtain 
\begin{align*}
\mathcal{E}_v^{(1)}\ll & T_v^{1/2+\varepsilon}\int_{1+O(T_v^{-\varepsilon})}\int_{|x_v|_v\ll T_v^{-1/2+\varepsilon}}dx_vd^{\times}y_v\ll 
T_v^{\varepsilon},\\
\mathcal{E}_v^{(2)}\ll & T_v^{1/2+\varepsilon}\int_{1+O(T_v^{-\varepsilon})}\int_{T_v^{-1/2+\varepsilon}\ll |x_v|_v\ll 1}\frac{T^{-1/2+\varepsilon}}{|x_v|_v}dx_vd^{\times}y_v\ll 
T_v^{\varepsilon}.
\end{align*}
Therefore, \eqref{6.22} follows from the above estimates, and the fact that $T_v\asymp C(\pi_v\otimes\chi_v)^{1/2}.$ 
\end{proof}

\subsection{Bounding Regular Orbital Integrals: Proof of Theorem \ref{thmD}} 
\subsubsection{The support of the rationals $t\in F-\{0,1\}$}
\begin{lemma}\label{lem10}
Let notation be as before. Suppose $t\in F-\{0,1\}.$ Let $f=f(g;\mathbf{v},\mathbf{i})=\otimes_{v\in\Sigma_F}f_v$ be the test function defined in \textsection\ref{3.6.2}. Then the integral  $\prod_{v\in\Sigma_F} \mathcal{E}_v(t)$ converges absolutely and it vanishes unless 
\begin{equation}\label{13}
\frac{t}{t-1}\in \mathfrak{X}(Q,f):=\bigg\{y\in F^{\times}\cap \mathfrak{N}_f^{-1}\prod_{\substack{v<\infty\\ v\nmid\mathfrak{Q}\nu(f)}}\mathfrak{p}_v^{n_v}\mathcal{O}_F:\ |y|_{v}\ll 1,\ v\mid \infty\bigg\},
\end{equation}
where the implied constant depends only on $\supp f_{\infty}.$
\end{lemma}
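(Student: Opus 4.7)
The plan is to read the vanishing condition $t/(t-1)\in \mathfrak{X}(Q,f)$ directly off the local estimates already established in \textsection\ref{sec6.2}--\textsection\ref{sec6.4}; absolute convergence will come for free once we observe that at almost every $v$ the integral $\mathcal{E}_v(t)$ is identically $1$. Concretely, each $\mathcal{E}_v(t)$ is a convergent integral because $f_v$ is compactly supported modulo $Z(F_v)$, and by Lemma \ref{lem6.2} one has $\mathcal{E}_v(t)=1$ whenever $e_v(t)=e_v(1-t)=0$, $n_v=0$, and $v\nmid \mathfrak{D}_F$; for a fixed $t\in F^{\times}$ this fails at only finitely many places, so the Euler product reduces to a finite one and absolute convergence is automatic.

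For the support statement, I would translate each local vanishing result into a constraint on the $v$-valuation (or $v$-absolute value) of $y:=t/(t-1)$. At $v<\infty$ with $v\nmid \mathfrak{Q}\nu(f)$, Lemma \ref{lem6.2} forces $e_v(t-1)\le 0$ and $e_v(t)-e_v(1-t)\ge n_v$, i.e.\ $e_v(y)\ge n_v$, which is exactly the local condition $y\in \mathfrak{p}_v^{n_v}\mathcal{O}_v$ appearing in $\mathfrak{X}(Q,f)$. At $v\mid \nu(f)$, Proposition \ref{prop6.5} gives $e_v(y)=e_v(t)-e_v(t-1)\ge -e_v(\mathfrak{N}_f)$, matching the local contribution of $\mathfrak{N}_f^{-1}$. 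At $v\mid\infty$, the normal-form analysis leading to \eqref{6.24} in the proof of Lemma \ref{lem6.6} forces $|t|_v\ll 1$ and $|1-t|_v\asymp 1$, and hence $|y|_v\ll 1$. Collecting these local conditions adelically is exactly the definition of $\mathfrak{X}(Q,f)$.

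The only mildly subtle step is the ramified case $v\mid \mathfrak{Q}$, since Proposition \ref{4} splits into several subcases and $\mathfrak{X}(Q,f)$ at such a $v$ only imposes $v$-integrality of $y$ (coming from the $\mathcal{O}_F$-factor in the definition). In the first non-vanishing subcase, $e_v(t)\le -1$ forces $e_v(t-1)=e_v(t)$ -- write $t-1=t(1-t^{-1})$ with $|1-t^{-1}|_v=1$ -- so $e_v(y)=0$. In the second subcase $e_v(t)\ge n_v-m_v\ge 0$ and $e_v(t-1)=0$, whence $e_v(y)=e_v(t)\ge 0$. Either way $y\in \mathcal{O}_v$, which is what is required. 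Assembling the archimedean, $\mathfrak{Q}$-adic, $\nu(f)$-adic, and unramified constraints yields $y\in \mathfrak{X}(Q,f)$ and completes the verification.
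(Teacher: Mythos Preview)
Your proof is correct and follows essentially the same approach as the paper: both argue absolute convergence via Lemma~\ref{lem6.2} (so that $\mathcal{E}_v(t)=1$ at almost all places) and then collect the local support constraints from Lemma~\ref{lem6.2}, Propositions~\ref{4} and~\ref{prop6.5}, and the archimedean analysis in Lemma~\ref{lem6.6} to read off the condition $t/(t-1)\in\mathfrak{X}(Q,f)$. Your treatment of the ramified places $v\mid\mathfrak{Q}$ is in fact more explicit than the paper's, which simply records the outcome $e_v(t)-e_v(t-1)\ge 0$ without spelling out the two subcases.
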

\begin{proof}
By Lemma \ref{lem6.2} the integral $\mathcal{E}_v(t)=1$ for all but finitely many $v$'s. It then follows from Lemmas \ref{lem6.2} and \ref{lem6.6}, and Propositions \ref{4} and \ref{prop6.5} that $\prod_{v\in\Sigma_F} \mathcal{E}_v(t)$ converges absolutely and it is vanishing unless
\begin{equation}\label{6.27}
\begin{cases}
e_v(t)-e_v(t-1)\geq -e_v(\mathfrak{N}_f),\ & \text{if $v\mid\nu(f),$}\\
e_v(t)-e_v(t-1)\geq n_v,\ & \text{if $v\nmid\mathfrak{Q}\nu(f),$}\\
e_v(t)-e_v(t-1)\geq 0,\ & \text{if $v\mid\mathfrak{Q}.$}
\end{cases}
\end{equation}

Since $t/(t-1)\in F-\{0,1\},$ then \eqref{13} follows from \eqref{6.27}.
\end{proof}

\subsubsection{Estimate of nonarchimedean integrals}\label{6.5.2}
Fix an ideal $\mathfrak{R}\subset \mathcal{O}_{F}$ with the property that $e_v(\mathfrak{R})=n_v$ for $v\nmid\mathfrak{Q}\nu(f),$ and   $e_v(\mathfrak{R})=0$ for all $v<\infty$ and $v\mid\mathfrak{Q}\nu(f).$ For $t\in F-\{0,1\}$ with $t/(t-1)\in\mathfrak{X}(Q,f)$ (cf. \eqref{13}), we may write 
\begin{equation}\label{6.28}
t/(t-1)=u,\ \ u\in \mathfrak{R}\mathfrak{N}_f^{-1}\mathcal{O}_F.
\end{equation}
Then $1/(t-1)=u-1.$

 \begin{lemma}\label{lem6.10}
 Let notation be as above. Let $\mathcal{E}_v(t)$ be defined by \eqref{51..}. Set $\mathcal{E}_{\fin}(t):=\prod_{v<\infty}|\mathcal{E}_v(t)|.$ Let $t/(t-1)=u\in \mathfrak{R}\mathfrak{N}_f^{-1}\mathcal{O}_F$ be as in \eqref{6.28}. Then 
 \begin{align*}
\mathcal{E}_{\fin}(t)\ll_{\varepsilon}& \frac{(MQ\cdot N_F(u)N_F(u-1))^{\varepsilon}\cdot (MQ\mathcal{N}_f^2)^{2s_0}}{N_F(t-1)^{s_0}\mathcal{N}_f}\cdot \prod_{v\nmid\mathfrak{Q}\nu(f)}\frac{1}{\Vol(K_v[n_v])}\cdot\prod_{v\mid\mathfrak{Q}}\mathcal{J}_v(u),
\end{align*}
where $s_0$ is defined by \eqref{eq2.1} in  \textsection\ref{2.1.5.}, and 
\begin{equation}\label{6.29.}
\mathcal{J}_v(u):=q_v^{\frac{m_v+e_v(u-1)}{2}}\cdot \textbf{1}_{\substack{e_v(u-1)\geq 1}}+q_v^{\frac{r_{\omega_v}+n_v+e_v(u)}{2}}\cdot \textbf{1}_{\substack{e_v(u)\geq n_v-m_v}}\textbf{1}_{r_{\omega_v}\leq m_v}.
\end{equation}
Here $n_v$ and $M'$ are defined in \textsection\ref{2.1.5}, $\nu(f)$ and $\mathcal{N}_f$ is defined in \textsection\ref{3.6.2}.
\end{lemma}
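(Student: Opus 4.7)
The plan is to combine the three local estimates already established in this section: Lemma \ref{lem6.2} at unramified good places, Proposition \ref{4} at $v \mid \mathfrak{Q}$, and Proposition \ref{prop6.5} at amplified places. The engine of the argument is the change of variables $u = t/(t-1)$, which gives the identities $e_v(t-1) = -e_v(u-1)$ and $e_v(t) = e_v(u) - e_v(u-1)$ at every finite place $v$; these convert all the existing bounds (naturally expressed in $e_v(t)$ and $e_v(t-1)$) into shapes matching $\mathcal{J}_v(u)$ and $N_F(t-1)^{-s_0} = N_F(u-1)^{s_0}$. First I would factor
\[
\mathcal{E}_{\fin}(t) = \prod_{v \nmid \mathfrak{Q}\nu(f)}|\mathcal{E}_v(t)| \cdot \prod_{v \mid \mathfrak{Q}}|\mathcal{E}_v(t)| \cdot \prod_{v \mid \nu(f)}|\mathcal{E}_v(t)|,
\]
and treat the three products in turn.

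For the first product, Lemma \ref{lem6.2} yields $|\mathcal{E}_v(t)| \ll \Vol(K_v[n_v])^{-1} q_v^{-(2n_v + e_v(t-1))s_0}$ times a divisor-type coefficient bounded by $O((1+|e_v(u)|+|e_v(u-1)|)^2)$. Multiplying over $v \nmid \mathfrak{Q}\nu(f)$ produces $\prod_v \Vol(K_v[n_v])^{-1}$ together with (a piece of) $N_F(t-1)^{-s_0}$ from the $q_v^{-e_v(t-1)s_0}$ factors; the piece $\prod_v q_v^{-2n_v s_0}$, being at most $(MQ)^{2s_0}$, is absorbed into $(MQ\mathcal{N}_f^2)^{2s_0}$; and the divisor coefficient contributes to the $(MQ \cdot N_F(u)N_F(u-1))^\varepsilon$ prefactor.

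At $v \mid \mathfrak{Q}$ I would apply Proposition \ref{4}. Its two nonzero cases correspond, under the change of variables, precisely to the two indicator terms of $\mathcal{J}_v(u)$. When $e_v(t) \leq -1$, one has $e_v(u-1) \geq e_v(u)+1 \geq 1$, and the bound $q_v^{(m_v - e_v(t))/2 - 3e_v(t)s_0}$ becomes $q_v^{(m_v + e_v(u-1) - e_v(u))/2}$ up to a factor $q_v^{3(e_v(u-1)-e_v(u))s_0}$; the former is bounded by the first term of $\mathcal{J}_v(u)$ (using $e_v(u) \geq 0$), and the latter is absorbed into $N_F(u-1)^\varepsilon$ since $s_0$ is much smaller than any fixed $\varepsilon$. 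When $e_v(t) \geq n_v - m_v$ and $e_v(t-1) = 0$, one has $e_v(u) \geq n_v - m_v$ and $e_v(u-1) = 0$, and the bound matches the second term of $\mathcal{J}_v(u)$ verbatim, with the multiplicative $\kappa_v$ absorbed into $N_F(u)^\varepsilon$.

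Finally, at $v \mid \nu(f)$, Proposition \ref{prop6.5} yields $|\mathcal{E}_v(t)| \ll \|f_v\|_\infty q_v^{(2-e_v(t-1))s_0}$ up to small coefficients. A direct inspection of the amplified test functions in \textsection\ref{3.2.4} gives $\prod_{v \mid \nu(f)}\|f_v\|_\infty \leq \mathcal{N}_f^{-1}$, producing the required factor $\mathcal{N}_f^{-1}$; the factors $q_v^{-e_v(t-1)s_0}$ join their counterparts at the other places to complete $N_F(t-1)^{-s_0}$; and $\prod_{v \mid \nu(f)} q_v^{2s_0}$ is absorbed into $(MQ\mathcal{N}_f^2)^{2s_0}$. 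The hard part is the disciplined bookkeeping of all these residual $q_v^{O(1)s_0}$ factors; however, since $s_0$ is smaller than any fixed $\varepsilon$ and the nonvanishing of $\mathcal{E}_v(t)$ forces $e_v(u)$ and $e_v(u-1)$ into the tightly controlled ranges dictated by Lemmas \ref{lem6.2}, \ref{lem10} and Propositions \ref{4}, \ref{prop6.5}, each such factor cleanly enters the $(MQ \cdot N_F(u)N_F(u-1))^\varepsilon$ prefactor, yielding the stated bound.
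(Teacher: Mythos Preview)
Your proposal is correct and follows essentially the same approach as the paper: factor $\mathcal{E}_{\fin}(t)$ into three products according to whether $v\nmid\mathfrak{Q}\nu(f)$, $v\mid\mathfrak{Q}$, or $v\mid\nu(f)$, apply respectively Lemma~\ref{lem6.2}, Proposition~\ref{4}, and Proposition~\ref{prop6.5}, convert everything via $e_v(t-1)=-e_v(u-1)$ and $e_v(t)=e_v(u)-e_v(u-1)$, and absorb divisor-type coefficients and the residual $q_v^{O(1)s_0}$ factors into the $\varepsilon$-prefactor using $s_0\ll\varepsilon$. The paper's proof does exactly this case analysis with the same bookkeeping, so there is nothing to add.
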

\begin{proof}
\begin{enumerate} 
	\item Suppose $v\nmid\mathfrak{Q}\nu(f).$ By Lemma \ref{lem6.2}, 
\begin{align*}
\mathcal{E}_v(t)\ll \frac{(1-e_v(1-t))(1+e_v(t)-2e_v(1-t))}{\Vol(K_v[n_v])}\textbf{1}_{\substack{e_v(t-1)\leq 0}}\cdot q_v^{-(2n_v+e_v(t-1))s_0}.
\end{align*}

In this case we have $-e_v(t-1)=e_v(u-1).$ So $(1-e_v(1-t))(1+e_v(t)-2e_v(1-t))\ll [e_v(t)-e_v(t-1)]\cdot (1+e_v(u-1))^2.$ 

Note that $e_v(\mathfrak{N}_f)=0,$ which implies that $e_v(u)\geq 0.$ Hence, \eqref{6.28} leads to $e_v(t)-e_v(t-1)=e_v(u)\ll_{\varepsilon}q_v^{\varepsilon e_v(u)}.$ Also, $e_v(u-1)\ll_{\varepsilon}q_v^{\varepsilon e_v(u-1)}.$ Therefore, by the trivial bound $q_v^{-2n_vs_0}\ll 1,$ we have that 
\begin{equation}\label{6.29}
\mathcal{E}_v(t)\ll_{\varepsilon} \frac{q_v^{\varepsilon e_v(u(u-1))}}{\Vol(K_v[n_v])}\textbf{1}_{\substack{e_v(t-1)\leq 0}}\cdot q_v^{-e_v(t-1)s_0}.
\end{equation}

\item Suppose $v\mid\mathfrak{Q}.$ By Proposition \ref{4},
\begin{align*}
\mathcal{E}_v(t)\ll 
\begin{cases}
m_v(1-e_v(t))^2 q_v^{\frac{m_v-e_v(t)}{2}-3e_v(t)s_0} \ \ &\text{if $e_v(t)\leq -1,$}\\
\kappa_vq_v^{\frac{r_{\omega_v}+n_v+e_v(t)}{2}}\textbf{1}_{r_{\omega_v}\leq m_v}\ \ &\text{if $e_v(t)\geq n_v-m_v,$ and $e_v(t-1)=0,$}\\
0\ \ &\text{otherwise,}
\end{cases}
\end{align*}
where $\kappa_v=m_v(e_v(t)+m_v-n_v+1).$ Notice that $m_v\ll_{\varepsilon}q_v^{\varepsilon m_v},$ and
\begin{align*}
\begin{cases}
1-e_v(t)\ll 1-e_v(t-1)=1+e_v(u-1)\ll_{\varepsilon}q_v^{\varepsilon e_v(u-1)},\ \ \text{if $e_v(t)\leq -1$}.\\
\kappa_v=m_v(1+m_v-n_v+e_v(u))\ll_{\varepsilon}q_v^{\varepsilon e_v(MQ\cdot u)},\ \ \text{if $e_v(t-1)=0$}.
\end{cases}
\end{align*}
Hence, $\mathcal{E}_v(t)$ is majorized by the product of the factor $q_v^{\varepsilon e_v(MQ\cdot u(u-1))}$ and
\begin{align*}
\begin{cases}
q_v^{\frac{m_v+e_v(u-1)}{2}+3e_v(u-1)s_0} \ \ &\text{if $e_v(t-1)\leq -1,$}\\
q_v^{\frac{r_{\omega_v}+n_v+e_v(u)}{2}}\textbf{1}_{r_{\omega_v}\leq m_v}\ \ &\text{if $e_v(u)\geq n_v-m_v,$\ $e_v(t-1)=0,$}\\
0\ \ &\text{otherwise.}
\end{cases}
\end{align*}
In particular, we have, for $v\mid\mathfrak{Q},$ that
\begin{equation}\label{6.30}
\mathcal{E}_v(t)\ll_{\varepsilon}q_v^{\varepsilon e_v(MQ\cdot u)}\cdot \mathcal{J}_v(u)\ll_{\varepsilon}q_v^{\varepsilon e_v(MQ\cdot u(u-1))}\cdot \mathcal{J}_v(u),
\end{equation}
where $\mathcal{J}_v(u)$ is defined by \eqref{6.29.}. Here we make use of the fact that $e_v(u-1)\geq 0$ as $e_v(\mathfrak{N}_f)=0.$ 

\item Suppose $v\mid\nu(f).$ By Proposition \ref{prop6.5}, we have
\begin{align*}
\mathcal{E}_v(t)\ll (3-e_v(t-1))\cdot \|f_v\|_{\infty}\cdot q_v^{(2-e_v(t-1))s_0}.
\end{align*}
We have $\|f_v\|_{\infty}\ll q_v^{-e_v(\mathfrak{N}_f)/2}$ (cf. \textsection\ref{3.2.4}), and 
$$
(3-e_v(t-1))\ll_{\varepsilon}q_v^{\varepsilon e_v(\mathfrak{N}_f\cdot N_F(u-1))}\ll q_v^{\varepsilon e_v(\mathfrak{N}_f^2\cdot (u(u-1)))}, 
$$
where we make use of the fact that $e_v(\mathfrak{N}_fu)\geq 0.$ Consequently,  
\begin{equation}\label{6.31}
\mathcal{E}_v(t) \ll_{\varepsilon} \mathcal{N}_f^{-1}\cdot q_v^{\varepsilon e_v(\mathfrak{N}_f^2\cdot u(u-1))+(2-e_v(t-1))s_0}.
\end{equation}
\end{enumerate}

Gathering the above local estimates \eqref{6.29}, \eqref{6.30} and \eqref{6.31}, then Lemma \ref{lem6.10} follows.
\end{proof}

For $x_{\infty}=\otimes_{v\mid\infty}x_v\in F_{\infty}.$ For $t\in \mathfrak{X}(Q,f),$ parametrize $t/(t-1)$ via \eqref{6.28}. We define 
\begin{equation}\label{6.33}
\mathcal{C}(x_{\infty}):=\sum_{\substack{t\in F-\{0,1\},\ \frac{t}{t-1}=u\in \mathfrak{X}(Q,f)\\
|\frac{t}{t-1}|_v\ll |x_v|_v,\ v\mid\infty}}\mathcal{E}_{\fin}(t).
\end{equation}
\begin{lemma}\label{lem6.11}
Let notation be as before. Let $x_{\infty}\in F_{\infty}^{\times}.$ Let $\mathcal{C}(x_{\infty})$ be defined by \eqref{6.33}. Then
 \begin{equation}\label{6.34}
\mathcal{C}(x_{\infty})\ll_{\varepsilon,F}(MQ\mathcal{N}_f(1+|x_{\infty}|_{\infty}))^{\varepsilon}\cdot |x_{\infty}|_{\infty}\cdot \mathcal{N}_f\cdot  \prod_{v\mid\mathfrak{Q}}q_v^{\frac{\min\{r_{\omega_v},m_v\}+m_v}{2}},
\end{equation}
where the implied constant depends on $\varepsilon$ and $F.$
\end{lemma}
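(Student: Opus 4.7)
\textbf{Proof plan for Lemma~\ref{lem6.11}.}

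The plan is to substitute the bound of Lemma~\ref{lem6.10} into the definition \eqref{6.33}, rewrite the result as a sum over $u\in\mathfrak{R}\mathfrak{N}_f^{-1}\mathcal{O}_F$ constrained at infinity and at $v\mid\mathfrak{Q}$, and then execute a two--stage dyadic decomposition: first on the subset of ramified places where each $\mathcal{J}_v(u)$ realizes its ``near $1$'' branch versus its ``small'' branch, then on the exact local valuations. Since $s_0\asymp\exp(-3\sqrt{\log C(\pi\times\chi)})$ is tiny and since for $u$ in our range one has $N_F(u-1)\ll (1+|x_\infty|_\infty)$ (by the archimedean constraint $|u|_v\ll|x_v|_v$ and the trivial estimate $|u-1|_v\ll 1+|u|_v$), every factor of the form $(MQ\mathcal{N}_f^2(1+|x_\infty|_\infty))^{O(s_0)}$ is absorbable into $(MQ\mathcal{N}_f(1+|x_\infty|_\infty))^\varepsilon$. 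Likewise, the product $\prod_{v\nmid\mathfrak{Q}\nu(f)}\Vol(K_v[n_v])^{-1}\ll R^{1+\varepsilon}$ with $R:=N_F(\mathfrak{R})$, exactly as in the proof of Proposition~\ref{prop12}.

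Next I would expand $\prod_{v\mid\mathfrak{Q}}\mathcal{J}_v(u)=\sum_{S\subseteq\{v\mid\mathfrak{Q}\}}\prod_{v\in S}A_v(u)\prod_{v\notin S}B_v(u)$, where $A_v(u)=q_v^{(m_v+e_v(u-1))/2}\mathbf{1}_{e_v(u-1)\geq 1}$ and $B_v(u)=q_v^{(r_{\omega_v}+n_v+e_v(u))/2}\mathbf{1}_{e_v(u)\geq n_v-m_v}\mathbf{1}_{r_{\omega_v}\leq m_v}$. For each fixed $S$ I would stratify further by the exact values $e_v(u-1)=k_v\geq 1$ ($v\in S$) and $e_v(u)=l_v\geq n_v-m_v$ ($v\notin S$), so that the sum over $u$ reduces to a lattice point count in a translate of a sublattice of $\mathfrak{R}\mathfrak{N}_f^{-1}\mathcal{O}_F$ cut out by these congruence conditions. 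Standard counting in the Minkowski lattice $F_\infty$ gives, for each stratum, at most
\[
O\!\left(1+\frac{|x_\infty|_\infty\,\mathcal{N}_f^{2}}{R\,\prod_{v\in S}q_v^{k_v}\prod_{v\notin S}q_v^{l_v}}\right),
\]
with an implied constant depending on $F$ (including $|d_F|^{1/2}$).

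Now the geometric sums in $(k_v,l_v)$ collapse cleanly: for $v\in S$, $\sum_{k_v\geq 1}q_v^{(m_v+k_v)/2-k_v}\asymp q_v^{m_v/2}$; for $v\notin S$, $\sum_{l_v\geq n_v-m_v}q_v^{(r_{\omega_v}+n_v+l_v)/2-l_v}\asymp q_v^{(r_{\omega_v}+m_v)/2}\mathbf{1}_{r_{\omega_v}\leq m_v}$. Summing over $S$ factors as an Euler product
\[
\prod_{v\mid\mathfrak{Q}}\bigl(q_v^{m_v/2}+q_v^{(r_{\omega_v}+m_v)/2}\mathbf{1}_{r_{\omega_v}\leq m_v}\bigr)\asymp\prod_{v\mid\mathfrak{Q}}q_v^{(m_v+\min\{r_{\omega_v},m_v\})/2}.
\]
Combining with the prefactor $R/\mathcal{N}_f$ and the lattice--count main term $|x_\infty|_\infty\mathcal{N}_f^{2}/R$ yields precisely
\[
\mathcal{C}(x_\infty)\ll_{\varepsilon,F}(MQ\mathcal{N}_f(1+|x_\infty|_\infty))^\varepsilon\cdot|x_\infty|_\infty\cdot\mathcal{N}_f\prod_{v\mid\mathfrak{Q}}q_v^{(m_v+\min\{r_{\omega_v},m_v\})/2},
\]
which is \eqref{6.34}. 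The contribution of the $O(1)$ boundary term in the lattice count produces a term with no factor of $|x_\infty|_\infty$, majorized by $(MQ\mathcal{N}_f)^\varepsilon\mathcal{N}_f\prod_v q_v^{(m_v+\min\{r_{\omega_v},m_v\})/2}$ times $R^{-1}\ll 1$, which is harmless when $|x_\infty|_\infty\gtrsim R/\mathcal{N}_f^{2}$ and otherwise only occurs for $O(1)$ values of $u$, contributing less than the stated bound.

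\textbf{Main obstacle.} The bookkeeping is the delicate part: one must verify that the lattice point count is uniform in $S$ and in $(k_v,l_v)$, that the implicit constants only depend polynomially on $|d_F|^{1/2}$ and the local measure normalizations, and that the transition between the main term and the boundary term respects the assumption that $|u|_v\ll|x_v|_v$ \emph{at every} archimedean place (not just globally). Once this uniformity is checked, the combinatorial expansion and the geometric series computations are routine, and the Euler factorization $\prod_v q_v^{(m_v+\min\{r_{\omega_v},m_v\})/2}$ emerges from the matching ranges $k_v\geq 1$ and $l_v\geq n_v-m_v$.
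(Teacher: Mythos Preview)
Your proposal is correct and follows essentially the same strategy as the paper: apply Lemma~\ref{lem6.10}, expand the product $\prod_{v\mid\mathfrak{Q}}\mathcal{J}_v(u)$ over the two branches, stratify by local divisibility data, count lattice points in $F_\infty$, and collapse the resulting geometric series into the Euler product $\prod_{v\mid\mathfrak{Q}}q_v^{(m_v+\min\{r_{\omega_v},m_v\})/2}$.

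The only organizational difference is in the stratification. You decompose over subsets $S\subseteq\{v\mid\mathfrak{Q}\}$ and then over exact local valuations $(k_v)_{v\in S}$, $(l_v)_{v\notin S}$; the paper instead sums over ideals $\mathfrak{a}\mid\mathfrak{Q}$ and then stratifies by the full ideal factorization $(u)=\mathfrak{R}\mathfrak{N}_f^{-1}\mathfrak{L}\mathfrak{J}$ with $\mathfrak{L}$ coprime to a suitable $\mathfrak{b}$ and $\mathfrak{J}\subseteq\mathfrak{b}$. The paper's ideal-based organization absorbs the boundary term automatically via the indicator $\mathbf{1}_{N_F(\mathfrak{R}\mathfrak{N}_f^{-1})\ll|x_\infty|_\infty}$ (the sum is empty otherwise), which is the clean resolution of the ``main obstacle'' you flag. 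In your setup the same fact---that any nonzero $u\in\mathfrak{R}\mathfrak{N}_f^{-1}\mathcal{O}_F$ satisfies $|u|_\infty\gg R/\mathcal{N}_f^2$ by the product formula---kills the boundary contribution when $|x_\infty|_\infty$ is small, and when $|x_\infty|_\infty\gg R/\mathcal{N}_f^2$ the number of nonempty strata is $\ll(MQ\mathcal{N}_f(1+|x_\infty|_\infty))^\varepsilon$ (since $\prod_{v\in S}q_v^{k_v}\leq\mathcal{N}_f^2(1+|x_\infty|_\infty)$ and likewise for the $l_v$), so the accumulated $O(1)$ is harmless. Either route gives \eqref{6.34}.
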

\begin{proof}
Recall that $\mathcal{J}_v(u)$ is defined by \eqref{6.29.}. Consider the auxiliary sum 
\begin{equation}\label{6.34.}
\mathcal{S}^{\dagger}(x_{\infty}):=\sum_{\substack{u\in \mathfrak{R}\mathfrak{N}_f^{-1}\mathcal{O}_F\cap  F^{\times}\\
|u|_v\ll |x_v|_v,\ v\mid\infty}}\prod_{v\mid\mathfrak{Q}}\mathcal{J}_v(u)\ll \prod_{v\mid\mathfrak{Q}}q_v^{\frac{m_v}{2}}\cdot \mathcal{S}(x_{\infty}),
\end{equation}
where
\begin{align*}
\mathcal{S}(x_{\infty}):=\sum_{\substack{u\in \mathfrak{R}\mathfrak{N}_f^{-1}\mathcal{O}_F\cap  F^{\times}\\
|u|_v\ll |x_v|_v,\ v\mid\infty}}\prod_{v\mid\mathfrak{Q}}\Bigg[q_v^{\frac{e_v(u-1)}{2}}1_{e_v(u-1)\geq 1}+q_v^{\frac{r_{\omega_v}+n_v-m_v+e_v(u)}{2}} \textbf{1}_{\substack{e_v(u)\geq n_v-m_v\\ r_{\omega_v}\leq m_v}}\Bigg].
\end{align*}
Based on Lemma \ref{lem6.10}, we can simplify the majorization of $\mathcal{C}(x_{\infty})$ by focusing on the upper bound of $\mathcal{S}^{\dagger}(x_{\infty})$ in the latter part of this proof.

We proceed to deal with $\mathcal{S}^{\dagger}(x_{\infty}).$ Expanding the product to obtain
\begin{align*}
\mathcal{S}(x_{\infty})\leq \sum_{\substack{\mathfrak{a}\subsetneq \mathcal{O}_F\\ \mathfrak{a}\mid \mathfrak{Q}}}\Bigg[\sum_{\substack{u\in \mathfrak{R}\mathfrak{N}_f^{-1}\mathcal{O}_F\cap  F^{\times}\\
|u|_v\ll |x_v|_v,\ v\mid\infty\\
e_v(u-1)\geq 1,\ v\mid\mathfrak{a}}}\prod_{v\mid \mathfrak{a}}q_v^{\frac{e_v(u-1)}{2}}+\prod_{v\mid \mathfrak{a}}q_v^{\iota_v}\sum_{\substack{u\in \mathfrak{R}\mathfrak{N}_f^{-1}\mathcal{O}_F\cap  F^{\times}\\
|u|_v\ll |x_v|_v,\ v\mid\infty\\ e_v(u)\geq n_v-m_v,\ v\mid\mathfrak{a}}}\prod_{v\mid \mathfrak{a}}q_v^{\frac{e_v(u)}{2}}\Bigg],
\end{align*}
where $\iota_v:=\frac{\min\{r_{\omega_v},m_v\}+n_v-m_v}{2}.$

Fix an ideal $\mathfrak{b}\subset \mathcal{O}_F$ such that $e_v(\mathfrak{b})=n_v-m_v$ for all $v\mid\mathfrak{a}$ and $e_v(\mathfrak{b})=0$ at $v<\infty$ and $v\nmid\mathfrak{a}.$ Then
\begin{equation}\label{6.34..}
\sum_{\substack{u\in \mathfrak{R}\mathfrak{N}_f^{-1}\mathcal{O}_F\cap  F^{\times}\\
|u|_v\ll |x_v|_v,\ v\mid\infty\\ e_v(u)\geq n_v-m_v,\ v\mid\mathfrak{a}}}\prod_{v\mid \mathfrak{a}}q_v^{\frac{e_v(u)}{2}}=\sum_{\substack{\mathfrak{L}\subseteq \mathcal{O}_F\\
\mathfrak{L}+\mathfrak{b}=\mathcal{O}_F}}\sum_{\substack{\mathfrak{J}\subseteq \mathfrak{b}\\
\mathfrak{J}+\mathfrak{L}=\mathcal{O}_F}}\sum_{\substack{u\in \mathfrak{R}\mathfrak{N}_f^{-1}\mathcal{O}_F\cap  F^{\times}\\
|u|_v\ll |x_v|_v,\ v\mid\infty\\ (u)=\mathfrak{R}\mathfrak{N}_f^{-1}\mathfrak{L}\mathfrak{J}}}\prod_{v\mid \mathfrak{a}}q_v^{\frac{e_v(u)}{2}},
\end{equation}
where $(u)$ is the principal ideal. By definition, 
$\mathfrak{R}\mathfrak{N}_f+\mathfrak{a}=\mathcal{O}_F.$ As a consequence, 
$$
\prod_{v\mid \mathfrak{a}}q_v^{\frac{e_v(u)}{2}}\leq N_F(\mathfrak{J})^{\frac{1}{2}},\ \ \text{for $(u)=\mathfrak{R}\mathfrak{N}_f^{-1}\mathfrak{L}\mathfrak{J}$}.
$$
Therefore, the RHS of \eqref{6.34..} is 
\begin{equation}\label{6.351}
\leq \sum_{\substack{\mathfrak{L}\subseteq \mathcal{O}_F\\
\mathfrak{L}+\mathfrak{b}=\mathcal{O}_F}}\sum_{\substack{\mathfrak{J}\subseteq \mathfrak{b}\\
\mathfrak{J}+\mathfrak{L}=\mathcal{O}_F}}\sum_{\substack{u\in \mathfrak{R}\mathfrak{N}_f^{-1}\mathcal{O}_F\cap  F^{\times}\\
|u|_v\ll |x_v|_v,\ v\mid\infty\\ (u)=\mathfrak{R}\mathfrak{N}_f^{-1}\mathfrak{L}\mathfrak{J}}}N_F(\mathfrak{J})^{\frac{1}{2}}.
\end{equation}

By switching the sums we see that \eqref{6.351} is 
\begin{align*}
\ll_{F}& \sum_{\substack{\mathfrak{L}\subseteq \mathcal{O}_F\\
\mathfrak{L}+\mathfrak{b}=\mathcal{O}_F\\ N_F(\mathfrak{L})\ll \frac{|x_{\infty}|_{\infty}N_F(\mathfrak{N}_f)}{N_F(\mathfrak{R})}}}\frac{|x_{\infty}|_{\infty}N_F(\mathfrak{N}_f)}{N_F(\mathfrak{R})N_F(\mathfrak{L})}\sum_{\substack{\mathfrak{J}\subseteq \mathfrak{b}\\
\mathfrak{J}+\mathfrak{L}=\mathcal{O}_F}}\frac{1}{N_F(\mathfrak{J})^{\frac{1}{2}}}\\
\ll_{F,\varepsilon}& \Bigg[\frac{|x_{\infty}|_{\infty}N_F(\mathfrak{N}_f)}{N_F(\mathfrak{R})}\Bigg]^{1+\varepsilon}\cdot N_F(\mathfrak{b})^{-\frac{1}{2}}\cdot \textbf{1}_{N_F(\mathfrak{R}\mathfrak{N}_f^{-1})\ll |x_{\infty}|_{\infty}},
\end{align*}
where the implied constants depend on $F$ and $\varepsilon.$ Similarly, 
\begin{align*}
\sum_{\substack{u\in \mathfrak{R}\mathfrak{N}_f^{-1}\mathcal{O}_F\cap  F^{\times}\\
|u|_v\ll |x_v|_v,\ v\mid\infty\\
e_v(u-1)\geq 1,\ v\mid\mathfrak{a}}}\prod_{v\mid \mathfrak{a}}q_v^{\frac{e_v(u-1)}{2}}\ll \frac{|x|_{\infty}^{1+\varepsilon}\mathcal{N}_f^{2+\varepsilon}}{N_F(\mathfrak{R})^{1+\varepsilon}}.
\end{align*}

Therefore, we obtain 
\begin{align*}
\mathcal{S}(x_{\infty})\ll \frac{|x|_{\infty}^{1+\varepsilon}\mathcal{N}_f^{2+\varepsilon}}{N_F(\mathfrak{R})^{1+\varepsilon}} \cdot\sum_{\substack{\mathfrak{a}\subsetneq \mathcal{O}_F\\ \mathfrak{a}\mid \mathfrak{Q}}}\Bigg[1+\prod_{v\mid \mathfrak{a}}q_v^{\frac{\min\{r_{\omega_v},m_v\}+n_v-m_v}{2}}\cdot  \prod_{v\mid \mathfrak{a}}q_v^{\frac{m_v-n_v}{2}}\Bigg].
\end{align*}

Note that $\sum_{\substack{\mathfrak{a}\subseteq \mathcal{O}_F\\ \mathfrak{a}\mid \mathfrak{Q}}}1\ll_{\varepsilon,F}Q^{\varepsilon}$ we then derive from the above estimate that 
\begin{equation}\label{6.35}
\mathcal{S}(x_{\infty})\ll_{\varepsilon,F}\prod_{v\mid\mathfrak{Q}}q_v^{\frac{\min\{r_{\omega_v},m_v\}}{2}}\cdot \frac{Q^{\varepsilon}\cdot |x_{\infty}|_{\infty}^{1+\varepsilon}\mathcal{N}_f^{2+\varepsilon}}{N_F(\mathfrak{R})}.
\end{equation}

For $t\in F-\{0,1\}$ with $\frac{t}{t-1}=u\in \mathfrak{R}\mathfrak{N}_f^{-1}\mathcal{O}_F,$ and $|\frac{t}{t-1}|_v\ll |x_v|_v,$ for all $v\mid\infty,$ we have
\begin{align*}
\frac{(MQ\cdot N_F(u)N_F(u-1))^{\varepsilon}\cdot (MQ\mathcal{N}_f^2)^{2s_0}}{N_F(t-1)^{s_0}}\ll (MQ\mathcal{N}_f^2(1+|x_{\infty}|_{\infty}))^{1000\varepsilon}.
\end{align*}

By the $\varepsilon$-convention in \textsection\ref{sec1.5.4} and Lemma \ref{lem6.10} we have
\begin{equation}\label{6.36}
\mathcal{C}(x_{\infty})\ll_{\varepsilon}\frac{(MQ\mathcal{N}_f^2(1+|x_{\infty}|_{\infty}))^{\varepsilon}}{\mathcal{N}_f}\cdot\prod_{v\nmid\mathfrak{Q}\nu(f)}\frac{1}{\Vol(K_v[n_v])}\cdot \mathcal{S}^{\dagger}(x_{\infty}),
\end{equation}
where $\mathcal{S}^{\dagger}(x_{\infty})$ is defined by \eqref{6.34.}. 

By the definition of $\mathfrak{R}\in\mathcal{O}_F$ in \textsection\ref{6.5.2}, it satisfies $e_v(\mathfrak{R})=n_v$ for $v\nmid\mathfrak{Q}\nu(f),$ and   $e_v(\mathfrak{R})=0$ for all $v<\infty$ and $v\mid\mathfrak{Q}\nu(f).$ Hence, 
\begin{equation}\label{6.37}
\frac{1}{N_F(\mathfrak{R})}\prod_{v\nmid\mathfrak{Q}\nu(f)}\frac{1}{\Vol(K_v[n_v])}\ll_{\varepsilon} M^{\varepsilon}.
\end{equation}

Then \eqref{6.34} follows from substituting \eqref{6.34.}, \eqref{6.35}, and \eqref{6.37} into \eqref{6.36}.
\end{proof}

\subsubsection{Proof of Theorem \ref{thmD}} 
Recall the definition \eqref{17.} in \textsection\ref{sec3.7}: 
\begin{align*}
J^{\Reg,\RNum{2}}_{\Geo,\bi}(f,\textbf{s}_0,\chi)=\sum_{t\in F-\{0,1\}}\int_{\mathbb{A}_F^{\times}}\int_{\mathbb{A}_F^{\times}}f\left(\begin{pmatrix}
	y&x^{-1}t\\
	xy&1
\end{pmatrix}\right)|x|^{s_1+s_2}|y|^{s_2}\overline{\chi}(y)d^{\times}yd^{\times}x.
\end{align*}

So the regular orbital integrals $J^{\Reg,\RNum{2}}_{\Geo,\bi}(f,\textbf{s}_0,\chi)$ is 
\begin{align*}
\ll \int_{F_{\infty}^{\times}}\int_{F_{\infty}^{\times}}|x_{\infty}|_{\infty}^{2s_0}|y_{\infty}|_{\infty}^{s_0}\sum_{\substack{t\in F-\{0,1\}\\ \frac{t}{t-1}\in \mathfrak{X}(Q,f)}}\mathcal{E}_{\fin}(t)\Big|f_{\infty}\left(\begin{pmatrix}
	y_{\infty}&x_{\infty}^{-1}t\\
	x_{\infty}y_{\infty}&1
\end{pmatrix}\right)\Big|d^{\times}y_{\infty}d^{\times}x_{\infty},
\end{align*}
where $\mathcal{E}_{\fin}(t):=\prod_{v<\infty}|\mathcal{E}_v(t)|.$

By the support of $f_{\infty}$ (cf. \eqref{6.24} in the proof of Lemma \ref{lem6.6}), 
\begin{equation}\label{6.32}
f_{\infty}\left(\begin{pmatrix}
	y_{\infty}&x_{\infty}^{-1}t\\
	x_{\infty}y_{\infty}&1
\end{pmatrix}\right)=0
\end{equation}
unless $y_{\infty}\asymp 1,$ $|x_{v}|_v\ll 1,$ and $\big|\frac{t}{t-1}\big|_{v}\ll |x_v|_{v},$ for all $v\mid\infty.$ Write 
$$t/(t-1)=u\mathfrak{N}_f^{-1}\mathfrak{R}$$ with $u\in\mathcal{O}_F$ as in \eqref{6.28}. Then 
 $J^{\Reg,\RNum{2}}_{\Geo,\bi}(f,\textbf{s}_0,\chi)$ is 
\begin{align*}
\ll \int_{F_{\infty}^{\times}}\int_{1+o(1)}\textbf{1}_{\substack{|x_v|_v\ll 1\\ v\mid\infty}}|y_{\infty}|_{\infty}^{s_0}\cdot \mathcal{C}(x_{\infty})\cdot \max_{t\in \mathfrak{X}(Q,f)}\Big|f_{\infty}\left(\begin{pmatrix}
	y_{\infty}&x_{\infty}^{-1}t\\
	x_{\infty}y_{\infty}&1
\end{pmatrix}\right)\Big|d^{\times}y_{\infty}d^{\times}x_{\infty},
\end{align*}
where $\mathcal{C}(x_{\infty})$ is defined by \eqref{6.33}. Employing Lemma \ref{lem6.11} we have 
\begin{align*}
J^{\Reg,\RNum{2}}_{\Geo,\bi}(f,\textbf{s}_0,\chi)\ll_{\varepsilon}(MQ\mathcal{N}_f^2)^{\varepsilon}\cdot \mathcal{N}_f\cdot \prod_{v\mid\mathfrak{Q}}q_v^{\frac{\min\{r_{\omega_v},m_v\}+n_v}{2}}\prod_{v\mid\infty}\mathcal{E}_v^{\dagger},
\end{align*}
where $\mathcal{E}_v^{\dagger}$ is defined by \eqref{6.21}. By Lemma \ref{lem6.8}, the above bound becomes
\begin{align*}
J^{\Reg,\RNum{2}}_{\Geo,\bi}(f,\textbf{s}_0,\chi)\ll C_{\infty}(\pi\otimes\chi)^{\varepsilon}(MQ\mathcal{N}_f)^{\varepsilon}\mathcal{N}_f\cdot \prod_{v\mid\mathfrak{Q}}q_v^{\frac{\min\{r_{\omega_v},m_v\}+m_v}{2}},
\end{align*}	
where the implied constant depends on $\varepsilon,$ $F,$ $c_v,$ and $C_v,$ $v\mid\infty$.

\section{Hybrid Subconvexity: Proof of Theorem \ref{B}}\label{sec7} 
Recall the intrinsic data in \textsection\ref{sec2.1}. Let $F$ be a number field. Let $\chi=\otimes_v\chi_v$ be a primitive unitary Hecke character of $F^{\times}\backslash\mathbb{A}_F^{\times}$. Let $\pi=\otimes_{v}\pi_v$ be a pure isobaric representation of $G(\mathbb{A}_F).$ Assume the Hypothesis \ref{hy}, i.e., the local representation $\pi_v\otimes\chi_v$ has \textit{uniform parameter growth of size $(T_v;c_v,C_v)$} for some constants $c_v$ and $C_v,$ and parameters $T_v,$ at all archimedean places $v\mid\infty.$


\subsection{The Spectral Side}\label{sec7.1}
 The spectral side $\mathcal{J}_{\Spec}^{\heartsuit}(\boldsymbol{\alpha},\boldsymbol{\ell})$ has been handled in \textsection\ref{sec3}. Here we recall the lower bound of $\mathcal{J}_{\Spec}^{\heartsuit}(\boldsymbol{\alpha},\boldsymbol{\ell})$ therein. 
 \thmf* 
\subsubsection{Choice of $\boldsymbol{\alpha}$} 
Define the sequence $\boldsymbol{\alpha}=(\alpha_{\mathfrak{m}})_{\mathfrak{m}\in\mathcal{L}}$ (cf. \textsection\ref{sec2.1.5}) by 
\begin{equation}\label{7.1.}
\alpha_{\mathfrak{m}}=\begin{cases}
\overline{\mu_{\pi}(\mathfrak{m})},\ &\text{if $\pi$ satisfies the assumption (a) in Theorem \ref{thm6},}\\
\overline{\mu_{\pi^{\dagger}}(\mathfrak{m})},\ &\text{if $\pi$ satisfies the assumption (b) in Theorem \ref{thm6}.}
\end{cases}
\end{equation}
 
Let $L\gg 1$ be such that $\log L\asymp \log Q.$ Let $\mathcal{L}$ be a subset of the set $\{\mathfrak{m}\in \mathcal{O}_{F}:\  L<N_F(\mathfrak{m})\leq 2L,\ (\mathfrak{m},\mathfrak{D}_FMQ)=1,\ \text{$\mathfrak{m}$ is square-free}\}.$ See \textsection\ref{sec2.1.5}. Define 
\begin{align*}
\mathcal{S}_1:=&\sum_{\mathfrak{m}_1,\mathfrak{m}_2\in\mathcal{L}}|\alpha_{\mathfrak{m}_1}|\cdot |\overline{\alpha_{\mathfrak{m}_2}}|\prod_{v_0\mid\gcd(\mathfrak{m}_1,\mathfrak{m}_2)}\sum_{i_{v_0}=0}^{1}\mathcal{N}_f^{-1},\\
\mathcal{S}_2:=&\sum_{\mathfrak{m}_1,\mathfrak{m}_2\in\mathcal{L}}|\alpha_{\mathfrak{m}_1}|\cdot |\overline{\alpha_{\mathfrak{m}_2}}|\prod_{v_0\mid\gcd(\mathfrak{m}_1,\mathfrak{m}_2)}\sum_{i_{v_0}=0}^{1}\mathcal{N}_f,
\end{align*}
where $\textbf{v}=\mathbf{v}_0\sqcup \mathbf{v}_1\sqcup \mathbf{v}_2$ and $\textbf{i}=\{(i_{v_0})_{v_0\in\textbf{v}_0}:\ i_{v_0}\in\{0,1\}\}$ are defined from $\mathfrak{m}_1,$ $\mathfrak{m}_2$ as follows: $\mathbf{v}_0=\{\text{$\mathfrak{p}$ prime}:\ \mathfrak{p}\mid \gcd(\mathfrak{m}_1,\mathfrak{m}_2)\},$ $\mathbf{v}_j=\{\text{$\mathfrak{p}$ prime}:\ \mathfrak{p}\mid \mathfrak{m}_j/\gcd(\mathfrak{m}_1,\mathfrak{m}_2)\},$ $j=1, 2$ (cf. \textsection\ref{3.2.5} for the notations), and $f=f(\cdot;\mathbf{v},\mathbf{i})$ is defined by \eqref{2.8}, and $\mathcal{N}_f$ is defined in \textsection\ref{2.3}.

\begin{lemma}\label{lem7.1}
Let notation be as before. Then
$\mathcal{S}_1\ll_{\varepsilon}L^{1+\varepsilon}C(\pi)^{\varepsilon}$ and $\mathcal{S}_2\ll_{\varepsilon}L^{3+\varepsilon}C(\pi)^{\varepsilon},$ where the implied constant depend on $\varepsilon.$
\end{lemma}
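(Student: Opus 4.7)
The strategy is to unpack $\mathcal{N}_f$ multiplicatively, exploit the factorization of Hecke eigenvalues on squarefree ideals, and close with the Rankin--Selberg mean square bound
\begin{equation}\label{eq:RS}
\sum_{N_F(\mathfrak{a})\leq X}|\mu_\pi(\mathfrak{a})|^2\ll X^{1+\varepsilon}C(\pi)^\varepsilon.
\end{equation}
The analogue for $\pi^\dagger=\sigma_{is_0,\eta'}$ is immediate: because $\Re(is_0)=0$ the Satake parameters of $\pi^\dagger$ have modulus one, so $|\mu_{\pi^\dagger}(\mathfrak{n})|\ll_\varepsilon N_F(\mathfrak{n})^\varepsilon$ on squarefree $\mathfrak{n}$.

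First, for $\mathfrak{m}_1,\mathfrak{m}_2\in\mathcal{L}$ I set $\mathfrak{l}=\gcd(\mathfrak{m}_1,\mathfrak{m}_2)$ and $\mathfrak{n}_j=\mathfrak{m}_j/\mathfrak{l}$; these are pairwise coprime, squarefree, and coprime to $\mathfrak{D}_FMQ$. From the definition of $\mathcal{N}_f$ in \textsection\ref{3.6.2}, together with the identity $N_F(\mathfrak{l})N_F(\mathfrak{n}_1\mathfrak{n}_2)^{1/2}=N_F(\mathfrak{m}_1\mathfrak{m}_2)^{1/2}\asymp L$, an elementary calculation over $\mathbf{i}\in\{0,1\}^{\mathbf{v}_0}$ yields
\[
\sum_{\mathbf{i}}\mathcal{N}_f^{-1}=\prod_{v\mid\mathfrak{l}}(1+q_v^{-1})\,N_F(\mathfrak{n}_1\mathfrak{n}_2)^{-1/2}\ll_\varepsilon L^\varepsilon\,\frac{N_F(\mathfrak{l})}{L},
\]
\[
\sum_{\mathbf{i}}\mathcal{N}_f=\prod_{v\mid\mathfrak{l}}(1+q_v)\,N_F(\mathfrak{n}_1\mathfrak{n}_2)^{1/2}\ll_\varepsilon L^{1+\varepsilon}.
\]

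For $\mathcal{S}_1$, I use multiplicativity $|\alpha_{\mathfrak{l}\mathfrak{n}_j}|=|\mu_\pi(\mathfrak{l})|\cdot|\mu_\pi(\mathfrak{n}_j)|$ on coprime squarefree arguments, reorder the double sum by $\mathfrak{l}$, and apply Cauchy--Schwarz together with \eqref{eq:RS} to the inner sum, obtaining $\sum_{N_F(\mathfrak{n})\leq Y}|\mu_\pi(\mathfrak{n})|\ll Y^{1+\varepsilon}C(\pi)^\varepsilon$. After squaring and collecting one arrives at
\[
\mathcal{S}_1\ll L^{1+\varepsilon}C(\pi)^\varepsilon\sum_{N_F(\mathfrak{l})\leq 2L}\frac{|\mu_\pi(\mathfrak{l})|^2}{N_F(\mathfrak{l})^{1+\varepsilon}}\ll L^{1+\varepsilon}C(\pi)^\varepsilon,
\]
the last step by \eqref{eq:RS} and partial summation. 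For $\mathcal{S}_2$ the cruder route suffices: pull $\sum_{\mathbf{i}}\mathcal{N}_f\ll L^{1+\varepsilon}$ out of $\mathcal{S}_2$ and apply Cauchy--Schwarz with \eqref{eq:RS} directly to $\bigl(\sum_{\mathfrak{m}\in\mathcal{L}}|\mu_\pi(\mathfrak{m})|\bigr)^{2}\ll L^{2+\varepsilon}C(\pi)^\varepsilon$, giving $\mathcal{S}_2\ll L^{3+\varepsilon}C(\pi)^\varepsilon$.

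The argument is a routine Rankin--Selberg sieve with no substantive obstacle; the only point requiring attention is the availability of \eqref{eq:RS} for the deformed Eisenstein $\pi^\dagger$, already addressed above.
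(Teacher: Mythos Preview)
Your proof is correct and follows essentially the same route as the paper: reorganize the double sum over $\mathfrak{m}_1,\mathfrak{m}_2$ according to $\mathfrak{l}=\gcd(\mathfrak{m}_1,\mathfrak{m}_2)$, use Hecke multiplicativity on coprime squarefree ideals, and close with Cauchy--Schwarz together with the Rankin--Selberg mean-square bound (the paper cites \cite{Iwa92} for this, and handles the deformed case via $C(\pi)\asymp C(\pi^\dagger)^{1+o(1)}$, whereas you invoke the unitarity of the Satake parameters of $\pi^\dagger$ directly --- both are fine). Your treatment of $\mathcal{S}_2$ is in fact slightly more economical than the paper's: you observe that $\sum_{\mathbf{i}}\mathcal{N}_f\ll L^{1+\varepsilon}$ uniformly in $\mathfrak{m}_1,\mathfrak{m}_2$ and then simply square $\sum_{\mathfrak{m}\in\mathcal{L}}|\mu_\pi(\mathfrak{m})|$, whereas the paper repeats the gcd-decomposition; the outcomes coincide.
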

\begin{proof}
By definition of $\mathcal{N}_f$ in \textsection\ref{2.3}, we have 
\begin{equation}\label{7.2.}
\mathcal{S}_1\ll\sum_{\substack{\mathfrak{b}\in\mathcal{O}_F\\ N_F(\mathfrak{b})\leq 2L\\ \text{$\mathfrak{b}$ is square-free}}}\prod_{\mathfrak{p}\mid \mathfrak{b}}(1+N_F(\mathfrak{p})^{-1})\sum_{\substack{\mathfrak{m}_1,\mathfrak{m}_2\in\mathcal{O}_F\\ \gcd(\mathfrak{b},\mathfrak{m}_1\mathfrak{m}_2)=\mathcal{O}_F\\
\gcd(\mathfrak{m}_1,\mathfrak{m}_2)=\mathcal{O}_F\\ N_F(\mathfrak{m}_1)\leq {2L/N_F(\mathfrak{b})}\\
N_F(\mathfrak{m}_2)\leq {2L}/{N_F(\mathfrak{b})}
}}\frac{|\alpha_{\mathfrak{b}\mathfrak{m}_1}|\cdot |\alpha_{\mathfrak{b}\mathfrak{m}_2}|}{\sqrt{N_F(\mathfrak{m_1}\mathfrak{m_2})}}.
\end{equation}

Note that $\prod_{\mathfrak{p}\mid \mathfrak{b}}(1+N_F(\mathfrak{p})^{-1})\ll \log N_F(\mathfrak{b})\ll \log L,$ and 
\begin{align*}
\sum_{\substack{\mathfrak{m}_1,\mathfrak{m}_2\in\mathcal{O}_F\\ \gcd(\mathfrak{b},\mathfrak{m}_1\mathfrak{m}_2)=\mathcal{O}_F\\
\gcd(\mathfrak{m}_1,\mathfrak{m}_2)=\mathcal{O}_F\\ N_F(\mathfrak{m}_1)\leq {2L/N_F(\mathfrak{b})}\\
N_F(\mathfrak{m}_2)\leq {2L}/{N_F(\mathfrak{b})}
}}\frac{|\alpha_{\mathfrak{b}\mathfrak{m}_1}|\cdot |\alpha_{\mathfrak{b}\mathfrak{m}_2}|}{\sqrt{N_F(\mathfrak{m_1}\mathfrak{m_2})}}\leq |\alpha_{\mathfrak{b}}|^2\Bigg[\sum_{\substack{\mathfrak{m}\in\mathcal{O}_F\\ N_F(\mathfrak{m})\leq {2L/N_F(\mathfrak{b})}
}}\frac{|\alpha_{\mathfrak{m}}|}{\sqrt{N_F(\mathfrak{m})}}\Bigg]^2,
\end{align*}
which, by Cauchy-Schwarz inequality, and the Rankin-Selberg convolution (cf. Lemma 1 in \cite{Iwa92}), and the fact that $\alpha_{\mathfrak{b}\mathfrak{m}_j}=\alpha_{\mathfrak{b}}\alpha_{\mathfrak{m}_j}$, $1\leq j\leq 2,$ is
\begin{align*}
\ll |\alpha_{\mathfrak{b}}|^2\sum_{\substack{\mathfrak{m}\in\mathcal{O}_F\\ N_F(\mathfrak{m})\leq {2L/N_F(\mathfrak{b})}
}}\frac{1}{N_F(\mathfrak{m})}\sum_{\substack{\mathfrak{m}\in\mathcal{O}_F\\ N_F(\mathfrak{m})\leq {2L/N_F(\mathfrak{b})}
}}|\alpha_{\mathfrak{m}}|^2\ll_{\varepsilon} \frac{|\alpha_{\mathfrak{b}}|^2\cdot C(\pi)^{\varepsilon}L\log L}{N_F(\mathfrak{b})}.
\end{align*}
Here we also make use of the fact  that $C(\pi)\asymp C(\pi^{\dagger})^{1+o(1)}.$

Substituting the above estimates into \eqref{7.2.} we then obtain that
\begin{align*}
\mathcal{S}_1\ll_{\varepsilon}\sum_{\substack{\mathfrak{b}\in\mathcal{O}_F\\ N_F(\mathfrak{b})\leq 2L\\ \text{$\mathfrak{b}$ is square-free}}}\frac{|\alpha_{\mathfrak{b}}|^2\cdot C(\pi)^{\varepsilon}L^{\varepsilon}}{N_F(\mathfrak{b})}  \ll_{\varepsilon}L^{1+\varepsilon}C(\pi)^{\varepsilon}.
\end{align*}

Now we proceed to bound $\mathcal{S}_2.$ Similar to \eqref{7.2.}, we have

\begin{align*}
\mathcal{S}_2\ll\sum_{\substack{\mathfrak{b}\in\mathcal{O}_F\\ N_F(\mathfrak{b})\leq 2L\\ \text{$\mathfrak{b}$ is square-free}}}\prod_{\mathfrak{p}\mid \mathfrak{b}}(1+N_F(\mathfrak{p}))\sum_{\substack{\mathfrak{m}_1,\mathfrak{m}_2\in\mathcal{O}_F\\ \gcd(\mathfrak{b},\mathfrak{m}_1\mathfrak{m}_2)=\mathcal{O}_F\\
\gcd(\mathfrak{m}_1,\mathfrak{m}_2)=\mathcal{O}_F\\ N_F(\mathfrak{m}_1)\leq {2L/N_F(\mathfrak{b})}\\
N_F(\mathfrak{m}_2)\leq {2L}/{N_F(\mathfrak{b})}
}}|\alpha_{\mathfrak{b}\mathfrak{m}_1}|\cdot |\alpha_{\mathfrak{b}\mathfrak{m}_2}|\cdot \sqrt{N_F(\mathfrak{m}_1\mathfrak{m}_2)}.
\end{align*}

Notice that $\prod_{\mathfrak{p}\mid \mathfrak{b}}(1+N_F(\mathfrak{p}))\ll N_F(\mathfrak{b})^{1+\varepsilon}\ll L^{\varepsilon}N_F(\mathfrak{b}).$ So  
\begin{align*}
\mathcal{S}_2\ll_{\varepsilon}&L^{\varepsilon}\sum_{\substack{\mathfrak{b}\in\mathcal{O}_F\\ N_F(\mathfrak{b})\leq 2L\\ \text{$\mathfrak{b}$ is square-free}}}N_F(\mathfrak{b})|\alpha_{\mathfrak{b}}|^2\Bigg[\sum_{\substack{\mathfrak{m}\in\mathcal{O}_F\\ N_F(\mathfrak{m})\leq {2L/N_F(\mathfrak{b})}
}}|\alpha_{\mathfrak{m}}|\sqrt{N_F(\mathfrak{m})}\Bigg]^2.
\end{align*}

By Cauchy-Schwarz inequality and the Rankin-Selberg bound we obtain 
\begin{align*}
\mathcal{S}_2\ll_{\varepsilon}L^{\varepsilon}\sum_{\substack{\mathfrak{b}\in\mathcal{O}_F\\ N_F(\mathfrak{b})\leq 2L\\ \text{$\mathfrak{b}$ is square-free}}}\frac{N_F(\mathfrak{b})|\alpha_{\mathfrak{b}}|^2\cdot L^2}{N_F(\mathfrak{b})^2}\cdot \Bigg[\sum_{\substack{\mathfrak{m}\in\mathcal{O}_F\\ N_F(\mathfrak{m})\leq {2L/N_F(\mathfrak{b})}
}}|\alpha_{\mathfrak{m}}|^2\Bigg]\ll_{\varepsilon} L^{3+\varepsilon}C(\pi)^{\varepsilon}.
\end{align*}

Therefore, Lemma \ref{lem7.1} follows.
\end{proof}

\subsubsection{A Lower Bound of $\mathcal{J}_{\Spec}^{\heartsuit}(\boldsymbol{\alpha},\boldsymbol{\ell})$ } 
\begin{prop}\label{prop7.1}
Let notation be as before. Then 
\begin{align*}
\mathcal{J}_{\Spec}^{\heartsuit}(\boldsymbol{\alpha},\boldsymbol{\ell})\gg_{\varepsilon}C_{\infty}(\pi\otimes\chi)^{-\frac{1}{4}-\varepsilon}(MQ)^{-\varepsilon} |L(1/2,\pi\times\chi)|^2\cdot L^{2-\varepsilon}
\end{align*}
if $L\gg_{F,\varepsilon} Q^{\varepsilon}C_{\infty}(\pi)^{1/2+\varepsilon}M^{1+\varepsilon}.$
\end{prop}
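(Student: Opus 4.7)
The plan is to combine the lower bound from Theorem \ref{thm6} with a Chebyshev-type asymptotic for Hecke coefficients at primes.

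First, I would substitute the amplifier \eqref{7.1.} into Theorem \ref{thm6}. Writing $\pi^{\sharp}:=\pi$ in case (a) and $\pi^{\sharp}:=\pi^{\dagger}$ in case (b), the choice $\alpha_{\mathfrak{m}}=\overline{\mu_{\pi^{\sharp}}(\mathfrak{m})}$ collapses the amplifier weight to a positive sum:
\begin{equation*}
\Big|\sum_{\mathfrak{m}\in\mathcal{L}}\alpha_{\mathfrak{m}}\mu_{\pi^{\sharp}}(\mathfrak{m})\Big|^2=\Big(\sum_{\mathfrak{m}\in\mathcal{L}}|\mu_{\pi^{\sharp}}(\mathfrak{m})|^2\Big)^2.
\end{equation*}
Consequently, Proposition \ref{prop7.1} reduces to exhibiting an admissible set $\mathcal{L}$, in the range $L\gg_{F,\varepsilon} Q^{\varepsilon}C_{\infty}(\pi)^{1/2+\varepsilon}M^{1+\varepsilon}$, for which
\begin{equation*}
\sum_{\mathfrak{m}\in\mathcal{L}}|\mu_{\pi^{\sharp}}(\mathfrak{m})|^2\gg_{\varepsilon} L^{1-\varepsilon}.
\end{equation*}

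Next, I would specialize $\mathcal{L}$ to the set of prime ideals $\mathfrak{p}$ with $L<N_F(\mathfrak{p})\leq 2L$ and $(\mathfrak{p},\mathfrak{D}_FMQ)=\mathcal{O}_F$. Via the local Hecke relation, $|\mu_{\pi^{\sharp}}(\mathfrak{p})|^2$ is, up to harmless corrections at a finite set of places, the $\mathfrak{p}$-th Dirichlet coefficient of $L(s,\pi^{\sharp}\times\widetilde{\pi^{\sharp}})$. The Rankin-Selberg $L$-function is entire away from a simple pole at $s=1$, whose residue admits a polynomial lower bound $\mathrm{Res}_{s=1}L(s,\pi^{\sharp}\times\widetilde{\pi^{\sharp}})\gg C(\pi)^{-\varepsilon}$ by Hoffstein-Lockhart and Brumley. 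A standard Perron contour shift, together with the known zero-free region for $L(s,\pi^{\sharp}\times\widetilde{\pi^{\sharp}})$, yields the sought-after Chebyshev asymptotic whenever $L$ exceeds a suitable power of the analytic conductor. The specific hybrid threshold $L\gg Q^{\varepsilon}C_{\infty}(\pi)^{1/2+\varepsilon}M^{1+\varepsilon}$ is precisely tuned so that the pole contribution dominates the truncation and zero-free-region error terms.

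The main obstacle is making the Chebyshev estimate effective in the precise hybrid range dictated by the hypothesis, in particular ensuring that the contribution of the simple pole of $L(s,\pi^{\sharp}\times\widetilde{\pi^{\sharp}})$ at $s=1$ dominates the shifted-contour error terms in this narrow window. In case (b) a secondary subtlety is that $\pi^{\dagger}=\sigma_{is_0,\eta'}$ differs from $\pi$ only by a tiny imaginary twist of size $s_0\asymp \exp(-3\sqrt{\log C(\pi\times\chi)})$, so the required Chebyshev estimate for $\pi^{\dagger}$ must be compared with that for $\pi$ at a cost of $C(\pi)^{\varepsilon}$; this can be absorbed by the $\varepsilon$-convention and causes no essential difficulty.
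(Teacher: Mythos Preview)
Your reduction via Theorem \ref{thm6} to the inequality $\sum_{\mathfrak{m}\in\mathcal{L}}|\mu_{\pi^{\sharp}}(\mathfrak{m})|^2\gg L^{1-\varepsilon}$ is correct and matches the paper. The divergence is in how you propose to establish this lower bound, and there the proposal has a real gap.

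The paper does \emph{not} invoke a zero-free region. It takes $\mathcal{L}$ to be all square-free ideals in $(L,2L]$ coprime to $\mathfrak{D}_FMQ$ (not just primes), recognises the smoothed generating series as $L^{(S)}(s,\pi\times\widetilde{\pi})/L^{(S)}(2s,\pi\times\widetilde{\pi})$, and shifts the Mellin contour only to $\Re(s)=1/2+\varepsilon$. The simple pole at $s=1$ contributes $\gg L\,(C(\pi)Q)^{-\varepsilon}$ via the Hoffstein--Lockhart lower bound on the residue, while on the shifted line the \emph{convexity bound} for $L(s,\pi\times\widetilde{\pi})$ produces an error $O\big((LM)^{1/2+\varepsilon}C_{\infty}(\pi)^{1/4+\varepsilon}Q^{\varepsilon}\big)$. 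The hypothesis $L\gg Q^{\varepsilon}C_{\infty}(\pi)^{1/2+\varepsilon}M^{1+\varepsilon}$ is exactly the crossover where the pole term dominates this convexity error; no finer input is used.

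Your route through a zero-free region cannot recover this threshold. The effective regions available for $L(s,\pi\times\widetilde{\pi})$ on $\mathrm{GL}(2)$ are of de~la~Vall\'ee~Poussin type, $\sigma>1-c/\log(C(2+|t|))$, and the standard Perron argument then yields an error of the shape $L\exp\!\big(-c'\sqrt{\log L/\log C(\pi)}\big)$ or worse. For this to be $o(L)$ one needs $\log L$ large relative to a power of $\log C(\pi)$, not merely $L\gg C(\pi)^{O(1)}$; in particular the precise polynomial threshold in the hypothesis does not arise from ``zero-free-region error terms.'' Restricting to prime ideals also complicates the generating function (one is pushed towards $-L'/L$ rather than a clean ratio of $L$-functions) without any gain. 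The fix is simply to replace the zero-free-region step by the shallow shift to $\Re(s)=1/2+\varepsilon$ and the convexity bound, exactly as the paper does; the treatment of case~(b) is then identical since $C(\pi^{\dagger})\asymp C(\pi)^{1+o(1)}$.
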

\begin{proof}
Let $h$ be a smooth cut-off
function supported on $[1/2,4].
$ Then 
\begin{align*}
\sum_{\mathfrak{m}\in\mathcal{L}}|\mu_{\pi}(\mathfrak{m})|^2\gg_h\frac{1}{2\pi i}\int_{(2)}\frac{L^{(S)}(s,\pi\times\widetilde{\pi})}{L^{(S)}(2s,\pi\times\widetilde{\pi})}\hat{h}(s)L^sds,
\end{align*} 
where $\hat{h}$ is the Mellin transform of $h,$ $S=\{\text{$\mathfrak{p} $ prime}:\ \mathfrak{p}\mid C_{\fin}(\pi)C_{\fin}(\chi)\},$ and the superscript $(S)$ indicates that the Euler factors of the Rankin-Selberg $L$-function at the primes $\mathfrak{p}\in S.$ Shifting the contour from $\Re(s)=2$ to $\Re(s)=1/2+\varepsilon,$ we obtain, from the lower bound $\underset{s=1}{\Res}\ L^{(S)}(s,\pi\times\widetilde{\pi})\gg_{\varepsilon}(C(\pi)Q)^{-\varepsilon}$ (cf. \cite{HL94}) and the upper bound $L^{(S)}(1+2\varepsilon,\pi\times\widetilde{\pi})\ll_{\varepsilon}(C(\pi)Q)^{\varepsilon},$ that 
\begin{equation}\label{7.2}
\sum_{\mathfrak{m}\in\mathcal{L}}|\mu_{\pi}(\mathfrak{m})|^2\gg_{\varepsilon}L(C(\pi)Q)^{-\varepsilon}+O((LM)^{\frac{1}{2}+\varepsilon}C_{\infty}(\pi)^{\frac{1}{4}+\varepsilon}Q^{\varepsilon}).
\end{equation}

Likewise, we also have the estimate towards $\pi^{\dagger}:$
\begin{equation}\label{7.3}
\sum_{\mathfrak{m}\in\mathcal{L}}|\mu_{\pi^{\dagger}}(\mathfrak{m})|^2\gg_{\varepsilon}L(C(\pi)Q)^{-\varepsilon}+O((LM)^{\frac{1}{2}+\varepsilon}C_{\infty}(\pi)^{\frac{1}{4}+\varepsilon}Q^{\varepsilon}).
\end{equation}

Hence, Proposition \ref{prop7.1} follows from  Theorem \ref{thm6} and  \eqref{7.2} and \eqref{7.3}. 
\end{proof}

\subsection{The Geometric Side}\label{sec7.2}
By  \eqref{2.17} and \eqref{15.} in \textsection\ref{sec3.7}, 
\begin{align*}
J_{\Geo}^{\Reg}(f,\textbf{s},\chi):=J^{\Reg}_{\Geo,\sm}(f,\textbf{s},\chi)+J^{\Reg}_{\Geo,\du}(f,\textbf{s},\chi)+J^{\Reg,\RNum{2}}_{\Geo,\bi}(f,\textbf{s},\chi),
\end{align*}
where $\mathbf{s}=(s_1,s_2)$ with $\Re(s_1)+\Re(s_2)>1.$

Recall that $s_0\in [4^{-1}\exp(-3\sqrt{\log C(\pi\times\chi)}),\exp(-3\sqrt{\log C(\pi\times\chi)})]$ is the deformation parameter defined by \eqref{eq2.1} in \textsection\ref{2.1.5.}. 
In \textsection \ref{sec4}-\textsection\ref{sec6} we derive the holomorphic continuation at $\mathbf{s}_0=(s_0,s_0):$
\begin{align*}
J_{\Geo}^{\Reg,\heartsuit}(f,\textbf{s}_0,\chi)=&J^{\Reg}_{\Geo,\sm}(f,\textbf{s}_0,\chi)+J^{\Reg,+}_{\Geo,\du}(f,\textbf{s}_0,\chi)+J^{\Reg,\wedge}_{\Geo,\du}(f,\textbf{s}_0,\chi)\\
&+J^{\Reg,\Res}_{\Geo,\du}(f,\textbf{s}_0,\chi)+J^{\Reg,\RNum{2}}_{\Geo,\bi}(f,\textbf{s}_0,\chi).
\end{align*}

By Proposition \ref{prop12}, we have 
\begin{align*}
J^{\Reg}_{\Geo,\sm}(f,\textbf{s}_0,\chi)\ll_{\varepsilon} \mathcal{N}_f^{-1}[M,M'Q]^{1+\varepsilon}C_{\infty}(\pi\otimes\chi)^{\frac{1}{4}+\varepsilon},
\end{align*}
where $\mathcal{N}_f$ is defined by \eqref{61}, and the implied constant depends only on $F,$ $\varepsilon,$ and $c_v,$ $C_v$ at $v\mid\infty,$ cf. \textsection \ref{2.1.2}. By Proposition \ref{prop17}, Lemmas \ref{lem14.}, \ref{lem5.4}, and \ref{lem5.5}, we have 
\begin{align*}
J^{\Reg,+}_{\Geo,\du}(f,\textbf{s}_0,\chi)+J^{\Reg,\wedge}_{\Geo,\du}(f,\textbf{s}_0,\chi)+J^{\Reg,\Res}_{\Geo,\du}(f,\textbf{s}_0,\chi)
\end{align*}
is majorized by (up to a constant depending on $\varepsilon$ and $F$) 
\begin{align*}
s_0^{-1}\mathcal{N}_f^{-1+2s_0}[M,M'Q]^{1+\varepsilon}C_{\infty}(\pi\otimes\chi)^{\frac{1}{4}+\varepsilon}+C_{\infty}(\pi\otimes\chi)^{\varepsilon}[M,M'Q]^{\varepsilon}\mathcal{N}_f^{1+2s_0+\varepsilon}.
\end{align*}

Moreover, by Theorem \ref{thmD} we have 
\begin{align*}
J^{\Reg,\RNum{2}}_{\Geo,\bi}(f,\textbf{s}_0,\chi)\ll C_{\infty}(\pi\otimes\chi)^{\varepsilon}(MQ\mathcal{N}_f)^{\varepsilon}\mathcal{N}_f\cdot \prod_{v\mid\mathfrak{Q}}q_v^{\frac{\min\{r_{\omega_v},m_v\}+m_v}{2}}.
\end{align*}	

Gathering the above estimates yields an upper bound for $J_{\Geo}^{\Reg,\heartsuit}(f,\textbf{s}_0,\chi).$\begin{prop}\label{prop7.2}
Let notation be as before. Then 
\begin{align*}
J_{\Geo}^{\Reg,\heartsuit}(f,\textbf{s}_0,\chi)\ll &\mathcal{N}_f^{-1+2s_0}[M,M'Q]^{1+\varepsilon}C_{\infty}(\pi\otimes\chi)^{\frac{1}{4}+\varepsilon}\\
&+C_{\infty}(\pi\otimes\chi)^{\varepsilon}(MQ\mathcal{N}_f)^{\varepsilon}\mathcal{N}_f\cdot Q^{\frac{1}{2}}\cdot \gcd(M',Q)^{\frac{1}{2}},
\end{align*}
where the implied constant depends on $\varepsilon,$ $F,$ $c_v,$ and $C_v,$ $v\mid\infty$ (cf. \textsection\ref{2.1.2}).
\end{prop}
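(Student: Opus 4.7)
\textbf{Proof plan for Proposition \ref{prop7.2}.} The plan is to assemble the estimates established in Sections \ref{sec4}--\ref{sec6} for each piece of the decomposition $J_{\Geo}^{\Reg,\heartsuit}(f,\mathbf{s}_0,\chi) = J^{\Reg}_{\Geo,\sm} + J^{\Reg,+}_{\Geo,\du} + J^{\Reg,\wedge}_{\Geo,\du} + J^{\Reg,\Res,1}_{\Geo,\du} + J^{\Reg,\Res,2}_{\Geo,\du} + J^{\Reg,\RNum{2}}_{\Geo,\bi}$ (arguments suppressed), and to repackage the resulting bounds according to the two shapes on the right-hand side of Proposition \ref{prop7.2}.

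For the first shape, I would combine the small-cell estimate of Proposition \ref{prop12} with the two ``archimedean'' dual-orbital pieces: $J^{\Reg,\wedge}_{\Geo,\du}$ from Proposition \ref{prop17} and $J^{\Reg,\Res,2}_{\Geo,\du}$ from Lemma \ref{lem5.5}. All three share the common shape $\mathcal{N}_f^{-1+2s_0}[M,M'Q]^{1+\varepsilon}C_{\infty}(\pi\otimes\chi)^{1/4+\varepsilon}$ (using $\mathcal{N}_f \geq 1$ to absorb the exponent $-1$ into $-1+2s_0$), up to a factor of $s_0^{-1}$ present in Proposition \ref{prop17} and Lemma \ref{lem5.5}. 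A subtle but routine point: since $s_0 \asymp \exp(-3\sqrt{\log C(\pi\times\chi)})$ by \textsection\ref{2.1.5.}, we have $s_0^{-1} \ll C(\pi\otimes\chi)^{\varepsilon}$, which is absorbed by the $\varepsilon$-convention in \textsection\ref{sec1.5.4}. This yields the first term in the stated bound.

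For the second shape, I would combine Lemma \ref{lem14.} (for $J^{\Reg,+}_{\Geo,\du}$) and Lemma \ref{lem5.4} (for $J^{\Reg,\Res,1}_{\Geo,\du}$), which both produce $C_{\infty}(\pi\otimes\chi)^{\varepsilon}[M,M'Q]^{\varepsilon}\mathcal{N}_f^{1+2s_0+\varepsilon}$, together with Theorem \ref{thmD} for the regular orbital integral $J^{\Reg,\RNum{2}}_{\Geo,\bi}$. The key observation that converts the ramification product in Theorem \ref{thmD} into the clean form $Q^{1/2}\gcd(M',Q)^{1/2}$ is the identity
\begin{align*}
\prod_{v\mid\mathfrak{Q}}q_v^{(\min\{r_{\omega_v},m_v\}+m_v)/2} \;=\; \prod_{v\mid\mathfrak{Q}}q_v^{m_v/2}\cdot \prod_{v\mid\mathfrak{Q}}q_v^{\min\{r_{\omega_v},m_v\}/2} \;=\; Q^{1/2}\cdot \gcd(M',Q)^{1/2},
\end{align*}
which follows from $Q = \prod_{v\mid\mathfrak{Q}} q_v^{m_v}$, $M' = \prod_{v<\infty} q_v^{r_{\omega_v}}$ together with the prime-by-prime identity $\min\{a,b\} = e_v(\gcd(q_v^a, q_v^b))$. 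Under the standing assumption $Q > 1$, we have $Q^{1/2} \geq 1$ and $\mathcal{N}_f^{1+2s_0+\varepsilon} \ll \mathcal{N}_f^{1+\varepsilon}$, so the contributions from Lemmas \ref{lem14.} and \ref{lem5.4} are dominated by the bound arising from Theorem \ref{thmD}.

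There is no significant obstacle at this stage: all the substantive analytic and arithmetic input (sup-norm bounds at $\infty$, Gauss/Kloosterman/Ramanujan sum cancellation, the amplifier structure, and the stability/support analysis of the regular orbitals) has been carried out in Sections \ref{sec4}--\ref{sec6}. The proof is essentially a triangle-inequality bookkeeping argument; the only points requiring care are (i) that the $s_0^{-1}$ factors do not spoil the power savings, which is guaranteed by the choice of $s_0$ made in \textsection\ref{2.1.5.}, and (ii) the clean factorization of the ramified product above, which is what determines the precise shape of the second main term.
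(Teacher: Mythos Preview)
Your proposal is correct and follows essentially the same approach as the paper: assemble the bounds from Proposition~\ref{prop12}, Proposition~\ref{prop17}, Lemmas~\ref{lem14.}, \ref{lem5.4}, \ref{lem5.5}, and Theorem~\ref{thmD}, then group them into the two displayed shapes. You make explicit two points the paper leaves implicit, namely that $s_0^{-1}\ll C(\pi\times\chi)^{\varepsilon}$ is absorbed by the $\varepsilon$-convention and that $\prod_{v\mid\mathfrak{Q}}q_v^{(\min\{r_{\omega_v},m_v\}+m_v)/2}=Q^{1/2}\gcd(M',Q)^{1/2}$; both are exactly right.
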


\begin{prop}\label{prop7.4}
Let notation be as before. Then 
\begin{align*}
\mathcal{J}_{\Geo}^{\heartsuit}(\boldsymbol{\alpha},\chi)\ll &[M,M'Q]^{1+\varepsilon}C_{\infty}(\pi\otimes\chi)^{\frac{1}{4}+\varepsilon}L^{1+\varepsilon}\\
&+C_{\infty}(\pi\otimes\chi)^{\varepsilon}(MQL)^{\varepsilon}L^3\cdot Q^{\frac{1}{2}}\cdot \gcd(M',Q)^{\frac{1}{2}},
\end{align*}	
where the implied constant depends on $\varepsilon,$ $F,$ $c_v,$ and $C_v,$ $v\mid\infty$ (cf. \textsection\ref{2.1.2}).
\end{prop}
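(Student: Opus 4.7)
The plan is to derive Proposition \ref{prop7.4} directly from Proposition \ref{prop7.2} by applying the bound termwise and controlling the amplification weights with Lemma \ref{lem7.1}. Since the essential analytic work has already been done, the argument is bookkeeping rather than new estimation.

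First, I would apply the triangle inequality to the definition of $\mathcal{J}_{\Geo}^{\heartsuit}(\boldsymbol{\alpha},\chi)$ given in \textsection\ref{3.7.3} to obtain
\begin{align*}
|\mathcal{J}_{\Geo}^{\heartsuit}(\boldsymbol{\alpha},\chi)|\leq \sum_{\mathfrak{m}_1,\mathfrak{m}_2\in\mathcal{L}}|\alpha_{\mathfrak{m}_1}||\alpha_{\mathfrak{m}_2}|\prod_{v_0\mid\gcd(\mathfrak{m}_1,\mathfrak{m}_2)}\sum_{i_{v_0}=0}^{1}|c_{v_0,i_{v_0}}|\cdot \big|J_{\Geo}^{\Reg,\heartsuit}(f(\cdot;\mathbf{v},\mathbf{i}),\mathbf{s}_0)\big|.
\end{align*}
Recall from \textsection\ref{3.2.4} that $c_{v_0,i_{v_0}}\ll 1$ uniformly, and the inner sum over $i_{v_0}\in\{0,1\}$ is finite, so these factors can be absorbed into the implicit constant.

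Second, I would substitute the bound from Proposition \ref{prop7.2} for each $J_{\Geo}^{\Reg,\heartsuit}(f(\cdot;\mathbf{v},\mathbf{i}),\mathbf{s}_0)$, which splits into two contributions: a term of size $\mathcal{N}_f^{-1+2s_0}[M,M'Q]^{1+\varepsilon}C_{\infty}(\pi\otimes\chi)^{\frac{1}{4}+\varepsilon}$ and a term of size $C_{\infty}(\pi\otimes\chi)^{\varepsilon}(MQ\mathcal{N}_f)^{\varepsilon}\mathcal{N}_f\cdot Q^{1/2}\cdot\gcd(M',Q)^{1/2}$. A key observation is that $s_0$ is exponentially small in $\sqrt{\log C(\pi\times\chi)}$ by its construction in \textsection\ref{2.1.5.}, and $\mathcal{N}_f \ll L^{1+o(1)}$ since $\mathfrak{N}_f$ divides a product of primes dividing $\mathfrak{m}_1\mathfrak{m}_2$ with $N_F(\mathfrak{m}_j)\leq 2L$; hence $\mathcal{N}_f^{2s_0}=1+o(L^{\varepsilon})$ can be absorbed into the $(MQL)^{\varepsilon}$-factors.

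Finally, I would split the resulting double sum according to the two contributions. The first is bounded by $[M,M'Q]^{1+\varepsilon}C_{\infty}(\pi\otimes\chi)^{\frac{1}{4}+\varepsilon}\cdot \mathcal{S}_1$, where $\mathcal{S}_1$ is the sum defined in \textsection\ref{sec7.1}; by Lemma \ref{lem7.1} this is $\ll L^{1+\varepsilon}[M,M'Q]^{1+\varepsilon}C_{\infty}(\pi\otimes\chi)^{\frac{1}{4}+\varepsilon}$ after absorbing $C(\pi)^{\varepsilon}$ into the ambient $\varepsilon$-convention. The second is bounded by $C_{\infty}(\pi\otimes\chi)^{\varepsilon}(MQL)^{\varepsilon}Q^{1/2}\gcd(M',Q)^{1/2}\cdot \mathcal{S}_2$, and by Lemma \ref{lem7.1} this yields $\ll L^{3+\varepsilon}C_{\infty}(\pi\otimes\chi)^{\varepsilon}(MQL)^{\varepsilon}Q^{1/2}\gcd(M',Q)^{1/2}$. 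Combining the two gives precisely the stated bound. There is no real obstacle; the only subtlety is confirming that the $s_0$-deformation exponent on $\mathcal{N}_f$ is harmless, which follows from the tiny size of $s_0$ as above.
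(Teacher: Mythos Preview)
Your proposal is correct and follows essentially the same route as the paper: triangle inequality on the definition of $\mathcal{J}_{\Geo}^{\heartsuit}(\boldsymbol{\alpha},\chi)$, then Proposition \ref{prop7.2} termwise, then Lemma \ref{lem7.1} applied to $\mathcal{S}_1$ and $\mathcal{S}_2$, together with $\mathcal{N}_f\leq 2L$. Your treatment is in fact slightly more explicit than the paper's, since you spell out why the stray factor $\mathcal{N}_f^{2s_0}$ is harmless (via the tiny size of $s_0$), whereas the paper simply absorbs this into the $\varepsilon$-convention without comment.
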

\begin{proof}
By definition in \textsection\ref{3.7.3},
\begin{align*}
\mathcal{J}_{\Geo}^{\heartsuit}(\boldsymbol{\alpha},\chi):=&\sum_{\mathfrak{m}_1,\mathfrak{m}_2\in\mathcal{L}}\alpha_{\mathfrak{m}_1}\overline{\alpha_{\mathfrak{m}_2}}\prod_{v_0\mid\gcd(\mathfrak{m}_1,\mathfrak{m}_2)}\sum_{i_{v_0}=0}^{1}c_{v_0,i_{v_0}}J_{\Geo}^{\Reg,\heartsuit}(f(\cdot;\mathbf{v},\mathbf{i}),\mathbf{s}_0),
\end{align*} 
where $\textbf{v}=\mathbf{v}_0\sqcup \mathbf{v}_1\sqcup \mathbf{v}_2$ and $\textbf{i}=\{(i_{v_0})_{v_0\in\textbf{v}_0}:\ i_{v_0}\in\{0,1\}\}$ are defined from $\mathfrak{m}_1,$ $\mathfrak{m}_2$ as follows: $\mathbf{v}_0=\{\text{$\mathfrak{p}$ prime}:\ \mathfrak{p}\mid \gcd(\mathfrak{m}_1,\mathfrak{m}_2)\},$ $\mathbf{v}_j=\{\text{$\mathfrak{p}$ prime}:\ \mathfrak{p}\mid \mathfrak{m}_j/\gcd(\mathfrak{m}_1,\mathfrak{m}_2)\},$ $j=1, 2.$ See \textsection\ref{3.2.5} for the notations.

Employing the triangle inequality we then derive that
\begin{equation}\label{7.5}
\mathcal{J}_{\Geo}^{\heartsuit}(\boldsymbol{\alpha},\chi)\ll \sum_{\mathfrak{m}_1,\mathfrak{m}_2\in\mathcal{L}}|\alpha_{\mathfrak{m}_1}|\cdot |\overline{\alpha_{\mathfrak{m}_2}}|\prod_{v_0\mid\gcd(\mathfrak{m}_1,\mathfrak{m}_2)}\sum_{i_{v_0}=0}^{1}\big|J_{\Geo}^{\Reg,\heartsuit}(f(\cdot;\mathbf{v},\mathbf{i}),\mathbf{s}_0)\big|.
\end{equation}

Then Proposition \ref{prop7.4} follows from Lemma \ref{lem7.1} and Proposition \ref{prop7.2}, the estimate \eqref{7.5}, and  the fact that $\mathcal{N}_f\leq 2L.$
\end{proof}

\subsection{Put It All Together: Proof of Main Results}\label{sec7.3}
Recall that by Proposition \ref{prop7.1}, we have, under the assumption $L\gg_{F,\varepsilon} (MQ)^{\varepsilon}C_{\infty}(\pi)^{1/2+\varepsilon}M^{1+\varepsilon},$ that 
\begin{align*}
\mathcal{J}_{\Spec}^{\heartsuit}(\boldsymbol{\alpha},\boldsymbol{\ell})\gg_{\varepsilon}C_{\infty}(\pi\otimes\chi)^{-\frac{1}{2}-\varepsilon}(MQ)^{-\varepsilon} |L(1/2,\pi\times\chi)|^2\cdot L^{2-\varepsilon}.
\end{align*}

By Proposition \ref{prop7.4}, we have
\begin{align*}
\mathcal{J}_{\Geo}^{\heartsuit}(\boldsymbol{\alpha},\chi)\ll_{\varepsilon,F} &[M,M'Q]^{1+\varepsilon}C_{\infty}(\pi\otimes\chi)^{\frac{1}{4}+\varepsilon}L^{1+\varepsilon}\\
&+C_{\infty}(\pi\otimes\chi)^{\varepsilon}(MQL)^{\varepsilon}L^3\cdot Q^{\frac{1}{2}}\cdot \gcd(M',Q)^{\frac{1}{2}}.
\end{align*}	

Substituting the above estimates into the amplified relative trace formula (i.e., Theorem \ref{thmD'} in \textsection\ref{3.7.3}) we then obtain the following.
\begin{thmx}\label{thmE}
Let notation be as before. Let $\pi$ be a pure isobaric representation of $\mathrm{GL}(2,\mathbb{A}_F)$ with central character $\omega_{\pi}.$ Let $\chi$ be a Hecke character. Suppose that $\pi_{\infty}\otimes\chi_{\infty}$ has uniform parameter growth (cf. Hypothesis \ref{hy}), and the arithmetic conductor  $C_{\fin}(\chi)>1.$ Let  $L\gg_{F,\varepsilon} (MQ)^{\varepsilon}C_{\infty}(\pi)^{1/2+\varepsilon}M^{1+\varepsilon}.$ Then
\begin{align*}
|L(1/2,\pi\times\chi)|^2\ll_{\varepsilon,F} &[M,M'Q]^{1+\varepsilon}C_{\infty}(\pi\otimes\chi)^{\frac{1}{2}+\varepsilon}L^{-1+\varepsilon}\\
&+C_{\infty}(\pi\otimes\chi)^{\frac{1}{4}+\varepsilon}(MQL)^{\varepsilon}L\cdot Q^{\frac{1}{2}}\cdot \gcd(M',Q)^{\frac{1}{2}},
\end{align*}
where the implied constant depends on $\varepsilon,$ $F,$ $c_v,$ and $C_v,$ $v\mid\infty$ (cf. \textsection\ref{2.1.2}).\end{thmx}

\begin{proof}[Proof of Theorem \ref{A}]
Fix $\pi.$ Then $C_{\infty}(\pi\otimes\chi)\asymp C_{\infty}(\chi)^2,$ and 
$$
(MQ)^{\varepsilon}C_{\infty}(\pi)^{1/2+\varepsilon}M^{1+\varepsilon}\ll_{F,\varepsilon} C(\chi)^{1/4+\varepsilon}.
$$ 
Firstly we consider the situation that $C_{\fin}(\chi)>1.$ Take $L=C(\chi)^{1/4+\varepsilon}.$ Substituting it into Theorem \ref{thmE} to obtain
\begin{align*}
|L(1/2,\pi\times\chi)|^2\ll_{\varepsilon,F} Q^{1+\varepsilon}C_{\infty}(\chi)^{1+\varepsilon}L^{-1+\varepsilon}\ll C(\chi)^{\frac{3}{4}+\varepsilon}.
\end{align*}
where the implied constant depends on $\pi,$ $F$ and $\varepsilon.$ 

If $C_{\fin}(\chi)=1,$ replace $\chi$ by $\chi\chi_0,$ where $\chi_0$ is a fixed Hecke character induced from a Dirichlet character of a fixed modulus, say, $3$, and we also replace $\pi$ with $\pi\otimes\overline{\chi}_0.$ Then Theorem \ref{thmE} applies towards $\pi\otimes\overline{\chi}_0$ and $\chi\chi_0,$ leading to the same bound (but with a different implied constant relying on the modulus of $\chi_0$) for $L(1/2,\pi\times\chi).$ Hence, Theorem \ref{A} follows.
\end{proof}
\begin{proof}[Proof of Theorem \ref{B}]
In Theorem \ref{thmE} we take the parameter $L$ to be
\begin{align*}
\max\Bigg\{(MQ)^{\varepsilon}C_{\infty}(\pi)^{\frac{1}{2}+\varepsilon}M, [M,M'Q]^{\frac{1}{2}+\varepsilon}C_{\infty}(\pi\otimes\chi)^{\frac{1}{8}+\varepsilon}Q^{-\frac{1}{4}}\cdot \gcd(M',Q)^{-\frac{1}{4}}\Bigg\}.
\end{align*}

Consider the situation that $C_{\fin}(\chi)>1.$ We have the following two cases.
\begin{enumerate}
	\item Suppose that 
\begin{align*}
(MQ)^{\varepsilon}C_{\infty}(\pi)^{\frac{1}{2}+\varepsilon}M\ll [M,M'Q]^{\frac{1}{2}+\varepsilon}C_{\infty}(\pi\otimes\chi)^{\frac{1}{8}+\varepsilon}Q^{-\frac{1}{4}}\cdot \gcd(M',Q)^{-\frac{1}{4}}.
\end{align*}

Making use of the fact that $[M,M'Q]\leq MQ,$ we obtain 
\begin{align*}
|L(1/2,\pi\times\chi)|^2\ll_{\varepsilon,F}& M^{\frac{1}{2}+\varepsilon}C_{\infty}(\pi\otimes\chi)^{\frac{3}{8}+\varepsilon}Q^{\frac{3}{4}+\varepsilon}\gcd(M',Q)^{\frac{1}{4}}.
\end{align*}

\item Suppose that 
\begin{align*}
(MQ)^{\varepsilon}C_{\infty}(\pi)^{\frac{1}{2}+\varepsilon}M\gg  [M,M'Q]^{\frac{1}{2}+\varepsilon}C_{\infty}(\pi\otimes\chi)^{\frac{1}{8}+\varepsilon}Q^{-\frac{1}{4}}\cdot \gcd(M',Q)^{-\frac{1}{4}}.
\end{align*}

\begin{align*}
|L(1/2,\pi\times\chi)|^2\ll_{\varepsilon,F} & C_{\infty}(\pi\otimes\chi)^{\frac{1}{4}+\varepsilon}(MQ)^{\varepsilon}C_{\infty}(\pi)^{\frac{1}{2}+\varepsilon}M
Q^{\frac{1}{2}}\cdot \gcd(M',Q)^{\frac{1}{2}}.
\end{align*}
\end{enumerate}
In all, we have
\begin{align*}
|L(1/2,\pi\times\chi)|^2\ll_{\varepsilon,F}& M^{\frac{1}{2}+\varepsilon}C_{\infty}(\pi\otimes\chi)^{\frac{3}{8}+\varepsilon}Q^{\frac{3}{4}+\varepsilon}\gcd(M',Q)^{\frac{1}{4}}\\
&+M^{1+\varepsilon}C_{\infty}(\pi)^{\frac{1}{2}+\varepsilon}C_{\infty}(\pi\otimes\chi)^{\frac{1}{4}+\varepsilon}
Q^{\frac{1}{2}+\varepsilon}\cdot \gcd(M',Q)^{\frac{1}{2}}.
\end{align*}

If $C_{\fin}(\chi)=1,$ replace $\chi$ by $\chi\chi_0,$ where $\chi_0$ is a fixed Hecke character induced from a Dirichlet character of a fixed modulus, and we also replace $\pi$ with $\pi\otimes\overline{\chi}_0,$ as in the proof of Theorem \ref{A}. Therefore, Theorem \ref{B} follows.
\end{proof}

\bibliographystyle{alpha}

\bibliography{LY}

\begin{thebibliography}{AHLS20}

\bibitem[AHLS20]{AHLS20}
Keshav Aggarwal, Roman Holowinsky, Yongxiao Lin, and Qingfeng Sun.
\newblock The {B}urgess bound via a trivial delta method.
\newblock {\em Ramanujan J.}, 53(1):49--74, 2020.

\bibitem[Art79]{Art79}
James Arthur.
\newblock Eisenstein series and the trace formula.
\newblock In {\em Automorphic forms, representations and {$L$}-functions
  ({P}roc. {S}ympos. {P}ure {M}ath., {O}regon {S}tate {U}niv., {C}orvallis,
  {O}re., 1977), {P}art 1}, Proc. Sympos. Pure Math., XXXIII, pages 253--274.
  Amer. Math. Soc., Providence, R.I., 1979.

\bibitem[BB11]{BB11}
Valentin Blomer and Farrell Brumley.
\newblock On the {R}amanujan conjecture over number fields.
\newblock {\em Ann. of Math. (2)}, 174(1):581--605, 2011.

\bibitem[BH08]{BH08}
Valentin Blomer and Gergely Harcos.
\newblock Hybrid bounds for twisted {$L$}-functions.
\newblock {\em J. Reine Angew. Math.}, 621:53--79, 2008.

\bibitem[BH10]{BH10}
Valentin Blomer and Gergely Harcos.
\newblock Twisted {$L$}-functions over number fields and {H}ilbert's eleventh
  problem.
\newblock {\em Geom. Funct. Anal.}, 20(1):1--52, 2010.

\bibitem[BH14]{BH14}
Valentin Blomer and Gergely Harcos.
\newblock Addendum: {H}ybrid bounds for twisted {$L$}-functions [mr2431250].
\newblock {\em J. Reine Angew. Math.}, 694:241--244, 2014.

\bibitem[BHM07]{BHM07}
V.~Blomer, G.~Harcos, and P.~Michel.
\newblock A {B}urgess-like subconvex bound for twisted {$L$}-functions.
\newblock {\em Forum Math.}, 19(1):61--105, 2007.
\newblock Appendix 2 by Z. Mao.

\bibitem[Bur63]{Bur63}
D.~A. Burgess.
\newblock On character sums and {$L$}-series. {II}.
\newblock {\em Proc. London Math. Soc. (3)}, 13:524--536, 1963.

\bibitem[Byk96]{By96}
V.~A. Bykovski\u{\i}.
\newblock A trace formula for the scalar product of {H}ecke series and its
  applications.
\newblock {\em Zap. Nauchn. Sem. S.-Peterburg. Otdel. Mat. Inst. Steklov.
  (POMI)}, 226(Anal. Teor. Chisel i Teor. Funktsi\u{\i}. 13):14--36, 235--236,
  1996.

\bibitem[CI00]{CI00}
J.~B. Conrey and H.~Iwaniec.
\newblock The cubic moment of central values of automorphic {$L$}-functions.
\newblock {\em Ann. of Math. (2)}, 151(3):1175--1216, 2000.

\bibitem[Cog03]{Cog03}
James~W. Cogdell.
\newblock On sums of three squares.
\newblock volume~15, pages 33--44. 2003.
\newblock Les XXII\`emes Journ\'{e}es Arithmetiques (Lille, 2001).

\bibitem[Coh05]{Coh05}
Paula~B. Cohen.
\newblock Hyperbolic equidistribution problems on {S}iegel 3-folds and
  {H}ilbert modular varieties.
\newblock {\em Duke Math. J.}, 129(1):87--127, 2005.

\bibitem[CPSS01]{CPSS01}
James~W Cogdell, Ilya~I Piatetski-Shapiro, and Peter Sarnak.
\newblock Estimates on the critical line for hilbert modular l-functions and
  applications.
\newblock {\em preprint}, 2(3):35, 2001.

\bibitem[Del80]{Del80}
Pierre Deligne.
\newblock La conjecture de weil: Ii.
\newblock {\em Publications Math{\'e}matiques de l'IH{\'E}S}, 52:137--252,
  1980.

\bibitem[DFI93]{DFI93}
W.~Duke, J.~Friedlander, and H.~Iwaniec.
\newblock Bounds for automorphic {$L$}-functions.
\newblock {\em Invent. Math.}, 112(1):1--8, 1993.

\bibitem[DFI02]{DFI02}
W.~Duke, J.~B. Friedlander, and H.~Iwaniec.
\newblock The subconvexity problem for {A}rtin {$L$}-functions.
\newblock {\em Invent. Math.}, 149(3):489--577, 2002.

\bibitem[Duk88]{Duk88}
W.~Duke.
\newblock Hyperbolic distribution problems and half-integral weight {M}aass
  forms.
\newblock {\em Invent. Math.}, 92(1):73--90, 1988.

\bibitem[FS22]{FS22}
Yilan Fan and Qingfeng Sun.
\newblock A {B}essel {$\delta$}-method and hybrid bounds for {$\rm GL_2$}.
\newblock {\em Q. J. Math.}, 73(2):617--656, 2022.

\bibitem[FW09]{FW09}
Brooke Feigon and David Whitehouse.
\newblock Averages of central {$L$}-values of {H}ilbert modular forms with an
  application to subconvexity.
\newblock {\em Duke Math. J.}, 149(2):347--410, 2009.

\bibitem[GJ79]{GJ79}
SS~Gelbart and H~Jacquet.
\newblock Forms of {$\mathrm{GL}(2)$} from the analytic point of view in
  automorphic forms, representations and l-functions, part i (proc. sympos.
  pure maht., oregon state univ., corvallis, ore. 1977), 213-251.
\newblock {\em Am. Math. Soc., Providence}, 1979.

\bibitem[HB78]{HB78}
D.~R. Heath-Brown.
\newblock Hybrid bounds for {D}irichlet {$L$}-functions.
\newblock {\em Invent. Math.}, 47(2):149--170, 1978.

\bibitem[HB80]{HB80}
D.~R. Heath-Brown.
\newblock Hybrid bounds for {D}irichlet {$L$}-functions. {II}.
\newblock {\em Quart. J. Math. Oxford Ser. (2)}, 31(122):157--167, 1980.

\bibitem[HL94]{HL94}
Jeffrey Hoffstein and Paul Lockhart.
\newblock Coefficients of maass forms and the siegel zero.
\newblock {\em Annals of Mathematics}, pages 161--176, 1994.

\bibitem[IK04]{IK04}
Henryk Iwaniec and Emmanuel Kowalski.
\newblock {\em Analytic number theory}, volume~53 of {\em American Mathematical
  Society Colloquium Publications}.
\newblock American Mathematical Society, Providence, RI, 2004.

\bibitem[Iwa92]{Iwa92}
Henryk Iwaniec.
\newblock The spectral growth of automorphic {$L$}-functions.
\newblock {\em J. Reine Angew. Math.}, 428:139--159, 1992.

\bibitem[Jac09]{Jac09}
Herv\'{e} Jacquet.
\newblock Archimedean {R}ankin-{S}elberg integrals.
\newblock In {\em Automorphic forms and {$L$}-functions {II}. {L}ocal aspects},
  volume 489 of {\em Contemp. Math.}, pages 57--172. Amer. Math. Soc.,
  Providence, RI, 2009.

\bibitem[JPSS81]{JPSS81}
H.~Jacquet, I.~I. Piatetski-Shapiro, and J.~Shalika.
\newblock Conducteur des repr\'{e}sentations du groupe lin\'{e}aire.
\newblock {\em Math. Ann.}, 256(2):199--214, 1981.

\bibitem[KS03]{KS03}
Henry Kim and Peter Sarnak.
\newblock Refined estimates towards the ramanujan and selberg conjectures.
\newblock 16(1):175--181, 2003.

\bibitem[Mag17]{Mag14}
P.~Maga.
\newblock Subconvexity for twisted {$L$}-functions over number fields via
  shifted convolution sums.
\newblock {\em Acta Math. Hungar.}, 151(1):232--257, 2017.

\bibitem[Mil16]{Mil16}
Djordje Mili\'{c}evi\'{c}.
\newblock Sub-{W}eyl subconvexity for {D}irichlet {$L$}-functions to prime
  power moduli.
\newblock {\em Compos. Math.}, 152(4):825--875, 2016.

\bibitem[MR12]{MR12}
Philippe Michel and Dinakar Ramakrishnan.
\newblock Consequences of the {G}ross-{Z}agier formulae: stability of average
  {$L$}-values, subconvexity, and non-vanishing mod {$p$}.
\newblock In {\em Number theory, analysis and geometry}, pages 437--459.
  Springer, New York, 2012.

\bibitem[Mun18]{Mun18a}
Ritabrata Munshi.
\newblock The subconvexity problem for {$L$}-functions.
\newblock In {\em Proceedings of the {I}nternational {C}ongress of
  {M}athematicians---{R}io de {J}aneiro 2018. {V}ol. {II}. {I}nvited lectures},
  pages 363--376. World Sci. Publ., Hackensack, NJ, 2018.

\bibitem[MV06]{MV06}
Philippe Michel and Akshay Venkatesh.
\newblock Equidistribution, {$L$}-functions and ergodic theory: on some
  problems of {Y}u. {L}innik.
\newblock In {\em International {C}ongress of {M}athematicians. {V}ol. {II}},
  pages 421--457. Eur. Math. Soc., Z\"{u}rich, 2006.

\bibitem[MV10]{MV10}
Philippe Michel and Akshay Venkatesh.
\newblock The subconvexity problem for {${\rm GL}_2$}.
\newblock {\em Publ. Math. Inst. Hautes \'{E}tudes Sci.}, (111):171--271, 2010.

\bibitem[Nel19]{Nel19}
Paul~D Nelson.
\newblock Eisenstein series and the cubic moment for {${\rm PGL}(2)$}.
\newblock {\em arXiv preprint arXiv:1911.06310}, 2019.

\bibitem[Nel20]{Nel20}
Paul~D Nelson.
\newblock Spectral aspect subconvex bounds for {$\mathrm{U}_{n+1}\times
  \mathrm{U}_n$}.
\newblock {\em arXiv preprint arXiv:2012.02187}, 2020.

\bibitem[Nel21]{Nel21}
Paul~D Nelson.
\newblock Bounds for standard {$L$}-functions.
\newblock {\em arXiv preprint arXiv:2109.15230}, 2021.

\bibitem[NV21]{NV21}
Paul~D Nelson and Akshay Venkatesh.
\newblock The orbit method and analysis of automorphic forms.
\newblock {\em Acta Math.}, (226(1)):1--209, 2021.

\bibitem[PY19]{PY22}
Ian Petrow and Matthew~P Young.
\newblock The fourth moment of dirichlet {$ L $}-functions along a coset and
  the weyl bound.
\newblock {\em arXiv preprint arXiv:1908.10346}, 2019.

\bibitem[PY20]{PY20}
Ian Petrow and Matthew~P. Young.
\newblock The {W}eyl bound for {D}irichlet {$L$}-functions of cube-free
  conductor.
\newblock {\em Ann. of Math. (2)}, 192(2):437--486, 2020.

\bibitem[Ram95]{Ram95}
Kanakanahalli Ramachandra.
\newblock {\em On the mean-value and omega-theorems for the Riemann
  zeta-function}.
\newblock Number~85. Tata Institute of Fundamental Research, 1995.

\bibitem[RR05]{RR05}
Dinakar Ramakrishnan and Jonathan Rogawski.
\newblock Average values of modular {$L$}-series via the relative trace
  formula.
\newblock {\em Pure Appl. Math. Q.}, 1(4, Special Issue: In memory of Armand
  Borel. Part 3):701--735, 2005.

\bibitem[Sar85]{Sar85}
Peter Sarnak.
\newblock Fourth moments of {G}r\"{o}ssencharakteren zeta functions.
\newblock {\em Comm. Pure Appl. Math.}, 38(2):167--178, 1985.

\bibitem[Sch06]{Sch06}
Wolfgang~M Schmidt.
\newblock {\em Equations over finite fields: an elementary approach}, volume
  536.
\newblock Springer, 2006.

\bibitem[Sha10]{Sha10}
Freydoon Shahidi.
\newblock {\em Eisenstein series and automorphic {$L$}-functions}, volume~58.
\newblock American Mathematical Soc., 2010.

\bibitem[Ven10]{Ven10}
Akshay Venkatesh.
\newblock Sparse equidistribution problems, period bounds and subconvexity.
\newblock {\em Ann. of Math. (2)}, 172(2):989--1094, 2010.

\bibitem[Wu14]{Wu14}
Han Wu.
\newblock Burgess-like subconvex bounds for {$\text{GL}_2\times\text{GL}_1$}.
\newblock {\em Geom. Funct. Anal.}, 24(3):968--1036, 2014.

\bibitem[Wu19]{Wu19}
Han Wu.
\newblock Burgess-like subconvexity for {${\rm GL}_1$}.
\newblock {\em Compos. Math.}, 155(8):1457--1499, 2019.

\bibitem[Yan23]{Yan23a}
Liyang Yang.
\newblock Relative trace formula and {$L$}-functions for
  {$\mathrm{GL}(n+1)\times \mathrm{GL}(n)$}.
\newblock {\em arXiv preprint arXiv:2303.02225}, 2023.

\bibitem[You17]{You17}
Matthew~P. Young.
\newblock Weyl-type hybrid subconvexity bounds for twisted {$L$}-functions and
  {H}eegner points on shrinking sets.
\newblock {\em J. Eur. Math. Soc. (JEMS)}, 19(5):1545--1576, 2017.

\bibitem[Zha05]{Zha05}
Shou-Wu Zhang.
\newblock Equidistribution of {CM}-points on quaternion {S}himura varieties.
\newblock {\em Int. Math. Res. Not.}, (59):3657--3689, 2005.

\end{thebibliography}

\end{document}